\theoremstyle{definition}
\newenvironment{letterthm}[2][Theorem]{\begin{trivlist}
\item[\hskip \labelsep {\bfseries #1}\hskip \labelsep {\bfseries \hspace{-0.5mm}#2.}]}{\end{trivlist}}
\newcommand{\descref}[1]{\hyperref[#1]{#1}}
\theoremstyle{definition}
\newtheorem* {theorem*}{Theorem}
\newtheorem{theorem}{Theorem}[subsection]
\newtheorem* {problem*}{Problem}
\theoremstyle{definition}
\newtheorem* {example*}{Example}
\newtheorem{fact}[theorem]{Fact}
\newtheorem{lemma}[theorem]{Lemma}
\theoremstyle{definition}
\newtheorem{definition}[theorem]{Definition}
\theoremstyle{definition}
\newtheorem* {notation}{Notation}
\newtheorem{proposition}[theorem]{Proposition}
\newtheorem{corollary}[theorem]{Corollary}
\newtheorem* {remark}{Remark}
\theoremstyle{definition}
\newtheorem {example}[theorem]{Example}
\theoremstyle{definition}
\theoremstyle{definition}
\theoremstyle{definition}
\numberwithin{equation}{section}
\def\qquand{\qquad\text{and}\qquad}
\def\({\left(}
\def\){\right)}
     \newcommand{\CC}{\mathbb{C}}  \newcommand{\QQ}{\mathbb{Q}}   \newcommand{\cP}{\mathcal{P}} 
 \newcommand{\cK}{\mathcal{K}} \newcommand{\cO}{\mathcal{O}} 
  \newcommand{\cS}{\mathcal{S}}
                \def\spanning{\textnormal{-span}}   
  \def\wt{\widetilde}
 \def\L{\mathcal{L}} \def\sh{\mathrm{sh}}
\newcommand{\h}{\mathfrak{h}}
\newcommand{\fn}
\def\fk{\mathfrak}
\def\barr{\begin{array}}
\def\earr{\end{array}}
\def\ba{\begin{aligned}}
\def\ea{\end{aligned}}
\def\be{\begin{equation}}
\def\ee{\end{equation}}
\def\cS{\mathcal{S}}
\def\cF{\mathcal{F}}
\def\ben{\begin{enumerate}}
\def\een{\end{enumerate}}
\def\omdef{\overset{\mathrm{def}}}
\def\KK{\mathbb{K}}
\newcommand{\tthree}[6]{\xy<0.0cm,0.25cm> \xymatrix@R=0.5cm@C=0.2cm{ *{\bullet} #6 &*{\bullet} #5 &*{\bullet} #4 \\  *{\bullet}  #1 &*{\bullet}  #2 &*{\bullet}  #3  }\endxy}
\newcommand{\tfour}[8]{\xy<0.0cm,0.25cm> \xymatrix@R=0.2cm@C=0.2cm{ *{\bullet} #8 & *{\bullet} #7 &*{\bullet} #6 &*{\bullet} #5 \\  *{\bullet}  #1 &*{\bullet}  #2 &*{\bullet}  #3  &*{\bullet}  #4  }\endxy}
\newcommand{\exone}{\xy<0cm,0cm> \xymatrix@R=0.0cm@C=0.3cm{*{\bullet}1 &*{\bullet}  % \\1   &\overline 1 
}\endxy}
\newcommand{\extwo}[4]{\xy<0cm,0cm> \xymatrix@R=-0.0cm@C=0.3cm{*{\bullet} #1 &*{\bullet} #2 &*{\bullet} #3 &*{\bullet}  #4  
%\\1   & 2 &\overline 2 &\overline 1 
}\endxy}
\newcommand{\exthree}[6]{\xy<0cm,0cm> \xymatrix@R=-0.0cm@C=0.3cm{*{\bullet} #1 &*{\bullet} #2 &*{\bullet} #3 &*{\bullet}  #4 &*{\bullet}  #5 &*{\bullet}  #6
% \\1   & 2 &3   &\overline 3 &\overline 2 &\overline 1
 }\endxy}
\newcommand{\exfour}[8]{\xy<0.0cm,0.0cm> \xymatrix@R=-0.0cm@C=0.3cm{*{\bullet} #1 &*{\bullet} #2 &*{\bullet} #3 &*{\bullet}  #4 &*{\bullet}  #5 &*{\bullet}  #6 &*{\bullet}  #7 &*{\bullet}  #8%\\1   & 2 &3  &4 &\overline 4 &\overline 3 &\overline 2 &\overline 1 
}\endxy}
\newcommand{\xone}[1]{\xy<0cm,0cm> \xymatrix@R=-0.0cm@C=0.3cm{*{\bullet} #1 % \\1
  }\endxy}
\newcommand{\xtwo}[2]{\xy<0cm,0cm> \xymatrix@R=-0.0cm@C=0.3cm{*{\bullet} #1 &*{\bullet} #2  % \\1   & 2
  }\endxy}
\newcommand{\xthree}[3]{\xy<0cm,0cm> \xymatrix@R=-0.0cm@C=0.3cm{*{\bullet} #1 &*{\bullet} #2 &*{\bullet} #3 % \\1   & 2 &3    
}\endxy}
\newcommand{\xfour}[4]{\xy<0.0cm,0.0cm> \xymatrix@R=-0.0cm@C=.3cm{*{\bullet} #1 &*{\bullet} #2 &*{\bullet} #3 &*{\bullet}  #4 
%\\1   & 2 &3  &4 
}\endxy}
\newcommand{\xfive}[5]{\xy<0.0cm,0.0cm> \xymatrix@R=-0.0cm@C=0.3cm{*{\bullet} #1 &*{\bullet} #2 &*{\bullet} #3 &*{\bullet}  #4  &*{\bullet}  #5 
%\\1   & 2 &3  &4 & 5
}\endxy}
\newcommand{\xsix}[6]{\xy<0.0cm,0.0cm> \xymatrix@R=-0.0cm@C=0.3cm{*{\bullet} #1 &*{\bullet} #2 &*{\bullet} #3 &*{\bullet}  #4  &*{\bullet}  #5 &*{\bullet}  #6 
% \\1   & 2 &3  &4 & 5
}\endxy}
\newcommand{\xseven}[7]{\xy<0.0cm,0.0cm> \xymatrix@R=-0.0cm@C=0.3cm{*{\bullet} #1 &*{\bullet} #2 &*{\bullet} #3 &*{\bullet}  #4  &*{\bullet}  #5 &*{\bullet}  #6 &*{\bullet}  #7 
% \\1   & 2 &3  &4 & 5
}\endxy}
\def\kk{\mathbb{K}}
\def\id{\mathrm{id}}
\def\im{\mathrm{Image}}
\def\Set{\textbf{Set}}
\def\Sp{\textbf{Sp}}
\def\SetSp{\textbf{SetSp}}
\def\Hopf{\textbf{Hopf}}
\def\cocoHopf{{^{\mathrm{co}}\Hopf^{\mathrm{co}}}}
\def\Vec{\textbf{Vec}}
\def\h{\mathbf{h}}
\def\p{\mathbf{p}}
\def\q{\mathbf{q}}
\def\r{\mathbf{r}}
\def\X{\mathrm{X}}
\def\E{\mathrm{E}}
\def\L{\mathrm{L}}
\def\x{\mathbf{x}}
\def\R{\mathrm{R}}
\def\P{\mathrm{P}}
\def\Q{\mathrm{Q}}
\def\one{\mathbf{1}}
\def\bfE{\mbox{\textbf{E}}}
\def\bfX{\mbox{\textbf{X}}}
\def\bfL{\mbox{\textbf{L}}}
\def\bfPi{\mbox{\boldmath$\Pi$}}
\def\Sym{\mathbf{Sym}}
\def\NCSym{\mathbf{NCSym}}
\def\S{{\tt S}}
\def\FB{\mathbf{FB}}
\def\FI{\mathbf{FI}}
\renewcommand{\@makefnmark}{\mbox{\textsuperscript{}}}
\begin{document}
\title{Strong forms of self-duality for Hopf monoids in species
}
\author{Eric Marberg\footnote{This research was conducted with support from the National Science Foundation.}
\\
Department of Mathematics \\
Stanford University \\
{\tt emarberg@stanford.edu}}
\date{}

\maketitle

\begin{abstract}
A vector species is a functor  from the category of finite sets with bijections to vector spaces (over a fixed field); informally, one can view this as a sequence of $S_n$-modules, one for each natural number $n$. 
A Hopf monoid (in the category of vector species) consists of a vector species with  unit, counit, product, and coproduct morphisms satisfying several compatibility conditions, analogous to a  graded  Hopf algebra.

A vector species has a basis if and only if it is given by a sequence of $S_n$-modules which are permutation representations.
We say that a Hopf monoid is {freely self-dual} if it is connected and finite-dimensional, and if it has a basis in which the structure constants of its product and coproduct coincide. Such Hopf monoids are self-dual in the usual sense, and we show that they are furthermore both commutative and cocommutative. 

We prove more specific classification theorems for freely self-dual Hopf monoids whose  products (respectively, coproducts) are linearized in the sense that they preserve the basis; we call such Hopf monoids strongly self-dual (respectively, linearly self-dual). In particular, we show that every strongly self-dual Hopf monoid has a basis isomorphic to some species of block-labeled set partitions, on which the product acts as the disjoint union. In turn, every linearly self-dual Hopf monoid has a basis isomorphic to the species of maps to a fixed set, on which the coproduct acts as restriction. 
It follows that every linearly self-dual Hopf monoid is strongly self-dual. 

Our final results concern connected Hopf monoids which are finite-dimensional, commutative, and cocommutative. We prove that such a Hopf monoid has a basis in which  its product and coproduct are both linearized if and only if it is strongly self-dual with respect to a basis equipped with a certain partial order, generalizing the refinement partial order on set partitions.
\end{abstract}

\setcounter{tocdepth}{2}
\tableofcontents

\section{Introduction}

\subsection{Background}

Fix a field $\kk$ with characteristic zero.
A \emph{vector species} is a functor $\p : \FB \to \Vec$
where $\FB$ denotes the category of finite sets with bijective maps as morphisms and $\Vec$ denotes the category of $\kk$-vector spaces.
A species $\p$ thus assigns to each finite set $S$ a vector space $\p[S]$ and to 
each bijection  $\sigma : S \to S'$ between finite sets a linear map $\p[\sigma] : \p[S]\to \p[S']$. 
The only condition which these assignments must satisfy is that
\be\label{functorial-eq} \p[\id_S] = \id_{\p[S]} \qquand \p[\sigma \circ \sigma'] = \p[\sigma]\circ \p[\sigma']\ee
 whenever $\sigma$ and $\sigma'$ are composable bijections between finite sets.
The composition rule implies that 
$\p[\sigma]$ is always a bijection, since $\p[\sigma^{-1}]$ affords its inverse. 
%We say that $\p$ is \emph{finite-dimensional} if $\p[S]$ is always finite-dimensional.
In this definition and subsequently, we follow the conventions of Aguiar and Mahajan's book \cite{species}.
%Useful general references for species include the books \cite{species,blp}.
%The collection of vector species forms a category whose morphisms are natural transformations.
%Recall that a natural transformation $f : F \to G$ between two species  consists of a morphism $f_S : F[S] \to G[S]$ for each finite set  $S$, such that \[G[\sigma]\circ f_S = f_{T} \circ F[\sigma]\] for all bijections $\sigma : S \to T$.
%We refer to the morphism $f_S$ as the \emph{$S$-component} of $f$.

Vector species form a category in which morphisms are natural transformations. Thus a morphism of vector species  $f: \p \to \q$  consists of a $\kk$-linear map $f_S : \p[S] \to \q[S]$ for each finite set  $S$, such that 
\[\q[\sigma]\circ f_S = f_{S'} \circ \p[\sigma]\]
for all bijections $\sigma : S \to S'$.
We refer to the map $f_S$ as the \emph{$S$-component} of $f$.
This category has a  symmetric monoidal structure which leads to interesting notions of monoids, comonoids, bimonoids, and Hopf monoids in species; see Section \ref{cauchy-sect}.
There are  natural functors which one can use to turn vector species into graded vector spaces, and Hopf monoids in species into graded Hopf algebras, as detailed in \cite[Chapter 15]{species}. % (these will be discussed in detail in Section \ref{app-sect}).
Constructions and properties of Hopf algebras which lift to the species level sometimes become easier to understand and to generalize via these functors.

In this paper 
we analyze the structure of Hopf monoids in species exhibiting certain strong forms of self-duality,  analogous to Zelevinksy's notion of \emph{positive self-duality} for Hopf algebras \cite{zel}.
Our original motivation was to provide a simple way of constructing and understanding the relationships between various Hopf algebras attached to towers of unipotent groups, considered for example in \cite{AZ,ABT,CB}. 
%We  sketch a few applications  towards this end in the last part of this introduction. We leave out most details and proofs, however; these will appear in full 
The applications of our results towards this end (sketched at the end of this introduction) are the main topic of the supplementary work \cite{supp}.

%Other applications of our results include a way of lifting the definition of the classical bases of the Hopf algebra of symmetric functions to (certain self-dual) Hopf monoids, and an alternative construction of the Frobenius characteristic map using the notion (inspired by \cite{ABS}) of a \emph{combinatorial Hopf monoid}.

% \begin{notation}
% In what follows, we write $\otimes$ for the tensor product $\otimes_\kk$ over $\kk$. Likewise, we use the symbol $\sqcup$ to denote the disjoint union of two sets. Finally, we write $\sigma|_S$ for the restriction of a map $\sigma$ to a subset  $S$ of its domain.
% \end{notation}
 
To proceed,
we review briefly the definition of
a \emph{Hopf monoid} in species.
For us, this consists of a triple $\h = (\p,\nabla,\Delta)$, where $\p$ is a vector species equipped with a unit map
$  \kk \to \p[\varnothing]$ and a counit map $  \p[\varnothing] \to \kk$,
and where $\nabla = (\nabla_{S,T})$ and $\Delta = (\Delta_{S,T})$ are two systems of  linear maps 
\[ \nabla_{S,T} : \p[S]\otimes \p[T] \to \p[S\sqcup T]\qquand \Delta_{S,T} : \p[S\sqcup T] \to \p[S]\otimes \p[T]\]
indexed by pairs of disjoint finite sets $S$, $T$. (Here and throughout, we write $\sqcup$ to indicate the disjoint union of sets.) These systems, which we call the product and coproduct of $\h$, are functorial in the sense that the diagrams
\be\label{1st-diagrams}
\begin{diagram}
 \p[ S \sqcup T] && \rTo^{\p[\sigma]} && \p[S'\sqcup T'] \\
\uTo^{\nabla_{S,T}}&  &&& \uTo_{\nabla_{S',T'}} \\
\p[S] \otimes \p[T] & &\rTo^{ \p[\sigma|_S]\otimes \p[\sigma|_T] }  && \p[S']\otimes \p[T'] 
\end{diagram}
\qquad\qquad
\begin{diagram}
 \p[ S \sqcup T] && \rTo^{\p[\sigma]} && \p[S'\sqcup T'] \\
\dTo^{\Delta_{S,T}}&  &&& \dTo_{\Delta_{S',T'}} \\
\p[S] \otimes \p[T] &&\rTo^{ \p[\sigma|_S]\otimes \p[\sigma|_T] } && \p[S']\otimes \p[T'] 
\end{diagram}
\ee
commute for any bijection $\sigma : S\sqcup T \to S' \sqcup T'$ with $\sigma(S) = S'$ and $\sigma(T) = T'$. (We write $\otimes $ for the tensor product over $\kk$.)
For simplicity we always assume that $\h$ is \emph{connected}, by which we mean that the unit and counit are inverse $\kk$-linear bijections.
The product and coproduct must satisfy a few natural conditions; namely, they must be \emph{associative}, \emph{unital}, and \emph{Hopf compatible}. These conditions are defined precisely in Section \ref{monoidal-struct-sect}.

Informally, associativity means that the two obvious ways of defining each of the notations $\nabla_{R,S,T}$ and $\Delta_{R,S,T}$ 
%for three pairwise disjoint sets 
coincide.
Unitality means that if  $S$ or $T$ is empty then $\nabla_{S,T}$ and $\Delta_{S,T}$ may be identified with (co)multiplication by elements of $\kk\cong \p[\varnothing]$.
If we think of the product and coproduct as ``joining'' and ``splitting'' operations, then Hopf compatibility is the condition that we get the same thing by joining then splitting two objects as by splitting two objects into four  then joining appropriate pairs; see Definition \ref{hopf-compat-def}.

\begin{example}\label{E-ex}
%A simple example of a Hopf monoid comes from the vector species $\bfM^C$ of maps to a fixed set $C$.
%This is given as follows. 
Let $C$ be any set.
Define $\bfE_C$ as the vector species such that if $S$ is a finite set then
 $\bfE_C[S]$ is the $\kk$-vector space whose basis is the set of  maps $f : S \to C$. If $\sigma : S \to S'$ is a bijection then let $\bfE_C[\sigma]$ act on the basis of maps by the formula $f\mapsto f\circ \sigma^{-1}$.
We view $\bfE_C$ as a connected Hopf monoid 
whose unit and counit are the inverse isomorphisms $\bfE_C[\varnothing] \cong \kk$ which identify the unique map $\varnothing \to C$ with $1_\kk \in \KK$,
and whose product and coproduct have the formulas
\[ \nabla_{S,T}(f\otimes g) = f\sqcup g\qquand \Delta_{S,T}(h) = h|_S \otimes h|_T\]
for maps $f : S \to C$ and $g : T \to C$ and $h : S\sqcup T \to C$.
Here $h|_S$ denotes the restriction of $h$ to $S$, while $f\sqcup g$ denotes the unique map $S\sqcup T \to C$ which restricts to $f$ on $S$ and to $g$ on $T$.
\end{example}

Up to isomorphism (where morphisms of Hopf monoids are natural transformations between vector species which commute with the unit, counit, product, and coproduct; see Section \ref{monoidal-struct-sect}), $\bfE_C$ depends only on the cardinality of $C$,
and we define the \emph{unit} and \emph{exponential species} as
\be\label{one-def}\one = \bfE_\varnothing
\qquand 
\bfE= \bfE_{\{1\}}.
\ee We  identify $\one[\varnothing ]  =\bfE[S] = \kk$ for all finite sets, while $\one[S] = 0$ if $S$ is nonempty. % since then there are no maps $S \to \varnothing$.
If $C$ is the basis of a $\kk$-vector space $V$ then $\bfE_C$ is the same as the Hopf monoid $\bfE_V$ defined in \cite[Example 8.18]{species}.
%Let $\p$ be the vector species of endfunctions, so that $\p[S]$ is the $\kk$-vector space whose basis is the set of maps $f : S \to S$ and 
%if $\sigma: S\to S'$ is a bijection the $\p[\sigma]$ acts on basis elements by $f \mapsto \sigma \circ f \circ \sigma^{-1}$.
%Define $\q[S]$ 
%as the subspace of $\p[S]$ spanned by elements of the form $f-\tau\circ f$ for maps $\tau, f: S \to S$ with $\tau$ invertible.
%One checks that $\q$ is a \emph{subspecies} of $\p$, that is, $\p[\sigma]$ restricts to a bijection $\q[S]\to \q[S']$.

\begin{example}\label{Pi-ex}
%The group of permutations of $S$ acts on the set of maps $f : S \to S$ by left composition.
Recall that a \emph{partition} of a set $S$ is a set of pairwise disjoint nonempty sets (referred to as \emph{blocks}) whose union is $S$. 
%For example, there are five partitions of $\{1,2,3\}$:
%\[ \{ \{1\},\{2\},\{3\}\},\qquad
%\{ \{1,2\},\{3\} \},\qquad
%\{ \{1,3\},\{2\}\},
%\qquad
%\{ \{1\},\{2,3\}\},\qquad
%\{\{1,2,3\}\}.\]
Define $\bfPi$ as the vector species such that $\bfPi[S]$ is the $\kk$-vector space whose basis 
is the set of partitions of $S$. If $\sigma : S \to S'$ is a bijection then we let $\bfPi[\sigma]$ act on the basis of set partitions by the formula $X \mapsto \{ \sigma(B) : B \in X\}$.
Then $\bfPi$ is a connected Hopf monoid whose unit and counit are the inverse isomorphisms $\bfPi[\varnothing]\cong \kk$  which identify the unique partition of the empty set with $1_\kk \in \kk$, and whose product and coproduct are
\[ \nabla_{S,T}(X\otimes Y) = X \sqcup Y \qquand \Delta_{S,T}(Z) = Z|_S \otimes Z|_T\]
for partitions $X$, $Y$,  $Z$ of the sets $S$, $T$,  $S\sqcup T$. Here $Z|_S$ denotes the partition of $S$ induced by the partition $Z$ of the larger set $S\sqcup T$.
Explicitly, $Z|S = \{ B \cap S : B \in Z \} - \{\varnothing\}$.
\end{example}

The products and coproducts of the Hopf monoids in the preceding examples have  all been (co)commutative in the usual sense; see Definitions \ref{assoc-def} and \ref{coassoc-def}.
In the following example the product is not commutative.

\begin{example}\label{L-ex}
Define $\bfL$ as the vector species such that $\bfL[S]$ is the $\kk$-vector space whose basis 
is the set of linear orders of $S$. If $\sigma : S \to S'$ is a bijection then we let $\bfL[\sigma]$ act on the basis of orders   by the formula $\ell \mapsto \ell'$ where $\ell'$ is the order of $S'$ such that $i < j$ with respect to $\ell'$ if and only if $\sigma^{-1}(i) < \sigma^{-1}(j)$ with respect to $\ell$.
Then $\bfL$ is a connected Hopf monoid whose unit and counit are given by identifying $\bfL[\varnothing]\cong \kk$ as usual, and whose product and coproduct have the formulas
\[ \nabla_{S,T}(\ell_1\otimes \ell_2) = \ell_1 * \ell_2 \qquand \Delta_{S,T}(\ell) = \ell|_S \otimes \ell|_T\]
for linear orders $\ell_1$, $\ell_2$,  $\ell$ of the sets $S$, $T$,  $S\sqcup T$. Here $\ell|_S$ denotes the linear order of $S$ induced by $\ell$, and $\ell_1*\ell_2$ denotes the unique linear order of $S\sqcup T$ restricting to $\ell_1$ on $S$ and to $\ell_2$ on $T$, such that every element of $T$ exceeds every element of $S$. 

\end{example}

As mentioned earlier, there are several ways of constructing   graded Hopf algebras from Hopf monoids in species; see, for the origins of some of these constructions, the papers  of Stover \cite{stover} and  Patras, Reutenauer, and Schocker \cite{291,292,293}.
 Among these methods are two functors
 $\cK$ and $\overline \cK$, 
 %\[ \cK,\ \overline\cK : \{\text{ Hopf monoids in species } \} \to \{\text{ Graded Hopf algebras }\}\] 
 referred to as   \emph{Fock functors} in  \cite[Chapter 15]{species}. 
%Here  $\Hopf(\Sp)$ denotes the category of Hopf monoids in species and $\Hopf(\GrVec)$ denotes the category of graded Hopf algebras.
The images of a Hopf monoid $\h = (\p,\nabla,\Delta)$ 
under these functors are  Hopf algebra structures
on the graded vector spaces
\[\cK(\h) = \bigoplus_{n\geq 0} \p[n]
\qquand
\overline\cK(\h)=\bigoplus_{n\geq 0} \p[n]_{S_n}\] where $\p[n]_{S_n}$ denotes the quotient space of coinvariants in $\p[n]$ under the natural action of the symmetric group.
For the precise definitions of these Hopf algebras, see 
\cite[Theorem 15.12]{species}.
% or
% Section \ref{fock-sect}.

We  mention these functors  just to point out some of the familiar Hopf algebras which arise from the simple Hopf monoids given as examples above. To begin, $\cK(\bfE_C)$ and $\overline\cK(\bfE_C)$ are the  tensor and symmetric algebras on the $\kk$-vector space generated by $C$; see \cite[Example 15.16]{species}. In turn, $\cK(\bfPi)$ and $\overline{\cK}(\bfPi)$ coincide with the Hopf algebras 
$\NCSym$  (see \cite{preprint,Sagan1})
and $\Sym$  (see \cite{ReinerNotes,zel})
of symmetric functions in  countably many non-commuting and commuting variables; see \cite[Section 17.4]{species}. Applying $\cK$ to the dual of $\bfL$ (with the \emph{dual} of a Hopf monoid defined as in Section \ref{dual-sect}), gives the Hopf algebra of permutations introduced by Malvenuto and Reutenauer  \cite{Mal1,Mal2}; see \cite[Example 15.17]{species}. Many more  such identifications appear in \cite[Chapter 17]{species}; this provides one motivation for the abstract study of Hopf monoids in species.

\subsection{Outline} 

%We are motivated by these connections to try to isolate key properties of the examples above, in order to prove general results about the structure of Hopf monoids in species. Towards this end, we 
%From \cite[Section 8.7]{species} we recall that
A \emph{set species} is a functor $\P : \FB \to \Set$ where $\Set$ denotes the usual category of sets
with arbitrary maps as morphisms. 
Thus, $\P$ assigns, to each finite set $S$ and bijection $\sigma: S \to S'$, a set $\P[S]$ and a bijection $\P[\sigma] : \P[S] \to \P[S']$, subject again  to  conditions  \eqref{functorial-eq}.
A set species $\P$ is \emph{finite} if $\P[S]$ is always finite; likewise, a vector species $\p$ is \emph{finite-dimensional} if $\p[S]$ is always finite-dimensional.
Following \cite[Section 8.7]{species}, if $\P$ is a set species then we write $\kk\P$ for the vector species  with $(\kk\P)[S]$ given by the vector space generated by $\P[S]$ and $(\kk\P)[\sigma]$ given by the vector space isomorphism which is the linearization of $\P[\sigma]$.

A   property present in each Hopf monoid $\h = (\p,\nabla,\Delta)$ in the examples above is that
the underlying vector species $\p$ is \emph{linearized}, by which we mean there exists a set species $\P$ such that $\p = \kk\P$. We refer to the set species $\P$ as a \emph{basis} for $\p$.
%and  write $\E_C$, $\Pi$, and $\L$ for the natural bases of $\bfE_C$, $\bfPi$, and $\bfL$ above.
%Thus $\E_C[S]$ is the set of maps $S \to C$, $\Pi[S]$ is the set of partitions of $S$, and $\L[S]$ is the set of linear orders of $S$.
%
 Not every vector species has a basis (see Proposition \ref{basis-prop}) but when one exists 
we can define the corresponding \emph{structure constants} of the product and coproduct as
the elements $a_{\lambda',\lambda''}^\lambda$ and $b_{\lambda',\lambda''}^{\lambda}$ in $\kk$ such that if $S$, $T$ are disjoint finite sets then
\[ \nabla_{S,T}(\lambda' \otimes \lambda'') = \sum_{\lambda \in \P[S\sqcup T]} a_{\lambda',\lambda''}^\lambda\cdot \lambda
\qquand 
\Delta_{S,T}(\lambda) = \sum_{(\lambda',\lambda'') \in \P[S]\times \P[T]} b_{\lambda',\lambda''}^\lambda \cdot \lambda'\otimes \lambda''\]
for $\lambda \in \P[S\sqcup T]$ and $\lambda' \in \P[S]$ and $\lambda'' \in \P[T]$.
For the Hopf monoids in Examples \ref{E-ex} and \ref{Pi-ex}, there exist bases such that these structure constants coincide, that is, 
such that we always have $a_{\lambda',\lambda''}^\lambda = b_{\lambda',\lambda''}^{\lambda}$.
(Note that in Example \ref{Pi-ex}, the basis exhibiting this property is not  the obvious basis of set partitions.)
If $\h$ is finite-dimensional and it has this property, then
we say that it is 
 \emph{freely self-dual} (or \emph{FSD} for short) with respect to the basis $\P$; see Section \ref{fsd-sect} for an explanation of this terminology.
The main results of this work are organized around the problem of classifying connected Hopf monoids which are FSD or which exhibit related forms of self-duality.

We summarize the rest of this paper as follows.
Section \ref{prelim-sect} provides some additional preliminaries on species$-$in particular,  the precise definitions of monoids, comonoids, and Hopf monoids.
In Section \ref{BSD-sect} we consider FSD Hopf monoids in detail.
Our first main result is the following theorem, from which one can deduce  that the Hopf monoid of linear orders $\bfL$ is not FSD.

\begin{letterthm}{A}\label{A} (See Theorem \ref{coco-cor}.) Assume $\KK\subset \CC$. If a connected Hopf monoid is freely self-dual then it is both commutative and cocommutative.
\end{letterthm}

%This already implies a fairly concrete description of any FSD connected Hopf monoid . . . 
A \emph{subspecies} of a vector species $\p$ is a (set or vector) species $\q$ such that $\q[S]\subset\p[S]$ for all finite sets $S$ and such that $\q[\sigma]$ is the restriction of $\p[\sigma]$ for all bijections $\sigma$; to indicate this situation we write $\q \subset \p$.
The subspecies of \emph{primitive elements} of a connected Hopf monoid $\h = (\p,\nabla,\Delta)$ is the vector subspecies $\cP(\h)\subset\p$ with 
$\cP(\h)[\varnothing] = 0$ and 
\[ {\cP}(\h)[I] = \{ \lambda \in \p[I] : \Delta_{S,T}(\lambda) = 0\text{ whenever $I = S\sqcup T$ with $S$, $T$ both nonempty}\}\]
for nonempty sets $I$.
It follows by general results of Stover \cite{stover} that every connected Hopf monoid which is commutative and cocommutative is isomorphic to the {free commutative Hopf monoid} on its subspecies of primitive elements; see Theorem \ref{coco-thm}.
Here, the \emph{free commutative Hopf monoid} on a vector species is an analogue of the symmetric algebra on a graded vector space; we review the precise definition in Section \ref{free-sect}.

 This discussion suggests that we  try to classify which free commutative  Hopf  monoids are freely self-dual. This may be a difficult problem.
For such a Hopf monoid to be FSD it must possess a basis, but examples show that, at least for arbitrary fields $\kk\subset \CC$, this necessary condition is not sufficient; see the remarks following Theorem \ref{coco-cor}.
To obtain a more tractable classification, we might restrict our attention to  connected Hopf monoids which are FSD and which have some additional property.
For example, one might consider the following:

\begin{problem*}\label{psd-problem}
Classify the connected Hopf monoids $\h = (\p,\nabla,\Delta)$ which are FSD with respect to some basis  and  whose structure constants $a_{\lambda',\lambda''}^{\lambda} = b_{\lambda',\lambda''}^{\lambda}$ in this basis are all nonnegative integers.
\end{problem*}

Connected Hopf monoids with these properties would be the obvious species analogues of Zelevinksy's notion of \emph{positive self-dual} (or \emph{PSD} for short) graded connected Hopf algebras. % \cite{zel} (see also \cite[Section 3]{ReinerNotes}). 
Zelevinky's monograph \cite{zel} (see also \cite[Section 3]{ReinerNotes}) develops a detailed structure theory for such Hopf algebras: each PSD Hopf algebra has a unique factorization as a tensor product of Hopf algebras isomorphic (after rescaling  gradings) to the familiar algebra of symmetric functions.

One hopes to find a similarly detailed structure theory  in answer to our problem above. Towards this end, 
we consider  the following special case:
%The classifications we obtain suggest that Problem \ref{psd-problem} may again be too general to yield the detailed structure theory occurring at the level of Hopf algebras.
namely, we  classify those  connected Hopf monoids $\h =(\p,\nabla,\Delta)$ such that
\begin{itemize}
\item[(a)] $\h$ is freely self-dual with respect to some basis $\P$ for $\p$.
\item[(b)] The product $\nabla$ is linearized with respect to the same basis $\P$.
\end{itemize}
Here, following \cite[Section 8.7.2]{species}, by \emph{linearized} we mean that each  component $\nabla_{S,T}$ of the product
is the linearization of a map $\P[S]\times \P[T]\to \P[S\sqcup T]$.
We refer to the  class of such connected Hopf monoids 
as \emph{strongly self-dual} (or \emph{SSD} for short).
The structure constants of a strongly self-dual Hopf monoid  are all nonnegative integers; in particular, they are all zero or one.
Many natural examples of FSD Hopf monoids   have this  property, including $\bfE_C$ and $\bfPi$.

%Fix a connected Hopf monoid $\h = (\p,\nabla,\Delta)$. We say that the product of $\h$ is \emph{linearized} with respect to a basis $\P$ for $\p$ if each  map $\nabla_{S,T}$ is the linearization of a map $\P[S]\times \P[T]\to \P[S\sqcup T]$.

Our results on SSD Hopf monoids appear in Section \ref{mufsd-sect}.
Proposition \ref{redef-prop} gives an elementary alternate definition of strongly self-dual and 
Theorem \ref{selfcompat-thm} characterizes the linearized products which give rise to  SSD Hopf monoids. From the description of an FSD Hopf monoid as a free commutative Hopf monoid
in Section \ref{structure-sect}, we derive this theorem:

\begin{letterthm}{B}\label{B}(See Theorem \ref{fsd-thm}.) A connected Hopf monoid which is commutative and cocommutative is strongly self-dual if and only if its subspecies of primitive elements has a finite basis.
\end{letterthm}

Let $\Q$ be a set species.
A \emph{$\Q$-labeled set partition} $X$ is then
a
  set of pairs $(B,\lambda)$, which we call the \emph{blocks} of $X$, consisting of a nonempty finite set  $B$ and an element  $\lambda \in \Q[B]$, subject to the condition that 
$B \cap B' = \varnothing$
whenever 
$(B,\lambda)$ and $(B',\lambda')$ are distinct
blocks. 
We say that $X$ is a partition of the set which is the union of its unlabeled blocks $B$. 
%There is a unique $\Q$-labeled partition of the empty set, which is itself the empty set.

The collection of $\Q$-labeled set partitions forms a set species, which we denote $\cS(\Q)$, in the following way: for each finite set $I$ define 
$\cS(\Q)[I]$
as the set of $\Q$-labeled partitions of $I$,
%
%
%\[\cS(\Q)[I] = \{\ \text{$\Q$-labeled set partitions $X$ with $\sh(X) \in \Pi[I]$}\ \} \]
%%\[ \cS_+(\Q)[I] = \{ \text{ $\Q$-labeled partitions of $I$ }\} \]
and
for each bijection $\sigma : I \to I'$ between finite sets 
define $\cS(\Q)[\sigma]$
as the map sending $X \in \cS(\Q)[I]$ to the labeled partition $X' \in \cS(\Q)[I']$ such that 
\[(B,\lambda)\in X
\qquad\text{if and only if}
\qquad 
 (\sigma(B), \Q[\sigma](\lambda))\in X'.\]
 %Alternatively, one can define $\cS(\Q) = \E \circ \Q$ where $\E$ is the set species with $\E[S] = \{1_\kk\}$ for all finite sets $S$ and $\circ$ is the \emph{substitution product} given in \cite[Section 8.7.1]{species}.
%Note that when $\Q[\varnothing] = \varnothing$, we can view $\Q$ as a subspecies of $\cS(\Q)$ by identifying $\Q[I]$ with the set of partitions in $\cS(\Q)[I]$  with exactly one block.
This notation leads to the following concrete description of a strongly self-dual Hopf monoid.

\begin{letterthm}{C}\label{C}(See Theorem \ref{muFSD-mainthm}.)
If $\h$ is a connected Hopf monoid which is commutative and cocommutative, and if $\Q$ is a basis for its subspecies of primitive elements, then $\h$ is strongly self-dual with respect to a basis isomorphic to $\cS(\Q)$, 
and the action of the product on this basis may be identified with the disjoint union of $\Q$-labeled set partitions.
\end{letterthm}

In Section \ref{pifsd-sect} we consider the ``dual'' problem of classifying the connected Hopf monoids $\h =(\p,\nabla,\Delta)$ exhibiting the following variations of properties
%conditions, which  are in some sense ``dual'' to 
(a) and (b) above:
 \begin{itemize}
 \item[(a$'$)] $\h$ is freely self-dual respect to some basis $\P$ for $\p$.
\item[(b$'$)] The coproduct $\Delta$ is linearized with respect to the same basis $\P$.

\end{itemize}
In this context, \emph{linearized} means that each  component $\Delta_{S,T}$ of the coproduct
is the linearization of a map $\P[S \sqcup T] \to \P[S]\times \P[ T]$.
We refer to the  class of  connected Hopf monoids with  these two properties
as \emph{linearly self-dual} for reasons which will become clear with the next theorem.
The structure constants of such Hopf monoids are again all  zero or one.
%, since for such Hopf monoids now the coproduct  is \emph{linearized} in the sense of \cite[Section 8.7.2]{species}.

Despite the symmetry of our definitions,  linear self-duality implies strong self-duality.
% connected Hopf monoids are actually strongly self-dual.
We have the following pair of results.
Here, we write $\p[1]$ for $\p[\{1\}]$, and more generally we  write  $\p[n]$ for $\p[\{1,\dots,n\}]$.
Additionally, we say that a connected Hopf monoid $(\p,\nabla,\Delta)$ is \emph{linearized} with respect to some basis $\P$ if both its product and coproduct are linearized in this basis.

\begin{letterthm}{D}\label{D}(See Corollary \ref{deltanabla-cor} and Theorem \ref{pi-mainthm}.)
If $\h$ is a linearly self-dual connected Hopf monoid and $C$ is any basis for $\p[1]$, then  
%there is an isomorphism   
$\h \cong \bfE_C$. In particular, a connected Hopf monoid which is linearly self-dual in some basis  is  strongly self-dual (and hence linearized) in the same basis.
\end{letterthm}

%We say that a vector species $\q$ is \emph{supported} on a set $S$ if $\q[S] \neq 0$.

\begin{letterthm}{E}\label{E}(See Theorem \ref{pi-lastthm}.)
A connected Hopf monoid $\h$ which is commutative and cocommutative is linearly self-dual if and only if its subspecies of primitive elements $\cP(\h)$ is such that $\cP(\h)[1]$ is finite-dimensional and  $\cP(\h)[n]= 0$ whenever $n\neq 1$. %, and in this case the Hopf monoid is also strongly self-dual.
\end{letterthm}

%\begin{theorem*}[See Corollary \ref{deltanabla-cor}] If a connected Hopf monoid is linearly self-dual with respect to some basis then it is strongly self-dual with respect to the same basis.
%\end{theorem*}

In Section \ref{linear-sect} we turn our attention to linearized  Hopf monoids.
% $\h = (\p,\nabla,\Delta)$ which are {linearized} with respect to some basis $\P$.
Our main results concern the case when such Hopf monoids are strongly self-dual; namely:
\begin{letterthm}{F}\label{F}(See Theorem \ref{coco-cor} and Corollary \ref{last-cor}.)
Let $\h$ be a connected Hopf monoid which is linearized in some basis. Then $\h$ is strongly self-dual (in some other basis) if and only if $\h$ is finite-dimensional, commutative, and cocommutative. 
\end{letterthm}

For Hopf monoids which are linearized and strongly self-dual, we prove that the product and coproduct determine a ``partial order'' on the Hopf monoid's basis  (generalizing the refinement order on set partitions), and  that conversely the product and this partial order uniquely determine the coproduct.
%
%In greater detail, let $\P\times \P$ denote the set species with $(\P\times \P)[S] = \P[S]\times \P[S]$ and $(\P\times \P)[\sigma] = \P[\sigma]\times \P[\sigma]$ for finite sets $S$ and bijections $\sigma$. 
%%We say that a subspecies $\R \subset \P\times \P$ is \emph{transitive} if $(i,j),(j,k) \in \R[S]$ implies $(i,j) \in \R[S]$. 
%A \emph{partial order} on $\P$ is then a  subspecies $\cO\subset \P\times \P$ such that if $(a,b) \in \cO[S]$ then $a\neq b$ and if also $(b,c) \in \cO[I]$ then $(a,c) \in \cO[I]$.
%We  have the following theorem, concerning a connected Hopf monoid 
%$\h = (\p,\nabla,\Delta)$ which is commutative, cocommutative, and linearized with respect to some finite basis $\P$.
%
%\begin{theorem*}[See Theorem \ref{order-thm} and Corollary \ref{lattice-cor}] 
%For each finite set $I$ define $\cO[I]$ as the set of pairs $(\lambda,\lambda') \in \P[I]\times \P[I]$ with $\lambda \neq \lambda'$ such that
%\[ \lambda = \nabla_{S_1,\dots,S_k}\circ \Delta _{S_1,\dots,S_k}(\lambda')\]
%for some pairwise disjoint decomposition $I = S_1\sqcup \dots \sqcup S_k$,
%where
%$\nabla_{S_1,\dots,S_k}$ and $ \Delta _{S_1,\dots,S_k}$ are the natural iterated product and coproduct maps defined by  \eqref{nablam} and \eqref{deltam} below.
% Then $\cO$ is a partial order on $\P$, whose 
%lower intervals $\{ \lambda : (\lambda,\lambda') \in \cO[I]\} \cup \{\lambda'\}$ are finite lattices.
%\end{theorem*}
%
 In greater detail, if $\P$ is a set species then let $\P\times \P$ denote the set species with 
 \[(\P\times \P)[I] = \P[I]\times \P[I] \qquand (\P\times \P)[\sigma] = \P[\sigma]\times \P[\sigma]\] for finite sets $I$ and bijections $\sigma$. We say that a subspecies $\cO \subset \P\times \P$ is \emph{transitive} if $(a,b),(b,c) \in \cO[I]$ implies $(a,c) \in \cO[I]$. A \emph{partial order} on $\P$ is then a transitive subspecies $\cO\subset \P\times \P$ for which $(a,b) \in \cO[I]$ implies $a\neq b$.
Given these definitions, we have the following theorem.
% here, assume $\h= (\p,\nabla,\Delta)$ is a connected Hopf monoid which is commutative, cocommutative, and linearized with respect to some basis $\P$.

\begin{letterthm}{G}\label{G}(See Theorem \ref{order-thm}.) 
Let $\h= (\p,\nabla,\Delta)$ be an SSD connected Hopf monoid which is linearized  in some basis $\P$.
 The (unique) minimal transitive subspecies $\cO\subset\P\times \P$ such that $(\lambda,\lambda') \in \cO[I]$   whenever $\lambda,\lambda' \in \P[I]$ are distinct elements and  $\lambda = \nabla_{S,T}\circ \Delta_{S,T} (\lambda')$ for some disjoint decomposition $I = S\sqcup T$
is then a partial order on $\P$.
\end{letterthm}

Proposition \ref{lattice-cor} shows that the
 lower intervals  
 %$\{ \lambda : (\lambda,\lambda') \in \cO[I]\} \cup \{\lambda'\}$ 
 of this order are finite lattices.
%In Section \ref{bases-sect}, we use this partial order to show that $\h$ is strongly self-dual; see Corollary \ref{last-cor}.
Lemma \ref{order-lem1} isolates two other key properties of the partial order $\cO$: namely,  (A)   lower intervals are preserved by products and (B) if  $I = S\sqcup T$ is a disjoint decomposition then each $\lambda \in \P[I]$ has a unique greatest lower bound in the image of $\nabla_{S,T}$. See Theorem \ref{order-thm2} for a more precise description of these properties.
Combining the remaining results of Sections \ref{order-sect} and \ref{bases-sect} 
%shows that 
%a strongly self-dual connected Hopf monoid is linearized if and only if its basis possesses a partial order with these properties. In detail, we have 
gives this theorem:

\begin{letterthm}{H}\label{H}(See Theorems \ref{order-thm2} and \ref{basis-thm} and Corollary \ref{last-cor}.)
Let $\h=(\p,\nabla,\Delta)$ be a connected Hopf monoid which is strongly self-dual in some basis $\P$. 
Then $\h$ is linearized  (in some other basis) if and only if  there exists a partial order on $\P$ with  properties (A) and (B).
\end{letterthm}

%Let $\FF$ denote a finite field and let $\UT_n = \UT_n(\FF)$ and $\T_n=\T_n(\FF)$ denote respectively the 
%sets of unipotent and strictly upper triangular $n\times n$-matrices  
%over $\FF$.
%The set $\UT_n$ is a group while $\T_n$ is a nilpotent (non-unital) $\FF$-algebra.
% The product group $\UT_n\times \UT_n$ acts on $\T_n$ by  $(g,h) : X \mapsto gXh^{-1}$,
% and we define $\SC_n$ as the $\kk$-vector space of maps $\T_n \to \kk$ which are constant on the orbits of this action.
% In turn, define a graded vector space
% \[\SC = \bigoplus_{n\geq 0} \SC_n.\]
%Let $\T^\D_n \subset \T_{n}$ denote the subspace of matrices $X \in \T_n$ such that $X^\dag + X = 0$
%where $\dag$ denotes the backwards transpose map which flips a matrix across its anti-diagonal.
%Let $\SC^\D_n$ denote the space of maps $\T^\D_n \to \kk$ given by restrictions of maps in $\SC_n$
%and define another graded vector space 
%\[ \SC^\D = \bigoplus_{n\geq 0} \SC^\D_{2n}.\]
%
%
%\[\]

While the scope of this paper is limited to  Hopf monoids in species, 
in the companion work \cite{supp} we apply  the results here to unify several Hopf algebra constructions appearing in the literature. A sketch of these applications goes as follows.
From any comonoid which is connected, cocommutative, and linearized, there is standard way of constructing a connected Hopf monoid which is strongly self-dual and linearized.
For Hopf monoids arising in this way, we can define four natural bases in terms of its associated partial order.
Applying the Fock functors $\cK$ and $\overline \cK$ to these Hopf monoids gives rise to graded connected Hopf algebras with four bases, which one can view as analogues of the four classical bases of the Hopf algebra of symmetric functions.
Of particular interest, 
we can obtain, via this construction,  the  Hopf algebras of superclass functions on the towers of maximal unipotent subgroups of the finite Chevalley groups of type A and D, studied in
\cite{AZ} and \cite{CB}.
Among other consequences, this realization of these Hopf algebras affords an easy proof that they are isomorphic whenever they have the same graded dimension.

\subsection*{Acknowledgements}

I thank Marcelo Aguiar for answering several questions about species and clarifying some of the background material in Section \ref{free-sect}. I am grateful also to Carolina Benedetti for many helpful discussions and for providing me with a preliminary version of  part of her PhD thesis.

\section{Preliminaries} 
\label{prelim-sect}

The primary reference for the  material in this preliminary section is Aguiar and Mahajan's book \cite{species}, and also the related papers 
\cite{species1,species2}. Other useful general references for species include  \cite{blp}; see also \cite{Mendez}.
The concept of a vector species originates in Joyal's work \cite{Joyal}, but can also be viewed a special case of the more general notion of an {FI-module} recently introduced in \cite{FI}. (While a vector species is a functor $\FB \to \Vec$, an \emph{FI-module} can be interpreted as a functor $\FI \to \Vec$ where $\FI$ is the category of finite sets with injective maps as morphisms.)

%For nonnegative integers $n$ we define $[n] = \{1,\dots,n\}$. If $\p$ is a species then we abbreviate by setting $\p[n] = \p[[n]]$
We will frequently refer in examples to the set species 
\[\E,\qquad \E_C,\qquad \fk S,\qquad  \Pi,\qquad \L.\]
Here $\E$, $\E_C$, $\Pi$, and $\L$ are the natural bases of $\bfE$, $\bfE_C$, $\bfPi$, and $\bfL$ in the introduction. Thus if $S$ is a finite set then $\E[S] = \{1_\kk\}$, while $\E_C[S]$, $\Pi[S]$, and $\L[S]$ are the respective sets of maps $S \to C$, partitions of $S$, and linear orders of $S$.
We define $\fk S$ as the species of permutations, so $\fk S[S]$ is the set of bijections  $S \to S$, and $\fk S[\sigma]$ for a bijection $\sigma : S \to S'$ is the map $\fk S[\sigma] : \lambda \mapsto \sigma \circ \lambda \circ \sigma^{-1}$.

\subsection{Connected species and the Cauchy product}\label{cauchy-sect}

The definition of a Hopf monoid sketched in the introduction derives from the following bifunctor on vector species, which Aguiar and Mahajan's book \cite{species} refers to as the \emph{Cauchy product}.

\begin{definition}\label{cauchy-def}
The \emph{Cauchy product} of two vector species $\p$ and $\q$ is the vector species  $\p\cdot\q$ 
such that 
\[ (\p\cdot\q)[I] = \bigoplus_{I=S\sqcup T} \p[S]\otimes \q[T]\qquand (\p\cdot\q)[\sigma] = \bigoplus_{I=S\sqcup T} \p[\sigma|_S]\otimes \q[\sigma|_T]\]
for finite sets $I$ and bijections $\sigma: I \to I'$ between finite sets, where the direct sums are over all ordered decompositions of $I$ into disjoint subsets $S$, $T$.
In turn, the \emph{Cauchy product} of two morphisms $f : \p \to \p'$ and $g:\q \to \q'$ is the morphism  $f\cdot g : \p\cdot \q \to \p'\cdot \q'$ whose $I$-component is
% the direct sum 
\[(f\cdot g)_I=\bigoplus_{I=S\sqcup T} f_S\otimes g_T\] where the direct sum is again over all ordered disjoint decompositions $I = S \sqcup T$.
\end{definition}

Many results concerning monoidal structures on species are greatly simplified when we restrict our attention to species $\p$ with a specified isomorphism $\p[\varnothing] \cong \kk$.
Following \cite[Section 8.9.1]{species}, we call such species \emph{connected}.
 The technical definition goes as follows; recall here the definition of the vector species $\one$ from \eqref{one-def}.

%Let $\one$ denote the vector species with $\one[\varnothing] = \kk$ and $\one[S] = 0$ for all nonempty finite sets $S$, and with $\one[\sigma] = 0$ for all bijections between nonempty finite sets. (Note that $\one[\sigma]$ is necessary the identity  when $\sigma$ is the unique map $\varnothing \to \varnothing$.)

\begin{definition} 
A \emph{connected species} is a vector species $\p$ with morphisms
%inverse isomorphisms of $\kk$-vector spaces
%\[\eta :  \kk \xrightarrow{\sim} \p[\varnothing]\qquand \varepsilon : \p[\varnothing] \xrightarrow{\sim} \kk\]
%\[\eta :  \one\to \p\qquand \varepsilon : \p\to\one\]
$\eta : \one \to \p$ and $\varepsilon : \p \to \one$
such that $\eta_\varnothing : \kk\to \p[\varnothing]$ and $\varepsilon_\varnothing : \p[\varnothing] \to \kk$ are inverse isomorphisms of $\kk$-vector spaces. 
We refer to $\eta$ and $\varepsilon$ as the \emph{unit} and \emph{counit} of %the connected species 
$\p$. 
\end{definition}

Note that $\eta_S$ and $\varepsilon_S$ are automatically zero whenever $S$ is nonempty, and that $\eta$ and $\varepsilon$ determine each other.
If $\p$ is a vector species with $\p[\varnothing] = \kk$ (such as $\one$ or $\bfE$),
then,
unless explicitly noted otherwise,
we view $\p$ as the connected species whose unit and counit are the unique morphisms 
%$ \one \to \p$ and $ \p \to \one$ 
whose $\varnothing$-components are the identity maps on $\kk$.
%We view $\one$ as the connected species whose unit and counit are the identity morphisms (although this species is connected with respect to other units and counits).

Connected species form a category, which we denote $\Sp^\circ$, whose morphisms are natural transformations $f: \p \to \p'$ which preserve units and counits, in the sense that 
$f\circ \eta = \eta'$  and $\varepsilon =  \varepsilon'\circ f$, with $\eta'$ and $\varepsilon'$ denoting the unit and counit of $\p'$. 
Whether or not a morphism of vector species preserves  units and counits in this sense  reduces to the following simpler condition.

\begin{fact}\label{connected-morph-fact}
Let $f : \p \to \p'$ be a morphism of vector species. If $\p$ and $\p'$ are connected with units $\eta$ and $\eta'$, 
%and if $1_{\p}$ and $1_{\p'}$ denote the respective images of $1_\kk \in \kk = \one[\varnothing]$ under the units of $\p$ and $\p'$,
then $f$ is a morphism of connected species if and only if 
$f_\varnothing $ maps $\eta_\varnothing(1_\kk) \mapsto \eta'_\varnothing(1_\kk)$.
%where $1_{\p}$ and $1_{\p'}$ denote the respective images of $1_\kk \in \kk = \one[\varnothing]$ under the units of $\p$ and $\p'$.
\end{fact}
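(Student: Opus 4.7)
The plan is to reduce both the unit condition $f \circ \eta = \eta'$ and the counit condition $\varepsilon = \varepsilon' \circ f$ to their $\varnothing$-components, and then observe that everything is controlled by a single one-dimensional linear map.

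First I would handle the nonempty components. For any nonempty finite set $S$, we have $\one[S] = 0$, so $\eta_S$, $\eta'_S$, $\varepsilon_S$, and $\varepsilon'_S$ are all the zero map by definition. Consequently the equalities $f_S \circ \eta_S = \eta'_S$ and $\varepsilon_S = \varepsilon'_S \circ f_S$ hold trivially. Thus the two natural transformation identities hold on every nonempty component automatically, and the only content is in the $\varnothing$-component.

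Next I would analyze $\varnothing$. By connectedness, $\eta_\varnothing : \kk \to \p[\varnothing]$ and $\varepsilon_\varnothing : \p[\varnothing] \to \kk$ are inverse isomorphisms, so $\p[\varnothing]$ is one-dimensional, spanned by $v := \eta_\varnothing(1_\kk)$, and $\varepsilon_\varnothing(v) = 1_\kk$. The analogous statements hold for $\p'$ with $v' := \eta'_\varnothing(1_\kk)$. The forward direction is immediate: if $f$ is a morphism of connected species then $f \circ \eta = \eta'$ specialized at $\varnothing$ and applied to $1_\kk$ gives $f_\varnothing(v) = v'$.

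For the converse, suppose $f_\varnothing(v) = v'$. Since $\p[\varnothing] = \kk v$, every $x \in \p[\varnothing]$ can be written uniquely as $x = cv$ for some $c \in \kk$, and by $\kk$-linearity $f_\varnothing(cv) = cv'$. Applied to $1_\kk$, this gives $f_\varnothing(\eta_\varnothing(1_\kk)) = \eta'_\varnothing(1_\kk)$, which by $\kk$-linearity of $\eta_\varnothing$ extends to $f_\varnothing \circ \eta_\varnothing = \eta'_\varnothing$, establishing the unit identity at $\varnothing$. For the counit identity at $\varnothing$, compute $(\varepsilon'_\varnothing \circ f_\varnothing)(cv) = \varepsilon'_\varnothing(cv') = c \cdot \varepsilon'_\varnothing(v') = c = c \cdot \varepsilon_\varnothing(v) = \varepsilon_\varnothing(cv)$, so $\varepsilon'_\varnothing \circ f_\varnothing = \varepsilon_\varnothing$. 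Combined with the earlier observation on nonempty components, this shows $f$ is a morphism of connected species.

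There is no real obstacle here; the statement is a direct linear algebra verification once one unpacks the definitions. The only subtlety worth flagging in writing is to make explicit that connectedness forces $\p[\varnothing]$ to be one-dimensional, so that the single condition $f_\varnothing(v) = v'$ pins down the entire linear map $f_\varnothing$ by $\kk$-linearity.
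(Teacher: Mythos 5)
Your proof is correct and follows essentially the same route as the paper's: reduce both the unit and counit identities to their $\varnothing$-components (the nonempty ones being trivially zero), use linearity to see that the single condition $f_\varnothing(\eta_\varnothing(1_\kk)) = \eta'_\varnothing(1_\kk)$ determines the unit identity, and use the fact that $\eta_\varnothing$ and $\varepsilon_\varnothing$ are inverse isomorphisms to get the counit identity. Your version just spells out the one-dimensionality argument more explicitly than the paper does.
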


\begin{proof}
The given condition holds if $f\circ \eta = \eta'$ and, by linearity, only if  $\varepsilon =  \varepsilon'\circ f$. Now observe that since $\eta_\varnothing$ and $\epsilon_\varnothing$ are inverses, $f\circ \eta = \eta'$ holds if and only if $\varepsilon =  \varepsilon'\circ f$.
\end{proof}

Let $\P$ be a set species and recall the definition of its linearization $\kk\P$ from the introduction. If $\P[\varnothing] $ consists of exactly one element (which we will denote $1_\P$), then we view the  $\kk\P$
as a connected species  with respect to the unit $\eta : \one \to \kk\P$ and counit $\varepsilon : \kk\P \to \one$ whose $\varnothing$-components are the  $\kk$-linear maps with 
$\eta_\varnothing : 1_\kk \mapsto 1_\P$ and $ \varepsilon_\varnothing : 1_\P \mapsto 1_\kk$.
If $\p$ and $\p'$ are connected species, then the Cauchy product $\p \cdot \p'$ is also connected; its unit and counit are the morphisms
$\one \to \p\cdot \p'$ and $ \p\cdot \p'\to \one$ whose $\varnothing$-components are 
 the respective compositions
\[\label{cauchy-conn} \kk \xrightarrow{\sim} \kk\otimes \kk \xrightarrow{\eta_\varnothing\otimes\eta_\varnothing'} \p[\varnothing]\otimes\p'[\varnothing]
\qquand
\p[\varnothing]\otimes\p'[\varnothing]\xrightarrow{\varepsilon_\varnothing \otimes \varepsilon_\varnothing'} \kk\otimes\kk \xrightarrow{\sim}\kk\]
where as usual $\eta$ and $\varepsilon$ (respectively $\eta'$ and $\varepsilon'$) denote 
the unit and counit of $\p$ (respectively $\p'$).
With respect to these conventions, the Cauchy product of two morphisms of connected species is again a morphism of connected species (i.e., it preserves the unit and counit).
The Cauchy product, in particular, defines a bifunctor on connected species,
and makes this category symmetric monoidal (with unit object the connected species $\one$); see \cite[Section 8.1.2]{species}.

\subsection{Monoidal structures on connected species} %Monoids, comonoids, and Hopf monoids}
\label{monoidal-struct-sect}

In any symmetric monoidal category there are standard notions of (commutative) monoids, (cocommutative) comonoids, bimonoids, and Hopf monoids.  We review explicitly the details of these constructions here, in the particular category of connected species with the symmetric monoidal structure given by the Cauchy product. %  (since we have been less explicit about the symmetric monoidal structure on $\Sp^\circ$).
Our definitions are taken from \cite[Sections 8.2 and 8.3]{species}, where they appear in slightly greater generality for vector species which are not necessarily connected. These details supplement the  informal definitions in the introduction.

To begin, we introduce the following notation for morphisms to and from Cauchy products. Throughout,  $\p$, $\q$, and $\r$ are vector species. Given morphisms  $f : \p \cdot \q \to \r$ and $g:\p \to \q \cdot \r$ and disjoint finite sets $S$, $T$, we let 
\[ f_{S,T}:  \p[S]\otimes \q[T] \to \r[S\sqcup T]
\qquand
 g_{S,T} : \p[S\sqcup T ] \to \q[S]\otimes \r[T]
\] denote the $\kk$-linear  maps 
 given by respectively pre- and post-composing the $(S\sqcup T)$-components of $f$ and $g$ 
 with the canonical inclusion and  projection 
 \[  \p[S]\otimes \q[T] \to (\p\cdot \q)[S\sqcup T]
\qquand
(\q\cdot \r)[S\sqcup T] \to \q[S]\otimes \r[T]
.\] 
Of course, it suffices to specify the system of maps $(f_{S,T})$ and $(g_{S,T})$ to uniquely determine the morphisms $f$ and $g$.

%Our first use of this notation comes in the following definition.

The following  definitions clarify the notions of associativity and commutativity for morphisms to and from Cauchy products.
 In the commutative diagrams in these definitions and elsewhere, whenever we write 
\[
\begin{diagram}
V_1 \otimes V_2 \otimes \dots \otimes V_k && \rTo^{\sim} && V_{\sigma(i)} \otimes V_{\sigma(2)} \otimes \dots \otimes V_{\sigma(k)}
  \end{diagram}
  \]
  where each $V_i$ is a $\kk$-vector
  space and $\sigma \in S_k$, we mean the obvious isomorphism given by linearly extending the map on tensors 
$v_1 \otimes v_2 \otimes \dots \otimes v_k  \mapsto v_{\sigma(i)} \otimes v_{\sigma(2)} \otimes \dots \otimes v_{\sigma(k)}$.

\begin{definition}\label{assoc-def}
A morphism $\nabla : \p \cdot \p \to \p$ is \emph{associative} (respectively, \emph{commutative}) if the diagram on the left (respectively, right)
\[
\begin{diagram}
  \p[R]\otimes \p[S]\otimes \p[T] &&& \rTo^{\nabla_{R, S} \otimes \id} &&& \p[R\sqcup S]\otimes \p[T] \\
\dTo^{\id\otimes \nabla_{S,T}}&  &&&&& \dTo_{\nabla_{R\sqcup S,T}} \\
\p[R] \otimes \p[S\sqcup T] & &&\rTo^{\nabla_{R,S\sqcup T}}  &&& \p[ R\sqcup S \sqcup T] \end{diagram}
\qquad
\begin{diagram}
  \p[S]\otimes \p[T] && \rTo^{\sim} && \p[T]\otimes \p[S] \\
& \rdTo_{\nabla_{S,T}} & &\ldTo_{\nabla_{T,S}} \\
& & \p[S\sqcup T] 
\end{diagram}
 \]
 commutes for all pairwise disjoint finite sets $R$, $S$, $T$.
 \end{definition}

%The next definition is dual to Definition \ref{assoc-def}.
\begin{definition}\label{coassoc-def}
A morphism $\Delta : \p  \to \p \cdot \p$ is \emph{coassociative} (respectively, \emph{cocommutative}) if the diagram on the left (respectively, right)
 \[
\begin{diagram}
  \p[R]\otimes \p[S]\otimes \p[T] &&& \lTo^{\Delta_{R, S} \otimes \id} &&& \p[R\sqcup S]\otimes \p[T] \\
\uTo^{\id\otimes \Delta_{S,T}}&  &&&&& \uTo_{\Delta_{R\sqcup S,T}} \\
\p[R] \otimes \p[S\sqcup T] & &&\lTo^{\Delta_{R,S\sqcup T}}  &&& \p[ R\sqcup S \sqcup T] \end{diagram}
\qquad
\begin{diagram}
  \p[S]\otimes \p[T] && \rTo^{\sim} && \p[T]\otimes \p[S] \\
& \luTo_{\Delta_{S,T}} & &\ruTo_{\Delta_{T,S}} \\
& & \p[S\sqcup T] 
\end{diagram}
 \]
 commutes for all pairwise disjoint finite sets $R$, $S$, $T$. 
 
 \end{definition}

%\begin{notation}
Let $\nabla : \p \cdot \p \to \p$ and  $\Delta : \p\to\p \cdot \p$ be morphisms of vector species and suppose we have a decomposition  $I = S_1 \sqcup S_2 \sqcup \cdots \sqcup S_k$ into pairwise disjoint finite sets.  There are in general $(k-1)!$ distinct $\kk$-linear maps 
\be\label{nablam}   \p[S_1] \otimes \p[S_2] \otimes \cdots \p[S_k] \to \p[I]\ee
which can be formed by composing maps of the form $\id \otimes \cdots  \otimes \nabla_{X,Y}\otimes \cdots \otimes \id$. When $k=1$ and $k=2$, the unique maps of this type   are  the identity  on $\p[I]$ and $\nabla_{S_1,S_2}$.
Likewise, there are in general
$(k-1)!$ distinct $\kk$-linear maps 
\be\label{deltam}   \p[I] \to \p[S_1] \otimes \p[S_2] \otimes \cdots \p[S_k] \ee
which can be formed by composing maps of the form $\id \otimes \cdots  \otimes \Delta_{X,Y}\otimes \cdots \otimes \id$. 
When $\nabla$
is
associative,
the compositions defining 
\eqref{nablam} all coincide, and 
when $\Delta$ is coassociative, the compositions defining \eqref{deltam} all coincide.
In these respective cases, we denote the resulting maps \eqref{nablam} and \eqref{deltam} 
by
\[
\nabla_{S_1,S_2,\dots,S_k}
\qquand
\Delta_{S_1,S_2,\dots,S_k}.\]
When $k=1$   both of these maps are just the identity on $\p[I]$.
%\end{notation}
%When the sets $S_1,\dots,S_k$ are clear from context, we sometimes denote the map $ \nabla_{S_1,S_2,\dots,S_k}$ by just $\nabla$.

We now define the notion of Hopf compatibility of morphisms
mentioned in the introduction.

\begin{definition}\label{hopf-compat-def} %Let $\p$ be any vector species.
Two morphisms $\nabla : \p \cdot \p \to \p$ and $\Delta : \p \to \p\cdot \p$ are \emph{Hopf compatible} if 
for any
two disjoint decompositions $I = R\sqcup R' = S\sqcup S'$ of the same finite set,
   the diagram
\[%\label{hopf-diagram}
\begin{diagram}
\p[R] \otimes \p[R'] &\rTo^{\nabla_{R,R'}}& & \p[I] &&\rTo^{\Delta_{S,S'}} & \p[S]\otimes\p[S'] \\  
\dTo^{\Delta_{A,B}\otimes \Delta_{A',B'}}&&  &&  && \uTo_{\nabla_{A,A'}\otimes \nabla_{B,B'}} \\  
\p[A] \otimes \p[B] \otimes \p[A'] \otimes \p[B']&&& \rTo^{\sim} &&&\p[A] \otimes \p[A'] \otimes \p[B] \otimes \p[B'] 
 \end{diagram}
 \]
 commutes, where 
$A = R \cap S 
$ and
$B = R\cap S'
$ and
$A' = R'\cap S
$ and
$B' = R' \cap S'.
$
%The bottom horizontal map in the above diagram is the obvious isomorphism given tensors by $a\otimes b \otimes a' \otimes b' \mapsto a \otimes a' \otimes b \otimes b'$.
\end{definition}

Using these definitions, we may  succinctly describe the notions of monoids, comonoids, and Hopf monoids  in the symmetric monoidal category of connected species (with the Cauchy product). We refer to these particular constructions as \emph{connected monoids}, \emph{connected comonoids}, and \emph{connected Hopf monoids}.
\begin{itemize}

\item A \emph{connected monoid} is a pair $(\p,\nabla)$ where $\p$ is a connected species (with unit $\eta$) and $\nabla : \p\cdot \p \to \p$  is an associative morphism (called the product)
which is {unit} compatible in the sense that if $1_\p = \eta_\varnothing(1_\kk)$ then
\[ \nabla_{\varnothing,S}(1_\p \otimes \lambda) = \nabla_{S,\varnothing}(\lambda \otimes 1_\p) =   \lambda\]
for all $\lambda \in \p[S]$ and
% the
% diagrams 
%\[
%\begin{diagram}
%  \kk \otimes \p[S] &&& \rTo^{\eta_\varnothing \otimes \id} &&& \p[\varnothing] \otimes \p[S] \\
%   & \rdTo(6,2)_{\cong} & &&&&\dTo_{\nabla_{\varnothing,S}} \\
%   & &&&&& \p[S]
%  \end{diagram}
%  \qquad
%  \begin{diagram}
% \p[S]\otimes \p[\varnothing]   &&& \lTo^{\id \otimes \eta_\varnothing} &&&   \p[S]  \otimes \kk \\
%\dTo^{\nabla_{S,\varnothing}}   &  & &&& \ldTo(6,2)_{\cong}& \\
%\p[S]
%  \end{diagram}
% \] 
% commute 
 finite sets $S$. 
 %(Note that the latter condition implies that $\nabla$ is a morphism of connected species, even if we do not assume this beforehand.) 
 A morphism  $ (\p,\nabla) \to (\p',\nabla')$ between  connected monoids is a morphism   $f : \p \to \p'$ of connected species  which commutes with the product morphisms in the sense that 
$f_{S\sqcup T}\circ \nabla_{S,T} = \nabla_{S,T}' \circ (f_{S} \otimes f_{T})$ for all  disjoint finite sets $S$, $T$.  
 A connected monoid if \emph{commutative} is its product  is commutative.

\item A \emph{connected comonoid} is a pair $(\p,\Delta)$ where $\p$ is a connected species (with counit $\varepsilon$) and $\Delta :\p\to \p\cdot \p $ is a coassociative morphism (called the coproduct)
which is {counit} compatible in the sense that if $1_\p = \eta_\varnothing(1_\kk)$ then
\[ \Delta_{\varnothing,S}(\lambda) = 1_\p \otimes \lambda \qquand \Delta_{S,\varnothing}(\lambda) =   \lambda\otimes 1_\p\]
%the
% diagrams 
%\[
%\begin{diagram}
%  \kk \otimes \p[S] &&& \lTo^{\varepsilon_\varnothing \otimes \id} &&& \p[\varnothing] \otimes \p[S] \\
%   & \luTo(6,2)_{\cong} & &&&&\uTo_{\Delta_{\varnothing,S}} \\
%   & &&&&& \p[S]
%  \end{diagram}
%  \qquad
%  \begin{diagram}
% \p[S]\otimes \p[\varnothing]   &&& \rTo^{\id \otimes \varepsilon_\varnothing} &&&   \p[S]  \otimes \kk \\
%\uTo^{\Delta_{S,\varnothing}}   &  & &&& \ruTo(6,2)_{\cong}& \\
%\p[S]
%  \end{diagram}
% \] 
% commute 
for all $\lambda \in \p[S]$ and finite sets $S$.  
 A morphism  $ (\p,\Delta) \to (\p',\Delta')$ between  connected comonoids is a morphism  $f : \p \to \p'$  of connected species which commutes with the coproduct morphisms in the sense that 
$(f_S\otimes f_T)\circ \Delta_{S,T} = \Delta_{S,T}' \circ f_{S\sqcup T}$ for all  disjoint finite sets $S$, $T$. 
A connected comonoid if \emph{cocommutative} is its coproduct  is cocommutative.

%Finally, we arrive at the definition of Hopf monoids if the symmetric monoidal category of connected species:
%\begin{itemize}
\item A \emph{connected Hopf monoid} is a triple $(\p,\nabla,\Delta$) where $\p$ is a connected species and  $\nabla : \p\cdot \p \to \p$ and $\Delta : \p \to \p\cdot \p$ are  Hopf compatible morphisms such that $(\p,\nabla)$ is a connected monoid and $(\p,\Delta)$ is a connected comonoid. A morphism of connected Hopf monoids is a morphism of connected species which commutes with products and coproducts. 
A connected Hopf monoid is \emph{commutative} or \emph{cocommutative} if it is commutative as a monoid or cocommutative as a comonoid.
 \end{itemize}
 
 Concerning these definitions we make two remarks. First, 
\emph{a prior} we only  require that $\nabla$ and $\Delta$ are  morphisms in the category of vector species, since the unit and counit compatibility conditions automatically imply that these morphisms are morphisms of connected species.
Second, we observe
that
the definition of a connected Hopf monoid given here
 is also that of a bimonoid in the symmetric monoidal category of connected species,  since in this category the notions of Hopf monoids and bimonoids coincide (see \cite[Section 8.4.1]{species}).
 In general, a Hopf monoid in a symmetric monoidal category is a particular type of bimonoid$-$namely, one such that the identity morphism in the corresponding convolution monoid has an inverse (referred to as the \emph{antipode}); see \cite[Section 1.2.5]{species} for the details of this general definition.

If $\h = (\p,\nabla,\Delta)$ is a connected Hopf monoid then its  antipode is the morphism $\S : \h \to\h$ given by the following formula  \cite[Proposition 8.13]{species}: the $\varnothing$-component $\S_\varnothing$ is the identity map on $\p[\varnothing]$, while the component $\S_I$ for nonempty sets $I$ is the map
\be\label{take} \S_I = \sum (-1)^k \cdot \nabla_{S_1,\dots,S_k} \circ \Delta_{S_1,\dots,S_k}\ee
where the sum is over all ordered decompositions of $I= S_1\sqcup \dots \sqcup S_k$ into nonempty subsets.
The formula \eqref{take}, which \cite{species} refers to as \emph{Takeuchi's antipode formula}, often involves numerous cancellations, and it is sometimes an interesting problem to find a simpler form of this alternating sum for particular Hopf monoids; see, e.g., \cite{BBT}.

Before ending this section we note one property of the product and coproduct of a connected Hopf monoid which will be of use later.

\begin{lemma}\label{injsurj-fact} Suppose $(\p,\nabla,\Delta)$ is a connected Hopf monoid. Then $\Delta_{S,T} \circ \nabla_{S,T}$ is the identity map on $\p[S]\otimes\p[T]$ for any disjoint sets $S$, $T$; in particular, $\nabla_{S,T}$ is  injective and $\Delta_{S,T}$ is  surjective.
\end{lemma}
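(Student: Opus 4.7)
The plan is to deduce this identity directly from Hopf compatibility combined with the unit/counit compatibility axioms for $\nabla$ and $\Delta$.

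First, I would invoke Definition \ref{hopf-compat-def} with the two decompositions of $I = S \sqcup T$ chosen to be identical, namely $R = S$, $R' = T$ (for the product) and also $S$, $T$ (for the coproduct). Then in the notation of the definition the four pairwise intersections become $A = S \cap S = S$, $B = S \cap T = \varnothing$, $A' = T \cap S = \varnothing$, and $B' = T \cap T = T$. Reading off the Hopf-compatibility square then yields the factorization
\[
\Delta_{S,T}\circ \nabla_{S,T} \;=\; (\nabla_{S,\varnothing}\otimes \nabla_{\varnothing,T})\circ \mathrm{swap} \circ (\Delta_{S,\varnothing}\otimes \Delta_{\varnothing,T}),
\]
where $\mathrm{swap}$ denotes the canonical transposition of the middle two tensor factors in $\p[S]\otimes \p[\varnothing]\otimes\p[\varnothing]\otimes \p[T]$.

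Next I would simplify each boundary piece using the unit and counit compatibility axioms. Writing $1_\p = \eta_\varnothing(1_\kk)$, the counit compatibility of $\Delta$ gives $\Delta_{S,\varnothing}(\lambda) = \lambda \otimes 1_\p$ and $\Delta_{\varnothing,T}(\mu) = 1_\p \otimes \mu$, so for an elementary tensor $\lambda \otimes \mu \in \p[S]\otimes \p[T]$ the preceding formula becomes
\[
\Delta_{S,T}\circ\nabla_{S,T}(\lambda\otimes\mu) \;=\; (\nabla_{S,\varnothing}\otimes \nabla_{\varnothing,T})\bigl(\lambda\otimes 1_\p\otimes 1_\p\otimes \mu\bigr),
\]
since the $\mathrm{swap}$ of the two copies of $1_\p$ has no effect. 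Applying the unit compatibility of $\nabla$, which gives $\nabla_{S,\varnothing}(\lambda\otimes 1_\p)=\lambda$ and $\nabla_{\varnothing,T}(1_\p\otimes\mu)=\mu$, collapses the right-hand side to $\lambda\otimes\mu$. By linearity, $\Delta_{S,T}\circ\nabla_{S,T}=\id_{\p[S]\otimes\p[T]}$.

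Finally, the injectivity of $\nabla_{S,T}$ and surjectivity of $\Delta_{S,T}$ are immediate formal consequences of having a one-sided inverse. The only mildly delicate point$-$and arguably the ``main obstacle'' in a totally rigorous write-up$-$is correctly tracking the $\mathrm{swap}$ appearing in Hopf compatibility to make sure one really does recover a genuine identity (not merely the identity up to some symmetry); but because two of the four tensor factors collapse to $1_\p$ after applying the unit/counit axioms, the transposition acts trivially and no sign or permutation appears.
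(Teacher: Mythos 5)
Your argument is correct and is essentially identical to the paper's own proof: the paper also specializes Definition \ref{hopf-compat-def} to the case $R=S$, $R'=T$ (so that $A=S$, $B'=T$, $A'=B=\varnothing$), then collapses the four-factor tensor using unit/counit compatibility, noting that the middle swap acts trivially on the two copies of $1_\p$. Nothing to add.
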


\begin{proof}
Let $S$ and $S'$ be disjoint finite sets
and   take $R=S$ and $R'=S'$ in Definition \ref{hopf-compat-def}, so that $A = S$ and $B' = S'$ and $A'=B=\varnothing$. Write $1_\p$ for the image of $1_\kk \in \kk$ under the $\varnothing$-component of the unit of $\p$. 
If $h$, $g$, and $f$ denote the left, bottom middle, and right arrows in the diagram in Definition \ref{hopf-compat-def}, then $\Delta_{S,S'} \circ \nabla_{S,S'} = f\circ g \circ h$.
On the other hand,
the unit and counit compatibility of $\nabla$ and $\Delta$  implies that
if
$\lambda \in \p[S]$ and $\lambda' \in \p[S']$ then
\[ f\circ g \circ h(\lambda\otimes \lambda') = f\circ g(\lambda\otimes 1_\p \otimes 1_\p \otimes \lambda')
= f(  \lambda\otimes 1_\p \otimes 1_\p \otimes \lambda' ) = \lambda\otimes \lambda'.\] 
By linearity  $\Delta_{S,S'} \circ \nabla_{S,S'} = f\circ g \circ h $ is therefore the identity map
on $\p[S]\otimes \p[S']$.
\end{proof}

 \subsection{Duality for vector species}
 \label{dual-sect}
 
% In this final preliminary section
Here we briefly review the notion of duality for vector species and connected Hopf monoids, as detailed in \cite[Section 8.6]{species}.
Given a $\kk$-vector space $V$, let $V^*$ denote the $\kk$-vector space of $\kk$-linear maps $V \to \kk$, and given a $\kk$-linear map $f : V \to W$,  let  $f^* : W^* \to V^*$ denote the $\kk$-linear map with the formula $f^*(\varphi )= \varphi \circ f$.
One extends this notation to species in the following way.

 \begin{definition}
 The \emph{dual} of a vector species $\p$ is the vector species $\p^*$ with
\[
\p^*[S] = (\p[S])^*\qquand \p^*[\sigma] = (\p[\sigma^{-1}])^*
\]
 for finite sets $S$
 and bijections $\sigma$ between finite sets.
 In turn, the \emph{dual} of a morphism $f : \p \to \q$ of vector species is the natural transformation $f^* : \q^* \to \p^*$ with $S$-component $(f^*)_S = (f_S)^*$.
\end{definition} 

These definitions make $*$ into a contravariant functor from the category of vector species to itself.
The species $\one$ is canonically isomorphic to its dual $\one^*$ via the natural transformation whose $\varnothing$-component is the isomorphism $\KK \xrightarrow{\sim} \KK^*$ sending $c \in \KK$ to the unique linear map $\KK \to \KK$ with $1_\kk \mapsto c$.
This allows us to view the dual of a connected species as also connected, in the sense of this definition: 

\begin{definition} The \emph{dual} of a connected species $\p$  (with unit $\eta$ and counit $\varepsilon$) is the connected species given by $\p^*$ with respect to the unit  and counit
$ \one \xrightarrow {\sim} \one^* \xrightarrow{\varepsilon^*} \p^* $ and $
\p^* \xrightarrow{ \eta^*} \one^* \xrightarrow{\sim} \one. $
\end{definition}
%We define the \emph{dual} of a connected species $\p$ to be this structure on $\p^*$.

With respect to this definition, the dual of a morphism $\p \to \q$ of connected species is automatically a morphism $\q^* \to \p^*$ of connected species.
The canonical vector space isomorphisms $(V\oplus W)^* \cong V^* \oplus W^*$ and $(V\otimes W)^* \cong V^* \otimes W^*$ lead to a canonical isomorphism of vector species $(\p\cdot \q)^* \cong \p^* \cdot \q^*.$
With respect to this identification, the following statements hold:
\begin{itemize}
%\item If 
%$\p$ is a connected species (with unit $\eta$ and counit $\varepsilon$) then its dual $\p^*$ is connected with respect to the unit  
%$ \one \xrightarrow {\sim} \one^* \xrightarrow{\varepsilon^*} \p^* $ and counit  $ 
%\p^* \xrightarrow{ \eta^*} \one^* \xrightarrow{\sim} \one. $

\item If $\p$ is a connected monoid (with product $\nabla$) then the connected species $\p^*$ is a connected comonoid with respect to the coproduct 
%\be\label{nabla*}
$\p^* \xrightarrow{\nabla^*} (\p\cdot \p)^* \xrightarrow{\sim} \p^*\cdot \p^*.
$
%\ee %Moreover, if $\p$ is commutative then $\p^*$ is cocommutative.

\item If $\p$ is a connected comonoid (with coproduct $\Delta$) then the connected species $\p^*$ is a connected monoid with respect to the product 
%\be\label{delta*}
$
\p^*\cdot \p^*   \xrightarrow{\sim}  (\p\cdot \p)^*  \xrightarrow{\Delta^*}  \p^* .
$
%\ee %Moreover, if $\p$ is cocommutative then $\p^*$ is commutative.

\item If $\p$ is a connected Hopf monoid (with product $\nabla$ and coproduct $\Delta$), then the connected species $\p^*$ is  a connected Hopf monoid with respect to the product  and coproduct just defined.
\end{itemize}
We define the \emph{dual} of a connected monoid, comonoid, or Hopf monoid  respectively as these connected comonoid, monoid, or Hopf monoid structures on $\p^*$. The dual of a commutative  monoid is a cocommutative comonoid and vice versa.
A connected Hopf monoid which is isomorphic to its dual is \emph{self-dual}. 

\subsection{Free commutative Hopf monoids}\label{free-sect}

If $\mathfrak q$ is a graded $\kk$-vector space with $\mathfrak q_0 = 0$, then the symmetric algebra $\mathrm{Sym}(\mathfrak q)$  has the structure of a graded Hopf algebra  which is connected, commutative, and cocommutative; see \cite[Example 1.18]{ReinerNotes}. Conversely,  the Cartier-Milnor-Moore theorem implies that every such graded Hopf algebra  arises in this way as the symmetric algebra generated by its subspace of primitive elements; see \cite[Remark 3.8.3]{Cartier}.

We review here the species analogue of these statements, which can be found in  \cite[Section 11.3]{species} and also \cite[Section 7.2]{species2}.
%This provides us with a construction generating all connected Hopf monoids which are commutative and cocommutative, which will figure prominently in Section \ref{BSD-sect}.
To begin, 
we say that a vector species $\q$ is \emph{positive} if $\q[\varnothing] = 0$, and write $\Sp_+$ to denote the full subcategory of positive vector species.
There is an adjoint equivalence 
%(given by $\q \mapsto \one + \q$) 
between this category and the category of connected species $\Sp^\circ$ \cite[Proposition 8.43]{species}, and so working with positive rather than connected species is essentially a notational convention.
The species analogue of the symmetric algebra on a graded vector space is given by a functor 
\be\label{s2-def} \cS : \Sp_+ \to \cocoHopf(\Sp^\circ)\ee
where $\cocoHopf(\Sp^\circ)$ denotes the full subcategory of connected Hopf monoids which are commutative and cocommutative.
The definition of this functor from  \cite[Section 11.3.2]{species} goes as follows.

Most concisely, given a positive species $\q$ and a morphism $f : \q \to \q'$ of positive species,
we may define 
\[\cS(\q) = \bfE \circ \q
\qquand \cS(f) = \id_{\textbf{E}} \circ f\]
where $\circ$ denotes the \emph{substitution product} described in \cite[Definition 8.5]{species}.
More explicitly, 
$\cS(\q)$ is  the vector species such that if $I$ is a finite set then
\[ \cS(\q)[I] = \bigoplus_{X \in \Pi[I]} \q(X)
\qquad\text{where}
\qquad
 \q(X) \omdef= \bigotimes_{B \in X} \q[B].\]
The product on the right is the \emph{unordered tensor product}  over the blocks of the set partition $X$; see \cite[Example 1.30]{species} for more information on such unordered products.
In turn, if $\sigma : I \to I'$ is a bijection between finite sets then
$\cS(\q)[\sigma]$ is given by the direct sum over all partitions $X \in \Pi[I]$ of the unordered tensor product of maps $\bigotimes_{B \in X} \q[\sigma|_B]$.
%
%and with $\cS(\q)[\sigma]$ given by the direct sum over all partitions $X \in \Pi[I]$ of the maps \eqref{XX'-eq} for bijections $\sigma : I \to I'$ between finite sets. The vector species $\cS(\q)$ is automatically connected    since $\cS(\q)[\varnothing] = \kk$.

If $X =\varnothing$ then we define $\q(X) = \kk$, so the species $\cS(\q)$ is automatically connected.
We view $\cS(\q)$ as a connected Hopf monoid with respect to the product and coproduct given by the morphisms  \[\nabla : \cS(\q)\cdot \cS(\q) \to \cS(\q) \qquand \Delta : \cS(\q) \to \cS(\q) \cdot \cS(\q)\] 
 whose components $\nabla_{S,T}$ and $\Delta_{S,T}$ for  disjoint finite sets $S$, $T$ are the  direct sums over all partitions $X\in \Pi[S]$ and $Y \in \Pi[ T]$ of the inverse  $\kk$-linear maps 
\[ \q(X) \otimes \q(Y) \xrightarrow{\sim} \q(X\sqcup Y)
\qquand
\q(X\sqcup Y)  \xrightarrow{\sim} \q(X) \otimes \q(Y).
\]
%defined in the diagram \eqref{unord-diagram}. 
(Observe that  $\Delta_{S,T}$ thus acts as the zero map on $\q(Z)$ for all partitions $Z \in\Pi[ S\sqcup T]$ with at least one block which is not a subset of $S$ or of $T$.)
The triple $(\cS(\q),\nabla,\Delta)$ forms a connected Hopf monoid which is commutative and cocommutative,  which one calls the \emph{free commutative Hopf monoid} on $\q$.
We will usually write $\cS(\q)$ in place of the triple $(\cS(\q),\nabla,\Delta)$ when it is clear that we mean to indicate a connected Hopf monoid. However, we also write $\cS(\q)$ to refer to the underlying connected species of this Hopf monoid.

%To make $\cS$ into a functor we should also define $\cS(f)$ for a morphism $f : \q \to \q'$ between positive species. We omit these details since we can work without them, and say only that if $\p \circ \q$ denotes the \emph{substitution product} of a vector species with a positive species described in \cite[Definition 8.5]{species}, then we may alternately define $\cS(\q) = \bfE \circ \q$, and in this case the image of a morphism under $\cS$ is just $\cS(f) = \id_{\textbf{E}} \circ f$.

\begin{example}
Let $\bfX$ be the vector species with $\bfX[S] = \kk$ when $|S| = 1$ and $\bfX[S] = 0$ otherwise. Define $\bfX[\sigma]$ to be the identity map for any bijection $\sigma$ between finite sets.
One checks that $\cS(\bfX) \cong \bfE$ as defined in \eqref{one-def}.
If $\bfE_+$ denotes the maximal subspecies of $\bfE$ with $\bfE_+[\varnothing]=0$, then
one can show that $\cS(\bfE_+) \cong \bfPi$ as defined in Example \ref{Pi-ex}.
\end{example}

The antipode $\S$ of the connected Hopf monoid $\cS(\q)$ has the following simple formula: $\S_I$ acts on the subspace $\q(X) \subset \cS(\q)[I]$ for each set partition $X \in \Pi[I]$ as the scalar map $(-1)^k $, where $k$ is the number of blocks of $X$. This follows as a special case of \cite[Theorem 11.40]{species}.

\begin{remark}
The symmetric group $S_n$ acts linearly on the vector space $\cS(\q)[n]$ and makes this space a $\kk S_n$-module.
It appears to be a difficult problem to describe the irreducible decomposition of this module, or at least of the natural submodules \[M_\lambda \omdef= \bigoplus_{\lambda(X)=\lambda} \q(X)\] for integer partitions $\lambda$ of $n$. (Recall that an \emph{integer partition} of $n$ is a weakly decreasing sequence $\lambda = (\lambda_1,\lambda_2,\dots,\lambda_k)$ of positive integers which sum to $n$. Given a set partition $X$, we write $\lambda(X)$ for its \emph{type}, that is, the integer partition formed by the sizes of its blocks.)
In general, we may describe $M_\lambda$ as the following induced module.
Fix an integer partition $\lambda$ of $n$, and define $\q[\lambda]$ as the external tensor product $\q[\lambda_1]\otimes\q[\lambda_2]\cdots\otimes \q[\lambda_k]$. This is a module of the Young subgroup \[S_\lambda  = S_{\lambda_1}\times S_{\lambda_2}\times \dots\times S_{\lambda_k}.\] 
Write $N_\lambda$ for the normalizer of $S_\lambda$ in $S_n$. 
If $\lambda$ has $e_i$ parts of size $i$, then $N_\lambda$ is isomorphic to a semidirect product \[(S_{e_1}\times S_{e_2}\times \dots \times S_{e_n} ) \ltimes S_\lambda.\] This semidirect product structure allows us to view $\q[\lambda]$ to an $N_\lambda$-module in a standard way, and $M_\lambda$ is isomorphic to the $S_n$-module induced from this $N_\lambda$-module. 
%We then have an isomorphism of $S_n$-modules 
%$ M_\lambda \cong \Ind_{N_\lambda}^{S_n} \( {\q[\lambda]}\).$
%
When $\q = \bfE_+$, the extended modules ${\q[\lambda]}$ are all trivial representations, but even in this essentially simplest case of interest the decomposition of $M_\lambda$ is not known. It is a longstanding open problem to prove that $M_{(a,a,\dots,a)}$ is isomorphic to a submodule of $M_{(b,b,\dots,b)}$ when $n = ab$ and $a\leq b$, viewing both of these as submodules of $\cS(\bfE_+)[n]$. This statement is known as Foulkes' Conjecture; see \cite{Wildon2} for a recent overview.
%
%
%\begin{proposition} If $I$ is an $n$-element set then there is an isomorphism of $S_n$-modules
%\[ \cS(\q)[I] \cong \bigoplus_{\lambda\vdash n} \wt \Ind_{S_\lambda}^{S_n}( \p[\lambda]).\]
%where $\p[\lambda] = \p[\lambda_1]\otimes \p[\lambda_2]\otimes \dots $ and where the twisted induced is defined as the usual induction from  following subgroup $G_\lambda$: if $\lambda = 1^{e_1} 2^{e_2}\dots n^{e_n}$ then \[G_\lambda = \prod_{i=1}^n S_{e_i} \ltimes (\underbrace{S_i\times \dots \times S_i}_{e_i\text{ times}}).\] (It remains to say what is being induced: this is the ``natural'' extension of the module $\p[\lambda]$ of the Young subgroup $S_\lambda$ to $G_\lambda$.)
%\end{proposition}
%
%\begin{proof}
%(Straightforward but less fun that the previous proof?)
%\end{proof}

\end{remark}

If $I$ is a nonempty finite set and $X$ is the unique partition of $I$ with one block, then $\q(X) = \q[I]$, and via this equality we  view 
 \be\label{inclu-eq}\q \subset \cS(\q)\ee
  as a subspecies.
%   We think of  elements of the  species $\cS(\q)$ as ``$\q$-labeled set partitions,'' since if one chooses a basis for each vector space $\q[B]$, then $\cS(\q)[I]$ has a basis which can be identified with partitions of $I$ each of whose blocks $B$ is labeled by some  basis element of $\q[B]$.
We now arrive at the species analogue of the statement opening this section. This theorem follows as a special case of \cite[Theorem 120]{species2}, which is due originally to Stover \cite{stover}.

\begin{theorem}[Stover \cite{stover}] \label{coco-thm}
Let $\h = (\p,\nabla,\Delta)$ be a connected Hopf monoid and write $\q = \cP(\h)$ for its subspecies of primitive elements, as defined in the introduction. If 
$\h$ is commutative and cocommutative, then there exists a unique isomorphism
$\cS(\q) \xrightarrow{\sim} \h$ of connected Hopf monoids 
making the diagram 
\be\label{coco-diagram}
\begin{diagram}
\cS(\q) && \rTo^{\sim} && \p \\
& \luTo &&\ruTo\\
&& \q
 \end{diagram}
 \ee
commute, where the diagonal arrows are the natural inclusions of vector species.
\end{theorem}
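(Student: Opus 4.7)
The plan is to produce the morphism $\varphi : \cS(\q) \to \h$ via a universal property and then show it is an isomorphism by induction on the cardinality of the underlying finite set, using commutativity, cocommutativity, and characteristic zero.

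First I would establish a universal property for $\cS(\q) = \bfE \circ \q$ as the free commutative Hopf monoid generated by $\q$ inside cocommutative targets: any morphism of positive vector species $g : \q \to \p'$ whose image lies in $\cP(\h')$, where $\h' = (\p', \nabla', \Delta')$ is a commutative connected Hopf monoid, extends uniquely to a morphism of connected monoids $\tilde g : \cS(\q) \to \h'$ that extends $g$ along the inclusion \eqref{inclu-eq}. Concretely, $\tilde g_I$ is determined on each summand $\q(X) = \bigotimes_{B\in X} \q[B]$ by the unordered product $\nabla'_{B_1,\dots,B_k}\circ (g_{B_1}\otimes\cdots\otimes g_{B_k})$, which is well-defined by commutativity of $\nabla'$. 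Applied to the inclusion $\iota : \q = \cP(\h) \hookrightarrow \p$, this yields a unique morphism of commutative connected monoids $\varphi : \cS(\q) \to \h$ making diagram \eqref{coco-diagram} commute.

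Next I would verify that $\varphi$ is compatible with the coproduct, which is where cocommutativity of $\h$ enters (together with the fact that $\iota(\q)$ consists of primitives). It suffices to check $\Delta_{S,T} \circ \varphi_{S\sqcup T} = (\varphi_S \otimes \varphi_T) \circ \Delta^{\cS(\q)}_{S,T}$ on each generating summand $\q(X)$; one expands $\Delta_{S,T}$ on a product $\nabla_{B_1,\dots,B_k}(p_1\otimes\cdots\otimes p_k)$ of primitives using Hopf compatibility (Definition \ref{hopf-compat-def}) and primitivity of each $p_i$, and matches this against the formula for $\Delta^{\cS(\q)}_{S,T}$, which was defined as projection onto partitions whose blocks respect the split $S\sqcup T$. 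Since in the connected setting bimonoids and Hopf monoids coincide, $\varphi$ is then automatically a morphism of Hopf monoids.

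The hard part, and where most of the work lies, is proving that $\varphi_I$ is a linear isomorphism for every finite set $I$. I would proceed by induction on $n = |I|$. The cases $n=0$ and $n=1$ are immediate: for $n=0$ both spaces are $\kk$ via the connected structure; for $n=1$ there are no nontrivial decompositions, hence $\p[I] = \cP(\h)[I] = \q[I] = \cS(\q)[I]$ and $\varphi_I$ is the identity. For $n \geq 2$, filter $\p[I]$ by the subspace $\cP(\h)[I]$ of primitives and its algebraic complement spanned by ``decomposables'' $\nabla_{S,T}(\p[S]\otimes\p[T])$ for nontrivial splits $I = S\sqcup T$; by construction $\varphi_I$ carries the decomposable part of $\cS(\q)[I]$ (the direct sum of $\q(X)$ over partitions $X$ with more than one block) into the decomposable part of $\p[I]$, and is tautologically the identity on the primitive summand $\q[I]$. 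Using Lemma \ref{injsurj-fact} ($\Delta_{S,T}\circ\nabla_{S,T} = \id$), together with cocommutativity, one shows that an element $x \in \p[I]$ is uniquely determined modulo primitives by the family $(\Delta_{S,T}(x))_{S,T}$; by the inductive hypothesis each $\Delta_{S,T}(x)$ lies in the image of $\varphi_S \otimes \varphi_T$, so $x$ differs from an element of $\mathrm{image}(\varphi_I)$ by a primitive, and primitives lie in the image tautologically. This gives surjectivity. For injectivity, the key obstacle is to rule out relations among products of primitives beyond those imposed by commutativity; here characteristic zero is essential, allowing one to use an averaging/symmetrization argument (a species-level Eulerian-type idempotent) to separate $\q(X)$ from $\q(X')$ for distinct partitions $X, X'$. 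Combining these yields bijectivity on every component, so $\varphi$ is an isomorphism, and its uniqueness follows from the universal property established in the first step.
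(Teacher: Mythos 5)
The paper does not prove Theorem \ref{coco-thm} from scratch: it cites Stover via \cite[Theorem 120]{species2}, and the self-contained argument it does give (Theorem \ref{contained-thm}) takes as \emph{hypothesis} the decomposition $\p = \one + \q + \r$, i.e.\ that $\p$ is generated by its primitives. Your construction of $\varphi$ by iterated products of primitives, the check of compatibility with the coproduct via Hopf compatibility and primitivity, and the direct-sum/injectivity analysis all track the proof of Theorem \ref{contained-thm} closely and are sound in outline. But the entire content of Theorem \ref{coco-thm} beyond that is the claim that commutativity and cocommutativity alone force $\p[I] = \q[I] + \r[I]$ with $\r[I] = \sum \im(\nabla_{S,T})$, and that is exactly where your argument breaks down.

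In your surjectivity step you assert that since each $\Delta_{S,T}(x)$ lies in the image of $\varphi_S \otimes \varphi_T$, the element $x$ ``differs from an element of $\im(\varphi_I)$ by a primitive.'' This is a non sequitur: by induction $\varphi_S \otimes \varphi_T$ is surjective onto all of $\p[S]\otimes\p[T]$, so the premise carries no information about $x$. What you actually need is a \emph{single} element $z \in \r[I] \subset \im(\varphi_I)$ with $\Delta_{S,T}(z) = \Delta_{S,T}(x)$ simultaneously for every decomposition $I = S \sqcup T$ into nonempty parts, so that $x - z$ is primitive; nothing in your argument produces such a $z$. Constructing it is the hard part of the Cartier--Milnor--Moore/Stover theorem, and in characteristic zero it is done with the first Eulerian (convolution-logarithm) idempotent, using commutativity and cocommutativity to show that its value on $x$ is primitive while the complementary part lands in $\r[I]$. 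You invoke an ``Eulerian-type idempotent'' only for injectivity, where it is not really needed (once one has $\p[I]=\q[I]+\r[I]$, the arguments of Theorem \ref{contained-thm} via Lemma \ref{injsurj-fact} and the kernel description of $\Delta_{S,T}$ give the direct sum $\bigoplus_X \nabla_X(\q)$), and omit it in the surjectivity step, where it is essential. As written, the proof does not go through.
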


%Recall from the introduction the definition of the subspecies  of primitive elements $\cP(\h)$ in a connected Hopf monoid $\h$.
Morphisms of connected Hopf monoids preserve primitive elements,
and we can therefore view $\cP$ as a functor 
$ \cP : \cocoHopf(\Sp^\circ) \to \Sp_+$, where $\cocoHopf(\Sp^\circ)$ denotes the full subcategory of connected Hopf monoids which are commutative and cocommutative,
by defining 
the  image under $\cP$ of a morphism of connected Hopf monoids to be the morphism between the corresponding species of primitive elements whose components are the obvious restrictions.
%We refer to \eqref{radj-eq} as the \emph{primitive element functor}.
The functor   $\cS$ is left adjoint to this functor by \cite[Proposition 11.45]{species}. Thus  the connected Hopf monoid 
$\cS(\q)$ is \emph{free} on $\q$ with respect to the primitive element functor  $\cP$ and the inclusion \eqref{inclu-eq}. This explains  why we call $\cS(\q)$ a free commutative Hopf monoid. 
(The term \emph{free Hopf monoid} refers in \cite[Chapter 11]{species} to a related but distinct construction.)
%For the standard definition of this notion of freedom, see \cite{n-cat}.
One consequence of this fact is that the primitive elements of $\cS(\q)$ are just $\q$; i.e., $\cP(\cS(\q)) =\q$ \cite[Corollary 11.46]{species}.

\section{Freely self-dual  Hopf monoids}
\label{BSD-sect}

This section presents our results on connected Hopf monoids
which are 
 freely self-dual, strongly self-dual, and linearly self-dual.

%Since we assume everywhere that $\kk$ has characteristic zero, it follows by the discussion in \cite[Section 11.6.4]{species}
%that the free commutative Hopf monoid $\cS(\q)$ is self-dual whenever $\q$ is a positive species which is  finite-dimensional.
%In this section we show that if a connected Hopf monoid $\h$ satisfies a strong form of self-duality, then a converse to this fact holds; in particular, it will follow that $\h$ is a free commutative Hopf monoid with some additional structure. 

%Throughout, we let $\P$ denote a fixed set species and we write $\p = \kk\P$ for its linearization. Note that $\p$ can be connected only if $\P[\varnothing]$ is a singleton set; in this case we view $\p$ as a connected species as described in Section \ref{bases-sect}.

%\newpage
\subsection{Some equivalent definitions}\label{fsd-sect}

Recall from the introduction that  a {basis} of a vector species $\p$ is a set species $\P$ such that $\p =\kk\P$. 
A \emph{basis} for 
  a connected species $\p$ (with unit $\eta$ and counit $\varepsilon$), is a set subspecies $\P \subset\p$ such that $\P[S]$ is  always a basis for the vector space $\p[S]$ and such that the maps $\eta_\varnothing$ and $\varepsilon_\varnothing$  restrict to bijections $\{1_\kk\} \to \P[\varnothing]$ and $\P[\varnothing] \to \{1 _\kk\}$.
The first condition means that $\p=\kk\P$ as vector species, while the second condition ensures that this equality holds in the sense of connected species.

%The containment operator $\subset$ defines a partial order on the subspecies of a given set species . . . which forms a lattice.

% For connected species, it is useful to  require a basis to satisfy an additional condition concerning the unit and counit. It is easy to check, however, that whenever a connected species has a basis in the sense of Definition \ref{basis1-def}, it also has a (possibly different) basis with respect to the following definition:
% 
%  \begin{definition} \label{basis2-def}
%A \emph{basis} for a connected species $\p$, with unit $\eta$ and counit $\varepsilon$, is a set subspecies $\P\subset\p$ which is a basis for $\p$ as an arbitrary vector species, such that  the maps $\eta_\varnothing$ and $\varepsilon_\varnothing$ restrict to bijections $\{1_\kk\} \to \P[\varnothing]$ and $\P[\varnothing] \to \{1 _\kk\}$.
%  \end{definition}
  
%  Unlike for vector spaces, everyone one of which possesses a basis, for vector species the existence of a basis  is by no means guaranteed.
Not every vector species has a basis. 
 We may characterize which do in the following way.
Observe that if $\p$ is a vector species then each vector space $\p[S]$ is naturally a module of the group of permutations of $S$; in particular, $\p[n]$ is a $\kk S_n$-module.
%Any bijection $[n] \to S$ where $|S|=n$ induces an isomorphism between the symmetric group $S_n$ and the group of permutations of $S$, and after fixing such a bijection we may  view $\p[S]$ as a $\kk S_n$-module. Up to isomorphism, this structure has no dependence on the choice of bijection $[n]\to S$
Whenever the symmetric group $S_n$ acts on a set $X$, the vector space 
$\kk X  = \kk\spanning\{x \in X\}$ generated by $X$ naturally carries the structure of a $\KK S_n$-module. We refer to such $\kk S_n$-modules
%, and all $\kk S_n$-modules isomorphic to such modules, 
as \emph{permutation representations}.

\begin{proposition}\label{basis-prop}
A vector species $\p$ has a basis if and only if for each nonnegative integer $n$ the $\kk S_n$-module $\p[n]$ is  a permutation representation
of $S_n$.
\end{proposition}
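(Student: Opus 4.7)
The plan is to prove the two directions separately, with the reverse implication being the substantive content.

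For the forward direction, suppose $\p$ has a basis $\P$, so $\p[n] = \kk\P[n]$. For each bijection $\sigma : [n] \to [n]$ (that is, each $\sigma \in S_n$), the map $\p[\sigma]$ is by hypothesis the linearization of $\P[\sigma] : \P[n] \to \P[n]$. The functoriality conditions in \eqref{functorial-eq} force $\P[\sigma\circ\sigma'] = \P[\sigma]\circ\P[\sigma']$ and $\P[\id] = \id$, so $S_n$ acts on the set $\P[n]$, and $\p[n] = \kk\P[n]$ is the associated permutation representation.

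For the reverse direction, assume each $\p[n]$ is a permutation representation of $S_n$, and choose, for every nonnegative integer $n$, an $S_n$-stable basis $B_n \subset \p[n]$. The main task is to extend this family $\{B_n\}$ to a set subspecies $\P \subset \p$ with $\P[n] = B_n$. For a finite set $S$ of cardinality $n$, pick any bijection $\tau : [n] \to S$ and define $\P[S] = \p[\tau](B_n)$. This is independent of $\tau$: if $\tau' : [n] \to S$ is another bijection, then $\tau' = \tau \circ \sigma$ for some $\sigma \in S_n$, and since $B_n$ is $S_n$-stable one has $\p[\tau'](B_n) = \p[\tau]\bigl(\p[\sigma](B_n)\bigr) = \p[\tau](B_n)$. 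For a bijection $\sigma : S \to S'$ between finite sets of the same size, define $\P[\sigma]$ as the restriction of $\p[\sigma]$ to $\P[S]$; choosing a bijection $\tau : [n] \to S$ and composing gives $\tau' = \sigma\circ\tau : [n] \to S'$, which by construction shows $\p[\sigma](\P[S]) = \P[S']$, so $\P[\sigma]$ is a well-defined bijection $\P[S] \to \P[S']$. The functoriality of $\P$ follows from that of $\p$, and by construction $\P[S]$ is a basis of $\p[S]$, so $\p = \kk\P$.

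The only subtle point in the construction is verifying that $\P[S]$ is genuinely well-defined on the nose (not just up to the choice of transport bijection), which is exactly what the $S_n$-invariance of $B_n$ guarantees; without the permutation-representation hypothesis, the choice of $B_n$ would not be canonical enough to glue into a functor. No other obstacle arises, since the category $\FB$ of finite sets with bijections is equivalent to its skeleton $\bigsqcup_n B S_n$, so specifying a functor on $\FB$ is equivalent to specifying an $S_n$-set for each $n$.
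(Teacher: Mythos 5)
Your proposal is correct and follows essentially the same route as the paper: the forward direction is immediate, and for the converse both arguments choose an $S_n$-stable basis of $\p[n]$ and transport it to $\P[S]$ along an arbitrary bijection $[n]\to S$, using $S_n$-stability to check independence of the choice. Your write-up simply makes the well-definedness verification more explicit than the paper's ``straightforward to check.''
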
 

\begin{proof}
If $\p$ has a basis $\P$ then $\p[n]$ is clearly equal to the permutation representation arising from the action of $S_n$ on $\P[n]$. 
Conversely, if $\p[n]$ is isomorphic to a permutation representation for all $n$ then each  vector space $\p[n]$ has a basis $X_n$ which is permuted by $\p[\sigma]$ for all $\sigma \in S_n$. It is straightforward to check that if we define for each finite set $S$ of size $n$ 
\[ \P[S] = \p[\sigma](X_n)\qquad\text{where $\sigma : [n] \to X_n$ is any bijection}\]
then 
 $\P$ is a well-defined set subspecies of $\p$ which forms a basis.
\end{proof}

Let $\q$ be a positive species. If $\q$ has a basis $\Q$, then  $\cS(\q)$ has a basis which we may identify with the set species $\cS(\Q)$ of labeled partitions, as defined in the introduction.
On the other hand, one can construct a positive species $\q$ without a basis such that $\cS(\q)$ does have basis nevertheless (define $\q$ such that $\q[1]$ is the trivial representation, $\q[2]$ is the sign representation, and $\q[n]$ is the direct sum of many copies of the trivial representation when $n> 2$.) 
Of course, a free commutative Hopf monoid $\cS(\q)$ may fail to have a basis altogether, as the following example demonstrates.

\begin{example}\label{S-nobasis-ex}
Let $\q$ be the positive species given as follows.
Define $\q[S]$ to be zero when $|S| \notin \{1,2\}$ and to be $\kk$ otherwise.
Set $\q[\sigma]$ to be the identity map when $\sigma$ is a bijection between singleton sets, and to be the zero map when $\sigma$ is a bijection between sets with more than two elements.
 It remains only to specify the maps $\q[\sigma]$ when $\sigma$ is a bijection between two 2-element sets.
For this, first choose a bijection $\tau_S :  \{1,2\} \to S$ for each set $S$ with $|S| =2$. Then, given a bijection $\sigma: S \to S'$ between   2-element sets, define
\[ \q[\sigma] = \begin{cases} 1&\text{if }\tau_{S'}^{-1} \circ \sigma \circ \tau_S\text{ is the identity map on $\{1,2\}$} 
\\ -1 &\text{otherwise}.\end{cases}
\]
The vector space $\cS(\q)[n]$ has a basis indexed by the involutions (i.e., self-inverse elements) in the symmetric group $S_n$, and the representation of $S_n$ on this space is precisely the one studied in \cite{APR}. In particular,  by results of that paper,
 $\cS(\q)[n]$ is isomorphic as an $\kk S_n$-module to the multiplicity-free sum of all irreducible $\kk S_n$-modules. In general, this is not isomorphic to a permutation representation, since for $n$ sufficiently large not every irreducible representation of $S_n$ can appear in a multiplicity-free permutation representation, as a consequence of Wildon's classification of all such representations  \cite[Theorem 2]{Wildon}.
\end{example}

Let $\cF$ denote the forgetful functor from the category of vector species to set species.
Saying that a vector species $\p$  has a basis $\P$ is   equivalent to asserting that $\p$ is \emph{free} on $\P$ with respect to $\cF$. 
%Recall that this means there exists a set species $\P$ and a morphism  $e : \P \to \cF(\p)$,
%such that 
%whenever $\q$ is a vector species and 
%$f : \P \to \cF(\q)$ is a morphism, there exists a unique morphism $f^{\mathrm{ext}} : \p \to \q$ of vector species making the  diagram 
%\be\label{free-eq}
%\begin{diagram}
%\cF(\p) & & \rTo^{ \cF(f^{\mathrm{ext}}) } &&  \cF(\q) \\
% & \luTo_{e} & & \ruTo_{f} \\ & & \P
%\end{diagram}
%\ee
%commute.
%This universal property implies that $e$ is injective and that $\p$ is uniquely isomorphic to $\kk\P$, and so we can identify $\P$ with an actual basis of $\p$ 
%and $e$ with the corresponding inclusion of set species.
%
%Assume $\p$ is a vector species with a basis $\P$.
%In this case 
Consider the morphism $f: \P \to \cF(\p^*)$ whose components are the maps  $\P[I] \to \p^*[I]$ which send  $v \in \P[I]$ to the    linear functional 
$\delta_v : \p[I] \to \kk$  with $\delta_v(u) = \delta_{u,v}$ for all $u \in \P[I]$.
The defining universal property of a free object implies that $f$ has a unique extension to a morphism $f^{\mathrm{ext}} : \p \to \p^*$ making the  diagram 
\[
\begin{diagram}
\cF(\p) & & \rTo^{ \cF(f^{\mathrm{ext}}) } &&  \cF(\p^*) \\
 & \luTo & & \ruTo_{f} \\ & & \P
\end{diagram}
\]
commute, where the left diagonal arrow is the natural inclusion of set species. We refer $f^{\mathrm{ext}}$ as the 
%
%When $\p$ is a connected species with a basis $\P$, the natural transformation $\p \to \p^*$ whose $S$-components are the $\kk$-linear maps sending the basis $\P[S]\subset \p[S]$ to the corresponding dual basis of $\p^*[S]$ is an injective morphism of connected species. We refer to this morphism as the 
 morphism $\p\to\p^*$ \emph{induced by  $\P$}. 
 This considerations motivate the following  definition. % of a \emph{freely self-dual} Hopf monoid.

 \begin{definition} A connected Hopf monoid $\h = (\p,\nabla,\Delta)$ is \emph{freely self-dual} (or \emph{FSD} for short) with respect to a basis $\P$ for $\p$ if the   morphism $\p \to \p^*$ induced by $\P$
 %of connected species 
 defines an isomorphism $\h \cong \h^*$ of connected Hopf monoids. % between $(\p,\nabla,\Delta)$ and its dual.
\end{definition}

%\begin{fact}\label{cm-fact} The canonical morphism $\p\to \p^*$ induced by a basis $\P$ for $\p$ is an isomorphism of connected species if and only if  $\P$ is finite.
%\end{fact}
%
%\begin{proof}
%This follows since 
% when a vector space $V$ has an infinite basis, the corresponding dual ``basis'' only spans a proper subspace of $V^*$.
%\end{proof}

Of course, a freely self-dual Hopf monoid is self-dual. Since the morphism $\p \to \p^*$ induced by a basis is always injective but is only surjective if $\p$ is finite-dimensional, all freely self-dual Hopf monoids are finite-dimensional.
The definition of freely self-dual here differs from the one in the introduction, but is equivalent by 
 the following proposition.
 
\begin{proposition}\label{structureconst-prop} A finite-dimensional connected Hopf monoid $(\p,\nabla,\Delta)$ is freely self-dual with respect to some basis $\P$ for $\p$ if and only if the structure constants of the product and coproduct in this basis coincide, in the sense that for any disjoint finite sets $S$, $T$ there 
are $c^\lambda_{\lambda',\lambda''} \in \kk$  
%exists a map 
%\[\barr{rcl} \P[S\sqcup T]\times \P[S]\times \P[T] & \to&  \kk \\ (\lambda,\lambda',\lambda'') & \mapsto & c^\lambda_{\lambda',\lambda''}\earr\]
such that
\[ \nabla_{S,T}(\lambda'\otimes \lambda'') = \sum_{\lambda \in \P[S\sqcup T]} c^\lambda_{\lambda',\lambda''} \cdot \lambda
\qquand
\Delta_{S,T}(\lambda) = \sum_{(\lambda',\lambda'') \in \P[S]\times\P[T]} c^\lambda_{\lambda',\lambda''} \cdot \lambda'\otimes \lambda''\]
for all $\lambda \in \P[S\sqcup T]$ and $\lambda' \in \P[S]$ and $\lambda'' \in \P[T]$.
\end{proposition}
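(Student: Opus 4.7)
The plan is to compare the two conditions by unwinding the definitions in the basis $\P$ and its dual basis. Let $\phi : \p \to \p^*$ denote the morphism of vector species induced by $\P$, whose component $\phi_I$ sends each $v \in \P[I]$ to the linear functional $\delta_v \in \p^*[I]$ dual to $v$. Because $\h$ is finite-dimensional, the family $\{\delta_v : v \in \P[I]\}$ is a basis of $\p^*[I]$ for every finite set $I$, so $\phi_I$ is bijective and $\phi$ is automatically an isomorphism of vector species. Since $\P[\varnothing] = \{1_\P\}$ with $1_\P = \eta_\varnothing(1_\kk)$, a direct computation from the definition of the unit of $\p^*$ in Section \ref{dual-sect} shows that $\phi_\varnothing(1_\P)$ equals the image of $1_\kk$ under the unit of $\p^*$, so by Fact \ref{connected-morph-fact} the map $\phi$ is a morphism of connected species. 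Consequently $\h$ is freely self-dual with respect to $\P$ if and only if $\phi$ commutes with products and coproducts.

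Next I would translate the compatibility of $\phi$ with $\nabla$ into an identity on structure constants. Fix disjoint $S$, $T$ and basis elements $\lambda' \in \P[S]$, $\lambda'' \in \P[T]$, and write the structure constants of $\nabla$ and $\Delta$ as $a^\lambda_{\lambda',\lambda''}$ and $b^\lambda_{\lambda',\lambda''}$. Directly from the definitions,
\[
\phi_{S\sqcup T}\bigl(\nabla_{S,T}(\lambda'\otimes\lambda'')\bigr) \;=\; \sum_{\lambda\in\P[S\sqcup T]} a^\lambda_{\lambda',\lambda''}\,\delta_\lambda .
\]
On the other side, the product of $\p^*$ at $(S,T)$ is the composition
\[
\p^*[S]\otimes\p^*[T] \xrightarrow{\sim} (\p[S]\otimes\p[T])^* \xrightarrow{\Delta_{S,T}^*} \p^*[S\sqcup T],
\]
and evaluating its image on $\delta_{\lambda'}\otimes\delta_{\lambda''}$ at an arbitrary $\lambda \in \P[S\sqcup T]$ returns the coefficient of $\lambda'\otimes\lambda''$ in $\Delta_{S,T}(\lambda)$, which is $b^\lambda_{\lambda',\lambda''}$. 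Hence this image equals $\sum_\lambda b^\lambda_{\lambda',\lambda''}\delta_\lambda$, and linear independence of $\{\delta_\lambda\}$ gives that $\phi$ commutes with the products if and only if $a^\lambda_{\lambda',\lambda''} = b^\lambda_{\lambda',\lambda''}$ for all $\lambda,\lambda',\lambda''$.

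Finally, I would observe that an entirely symmetric calculation, using the definition of the coproduct of $\p^*$ as the composition $\p^* \xrightarrow{\nabla^*} (\p\cdot\p)^* \xrightarrow{\sim} \p^*\cdot\p^*$, shows that $\phi$ commutes with the coproducts under exactly the same equality of structure constants. Together these establish both directions of the proposition. There is no deep obstacle here; the content is bookkeeping, and the only technical care needed is in tracking the canonical identifications $(V\otimes W)^* \cong V^*\otimes W^*$ under which the product and coproduct of $\p^*$ are defined and in using finite-dimensionality to guarantee that $\phi_I$ is surjective (so that ``structure constants agree in $\P$'' captures the full content of $\h \cong \h^*$ as connected Hopf monoids).
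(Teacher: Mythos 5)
Your proposal is correct and follows essentially the same route as the paper: both arguments rest on the observation that in the finite-dimensional case the morphism $\p \to \p^*$ induced by $\P$ is automatically an isomorphism of connected species, and that its compatibility with the product (resp.\ coproduct) amounts, after unwinding the dual structure maps, to the equality of the structure constants $a^\lambda_{\lambda',\lambda''} = b^\lambda_{\lambda',\lambda''}$. The paper phrases this by transporting $\h^*$ back along $f^{-1}$ and noting the structure constants swap, while you compute the commutation conditions directly, but the content is identical.
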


\begin{proof}
Assume $\p$ is a finite-dimensional connected species with a basis $\P$. The induced morphism $f : \p\to \p^*$ is then automatically an isomorphism.
Suppose $\h = (\p,\nabla,\Delta)$ is a connected Hopf monoid and let $\wt \h$ denote the image of the dual Hopf monoid $\h^*$ under $f^{-1}$.
Then $\p$ is the underlying species of both $\h$ and $\wt\h$, and $\h$ is FSD if and only if $\h = \wt \h$.
From this last statement, the lemma follows by noting that  the product (respectively, coproduct) of $\h$ has the same structure 
constants with respect to $\P$ as the coproduct (respectively, product) of $\wt \h$. 
\end{proof}

For the duration of this section assume $\KK\subset \CC$.
We can restate the preceding lemma in terms of the existence of a certain Hermitian form. 
By a \emph{sequilinear form} on a vector species $\p$, we
 mean a collection of sesquilinear maps $\langle \cdot ,\cdot \rangle : \p[S]\times {\p[S]}\to \CC$, one for each finite set $S$,
 such that
 \[\bigl\langle\  \p[\sigma](\lambda),\ \p[\sigma](\lambda')\ \bigr\rangle = \langle \lambda,\lambda'\rangle\qquad\text{for all $\lambda,\lambda' \in \p[S]$}\] whenever $\sigma :S\to S'$ is a bijection.
 If $\KK$ is closed under complex conjugation (i.e., $\KK = \overline \KK$), then  a sesquilinear form is equivalent
to a morphism \[\langle\cdot,\cdot\rangle: \p\times\overline{\p} \to \bfE\]
 where $\overline{\p}$ denotes the composition of $\p$ with the complex conjugation functor from $\Vec$ to itself $\times$ denotes the \emph{Hadamard product} of vector species; see  \cite[Definition 8.5]{species}.
The form $\langle\cdot,\cdot\rangle$ is \emph{Hermitian} if it always holds that $\langle\lambda,\lambda'\rangle =\overline{ \langle \lambda',\lambda\rangle}$
and \emph{positive definite} if  $\langle\lambda,\lambda\rangle > 0$ 
whenever $\lambda \neq 0$.

\begin{notation} If $\p$ and $\q$ are vector species then write $\p+\q$ for the vector species with 
\[ (\p+\q)[I] = \p[I]\oplus \q[I]\qquand (\p+\q)[\sigma] = \p[\sigma]\oplus \q[\sigma]\]
for finite sets $I$ and bijections $\sigma: I \to I'$ between finite sets.
\end{notation}

\begin{fact} Assume $\KK = \overline \KK$. If $\langle\cdot,\cdot\rangle$ is a positive definite Hermitian form on a vector species $\p$, and if $\q$ is a subspecies of $\p$, then there exists a unique subspecies $\r$ of $\p$ such that $\p =\q+\r$ and $\langle\lambda,\lambda'\rangle = 0$ for all   $\lambda \in \p[S]$ and $\lambda' \in \r[S]$ and finite sets $S$.
\end{fact}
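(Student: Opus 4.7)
The plan is to construct $\r$ as the pointwise orthogonal complement of $\q$ inside $\p$ with respect to the form. Concretely, for each finite set $I$ I set
\[ \r[I] := \{ v \in \p[I] : \langle v, w\rangle = 0 \text{ for all } w \in \q[I]\},\]
and take $\r[\sigma]$ to be the restriction of $\p[\sigma]$ to $\r[I]$. The first thing to verify is that this restriction lands in $\r[I']$, so that $\r$ is genuinely a subspecies of $\p$. Since $\q$ is a subspecies, $\p[\sigma]$ restricts to a bijection $\q[I] \to \q[I']$; hence for $v \in \r[I]$ and an arbitrary $w' \in \q[I']$, writing $w' = \p[\sigma](w)$ with $w \in \q[I]$ and using the invariance of the form gives $\langle \p[\sigma](v), w'\rangle = \langle v, w\rangle = 0$. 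So $\r$ is a well-defined vector subspecies of $\p$, and the orthogonality condition in the statement is built into its definition.

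Next I verify the direct sum decomposition $\p[I] = \q[I] \oplus \r[I]$ componentwise. The intersection $\q[I] \cap \r[I]$ is trivial: any $v$ in the intersection satisfies $\langle v, v\rangle = 0$, hence $v = 0$ by positive definiteness. For the sum covering $\p[I]$, I would invoke the standard fact from linear algebra that in any inner product space a subspace $\q[I]$ admitting an orthogonal projection yields a direct-sum decomposition with its orthogonal complement; this is automatic when $\q[I]$ is finite-dimensional via Gram--Schmidt, which is the regime in which this Fact is subsequently applied (freely self-dual Hopf monoids are finite-dimensional).

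Uniqueness is then easy. If $\r'$ is another subspecies with $\p = \q + \r'$ and $\langle w, v\rangle = 0$ for all $w \in \q[I]$ and $v \in \r'[I]$, then the orthogonality condition gives $\r'[I] \subseteq \r[I]$ directly from the definition. Conversely, any $v \in \r[I]$ decomposes as $v = w + v'$ with $w \in \q[I]$ and $v' \in \r'[I] \subseteq \r[I]$, which forces $w = v - v' \in \q[I] \cap \r[I] = 0$ and hence $v = v' \in \r'[I]$, so $\r = \r'$. The only genuine obstacle is making sure the sum decomposition is valid in the ambient vector space; once finite-dimensionality is in hand this reduces to a one-line appeal to orthogonal projection, and the only step specific to species rather than ordinary vector spaces is the functoriality check on $\r[\sigma]$, which is nothing more than the $S_n$-invariance of $\langle\cdot,\cdot\rangle$.
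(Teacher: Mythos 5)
Your proof is correct and follows essentially the same route as the paper: define $\r[S]$ as the pointwise orthogonal complement of $\q[S]$, use the invariance of the form together with the fact that $\q$ is a subspecies to check functoriality of $\r$, and appeal to standard linear algebra for the componentwise decomposition $\p[S]=\q[S]\oplus\r[S]$. You are in fact slightly more careful than the paper, which neither verifies the uniqueness claim nor flags the (harmless in context, since freely self-dual Hopf monoids are finite-dimensional) issue that the algebraic orthogonal decomposition requires finite-dimensionality of $\q[S]$.
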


\begin{proof}
Define $\r[S] = \{ y \in \p[S] : \langle x,y\rangle = 0\text{ for all }x \in \q[S]\}$ as the usual orthogonal complement of $\q[S]$ in $\p[S]$. Standard arguments from linear algebra show that $\p[S]=\q[S]\oplus\r[S]$, so it remains only to check that this definition  in fact makes $\r$  a subspecies of $\p$. This follows since if $\sigma : S\to S'$ is a bijection of finite sets and $y \in \r[S]$, then   $\langle x, \p[\sigma](y) \rangle = \langle \p[\sigma^{-1}](x),y\rangle = 0$ for all $x \in \q[S']$ since $\q$ is a subspecies,
so 
$\p[\sigma](y) \in \r[S']$.
\end{proof}  

We refer to the subspecies $\r$ as the \emph{orthogonal complement} of $\q$ in $\p$ with respect to 
 $\langle\cdot,\cdot\rangle$.

\begin{fact}
If a finite-dimensional vector species has a basis then it has a positive definite Hermitian form.
\end{fact}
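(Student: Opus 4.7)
The plan is to define the Hermitian form on each component $\p[S]$ by declaring the basis $\P[S]$ to be orthonormal, then check invariance under the functorial maps $\p[\sigma]$.

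More precisely, for each finite set $S$, define $\langle \cdot, \cdot \rangle : \p[S] \times \p[S] \to \CC$ to be the unique sesquilinear form (conjugate-linear in the second argument, say) such that
\[ \langle \lambda, \lambda' \rangle = \delta_{\lambda, \lambda'} \qquad\text{for all } \lambda, \lambda' \in \P[S]. \]
Hermiticity and positive-definiteness follow at once from the fact that $\P[S]$ is by construction an orthonormal basis for $\p[S]$.

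The only nontrivial point is the invariance axiom $\langle \p[\sigma](x), \p[\sigma](y) \rangle = \langle x, y \rangle$ for every bijection $\sigma : S \to S'$. Since $\P$ is a set subspecies of $\p$, the map $\p[\sigma]$ restricts to a map $\P[S] \to \P[S']$, and by the functoriality axiom \eqref{functorial-eq} this restriction is a bijection (with inverse given by the restriction of $\p[\sigma^{-1}]$). Hence for $\lambda, \lambda' \in \P[S]$ we have $\p[\sigma](\lambda) = \p[\sigma](\lambda')$ if and only if $\lambda = \lambda'$, so
\[ \langle \p[\sigma](\lambda), \p[\sigma](\lambda') \rangle = \delta_{\p[\sigma](\lambda), \p[\sigma](\lambda')} = \delta_{\lambda, \lambda'} = \langle \lambda, \lambda' \rangle. \]
Extending by sesquilinearity in both arguments gives the identity on all of $\p[S] \times \p[S]$, which completes the construction.

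I do not expect any real obstacle here: the construction is essentially tautological once one notes that $\p[\sigma]$ permutes the basis elements, and the finite-dimensionality hypothesis is convenient but plays no essential role (it does ensure that the sesquilinear form is well-defined without convergence issues, but since we only sesquilinearly extend from a set-theoretic basis this is automatic in any case).
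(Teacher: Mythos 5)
Your construction is exactly the paper's: declare the basis $\P[S]$ orthonormal, extend sesquilinearly, and observe that invariance follows because each $\p[\sigma]$ permutes the basis elements. The paper states this in one line without spelling out the invariance check, so your proposal is correct and takes essentially the same approach.
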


\begin{proof}
Let $\p$ be a finite-dimensional vector species with a basis $\P$. The desired  form is given by the morphism $\p\times\overline\p\to\bfE$ whose $S$-component is the sesquilinear form $\p[S]\times\p[S]\to\kk$ with 
%\[ x\otimes y \mapsto \begin{cases} 1 &\text{if }x=y \\ 0&\text{otherwise}\end{cases}\]
$x\otimes x \mapsto 1$ and $x\otimes y \mapsto 0$
for all $x,y \in \P[S]$ with $x\neq y$.
\end{proof}

\begin{notation}
We will denote the sesquilinear form   induced by a basis $\P\subset\p$ described in this proof of this fact by $\langle\cdot,\cdot\rangle_\P$.
\end{notation}

%\begin{remark}
%The converse to the preceding fact does not hold, since if $\bfE^-$ is defined as in Example \ref{nobasis-ex}, then $\bfE^-$ has no basis, but $\bfE^-\times \bfE^- \cong \bfE$ and the isomorphism whose components are the natural maps $\kk\otimes\kk \xrightarrow{\sim}\kk$ is a symmetric, positive definite bilinear form.
%\end{remark}
% 

Given any sesquilinear form $\langle\cdot,\cdot\rangle$ on $\p$ and 
 finite sets $S$, $T$ and $x, y \in \p[S]\otimes  \p[T] $,
 we 
 write
$\langle x,y\rangle$ for the image of $x \otimes y$ under the unique linear map
$
 \p[S]\otimes   \p[T]  \otimes \p[S]\otimes   \p[T]  \to \kk
 $
  such that  
 \[
 \langle \lambda_1\otimes \lambda_1',\lambda_2\otimes \lambda'_2\rangle = \langle \lambda_1,\lambda_2\rangle \cdot \langle \lambda_1',\lambda'_2\rangle
\qquad\text{for any $\lambda_i\in \p[S]$ and $\lambda'_i \in \p[T]$.}\]
%
%
%
%  finite sets $S_1,\dots,S_k$ and $x, y \in \p[S_1]\otimes \cdots \otimes \p[S_k] $, we write $\langle x,y\rangle$ for the image of $x \otimes y$ under the unique linear map
% \[
% \p[S_1]\otimes \cdots \otimes \p[S_k]  \otimes \p[S_1]\otimes \cdots \otimes \p[S_k]  \to \kk
% \]
%  such that  
% \[
% \langle \lambda_1\otimes \dots\otimes\lambda_k,\lambda'_1\otimes \dots\otimes\lambda'_k\rangle = \langle \lambda_1,\lambda'_1\rangle \cdots \langle \lambda_k,\lambda'_k\rangle
%\qquad\text{for any $\lambda_i,\lambda_i' \in \p[S_i]$.}\]
We now have our third characterization of freely self-dual Hopf monoids.

\begin{proposition} \label{form-prop}
A finite-dimensional connected Hopf monoid $(\p,\nabla,\Delta)$ is freely self-dual with respect to a basis $\P$ for $\p$ if and only if
 the  
 form
 $\langle\cdot,\cdot\rangle_\P$ is invariant in the sense
that
\[\Bigl \langle\ \nabla_{S,T}(x),\ y\ \Bigr\rangle_\P = \Bigl\langle\ x,\ \Delta_{S,T}(y)\ \Bigr\rangle_\P
\]
for all disjoint finite sets $S$, $T$  and  all $x \in \p[S]\otimes  \p[T]$ and $y \in \p[S\sqcup T]$.

%%Moreover,
%%in this case,
% for all pairwise disjoint finite sets
%$S_1,\dots,S_k$
%it holds that
%\[
%\Bigl\langle\ \nabla_{S_1,\dots,S_k}(x) ,\ y\ \Bigr\rangle_\P 
%= 
%\Bigl\langle\ x,\ \Delta_{S_1,\dots,S_k}(y)\ \Bigr\rangle_\P
%\]
%for all $x \in \p[S_1]\otimes\cdots\otimes \p[S_k]$ and $y \in \p[S_1\sqcup\dots\sqcup S_k]$.
\end{proposition}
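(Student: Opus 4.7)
The plan is to reduce the equivalence to Proposition \ref{structureconst-prop}, which characterizes free self-duality in the basis $\P$ as the coincidence of the structure constants $a^\lambda_{\lambda',\lambda''}$ and $b^\lambda_{\lambda',\lambda''}$ of $\nabla$ and $\Delta$. I would first unpack the invariance identity on basis inputs, observe that both sides collapse to these scalars via orthonormality of $\P$ under $\langle\cdot,\cdot\rangle_\P$, and then promote the basis-level identity to all inputs by (sesqui)linearity.

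More concretely, fix disjoint finite sets $S,T$ and basis elements $\lambda' \in \P[S]$, $\lambda'' \in \P[T]$, and $\mu \in \P[S\sqcup T]$. Substituting the expansion $\nabla_{S,T}(\lambda' \otimes \lambda'') = \sum_\lambda a^\lambda_{\lambda',\lambda''}\cdot \lambda$ and using $\langle \lambda, \mu\rangle_\P = \delta_{\lambda,\mu}$, the left-hand side of the invariance identity reduces to $a^\mu_{\lambda',\lambda''}$. Similarly, substituting $\Delta_{S,T}(\mu) = \sum_{(\sigma',\sigma'')} b^\mu_{\sigma',\sigma''}\cdot \sigma' \otimes \sigma''$ and using that the induced form on tensor products satisfies $\langle \lambda'\otimes \lambda'',\sigma'\otimes\sigma''\rangle_\P = \delta_{(\lambda',\lambda''),(\sigma',\sigma'')}$, the right-hand side reduces to $b^\mu_{\lambda',\lambda''}$. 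Hence invariance restricted to basis inputs is exactly the equation $a^\mu_{\lambda',\lambda''} = b^\mu_{\lambda',\lambda''}$ for every admissible triple, which by Proposition \ref{structureconst-prop} is equivalent to freely self-dual with respect to $\P$.

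To obtain the full equivalence, I would then extend from basis inputs to arbitrary inputs. Both sides of the invariance identity are linear in the first argument $x \in \p[S]\otimes\p[T]$ and (conjugate-)linear in the second argument $y \in \p[S\sqcup T]$, and the products of basis elements span $\p[S]\otimes\p[T]$ while $\P[S\sqcup T]$ spans $\p[S\sqcup T]$. Thus the identity on general inputs is implied by the identity on basis inputs, and the converse is obtained by specialization.

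The main obstacle is just the bookkeeping around sesquilinearity, since the form involves a complex conjugation in the second slot which in principle could mismatch a structure constant with its conjugate. This does not interfere with the argument, however, because $\langle \cdot,\cdot\rangle_\P$ takes values in $\{0,1\} \subset \mathbb{R}$ on pairs of basis elements, so conjugation acts trivially at the basis level; the computation isolates the unique nonzero term in each expansion and pairs it with the corresponding structure constant, with no spurious conjugates introduced. Once this is verified on basis inputs, extending by (sesqui)linearity completes the proof.
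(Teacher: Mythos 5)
Your proof is correct and follows essentially the same route as the paper's: reduce to Proposition \ref{structureconst-prop} by evaluating the invariance identity on basis elements, where the two sides collapse to the structure constants $a^{\lambda}_{\lambda',\lambda''}$ and $b^{\lambda}_{\lambda',\lambda''}$, and then extend by spanning and (sesqui)linearity. The only quibble is with your justification of the conjugation point: the relevant issue is not the values of the form on basis pairs but that the coefficients $b^{\lambda}_{\lambda',\lambda''}$ would be conjugated when pulled out of the conjugate-linear slot; the paper passes over this silently (asserting both pairings equal the structure constants ``by definition''), so your argument matches its proof exactly.
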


\begin{proof}
Let $\h = (\p,\nabla,\Delta)$ be a finite-dimensional connected Hopf monoid and suppose $\P$ is a basis for $\p$. Write $a_{\lambda',\lambda''}^{\lambda}$ and $b_{\lambda',\lambda''}^{\lambda}$ for the respective structure constants of the product and coproduct of $\h$ with respect to $\P$.
If $\lambda' \in \P[S]$ and $\lambda'' \in \P[T]$ and $\lambda \in \P[S\sqcup T]$, then by  definition 
\[a_{\lambda',\lambda''}^{\lambda} =  \langle\ \nabla_{S,T}(x),\ y\ \rangle_\P 
\qquand
 b_{\lambda',\lambda''}^{\lambda} = \left\langle\ x,\ \Delta_{S,T}(y)\ \right\rangle_\P
 \] 
 when $x = \lambda'\otimes \lambda''$ and $y = \lambda$.
Since such elements $x$ and $y$ span $\p[S]\otimes \p[T]$ and $\p[S\sqcup T]$, it follows that the form $\langle\cdot,\cdot\rangle_\P$ is invariant if and only if the structure constants of the product and coproduct of $\h$ coincide  with respect to $\P$. The present proposition therefore follows from Proposition \ref{structureconst-prop}.
\end{proof}

%\newpage
\subsection{Structure of freely self-dual Hopf monoids}
\label{structure-sect}

%This provides another motivation for our terminology.
%This section also contains some general results which can be seen as species analogs of  the statements about graded connected Hopf algebras  in \cite[Appendix A]{zel} and also \cite[Section 3.1]{ReinerNotes}.
%\[\]
%
%This lemma prepares us to describe the structure of any freely self-dual Hopf monoid as the species analogue of a polynomial algebra. Towards this end, some further definitions.

Here we prove that connected Hopf monoids which are freely self-dual  are  both commutative and cocommutative. 
We fix some notation for the duration of this subsection. First, we let  
\[\h=(\p,\nabla,\Delta)\] 
be an arbitrary connected Hopf monoid. We will write $\one$ for the  subspecies of $\p$ with 
\be\label{one-newdef-eq}\one[\varnothing] = \p[\varnothing]
\qquand
\one[S]=0\text{ for all $S \neq \varnothing$.}\ee We view $\one$ as a connected species with the same unit and counit as $\p$. While this convention conflicts with our previous definition of $\one$,  if any confusion arises one can  simply assume without loss of generality that $\p[\varnothing] = \kk$. 
Next, we denote the species of  primitive elements of $\h$, as defined in the introduction, by
\be\label{q} \q = \cP(\h). \ee  Finally, we write 
$\r$ for the 
positive
subspecies of $\p$ with
 \be\label{r} \r[I] = \sum_{\substack{S\sqcup T = I \\ S,T \neq \varnothing}} \im(\nabla_{S,T})\ee for each finite set $I$, where the sum is over all ordered decompositions $S\sqcup T$ of $I$ into nonempty subsets.
Note that this sum has zero summands when  $|I| \leq 1$, and thus in these cases $\r[I]=0$.
Checking that $\r$ is indeed a subspecies,  using   the fact that $\nabla : \p \cdot \p \to \p$ is a morphism, is a straightforward exercise which we omit.

The following lemma uses the notion of the intersection of two subspecies, which we need to define. If $\x$ is a set or vector species and $\textbf{y}$, $\textbf{z}$ are both subspecies, then we denote by $\textbf{y}\cap\textbf{z}$ the subspecies of $\x$ with $(\textbf{y} \cap \textbf{z})[S] = \textbf{y}[S] \cap \textbf{z}[S]$
for all finite sets $S$.

\begin{lemma}\label{intersect-lem}
If $\q\cap \r = \bf{0}$ then $\nabla$ is commutative.
\end{lemma}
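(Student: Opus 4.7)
The plan is to show that for disjoint nonempty finite sets $S,T$ with $I = S\sqcup T$ and any $x\in\p[S]$, $y\in\p[T]$, the commutator element
\[ c := \nabla_{S,T}(x\otimes y) - \nabla_{T,S}(y\otimes x) \]
lies in $\q[I]\cap\r[I]$, which by hypothesis forces $c=0$; this is exactly the commutativity of $\nabla$. That $c\in\r[I]$ is immediate from the definition \eqref{r}, since both summands lie in images of products over nontrivial decompositions. Commutativity in the degenerate case where $S$ or $T$ is empty is automatic from unit compatibility.

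To show $c\in\q[I]$, I would induct on $|I|$, with vacuous base cases $|I|\leq 1$. For the inductive step, I would fix an arbitrary nontrivial decomposition $I = U\sqcup V$ and expand $\Delta_{U,V}\circ\nabla_{S,T}$ and $\Delta_{U,V}\circ\nabla_{T,S}$ using Hopf compatibility (Definition \ref{hopf-compat-def}). Setting $S_U = S\cap U$, $S_V = S\cap V$, $T_U = T\cap U$, $T_V = T\cap V$, and writing $\Delta_{S_U,S_V}(x) = \sum x_{(1)}\otimes x_{(2)}$ and $\Delta_{T_U,T_V}(y) = \sum y_{(1)}\otimes y_{(2)}$ in Sweedler notation, Hopf compatibility produces
\begin{align*}
\Delta_{U,V}\nabla_{S,T}(x\otimes y) &= \sum \nabla_{S_U,T_U}(x_{(1)}\otimes y_{(1)})\otimes \nabla_{S_V,T_V}(x_{(2)}\otimes y_{(2)}), \\
\Delta_{U,V}\nabla_{T,S}(y\otimes x) &= \sum \nabla_{T_U,S_U}(y_{(1)}\otimes x_{(1)})\otimes \nabla_{T_V,S_V}(y_{(2)}\otimes x_{(2)}).
\end{align*}
Since $U$ and $V$ are both nonempty and disjoint, $|U|,|V|<|I|$, so the inductive hypothesis applied on each tensor factor gives $\nabla_{S_U,T_U}(x_{(1)}\otimes y_{(1)}) = \nabla_{T_U,S_U}(y_{(1)}\otimes x_{(1)})$ and analogously on the $V$-factor, with the degenerate sub-cases in which one of $S_U,T_U,S_V,T_V$ is empty again handled by unit compatibility. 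Hence the two displays coincide termwise in the summation, so $\Delta_{U,V}(c)=0$; varying $U\sqcup V = I$ over all nontrivial decompositions shows $c\in\q[I]$.

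I expect the main obstacle to be bookkeeping rather than any conceptual difficulty: one must keep track of which copies of $\nabla$ and $\Delta$ are applied to which factors after the middle swap in the Hopf-compatibility diagram, and verify that the inductive hypothesis applies uniformly, including in the degenerate cases where some intersection is empty. Beyond that, the argument is a direct deduction from Hopf compatibility and the definitions of $\q$ and $\r$.
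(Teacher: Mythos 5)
Your proof is correct and follows essentially the same route as the paper's: the same commutator element, the same induction on the size of the underlying set, and the same expansion of $\Delta_{U,V}\circ\nabla_{S,T}$ via Hopf compatibility to reduce to the inductive hypothesis on each tensor factor. The only cosmetic difference is your use of Sweedler notation for the intermediate coproducts.
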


This lemma is analogous to \cite[Lemma AI.3]{zel} (which also appears as \cite[Lemma 3.5]{ReinerNotes}), %a statement about graded connected Hopf algebras, 
and its proof is similar. 
 
\begin{proof}
We must show that whenever $S$, $T$ are disjoint finite sets and $\lambda \in \p[S]$ and $\lambda' \in \p[T]$, the element 
\be\label{xref} x\omdef =\nabla_{S,T}(\lambda\otimes\lambda') - \nabla_{T,S}(\lambda'\otimes\lambda) \in \p[S\sqcup T]\ee
 is zero.
 We proceed to do this by induction on $|S| + |T|$.
If $S$ or $T$ is empty then $x=0$ by the unit compatibility of $\nabla$. 
Assume 
therefore
that
$S$ 
and 
$T$ 
are 
both nonempty, and that elements of the form \eqref{xref} are zero whenever  $S$, $T$ are replaced by disjoint sets $S'$, $T'$ with $|S'|+|T'| < |S| +|T|$. 

Under these hypotheses $x$  belongs to $\r[S\sqcup T]$,
and so, since we assume $\q\cap \r = \bf{0}$, it suffices to show that also  $x \in \q[S\sqcup T]$.
To this end, let $R,R'$ be disjoint nonempty finite sets with $R\sqcup R' = S\sqcup T$; it is enough to check that  
\[\Delta_{R,R'}(x) = \Delta_{R,R'}\circ \nabla_{S,T}(\lambda\otimes\lambda') -  \Delta_{R,R'}\circ \nabla_{T,S}(\lambda'\otimes\lambda) = 0.\]
For this, let $A=R\cap S$ and $B =R\cap T$ and $A' = R'\cap S$ and $B' = R'\cap T$.
The Hopf compatibility of $\nabla$ and $\Delta$ then implies that $\Delta_{R,R'}(x)$ is a sum of elements of the form 
\be\label{4terms} \nabla_{A,B}(\alpha \otimes \beta) \otimes \nabla_{A',B'}(\alpha'\otimes \beta')
-
  \nabla_{B,A}( \beta \otimes \alpha) \otimes \nabla_{B',A'}(\beta' \otimes \alpha')
\ee
for some  $\alpha \in \p[A]$ and $\beta \in \p[B]$ and $\alpha' \in \p[A']$ and $\beta' \in \p[B']$.
% and choose finite indexing sets $J$ and $K$ such that 
%\[ 
%\Delta_{A,A'}(\lambda) =\sum_{j \in J} \alpha_j \otimes \alpha'_j \qquand \Delta_{B,B'}(\lambda') = \sum_{k \in K} \beta_k \otimes \beta'_k 
%\]
%for some elements $\alpha_j \in \p[A]$ and $\alpha'_j \in \p[A']$ and $\beta_k \in\p[B]$ and $\beta'_k \in \p[B']$.
%The Hopf compatibility of $\nabla$ and $\Delta$ implies that $\Delta_{R,R'}(x)$ is equal to the sum over all pairs $(j,k) \in J\times K$ of the terms
%\be\label{4terms} \nabla_{A,B}(\alpha_j \otimes \beta_k) \otimes \nabla_{A',B'}(\alpha_j'\otimes \beta'_k)
%-
%  \nabla_{B,A}( \beta_k \otimes \alpha_j) \otimes \nabla_{B',A'}(\beta_k' \otimes \alpha'_j).
%\ee
We claim that each of these differences is zero. To see this, note that since $R$ and $R'$ are both nonempty, we have $|A|+|B| = |R| < |S|+|T|$ and $|A'|+|B'| =|R'| < |S| + |T|$.
Therefore \[\nabla_{A,B}(\alpha \otimes \beta)=\nabla_{B,A}( \beta \otimes \alpha) \qquand   \nabla_{A',B'}(\alpha'\otimes \beta') = \nabla_{B',A'}(\beta' \otimes \alpha')\] 
by induction, 
so 
the expressions \eqref{4terms} are all zero and $\Delta_{R,R'}(x) = 0$. We conclude that $x \in \q [S\sqcup T]$ which is what we needed to show that $x=0$.
\end{proof}

The next lemma shows that $\q \cap \r = \textbf{0}$ whenever $\h$ is freely self-dual.

\begin{lemma}\label{fsd-lem2}
Assume $\KK \subset \CC$.
If $\h$ is freely self-dual with respect to a basis $\P$ for $\p$, then the subspecies $\q$ and $\r$ are orthogonal with respect to the form $\langle\cdot,\cdot\rangle_\P$
and  we have
$\p =\one+ \q+\r$.
\end{lemma}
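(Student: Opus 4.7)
The plan is to exploit the invariance of the Hermitian form $\langle\cdot,\cdot\rangle_\P$ from Proposition~\ref{form-prop}, together with the positive-definiteness that makes orthogonal complements behave well.

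For the orthogonality claim, I would fix a finite set $I$, a primitive element $\lambda\in\q[I]$, and a generator $\mu=\nabla_{S,T}(x)$ of $\r[I]$ where $I=S\sqcup T$ with $S,T\neq\varnothing$ and $x\in\p[S]\otimes\p[T]$. Invariance then gives
\[
\langle\mu,\lambda\rangle_\P = \langle\nabla_{S,T}(x),\lambda\rangle_\P = \langle x,\Delta_{S,T}(\lambda)\rangle_\P = 0,
\]
since primitivity forces $\Delta_{S,T}(\lambda)=0$ when $S,T$ are both nonempty. Extending by linearity over all such decompositions yields $\q[I]\perp\r[I]$ for every~$I$.

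For the decomposition $\p=\one+\q+\r$, I would let $\mathbf{v}\subset\p$ be the orthogonal complement of the subspecies $\one+\q+\r$ with respect to $\langle\cdot,\cdot\rangle_\P$ (which exists as a subspecies by the fact preceding the definition of $\langle\cdot,\cdot\rangle_\P$), and show $\mathbf{v}=\mathbf{0}$; combined with positive-definiteness, this yields the claimed decomposition on every component. I would handle this by case analysis on $|I|$. When $I=\varnothing$ we have $\one[\varnothing]=\p[\varnothing]$, so trivially $\mathbf{v}[\varnothing]=0$; when $|I|=1$ no decomposition $I=S\sqcup T$ with both pieces nonempty exists, so the primitive condition is vacuous and $\q[I]=\p[I]$, again forcing $\mathbf{v}[I]=0$.

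The only nontrivial case is $|I|\geq 2$, where I would take $y\in\mathbf{v}[I]$ and exploit orthogonality to $\r[I]$: for every decomposition $I=S\sqcup T$ with $S,T\neq\varnothing$ and every $x\in\p[S]\otimes\p[T]$,
\[
0 = \langle\nabla_{S,T}(x),y\rangle_\P = \langle x,\Delta_{S,T}(y)\rangle_\P.
\]
Non-degeneracy of the form on $\p[S]\otimes\p[T]$ (inherited from positive-definiteness of $\langle\cdot,\cdot\rangle_\P$ via the product basis $\P[S]\times\P[T]$) forces $\Delta_{S,T}(y)=0$ for every such $S,T$, so $y\in\q[I]$. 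Since $y$ is also orthogonal to $\q[I]$, we have $\langle y,y\rangle_\P=0$, and positive-definiteness gives $y=0$. The only subtlety to watch is the edge-case analysis for $|I|\leq 1$; the algebraic content of the argument is the adjunction of $\nabla$ and $\Delta$ under the form, which does all the real work.
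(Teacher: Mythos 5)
Your proposal is correct and follows essentially the same route as the paper: both arguments rest on the adjunction $\langle\nabla_{S,T}(x),y\rangle_\P=\langle x,\Delta_{S,T}(y)\rangle_\P$ together with positive-definiteness, and your two steps (showing $\q\perp\r$, then showing the orthogonal complement of $\one+\q+\r$ vanishes) are exactly the two inclusions of the paper's single claim that $\q[I]$ is the orthogonal complement of $\r[I]$ for nonempty $I$.
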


%This statement is the species analogue of \cite[Proposition 3.3]{ReinerNotes}.
This lemma follows by a standard argument using the invariance of the form $\langle\cdot,\cdot\rangle_\P$; compare the proof of \cite[Proposition 3.3]{ReinerNotes} concerning Hopf algebras.

\begin{proof} 
It suffices to show that when
 $I$ is a nonempty finite set, the orthogonal complement of $\r[I]$ in $\p[I]$ is $\q[I]$.
  Proposition \ref{form-prop}
 shows that $\lambda \in \p[I]$ belongs to the orthogonal complement of $\r[I]$  if and only if whenever  $I = S\sqcup T$ is a disjoint decomposition into nonempty subsets, so we have  
\[\bigl\langle\ \lambda,\ \nabla_{S,T}(\alpha\otimes \beta)\ \bigr\rangle_\P=  \bigl\langle\ \Delta_{S,T}(\lambda),\ \alpha\otimes \beta\ \bigr\rangle_\P = 0\qquad\text{for all $\alpha \in \p[S]$ and $\beta \in \p[T]$.}\] 
The positive definiteness of $\langle\cdot,\cdot\rangle_\P$ implies that this holds if and only if $\Delta_{S,T}(\lambda) = 0$ whenever $S$ and $T$ are both nonempty, i.e., precisely when $\lambda \in \q[I]$.
%
%Thus $\p[I] = \q[I] \oplus \r[I]= \one[I] \oplus \q[I]\oplus\r[I]$ whenever $I$ is nonempty,   since  $\one[I] = 0$. As $\q$ and $\r$ are both positive species, it also holds that $\p[\varnothing] = \one[\varnothing] \oplus \q[\varnothing]\oplus\r[\varnothing]$, and we conclude that $\p =\one+ \q+\r$ as vector species.
\end{proof}

Combining the preceding lemmas show that an FSD connected Hopf monoid is commutative.
%If $\h$ is freely self-dual, then 
%the preceding lemma implies  $\q\cap \r = \textbf{0}$, so by Lemma \ref{intersect-lem} it follows that $\h$ is commutative.
Invoking self-duality, we arrive at the following result, given as Theorem \descref{A} in the introduction.

\begin{theorem}\label{coco-cor}
Assume $\KK \subset \CC$.
An FSD connected Hopf monoid 
is commutative and cocommutative. 
Hence  if $\h$ is freely self-dual then there exists a unique isomorphism
$\cS(\q) \xrightarrow{\sim} \h$ of connected Hopf monoids 
making the diagram \eqref{coco-diagram} in Theorem \ref{coco-thm} commute.
\end{theorem}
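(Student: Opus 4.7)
The proof is essentially a direct assembly of the two preceding lemmas together with the self-duality hypothesis and Stover's theorem, so the plan is short.

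First I would deduce commutativity. By Lemma \ref{fsd-lem2}, the subspecies $\q = \cP(\h)$ and $\r$ are orthogonal in $\p$ with respect to the positive definite Hermitian form $\langle\cdot,\cdot\rangle_\P$. Positive definiteness immediately forces $\q\cap\r = \mathbf{0}$: any $x \in (\q\cap\r)[S]$ would satisfy $\langle x,x\rangle_\P = 0$ and hence $x=0$. Lemma \ref{intersect-lem} then yields that $\nabla$ is commutative, so $\h$ is commutative as a Hopf monoid.

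Next I would deduce cocommutativity by invoking self-duality. Since $\h$ is FSD with respect to $\P$, the basis induces an isomorphism $\h \cong \h^*$ of connected Hopf monoids. By the general properties of the dual recalled in Section \ref{dual-sect}, the dual of a commutative monoid is a cocommutative comonoid; applied to $\h$, which we have just shown is commutative, this gives that $\h^*$ is cocommutative. Transporting this along the isomorphism $\h \cong \h^*$ shows that $\h$ itself is cocommutative.

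Finally, having established that $\h$ is a connected Hopf monoid which is both commutative and cocommutative, the concluding assertion is an immediate application of Theorem \ref{coco-thm} (Stover's theorem) to $\h$ with its subspecies of primitive elements $\q = \cP(\h)$: there exists a unique isomorphism $\cS(\q)\xrightarrow{\sim}\h$ of connected Hopf monoids making the diagram \eqref{coco-diagram} commute.

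There is essentially no obstacle here beyond correctly chaining the prior lemmas; the only subtle point is the passage from orthogonality to $\q\cap\r = \mathbf{0}$, which uses positive definiteness of $\langle\cdot,\cdot\rangle_\P$ and is the reason for the standing assumption $\KK\subset\CC$.
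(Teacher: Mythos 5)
Your proof is correct and follows essentially the same route as the paper: Lemma \ref{fsd-lem2} plus positive definiteness gives $\q\cap\r=\mathbf{0}$ (equivalently, the direct sum decomposition $\p=\one+\q+\r$), Lemma \ref{intersect-lem} then gives commutativity, self-duality transports this to cocommutativity, and Theorem \ref{coco-thm} supplies the final assertion. No gaps.
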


%\begin{corollary}\label{coco-app-cor}
%If $\h$ is freely self-dual then there exists a unique isomorphism
%$\cS(\q) \xrightarrow{\sim} \h$ of connected Hopf monoids 
%making the diagram \eqref{coco-diagram} in Theorem \ref{coco-thm} commute.
%\end{theorem}

\begin{remark}
Not every connected Hopf monoid of the form $\cS(\q)$ is also FSD, since its underlying species may not even possess a basis; see Example \ref{S-nobasis-ex}. 
Moreover, if the field $\kk$ is small enough, say $\kk = \QQ$, then one can construct a positive species $\q$ such that $\cS(\q)$ has a basis but is not FSD. We know of no such examples when $\kk $ is algebraically closed, and the following question is therefore open: are there any positive species $\q$ when $\kk=\CC$ such that $\cS(\q)$ has a basis but is not FSD?
\end{remark}

%Combining this with Theorem \ref{coco-thm} gives the following corollary, which we state here for future reference. 
%\begin{corollary} If $\h$ is freely self-dual then there exists a unique isomorphism
%$\cS(\q) \xrightarrow{\sim} \h$ of connected Hopf monoids 
%making the diagram \eqref{coco-diagram} in Theorem \ref{coco-thm} commute.
%%\[
%%\begin{diagram}
%%\cS(\q) && \rTo^{\sim} && \p \\
%%& \luTo &&\ruTo\\
%%&& \q
%% \end{diagram}
%% \]
%%commute, where the diagonal arrows are the natural inclusions of vector species. 
%\end{corollary}

%
%\begin{theorem}
%If $\p = \one + \q + \r$ then $\nabla$ is commutative and $\Delta$ is cocommutative.
%\end{theorem}
%
%\begin{proof}
%
%
%\end{proof}

The previous theorem is all that we require for later sections, but before continuing we state a slightly more general result for which we can give a self-contained and constructive proof. 
From now on, 
we 
assume  that the (previously arbitrary) connected Hopf monoid $\h= (\p,\nabla,\Delta)$ is commutative.
%Let $\c$ be a set or vector species which is a subspecies of $\p$.
Then, for each finite set $I$ and partition $X \in \Pi[ I]$, we may define a subspace
\[\nabla_X(\q) \subset \p[I]\]
in the following way. If $ I = \varnothing$ then set $\nabla_X(\q) = \p[\varnothing] \cong \kk$.
Otherwise, let $k\geq 1$ be the number of blocks of $X$ and choose a bijection $[k] \to X$. Write $S_i$ for the image of $i \in [k]$ under this map, and then set 
\be\label{nabla_X-eq} \nabla_X(\q) =\Bigl \{\ \nabla_{S_1,\dots,S_k}(x) : x \in \q[S_1]\otimes \cdots \otimes \q[S_k]\ \Bigr\} \subset \p[I].\ee
This is well-defined since the  set on the right has no dependence on the choice of ordering $[k]\to X$, as we assume that $\nabla$ is commutative.
Recall that $\nabla_{S_1,\dots,S_k}$ is the identity map when $k=1$, so if $X$ has a single block then $\nabla_X(\q) = \q[I]$.

The following statement implies Theorem \ref{coco-cor}  by Lemma \ref{fsd-lem2}, and gives a species analogue of the main theorem of \cite[Appendix A]{zel} which also appears as \cite[Theorem 3.7]{ReinerNotes}.
%\begin{fact} If $\sigma : I \to I'$ is a bijection between finite sets and $X\vdash I$ and $X' = \{ \sigma(S) : S \in X\} \vdash I'$, then 
%the $\kk$-linear map $\p[\sigma]: \p[I]\to \p[I']$ restricts to an isomorphism
%$\q(X) \to \q(X')$.
%\end{fact}
%
%\begin{proof}
%
%\end{proof}

%\begin{fact} 
%If $X$ and $Y$ are partitions of disjoint finite sets $S$ and $T$, then $\nabla_{S,T}$ % : \p[S]\otimes\p[T]\to\p[S\sqcup T]$
% restricts to an isomorphism $\q(X)\otimes\q(Y) \to \q(X\sqcup Y)$.
%\end{fact}
%
%\begin{proof}
%By construction $\nabla_{S,T}$ restricts to surjective map $\q(X)\otimes\q(Y) \to \q(X\sqcup Y)$. This restriction is injective by Fact \ref{injsurj-fact}. 
%\end{proof}

%\begin{fact}
%If $S$, $T$ are disjoint finite sets, then $\Delta_{S,T}$
% restricts to a map $\nabla_X(\c) \to \nabla_{X|_S}(\c)\otimes \nabla_{X|_T}(\c)$ for any partition $X \vdash S\sqcup T$. 
%\end{fact}
%
%\begin{proof}
%(This might require $\Delta$ to be cocommutative. Messy diagrammatic argument.)
%\end{proof}

\begin{theorem} \label{contained-thm}
%Let $(\p,\nabla,\Delta)$ be a connected Hopf monoid and define $\q$ and $\r$ as the subspecies \eqref{q} and \eqref{r}.
%
Suppose $\p=\one+\q+\r$, so that $\nabla$ is commutative. 
For any disjoint decomposition $I = S\sqcup T$ of a finite set, the following properties then hold: %Let $S$ and $T$ be disjoint sets.

\ben
\item[(a)] There is a direct sum decomposition % the vector space $\p[I]$ decomposes as the direct sum  \[
$\p[I] = \bigoplus_{X\in \Pi[I]} \nabla_X(\q)$.

\item[(b)] The maps $\nabla_{S,T}$ and $\Delta_{S,T}$ restrict  to inverse isomorphisms 
$\nabla_X(\q)\otimes \nabla_Y(\q)\cong\nabla_{X\sqcup Y}(\q)
$
%and
%$\nabla_{X\sqcup Y}(\q)\xrightarrow{\sim}\nabla_X(\q)\otimes \nabla_Y(\q)$
for all set partitions  
 $X \in \Pi[S]$ and $Y \in \Pi[T]$. % for disjoint finite sets $S$, $T$.

\item[(c)] The kernel of $\Delta_{S,T}$ % : \p[S\sqcup T] \to \p[S] \otimes \p[T]$
 is  the direct sum of the subspaces $\nabla_Z(\q)$
%\[ \ker \Delta_{S,T} = \bigoplus_{\substack{Z\in \Pi[ S\sqcup T  ] \\  Z \neq Z|_S \sqcup Z|_T}} \nabla_Z(\q)\] where
%the
%sum
%is
over all partitions $Z \in \Pi[ I]$ with at least one block which is  not a subset of $S$ or of $T$.
%Consequently  $\Delta$ is  cocommutative.

%\item[(d)] The  inclusion $\q \to \cS(\q)$ extends uniquely to an isomorphism $\h \xrightarrow{\sim} \cS(\q)$ of connected Hopf monoids, which restricts to an isomorphism $\nabla_X(\q) \xrightarrow{\sim} \q(X)$ of vector spaces for each set partition $X$.

\een
It follows from these properties that  $\Delta$ is  cocommutative, and that 
the  inclusion $\q \to \p$ extends uniquely to an isomorphism of connected Hopf monoids $\cS(\q) \xrightarrow{\sim} \h$. %, the components of which define vector space isomorphisms $\q(X) \xrightarrow{\sim} \nabla_X(\q)$ for all set partitions $X$.

 \end{theorem}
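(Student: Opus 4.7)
The plan is to establish (a), (b), (c) by simultaneous strong induction on $|I|$, and then deduce cocommutativity together with the isomorphism $\cS(\q) \xrightarrow{\sim} \h$. The base case $|I| \leq 1$ is immediate: $\Pi[I]$ has a unique partition, $\nabla_X(\q)$ is either $\p[\varnothing]$ or $\q[I]$, and (b), (c) reduce to unit/counit compatibility. For the inductive step with $n = |I| \geq 2$, I would first verify the sum in (a) by unfolding the hypothesis $\p = \one + \q + \r$: for nonempty $I$ this gives $\p[I] = \q[I] + \r[I]$, and associativity of $\nabla$ combined with the inductive decomposition of $\p[S]$ and $\p[T]$ for each nontrivial $I = S \sqcup T$ yields $\r[I] = \sum_{Z \in \Pi[I],\, |Z| \geq 2} \nabla_Z(\q)$. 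Since $\q[I] = \nabla_{\{I\}}(\q)$, this gives $\p[I] = \sum_{X \in \Pi[I]} \nabla_X(\q)$.

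The key calculation is to analyze $\Delta_{S,T}$ on $\nabla_Z(\q)$ for a general $Z = \{A_1, \dots, A_k\} \in \Pi[I]$. Iterating Hopf compatibility expands $\Delta_{S,T} \circ \nabla_{A_1, \dots, A_k}(x_1 \otimes \cdots \otimes x_k)$ as a sum indexed by splittings of each block into $A_i \cap S$ and $A_i \cap T$, with the $i$th factor involving $\Delta_{A_i \cap S,\, A_i \cap T}(x_i)$. For primitive $x_i \in \q[A_i]$ this coproduct vanishes unless $A_i \subseteq S$ or $A_i \subseteq T$; in the surviving cases it reduces to $x_i \otimes 1_\p$ or $1_\p \otimes x_i$. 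Hence $\Delta_{S,T}$ annihilates $\nabla_Z(\q)$ whenever some block of $Z$ meets both $S$ and $T$; in the complementary case, writing $Z = X \sqcup Y$ with $X \in \Pi[S]$ and $Y \in \Pi[T]$, commutativity of $\nabla$ lets one reorder the surviving factors into an explicit map into $\nabla_X(\q) \otimes \nabla_Y(\q)$. Combined with Lemma \ref{injsurj-fact}, which provides $\Delta_{S,T} \circ \nabla_{S,T} = \id$, this shows $\nabla_{S,T}$ and $\Delta_{S,T}$ restrict to inverse isomorphisms between $\nabla_X(\q) \otimes \nabla_Y(\q)$ and $\nabla_{X \sqcup Y}(\q)$, proving (b).

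For directness in (a), given a relation $\sum_X v_X = 0$ with $v_X \in \nabla_X(\q)$, pick any multi-block $X$, a block $B \in X$, and apply $\Delta_{B,\, I \setminus B}$: by the vanishing above only partitions $X'$ whose blocks all lie inside $B$ or inside $I \setminus B$ contribute, and by (b) their images land in the distinct summands of the inductive direct sum $\p[B] \otimes \p[I \setminus B] = \bigoplus_{X', Y'} \nabla_{X'}(\q) \otimes \nabla_{Y'}(\q)$; hence each surviving $v_{X'}$ vanishes, and in particular $v_X = 0$. Running this for every multi-block $X$ reduces the relation to $v_{\{I\}} = 0$. Claim (c) now follows by combining the vanishing calculation with directness and the injectivity of $\Delta_{S,T}$ on each $\nabla_{X \sqcup Y}(\q)$ from (b). Finally, the formula in (b) is manifestly symmetric under $(S, T, X, Y) \mapsto (T, S, Y, X)$, so $\Delta$ is cocommutative, and Theorem \ref{coco-thm} yields the unique isomorphism $\cS(\q) \xrightarrow{\sim} \h$ extending $\q \hookrightarrow \p$. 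The main obstacle is the iterated Hopf-compatibility expansion of $\Delta_{S,T} \circ \nabla_{A_1, \dots, A_k}$ acting on primitive tensors; once this combinatorial bookkeeping is carried out, each of the remaining steps is essentially formal.
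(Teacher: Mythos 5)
Your argument is correct, and it reaches the same conclusions by a noticeably heavier route than the paper in two places. First, the paper gets (b) almost for free: associativity and commutativity of $\nabla$ show directly that $\nabla_{S,T}$ maps $\nabla_X(\q)\otimes\nabla_Y(\q)$ onto $\nabla_{X\sqcup Y}(\q)$, and Lemma \ref{injsurj-fact} supplies injectivity and the inverse; no expansion of $\Delta_{S,T}\circ\nabla_{A_1,\dots,A_k}$ is needed. Likewise for (c) the paper isolates a single bad block $I'$, writes $\nabla_Z(\q)$ as the image of $\q[I']\otimes\nabla_{Z''}(\q)$ under $\nabla_{I',I''}$, and applies Hopf compatibility \emph{once} to see that $\Delta_{A,B}$ with $A=I'\cap S$, $B=I'\cap T$ kills the primitive factor. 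Your full iterated expansion (essentially Lemma \ref{preorder-lem}, which the paper only proves later and describes as ``straightforward but cumbersome'') is valid and needs no cocommutativity, but you should be aware you are fronting the hardest bookkeeping where a one-block application suffices. In compensation, your proof of directness in (a) --- applying $\Delta_{B,I\setminus B}$ to a relation $\sum_X v_X=0$ and using the inductive direct sum of $\p[B]\otimes\p[I\setminus B]$ --- is actually tighter than the paper's, which only verifies pairwise trivial intersections. The one point I would change is the final step: invoking Theorem \ref{coco-thm} is logically legitimate (commutativity and cocommutativity are in hand by then), but it defeats the stated purpose of this theorem, which the paper introduces precisely to give a \emph{self-contained, constructive} alternative to Stover's result; moreover the explicit isomorphism built from the maps $f_X$ in \eqref{f_I} is reused verbatim in the proof of Theorem \ref{muFSD-mainthm}, so you should extract it from parts (a) and (b) directly (order the blocks of each $X$, apply $\nabla_{B_1,\dots,B_k}$, and check it is well defined by commutativity and bijective by Lemma \ref{injsurj-fact}) rather than outsourcing existence and uniqueness to \ref{coco-thm}.
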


\begin{proof}
Our proof of part (a) uses parts (b) and (c), so we prove these first.
For part (b), note by definition that  $\nabla_{S,T}$ restricts to a  surjective linear map $\nabla_X(\q) \otimes \nabla_Y(\q) \to \nabla_{X\sqcup Y}(\q)$. This map is  injective and hence bijective by Lemma \ref{injsurj-fact}, and by the same result its inverse is given by the restriction of $\Delta_{S,T}$.

Next, let $Z$ be a partition of $I$ with a block $I'$ contained in neither $S$ nor $T$. 
In light of part (b), to prove part (c) it suffices to show that $\nabla_Z(\q) \subset \ker \Delta_{S,T}$. 
To this end, 
define
\[
I'' = I-I'
\qquand
Z'  = \{I'\} \in \Pi[I']
\qquand
Z'' = Z - \{I'\} \in \Pi[I''].\]
Since $\nabla_{Z'}(\q) = \q[I']$, the subspace
$\nabla_Z(\q)$ is the image of  $\q[I']\otimes \nabla_{Z''}(\q)$ under $ \nabla_{I',I''}$ by part (b). We therefore need only show that 
\[\q[I']\otimes \nabla_{Z''}(\q)\subset \ker\(\Delta_{S,T}\circ \nabla_{I',I''}\).\]
For this, we note by the Hopf compatibility of $\nabla$ and $\Delta$ that if $A = I' \cap S$ and $B = I'\cap T$ and $A' = I''\cap S$ and $B' = I''\cap T$ then
 $\Delta_{S,T}\circ \nabla_{I',I''}$ is equal to the composition of three maps $f \circ \tau \circ g$, where 
$
g = \Delta_{A,B} \otimes \Delta_{A',B'}
$ and 
$f = \nabla_{A,A'}\otimes \nabla_{B,B'}$
% is the map
% \[
%    \p[I']\otimes \p[I'']   \xrightarrow{\quad\Delta_{A,B} \otimes \Delta_{A',B'}\quad} \p[A] \otimes \p[B] \otimes \p[A']\otimes \p[B'] 
% \]
% and
% $f$ is the map
%\[
% \p[A] \otimes \p[A'] \otimes \p[B]\otimes \p[B']  \xrightarrow{\quad\nabla_{A,A'}\otimes \nabla_{B,B'}\quad}  \p[S]\otimes \p[T] \]
 and  $\tau$ is the natural twisting isomorphism between the image of $g$ and the domain of $f$.
Since $I'$ is a subset of neither $S$ nor $T$, both $A$ and $B$ are nonempty sets, and so $\q[I'] \subset \ker \Delta_{A,B}$ by definition.
As such, it follows that
$\q[I']\otimes \nabla_{Z''}(\q)$ belongs to the kernel of $ g$, and hence also to the kernel of   $f\circ \tau \circ g = \Delta_{S,T}\circ \nabla_{I',I''}$, which is what we needed to show.

We now return to part (a).
First, we claim that 
if $X\neq Y$ are distinct partitions of $I$, then the subspaces $\nabla_X(\q)$ and $\nabla_Y(\q)$ have zero intersection.
This follows since if we let $S$ be any block of $X$ which is not a block of $Y$ and set $T = I - S$, then  $\Delta_{S,T}$ restricts to an injective map on $\nabla_X(\q)$ by part (b) and to the zero map on $\nabla_Y(\q)$ by part (c).
Therefore on $\nabla_X(\q) \cap \nabla_Y(\q)$ the map $\Delta_{S,T}$ is both zero and injective, which occurs only if the intersection is zero.
This proves our claim and we conclude that $\sum_{X \in \Pi[I]} \nabla_X(\q) = \bigoplus_{X \in \Pi[I]} \nabla_X(\q)$.
It is now enough to show that 
\be \label{contain-eq} \p[I] \subset \sum_{X \in \Pi[I]} \nabla_X(\q).\ee
This it true by construction if $I$ is empty, so assume $|I| \geq 1$ and that the desired containment holds whenever $I$ is replaced by a smaller set.
%Recall that $\p[I] = \q[I] \oplus \r[I]$.
If $I = S\sqcup T$ is a disjoint decomposition
with both $S$ and $T$ nonempty then $\im(\nabla_{S,T})$ is contained in the right side of \eqref{contain-eq}
by induction and part (b).
%\[ \im(\nabla_{S,T}) \subset  \nabla_{S,T}\(\sum_{X \in \Pi[S]} \nabla_X(\q)  \otimes \sum_{Y \in \Pi[T]} \nabla_{ Y}(\q)\) = \sum_{\substack{X\in \Pi[S]\\ Y \in \Pi[T]}}  \nabla_{X\sqcup Y}(\q).\]
Since $\r[I]$ is the sum of such images, since $\q[I] = \nabla_{X}(\q) $
for the unique partition $X$ of $I$ with one block,
and since $\p[I] = \q[I] \oplus \r[I]$, the desired containment
\eqref{contain-eq} holds.
%This establishes part (a).

%To see that $\Delta$ is cocommutative, let $S$ and $T$ be disjoint finite sets and write $\tau $ for the usual twisting isomorphism
%$\p[S] \otimes \p[T]\to \p[T]\otimes \p[S]$.
%We wish to show that \be\label{coco-eq}\tau \circ \Delta_{S,T} = \Delta_{T,S}.\ee By part (a) it suffices check this identity on the subspaces $\nabla_Z(\q)$ for each   partition $Z \in \p[S\sqcup T]$.
%If $Z$ has any blocks which are not subsets of $S$ or of $T$, then both sides of \eqref{coco-eq} restrict to the zero map on $\nabla_Z(\q)$.
%Alternatively, if this case does not occur, then $Z = X \sqcup Y$ for some $X \in \Pi[S]$ and $Y \in \Pi[T]$, and both sides of  \eqref{coco-eq} 
%restrict to invertible maps $\nabla_{Z}(\q) \to \nabla_Y(\q) \otimes \nabla_X(\q)$. 
%By part (b), the inverses of these maps are  the restrictions of $\nabla_{S,T}\circ \tau^{-1} $ and $ \nabla_{T,S}$, which coincide because $\nabla$ is commutative. Therefore \eqref{coco-eq} always holds, as desired.

Deducing from parts (a), (b), and (c)  that $\Delta$ is cocommutative is straightforward.
For the last assertion in the theorem, 
define for each set partition $X \in \Pi[I]$ a $\kk$-linear map $f_X : \q(X) \to \nabla_X(\q)$ in the following way.
If $I = \varnothing$ so that $\q(X) = \kk$ then we set $f_X = \eta_\varnothing$. 
If $I \neq \varnothing$, then  define $f_X$ as the composition
\[ \q(X) = \bigotimes_{B \in X} \q[B] \xrightarrow{\quad\sim\quad} \q[B_1] \otimes \dots \otimes \q[B_k] \xrightarrow{\quad \nabla_{B_1\dots,B_k}\quad} \nabla_X(\q)\]
where $X = \{ B_1,B_2,\dots,B_k\}$ is an arbitrary ordering of the blocks of $X$.
 %and the left arrow is the inverse of \eqref{mayidentify-eq}.
The commutativity of $\nabla$ ensures that this definition has no dependence on the chosen order for the blocks of $X$, and the
injectivity of $\nabla_{B_1\dots,B_k}$ (which follows by Lemma \ref{injsurj-fact}) implies that $f_X$ is an isomorphism.
%
%For each finite set $I$ let $f_I : \cS(\q)[I] \to \q[I]$ be the linear map given by the sum $f_I = \bigoplus_{X \in \Pi[I]} f_X$. 
By part (a), the sums of maps 
\be\label{f_I}
f_I \omdef = \bigoplus_{X \in \Pi[I]} f_X :  \cS(\q)[I] \to \q[I]
\ee
therefore give the components of an isomorphism of vector species $f : \cS(\q) \to \p$.
By  Fact \ref{connected-morph-fact} it follows that $f$ is an isomorphism of connected species, and by 
parts (b) and (c) it follows that $f$ is an isomorphism of connected Hopf monoids.
Since we view $\q$ as a subspecies of $\cS(\q)$ by identifying $\q[I]$ with the subspace $\q(\{I\}) \subset \cS(\q)[I]$, the isomorphism $f$ 
 extends the inclusion $\q \to \p$. It is clear from parts (a) and (b), finally,  that $f$ is the unique such extension.
\end{proof}

%\newpage
\subsection{Strongly self-dual Hopf monoids}\label{mufsd-sect}

A connected monoid $(\p,\nabla)$ is \emph{linearized} 
(in the sense of 
\cite[Section 8.7.2]{species})
if the connected species $\p$  has a basis $\P$ in which the product $\nabla$ is linearized (in the sense of the introduction). 
Recall from the introduction that a connected Hopf monoid 
$\h $ 
is \emph{strongly self-dual} with respect to some  basis  if with respect to that basis, $\h$ is both freely self-dual  and linearized as a monoid. % $(\p,\nabla)$ is linearized.

%Consider two maps %We say that a $\kk$-linear map 
%\[ F : V_1 \otimes \dots \otimes V_m \to W_1 \otimes\dots \otimes W_n
%\qquand
% f : X_1 \times \dots \times X_m \to Y_1\times \dots \times Y_n\] 
% where  $\{V_i\}$ and $\{W_j\}$ are $\kk$-vector spaces and  $\{X_i\}$ and $\{Y_j\}$ are sets and $F$ is $\kk$-linear.
% We say that $F$ is the \emph{linearization} of $f$
%if
%$X_i$ is a basis for $V_i$ and %$Y_j$ is basis for $W_j$ and 
%$F(x_1\otimes \dots \otimes x_m) = f(x_1,\dots,x_m)$ 
%for all $x_i \in X_i$.
%%It might seem natural to drop the requirement that the sets $Y_j$ are also bases, but this condition is convenient for our purposes.
%

%

It is convenient to have a word to refer to the system of maps $\P[S]\times \P[T] \to \P[S\sqcup T]$ whose linearizations give the product of a linearized monoid.
In the following definition and throughout this section, $\P$ will denote a set species with linearization $\p =\kk\P$.

\begin{definition}\label{mu-def} A family 
 $\mu = (\mu_{S,T})$
of maps $\mu_{S,T} :   \P[S]\times \P[T] \to \P[S\sqcup T]$ indexed by pairs of disjoint finite sets $S$, $T$ is a \emph{multiplicative system} for $\P$ if
there exists a morphism $\nabla : \p\cdot \p \to \p$ 
whose components $\nabla_{S,T}$ are the linearizations of $\mu_{S,T}$. 
%\[
%\begin{diagram}
% \P[ S \sqcup T] & \rTo^{} & \p[S \sqcup T] \\
%\uTo^{\mu_{S,T}}  && \uTo_{\nabla_{S,T}} \\
%\P[S]\times \P [T] & \rTo^{  }  & \p[S]\otimes \p[T] \end{diagram}
% \]
% commutes for any disjoint finite sets $S$, $T$, where the horizontal arrows are the natural inclusions. (In particular, the bottom arrow is the map $(x,y)\mapsto x\otimes y$.)
%with $\nabla_{S,T}(\alpha\otimes\beta)=\mu_{S,T}(\alpha,\beta)$ for all $\alpha \in \P[S]$ and $\beta \in \P[T]$.
\end{definition}
%
%\begin{notation}
%\end{definition}

\begin{notation}
If $\mu$ is a multiplicative system then the morphism $\nabla : \p\cdot \p \to \p$ in  
this definition
is
  uniquely determined, and we denote it by $\nabla^\mu$.
  %A connected monoid $(\p,\nabla)$ is  linearized with respect to a basis $\P$ if and only if $\nabla = \nabla^\mu$ for some multiplicative system $\mu$ for $\P$.
  \end{notation}

%A multiplicative system may be defined more explicitly as any family of maps $\mu_{S,T}$ indexed by pairs of disjoint finite sets such that the diagram

One can interpret  a multiplicative system as a morphism $\mu : \P\cdot \P \to \P$ where the Cauchy product $\P\cdot \P$ of set species is defined as in \cite[Section 8.7.1]{species}, namely, by replacing the direct sums in Definition \ref{cauchy-def} by disjoint unions and the tensor products by Cartesian products.
We avoid defining a multiplicative system  in this way because the dual notion of a \emph{comultiplicative system} in the next section cannot similarly be interpreted as a morphism $\P \to \P\cdot \P$.

In the examples which follow, given maps $\lambda : S \to S'$ and $\lambda' : T \to T'$ with disjoint domains, we write $\lambda \sqcup \lambda'$ for the unique map $S \sqcup T \to S' \cup T'$ which restricts  to $\lambda$ on $S$ and  to $\lambda'$ on $T$.

%\[\]
%A multiplicative system may be defined more explicitly as any family of maps $\mu_{S,T}$ indexed by pairs of disjoint finite sets such that the diagram
%\be\label{mu-diagram}
%\begin{diagram}
% \P[ S \sqcup T] &&& \rTo^{\P[\sigma]} &&& \P[S'\sqcup T'] \\
%\uTo^{\mu_{S,T}}&  &&&&& \uTo_{\mu_{S',T'}} \\
%\P[S] \times \P[T] && &\rTo^{ \P[\sigma|_S]\times \P[\sigma|_T] } & && \P[S']\times \P[T'] \end{diagram}
% \ee
% commutes for any bijection $\sigma : S\sqcup T \to S' \sqcup T'$ with $\sigma(S) = S'$ and $\sigma(T) = T'$.
%One can interpret  a multiplicative system as a morphism $\mu : \P\cdot \P \to \P$ where the Cauchy product of set species is defined as in \cite[Section 8.7.1]{species}, namely, by replacing the direct sums in Definition \ref{cauchy-def} by disjoint unions and the tensor products by Cartesian products.
%We avoid defining a multiplicative system  in this way because the dual notion of a \emph{comultiplicative system} in the next section cannot similarly be interpreted as a morphism $\P \to \P\cdot \P$.
%\[\]

\begin{example}\label{mu-ex}
Recall the set species defined at the beginning of Section \ref{prelim-sect}.
For each of those examples, we define a multiplicative system $\mu$ as follows.
\begin{itemize}
\item[(i)] Exponential species: if $\P=\E$ then define $\mu_{S,T}$ as the unique map $\{1_\kk\} \times \{1_\kk\} \to \{1_\kk\}$.

\item[(ii)] Maps: if $\P = \E_C$  then define $\mu_{S,T}(\lambda,\lambda')= \lambda \sqcup \lambda'$ for maps $\lambda : S \to C$ and $\lambda' : T \to C$. 
%Here we write $ \lambda \sqcup \lambda'$ for the unique map $S\sqcup T \to C$ which restricts to $\lambda$ on $S$ and to $\lambda'$ on $T$. 

\item[(iii)] Permutations: if $\P = \fk S$ then define $\mu_{S,T}(\lambda,\lambda') = \lambda \sqcup \lambda'$, so that this is the permutation of $S\sqcup T$ whose cycle representation is given by concatenating the cycles of $\lambda$ and $\lambda'$.

\item[(iv)] Linear orders: if $\P = \L$ then define $\mu_{S,T}(\lambda,\lambda')$
as
 the unique linear order of $S \sqcup T$ restricting to $\lambda$ on $S$ and to $\lambda'$ on $T$ and such that every element of $T$ exceeds every element of $S$.
%, in the case that $|S| =n$ and $|T| = m$, as the linear ordering given by the map $[n+m] \to S\sqcup T$ with the formula
%\[ i \mapsto \begin{cases} \lambda(i)&\text{if }i \in \{1,2,\dots,n\} \\ \lambda'(i-n)&\text{if }i-n \in \{1,2,\dots,m\}.\end{cases}\]
%In contrast to the previous examples, here we do not always have $\mu_{ST}(\lambda,\lambda') = \mu_{TS}(\lambda',\lambda)$.

\item[(v)] Set partitions: if $\P=\Pi$ then define $\mu_{S,T}(\lambda,\lambda') = \lambda \sqcup \lambda'$, so that this is the partition of $S\sqcup T$ whose blocks are the blocks of $\lambda$ and $\lambda'$.

%\item Set partitions of type D: let $\X[S]$ be the set of arcs sets $\Lambda$ such that if $(i,j,t) \in \Lambda$ then $i,j \in S \cup (- S)$ and $t \in [r]$ and $(-j,-i,t) \in \Lambda$ and $i\neq -j$.   If $\sigma : S \to S'$ is an order-preserving bijection then $\sigma$ extends to a unique  bijection $\wt \sigma : S \cup (-S) \to S' \cup (-S')$ preserving the order $\prec$ on $\cZ$, and one defines $\X[\sigma](\Lambda)$ to be the arc set $ \{ (\wt\sigma(i),\wt\sigma(j),t) : (i,j,t) \in \Lambda\}$. Again define $\mu_{ST}(\Lambda,\Lambda') = \Lambda \cup \Lambda'$.

\end{itemize}

\end{example}

For the duration of this section $\mu$ denotes a multiplicative system for  $\P$.
Concerning  such a system, we introduce the following terminology:
\begin{itemize}
\item $\mu$ is \emph{injective} or \emph{surjective} if the maps $\mu_{S,T}$ are always injective or surjective.
\item $\mu$ is \emph{associative} or \emph{commutative} if the morphism $\nabla^\mu$ is associative or commutative. 
\item  $\mu$ is \emph{unital} if $\P[\varnothing] = \{1_\P\}$ is a singleton set and the maps $\mu_{\varnothing,S}$ and $\mu_{S,\varnothing}$ are always the canonical identifications $(1_\P,\lambda) \mapsto \lambda$ and $(\lambda,1_\P)\mapsto \lambda$.

%\item $\mu$ is \emph{Hopf self-compatible} if  $\nabla^\mu$ and $\Delta^\mu$ are Hopf compatible in the sense of Definition \ref{hopf-compat-def}.

\end{itemize}
All of the multiplicative systems in Example \ref{mu-ex} are injective, associative, and unital. Only systems (i) and (ii) are surjective  while only system (iv) is non-commutative. 
%
%\begin{remark}
We will sometimes 
write $\lambda\cdot\lambda'$ for the image of $(\lambda,\lambda') \in \P[S]\times\P[T]$ under $\mu_{S,T}$ when the system $\mu$ and the sets $S$, $T$ are clear from context. 
Using this notation, we can say that
$\mu$ is associative or commutative if and only if the familiar identities 
$\lambda\cdot(\lambda'\cdot\lambda'') = (\lambda\cdot\lambda')\cdot\lambda'' $ or $\lambda\cdot\lambda' =\lambda'\cdot\lambda$
always hold.

Continuing, we have the following fact. Recall here that the set species $\P$ is \emph{finite} if $\P[S]$ is always a finite set. %This statement derives from the observation that when $\P$ is finite the induced morphism $\p \to \p^*$ is an isomorphism.

 \begin{fact} \label{mumorph}
 If $\P$ is finite then there 
 is a unique morphism 
 %are unique morphisms  $\nabla^\mu : \p \cdot \p \to \p$ and 
 $\Delta^\mu : \p \to \p \cdot \p $ of vector species 
whose components for disjoint finite sets $S$, $T$ have the formula
  \[\Delta^\mu_{S,T}(\gamma) = \sum_{\substack{(\lambda,\lambda') \in \P[S]\times \P[T] \\ \mu_{S,T}(\lambda,\lambda') = \gamma}} \lambda\otimes \lambda'
  \qquad\text{for }
\gamma \in \P[S\sqcup T].
\] 
The morphism $\nabla^\mu$ is associative (respectively, commutative) if and only if $\Delta^\mu$ is coassociative (respectively, cocommutative).
 \end{fact}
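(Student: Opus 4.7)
The plan is to first establish existence and uniqueness of $\Delta^\mu$ by direct construction on the basis $\P$, and then to deduce the (co)associativity and (co)commutativity equivalences by observing that $\Delta^\mu$ is defined to have exactly the same structure constants in $\P$ as $\nabla^\mu$, so that each axiom for $\nabla^\mu$ reduces verbatim to the dual axiom for $\Delta^\mu$.

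For the first part, I would take the displayed formula as the definition of $\Delta^\mu_{S,T}$ on the basis vector $\gamma \in \P[S\sqcup T]$ and extend linearly. The hypothesis that $\P$ is finite guarantees that the fiber $\mu_{S,T}^{-1}(\gamma)\subset \P[S]\times \P[T]$ is a finite set, so the sum genuinely lands in $\p[S]\otimes \p[T]$ and hence in the $(S,T)$-summand of $(\p\cdot\p)[S\sqcup T]$. Uniqueness is immediate since $\P[S\sqcup T]$ is a basis of $\p[S\sqcup T]$. To verify that the family $(\Delta^\mu_{S,T})$ assembles into a genuine morphism of vector species, I would check the right-hand square of \eqref{1st-diagrams} on a basis element $\gamma$; this reduces to the equivariance identity
\[ \mu_{S',T'}\bigl(\P[\sigma|_S](\lambda),\,\P[\sigma|_T](\lambda')\bigr) \;=\; \P[\sigma]\bigl(\mu_{S,T}(\lambda,\lambda')\bigr), \]
valid for any bijection $\sigma : S\sqcup T \to S'\sqcup T'$ preserving the decomposition, and this identity is exactly the assertion that $\nabla^\mu$ is a morphism of vector species.

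For the second part, observe that the structure constants of $\nabla^\mu$ in the basis $\P$ are $a^\gamma_{\lambda,\lambda'}=1$ when $\mu_{S,T}(\lambda,\lambda')=\gamma$ and $0$ otherwise, while by construction the structure constants of $\Delta^\mu$ in the same basis are equal to $a^\gamma_{\lambda,\lambda'}$. Associativity of $\nabla^\mu$, evaluated on basis elements $(a,b,c)\in \P[R]\times \P[S]\times \P[T]$ and compared against $d \in \P[R\sqcup S\sqcup T]$, is the statement
\[ \sum_{e\in \P[R\sqcup S]} a^e_{a,b}\, a^d_{e,c} \;=\; \sum_{f\in \P[S\sqcup T]} a^d_{a,f}\, a^f_{b,c}, \]
and coassociativity of $\Delta^\mu$ spelled out on $d$ and compared against $a\otimes b\otimes c$ is precisely the same identity (all sums are finite by finiteness of $\P$). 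Commutativity of $\nabla^\mu$ and cocommutativity of $\Delta^\mu$ each reduce in the same way to the symmetry $a^\gamma_{\lambda,\lambda'}=a^\gamma_{\lambda',\lambda}$.

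The only step requiring any substantive verification is the naturality of $\Delta^\mu$ under bijections; the remaining equivalences are formal consequences of the fact that the same system of nonnegative integer structure constants encodes both operations. I do not anticipate a genuine obstacle in any of these steps.
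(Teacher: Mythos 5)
Your proof is correct, but it takes a more hands-on route than the paper. The paper's proof is a one-liner: it defines $\Delta^\mu$ as the composition $\p \xrightarrow{\sim} \p^* \xrightarrow{(\nabla^\mu)^*} (\p\cdot\p)^* \xrightarrow{\sim} \p^*\cdot\p^* \xrightarrow{\sim} \p\cdot\p$, where the outer isomorphisms come from the morphism $\p\to\p^*$ induced by the basis $\P$ (this is where finiteness of $\P$ enters, to make that morphism invertible). With this definition, the fact that $\Delta^\mu$ is a morphism of vector species is automatic (it is a composite of morphisms), the displayed formula for its components is a short computation, and the equivalence of (co)associativity and (co)commutativity falls out of the general duality statements already recorded in Section \ref{dual-sect} (the dual of an associative product is a coassociative coproduct, and commutativity dualizes to cocommutativity). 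Your approach instead constructs $\Delta^\mu$ directly on the basis, verifies naturality by transporting fibers of $\mu_{S,T}$ along the bijections $\P[\sigma|_S]\times\P[\sigma|_T]$, and then proves the second claim by matching structure constants; all of these steps check out (in particular your fiber-bijection argument for naturality is sound, using equivariance for both $\sigma$ and $\sigma^{-1}$). What your version buys is self-containedness and an explicit identification of where each hypothesis is used; what the paper's version buys is brevity and the observation that $\Delta^\mu$ is literally the transport of the dual coproduct along the basis-induced isomorphism, which is the conceptual content behind Proposition \ref{monoid-prop} and the later self-duality results.
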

 
 \begin{remark}
If $\mu$ is injective then the formula for $\nabla^\mu$ simplifies considerably, since then the sum defining
 $\Delta^\mu_{S,T}(\gamma)$ 
 has at most one summand.
\end{remark}
 
 \begin{proof}
The desired morphism $\Delta^\mu$ is   the  composition 
$\p \xrightarrow{\sim} \p^* \xrightarrow{(\nabla^\mu)^*} (\p\cdot \p)^* \xrightarrow{\sim} \p^* \cdot \p^* \xrightarrow{\sim} \p\cdot \p$
 where the first  map is the  isomorphism $f: \p \to \p^*$ induced by   $\P$ and the last map is $f^{-1}\cdot f^{-1}$. 
 \end{proof}

 The following proposition, whose proof is just a summary of the preceding discussion, describes precisely when a multiplicative system gives rise to a linearized connected monoid.
 
 \begin{proposition}\label{monoid-prop}  
 Let $\mu$ be a multiplicative system for a finite set species $\P$ 
  with linearization $\p=\KK\P$. The following are then equivalent:
 \ben
 \item[(a)] $\mu$ is associative and unital.
 
 \item[(b)] $(\p,\nabla^\mu)$ is a connected monoid.
 
 \item[(c)] $(\p,\Delta^\mu)$ is a connected comonoid.
 \een
 Moreover, if these conditions hold then $(\p,\nabla^\mu)$ and $(\p,\Delta^\mu)$ are isomorphic to each other's duals via the morphism $\p \to \p^*$ induced by $\P$.
%Then
%  $\mu$ is  associative and unital
%  if and only if
%$(\kk\P,\nabla^\mu)$ is a connected monoid
%if and only if
%$(\kk\P,\Delta^\mu)$ is a connected comonoid.
%In these equivalent cases  $(\kk\P,\nabla^\mu)$ and $(\kk\P,\Delta^\mu)$ are isomorphic to each other's duals. 
\end{proposition}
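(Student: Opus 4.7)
My plan is to treat the three equivalences essentially by unwinding definitions, and then to deduce the duality claim directly from the construction of $\Delta^\mu$ in Fact~\ref{mumorph}.

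First I would establish (a)$\Leftrightarrow$(b). Associativity of $\mu$ is defined as associativity of $\nabla^\mu$, so only the unital condition needs examination. By Definition~\ref{mu-def}, $\nabla^\mu$ is a morphism of vector species, and unit compatibility for $(\p,\nabla^\mu)$ requires $\p$ to be connected, i.e.\ $\dim \p[\varnothing]=1$; since $\p=\kk\P$, this forces $\P[\varnothing]$ to be a singleton. Granting this, the two compatibility identities $\nabla^\mu_{\varnothing,S}(1_\p\otimes\lambda)=\lambda$ and $\nabla^\mu_{S,\varnothing}(\lambda\otimes 1_\p)=\lambda$ hold on the basis $\P$ precisely when $\mu_{\varnothing,S}$ and $\mu_{S,\varnothing}$ are the canonical identifications, i.e.\ when $\mu$ is unital. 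Thus (a) and (b) are equivalent by inspection.

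Next I would prove (b)$\Leftrightarrow$(c) via duality. Since $\P$ is finite, the morphism $f\colon \p\to \p^*$ induced by $\P$ is a componentwise isomorphism of connected species. By Fact~\ref{mumorph}, $\Delta^\mu$ is obtained (up to the canonical identification $(\p\cdot\p)^*\cong\p^*\cdot\p^*$) as $f^{-1}\cdot f^{-1}$ composed with $(\nabla^\mu)^*$ composed with $f$; equivalently, the transport of $(\nabla^\mu)^*$ along $f$. Consequently the diagrams defining coassociativity and counit compatibility of $\Delta^\mu$ are, after applying $f$ componentwise, exactly the duals of the diagrams defining associativity and unit compatibility of $\nabla^\mu$. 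Thus $\nabla^\mu$ is associative and unital if and only if $\Delta^\mu$ is coassociative and counital, proving (b)$\Leftrightarrow$(c). The same argument, applied to the commutativity diagrams in Definitions~\ref{assoc-def} and~\ref{coassoc-def}, gives the parenthetical claim in Fact~\ref{mumorph} that $\nabla^\mu$ is commutative iff $\Delta^\mu$ is cocommutative.

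For the final assertion, I would simply record that the construction of $\Delta^\mu$ as the transport of $(\nabla^\mu)^*$ through $f$ says exactly that $f$ intertwines $(\p,\Delta^\mu)$ with the dual comonoid of $(\p,\nabla^\mu)$, and symmetrically (using that $f$ is self-inverse up to the canonical identification $\p\cong\p^{**}$) intertwines $(\p,\nabla^\mu)$ with the dual monoid of $(\p,\Delta^\mu)$. There is no real obstacle here; the only point that requires any care is keeping track of the canonical identifications $(\p\cdot\p)^*\cong\p^*\cdot\p^*$ and $\one\cong\one^*$ used in the definition of the dual of a connected (co)monoid in Section~\ref{dual-sect}, so that unit maps go to counit maps correctly. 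Once these identifications are matched up, both the equivalence of (b) and (c) and the final isomorphism statement are formal consequences of the definition of $\Delta^\mu$.
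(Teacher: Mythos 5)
Your proposal is correct and matches the paper's intent exactly: the paper omits a written proof, stating that it is ``just a summary of the preceding discussion,'' and your argument---(a)$\Leftrightarrow$(b) by unwinding the definitions of associativity and unitality for $\mu$, and (b)$\Leftrightarrow$(c) together with the duality assertion via the construction of $\Delta^\mu$ as the transport of $(\nabla^\mu)^*$ along the induced isomorphism $\p\to\p^*$ in Fact~\ref{mumorph}---is precisely that summary.
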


%\begin{proof}
% The equivalence of (a) and (b) follows by inspection.
%%The associativity of $\mu$ is equivalent to the associativity of $\nabla^\mu$ by definition, and $\mu$ being unital is by inspection equivalent to $\p$ being connected and $\nabla^\mu$ being unit compatible (in the sense of the definition of a connected monoid in Section \ref{monoidal-struct-sect}). Therefore (a) and (b) are equivalent. 
%The pairs $(\p,\nabla^\mu)$ and $(\p,\Delta^\mu)$ are isomorphic to each other's duals by construction, so if either is a connected monoid or comonoid then the other is isomorphic to the associated dual comonoid or monoid. Thus (b) and (c) are likewise equivalent.
%\end{proof}

Suppose $\mu$ and $\mu'$ are multiplicative systems for  set species $\P$ and $\P'$. A \emph{morphism} of multiplicative systems $(\P,\mu) \to (\P',\mu')$ is a morphism of set species $f : \P \to \P'$ %which commutes with $\mu$ and $\mu'$ in the sense that 
such that
\[ f_{S\sqcup T}\circ \mu_{S,T} = \mu'_{S,T}\circ (f_S \times f_T)\] for all disjoint  finite sets $S$, $T$.
The proposition implies that $(\P,\mu)\cong (\P',\mu')$ if and only if $(\kk\P,\nabla^\mu) \cong (\kk\P',\nabla^{\mu'})$ as connected monoids 
and also, provided $\P$ and $\P'$ are finite,  $(\kk\P,\Delta^\mu) \cong (\kk\P',\Delta^{\mu'})$  as connected comonoids.

%\begin{definition} A multiplicative system $\mu$ for a finite set species $\P$ is \emph{Hopf self-compatible} if the morphisms $\nabla^\mu$ and $\Delta^\mu$ are Hopf compatible in the sense of Definition \ref{hopf-compat-def}.
%\end{definition}

The proposition likewise implies that the triple  $\h= (\kk\P,\nabla^\mu,\Delta^\mu)$ is a  connected Hopf monoid if and only if $\P$ is finite and $\mu$ is associative, unital, and \emph{Hopf self-compatible}$-$where we say that $\mu$ is Hopf self-compatible if  the morphisms $\nabla^\mu$ and $\Delta^\mu$ are Hopf compatible in the sense of Definition \ref{hopf-compat-def}. In this case $\h$ is  strongly self-dual, and every SSD connected Hopf monoid arises as such a triple for some multiplicative system $\mu$.
This observation shows that we may redefine the property of being strongly self-dual by the following more elementary conditions. 

\begin{proposition}\label{redef-prop}
A connected Hopf monoid $(\p,\nabla,\Delta)$ is strongly self-dual in some basis $\P$ for $\p$ if and only if   for all disjoint finite sets $S$, $T$ the following conditions hold:
\ben
%\item[(a)] $\P$ is finite.
\item [(a)]   $\nabla_{S,T}$ restricts to a map $\P[S]\times\P[T] \to \P[S\sqcup T]$.
\item[(b)] $\P[S\sqcup T]$ is a  finite subset of the union of the image of $\nabla_{S,T}$ and the kernel of $\Delta_{S,T}$.

%Each element of $\P[S\sqcup T]$ belongs to either the image of $\nabla_{S,T}$ or   the kernel of $\Delta_{S,T}$.
\een
\end{proposition}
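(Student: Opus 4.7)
The forward direction is essentially a bookkeeping argument. Suppose $\h = (\p,\nabla,\Delta)$ is strongly self-dual with respect to $\P$. By definition, $\nabla$ is linearized in $\P$, which is exactly condition (a). Finite-dimensionality of $\h$ gives that each $\P[S]$ is finite. For the nonzero-or-kernel statement in (b), I would argue from the FSD structure constants: by Proposition~\ref{structureconst-prop}, the product and coproduct share structure constants $c^\lambda_{\lambda',\lambda''}$, and since $\nabla$ is linearized these constants are $0$ or $1$ with $c^\lambda_{\lambda',\lambda''} = 1$ exactly when $\nabla_{S,T}(\lambda'\otimes\lambda'')=\lambda$. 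Thus if $\lambda \in \P[S\sqcup T]$ is not in the image of $\nabla_{S,T}$, then $c^\lambda_{\lambda',\lambda''} = 0$ for every pair, so $\Delta_{S,T}(\lambda) = 0$, i.e.\ $\lambda \in \ker\Delta_{S,T}$.

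For the reverse direction, assume (a) and (b) hold. Condition (a) gives a family of maps $\mu_{S,T}: \P[S]\times\P[T]\to\P[S\sqcup T]$ whose linearizations are the components of $\nabla$, so $\mu$ is a multiplicative system in the sense of Definition~\ref{mu-def} with $\nabla = \nabla^\mu$; also $\P$ is finite by taking $T = \varnothing$ in (b). The heart of the proof is to show that the given coproduct $\Delta$ coincides with the canonical coproduct $\Delta^\mu$ of Fact~\ref{mumorph}, whereupon the structure constants of $\nabla$ and $\Delta$ with respect to $\P$ automatically agree and $\h$ is FSD by Proposition~\ref{structureconst-prop}, hence SSD.

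To identify $\Delta$ with $\Delta^\mu$, fix $\gamma \in \P[S\sqcup T]$ and split into cases according to (b). If $\gamma \notin \im(\nabla_{S,T})$, then $\gamma \in \ker(\Delta_{S,T})$ by (b), and $\Delta^\mu_{S,T}(\gamma)$ is an empty sum, so both coproducts vanish on $\gamma$. If instead $\gamma = \mu_{S,T}(\alpha,\beta)$ for some pair $(\alpha,\beta) \in \P[S]\times\P[T]$, then $\nabla_{S,T}(\alpha\otimes\beta) = \gamma$, and Lemma~\ref{injsurj-fact} gives $\Delta_{S,T}(\gamma) = \Delta_{S,T}\circ\nabla_{S,T}(\alpha\otimes\beta) = \alpha\otimes\beta$. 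Applying the same lemma to any other preimage $(\alpha',\beta')$ of $\gamma$ under $\mu_{S,T}$ would give $\Delta_{S,T}(\gamma) = \alpha'\otimes\beta'$, and comparing in the basis $\P[S]\times\P[T]$ of $\p[S]\otimes\p[T]$ forces $(\alpha',\beta')=(\alpha,\beta)$. Thus $\mu_{S,T}$ is automatically injective on the fibers of its image, the sum defining $\Delta^\mu_{S,T}(\gamma)$ is the single term $\alpha\otimes\beta$, and $\Delta_{S,T}(\gamma) = \Delta^\mu_{S,T}(\gamma)$.

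The main obstacle is this last identification: one must use both condition (b) (to handle $\gamma$ outside the image) and Lemma~\ref{injsurj-fact} (to pin down $\Delta_{S,T}$ on the image and to force fiber-injectivity of $\mu_{S,T}$). Once $\Delta = \Delta^\mu$ is established, strong self-duality is immediate from the equality of structure constants combined with condition (a).
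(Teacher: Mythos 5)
Your proposal is correct and follows essentially the same route as the paper: the reverse direction identifies $\Delta$ with $\Delta^\mu$ by the same case split on whether $\gamma$ lies in the image of $\nabla_{S,T}$, using condition (b) for the kernel case and Lemma \ref{injsurj-fact} for the image case, and the forward direction is the same observation (which the paper states more tersely) that the shared structure constants of a linearized FSD product are $0$ or $1$, so basis elements outside the image of $\nabla_{S,T}$ must lie in $\ker\Delta_{S,T}$.
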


\begin{proof}
%If $(\p,\nabla,\Delta)$ is strongly self-dual in some basis $\P$ then $\P$ is finite by definition (since all freely self-dual Hopf monoids are finite-dimensional) and (a) and (b) hold since $\nabla=\nabla^\mu$ and $\Delta=\Delta^\mu$ for some multiplicative system $\mu$ for $\P$. Conversely,
Let $\h=(\p,\nabla,\Delta)$ be a connected Hopf monoid. 
%If $(\p,\nabla,\Delta) = (\kk\P,\nabla^\mu,\Delta^\mu)$ for some finite set species $\P$ and multiplicative system $\mu$ for $\P$ then conditions (a) and (b) hold by definition.
%
Suppose $\p = \kk\P$ for a  set species $\P$ and (a) and (b) hold,
so that $\P$ is finite and $\nabla = \nabla^\mu$ for some multiplicative system $\mu$ for $\P$. Combining (b)
with Lemma \ref{injsurj-fact}
then shows that $\Delta = \Delta^\mu$.
% which suffices to show that $\h$ is strongly self-dual with respect to $\P$. 
In detail, let $\gamma \in \P[S\sqcup T]$
and observe that if $\gamma$ does not belong to the image of $\nabla_{S,T}$ then \[\Delta_{S,T}(\gamma) = \Delta^\mu_{S,T}(\gamma) = 0\]
since (b) implies $\gamma \in \ker(\Delta_{S,T})$.
If alternatively $\gamma$ does belong to the image of $\nabla_{S,T}$,
then 
%(since $\nabla_{S,T}$ is injective)
 there is a unique pair $(\lambda,\lambda') \in \P[S]\times \P[T]$ with $\nabla_{S,T}(\lambda\otimes \lambda') = \mu_{S,T}(\lambda,\lambda') = \gamma$
and we have 
\[ \Delta_{S,T}(\gamma) = \Delta^\mu_{S,T}(\gamma) = \lambda\otimes \lambda'\] since $\Delta_{S,T}$ is the left inverse of $\nabla_{S,T}$. 
Hence $\Delta = \Delta^\mu$, which suffices to show that $\h$ is strongly self-dual with respect to $\P$. 
That conversely any SSD connected  Hopf monoid has the given properties follows from the fact that this is true for any connected Hopf monoid of the form $(\kk\P,\nabla^\mu,\Delta^\mu)$.
\end{proof}

It is usually straightforward to detect whether a given multiplicative system is associative or unital, directly from the definitions. It remains to give a simple criterion  to check whether $\mu$ is Hopf self-compatible, and we do this with 
the following theorem. % gives a ``local'' characterization of what it means for a multiplicative system to be Hopf self-compatible. 
%Here, given a disjoint decomposition $I = S_1\sqcup \dots \sqcup S_k$ of a finite set and an associative multiplicative system $\mu$, we define 
%\be\label{mu-mult}
%\mu_{S_1,S_2,\dots,S_k} : \P[S_1] \times \P[S_2]\times \dots \times \P[S_k] \to \P[I]
%\ee
%as the map whose linearization is the linear map $\nabla^\mu_{S_1,S_2,\dots,S_k}$ as described after Definition \ref{assoc-def}.
%(Alternatively, one can define this map directly  by just replacing the symbols $\p$ by $\P$, $\otimes$ by $\times$, and $\nabla$ by $\mu$ in the construction of $\nabla_{S_1,S_2,\dots,S_k}$ after Definition \ref{assoc-def}.) In particular, for four sets this map has the formula
%\[ \mu_{S_1,S_2,S_3,S_4}: (x_1,x_2,x_3,x_4) \mapsto \mu_{S,T}(y_1,y_2)\]
%where $S=S_1\sqcup S_2$ and $T=S_3\sqcup S_4$ and $y_1 = \mu_{S_1,S_2}(x_1,x_2)$ and $y_2 = \mu_{S_3,S_4}(x_3,x_4)$.

\begin{theorem}\label{selfcompat-thm}
Suppose $\P$ is a finite set species and  $\mu$ is a multiplicative system for $\P$ which is associative and unital.
Then $\mu$ is Hopf self-compatible if and only if the following conditions hold:
\begin{itemize}
\item[(a)] $\mu$ is commutative.
\item[(b)] $\mu$ is injective.
\item[(c)] If  $ S\sqcup S' = A \sqcup B$ are two disjoint decompositions of the same finite set 
then 
\[ 
\lambda \in \im(\mu_{A\cap S,B\cap S}) \qquand \lambda' \in \im(\mu_{A\cap S',B\cap S'})
\]
whenever  $(\lambda,\lambda') \in \P[S]\times \P[S']$ such that $\mu_{S,S'}(\lambda,\lambda ') \in \im(\mu_{A,B})$.

%\[  \lambda \in \im(\mu_{A,B})\quand \lambda' \in \im(\mu_{A',B'})\]
%where $A = R\cap S$ and $B = R\cap S'$ and $A' = R'\cap S$ and $B' = R'\cap S'$.
%for some elements $\alpha \in \P[R\cap S]$ and $\beta \in \P[R\cap S']$ and $\alpha' \in \P[R'\cap S]$ and $\beta' \in \P[R'\cap S']$.

%For any two  disjoint decompositions $ R\sqcup R' = S \sqcup S'$ of a finite set, the sets 
% \[ \im(\mu_{R,R'}) \cap \im(\mu_{S,S'}) \quand \P[R\cap S] \times \P[R\cap S'] \times \P[R'\cap S] \times \P[R'\cap S']\]
% have equal cardinality.
% 
%For any two  disjoint decompositions $ R\sqcup R' = S \sqcup S'$ of a finite set, one has
% \[ \im(\mu_{R,R'}) \cap \im(\mu_{S,S'}) \subset \im(\mu_{A,B,C,D})\]
 %where $A = R \cap S $ and $B = R\cap S'$ and $C = R'\cap S$ and $D = R' \cap S'$.
  \end{itemize}
\end{theorem}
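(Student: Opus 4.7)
The plan is to verify the two directions of the biconditional by directly computing both sides of the Hopf compatibility diagram (Definition \ref{hopf-compat-def}) applied to $\nabla^\mu$ and $\Delta^\mu$ on a pure basis tensor $\lambda \otimes \lambda' \in \P[S] \times \P[S']$. Using the formula for $\Delta^\mu$ from Fact \ref{mumorph}, the equation Hopf self-compatibility demands on such a tensor reads
\[ \sum_{\mu_{A,B}(\alpha,\beta)\,=\,\mu_{S,S'}(\lambda,\lambda')} \alpha \otimes \beta \ \ =\ \ \sum_{\substack{\mu_{A\cap S,\,B\cap S}(\lambda_1,\lambda_2)\,=\,\lambda \\ \mu_{A\cap S',\,B\cap S'}(\lambda_3,\lambda_4)\,=\,\lambda'}} \mu_{A\cap S,\,A\cap S'}(\lambda_1,\lambda_3) \otimes \mu_{B\cap S,\,B\cap S'}(\lambda_2,\lambda_4), \]
where the right-hand side is indexed over the obvious factorizations of $\lambda$ and $\lambda'$. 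All of the content of the theorem comes out of understanding when these two sums of basis elements agree.

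For the ``only if'' direction, I would begin with two special instances of the decomposition $A \sqcup B$ of $S \sqcup S'$. Taking $A = S'$ and $B = S$ collapses the right-hand side above, via the unit compatibility of $\mu$, to the single basis element $\lambda' \otimes \lambda$; matching this against the left-hand sum forces both $\mu_{S',S}(\lambda',\lambda) = \mu_{S,S'}(\lambda,\lambda')$ and uniqueness of this preimage under $\mu_{S',S}$, giving commutativity (a) and injectivity (b) simultaneously. For condition (c), I would then fix an arbitrary decomposition $S \sqcup S' = A \sqcup B$ and suppose $\mu_{S,S'}(\lambda,\lambda') = \mu_{A,B}(\alpha,\beta)$; by Lemma \ref{injsurj-fact} the left-hand side equals $\alpha \otimes \beta \neq 0$, so some term on the right must be nonzero, and the existence of such a term is precisely the required factorization of $\lambda$ and $\lambda'$ along the intersections with $A$ and $B$.

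For the ``if'' direction, I would fix an arbitrary decomposition $S \sqcup S' = A \sqcup B$ and a tensor $\lambda \otimes \lambda'$, and split on whether $\mu_{S,S'}(\lambda,\lambda')$ lies in $\im(\mu_{A,B})$. If it does, injectivity (b) makes the left-hand side a single term $\alpha \otimes \beta$, while (c) together with (b) yields unique factorizations $\lambda = \lambda_1 \cdot \lambda_2$ and $\lambda' = \lambda_3 \cdot \lambda_4$; associativity together with commutativity (a) then permits the rearrangement $(\lambda_1 \cdot \lambda_2) \cdot (\lambda_3 \cdot \lambda_4) = (\lambda_1 \cdot \lambda_3) \cdot (\lambda_2 \cdot \lambda_4)$, and injectivity identifies the unique right-hand term with $\alpha \otimes \beta$. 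If $\mu_{S,S'}(\lambda,\lambda')\notin \im(\mu_{A,B})$, the left-hand side vanishes, and the right-hand side must also vanish, since the existence of both factorizations of $\lambda$ and $\lambda'$ would produce, via the very same rearrangement, an element of $\im(\mu_{A,B})$ equal to $\mu_{S,S'}(\lambda,\lambda')$, contrary to assumption.

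The main obstacle, as far as I can see, is purely notational: keeping careful track of the four intersection sets $A\cap S$, $A\cap S'$, $B\cap S$, $B\cap S'$ and the four sub-elements $\lambda_1,\lambda_2,\lambda_3,\lambda_4$ while applying associativity and commutativity in the correct order. No results beyond the formula for $\Delta^\mu$ in Fact \ref{mumorph} and the identity $\Delta_{S,T}\circ \nabla_{S,T} = \id$ from Lemma \ref{injsurj-fact} appear to be needed.
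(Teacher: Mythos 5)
Your proposal is correct, and the heart of it --- the reduction of Hopf self-compatibility to an equality of two sums of basis tensors, the derivation of condition (c) from the nonvanishing of $\Delta^\mu_{A,B}\circ\nabla^\mu_{S,S'}(\lambda\otimes\lambda')=\alpha\otimes\beta$, and the converse via the case split on whether $\mu_{S,S'}(\lambda,\lambda')\in\im(\mu_{A,B})$ together with the rearrangement $(\lambda_1\cdot\lambda_2)\cdot(\lambda_3\cdot\lambda_4)=(\lambda_1\cdot\lambda_3)\cdot(\lambda_2\cdot\lambda_4)$ and injectivity of $\mu_{A,B}$ --- is exactly the paper's argument. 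The one place you diverge is in establishing (a) and (b) in the ``only if'' direction. The paper gets injectivity from Lemma \ref{injsurj-fact} and commutativity by observing that $(\kk\P,\nabla^\mu,\Delta^\mu)$ is freely self-dual and invoking Theorem \ref{coco-cor}; you instead specialize the compatibility diagram to the degenerate decomposition $A=S'$, $B=S$, where unitality collapses the right-hand side to the single tensor $\lambda'\otimes\lambda$, forcing both $\mu_{S',S}(\lambda',\lambda)=\mu_{S,S'}(\lambda,\lambda')$ and uniqueness of that preimage. Your route is more self-contained and in fact mirrors how the paper later proves the dual statement for comultiplicative systems (Theorem \ref{pi-selfcompat-lem}); it also sidesteps the hypothesis $\KK\subset\CC$ that Theorem \ref{coco-cor} carries via the Hermitian-form machinery, so your forward direction works over any field of characteristic zero without further comment. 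The paper's route is shorter given the structure theory already developed. Both are valid.
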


\begin{remark}
The theorem shows that all but system (iv) in Example \ref{mu-ex} are Hopf self-compatible.
\end{remark}

\begin{proof}
Suppose $\mu$ is Hopf self-compatible. Then $(\kk\P,\nabla^\mu,\Delta^\mu)$ is an FSD connected Hopf monoid so $\nabla^\mu$ is commutative by Theorem \ref{coco-cor} and the maps $\nabla^\mu_{S,T}$ are injective by Lemma \ref{injsurj-fact}. We conclude that $\mu$ in commutative and injective.
To show that property (c) holds, suppose $S\sqcup S' = A\sqcup B$ and $(\lambda,\lambda') \in \P[S]\times \P[S']$ such that $\lambda\cdot \lambda ' = \alpha\cdot \beta$ for some $(\alpha,\beta) \in \P[A]\times \P[B]$.
%We now consider the commutative diagram in Definition \ref{hopf-compat-def} with $\nabla = \nabla^\mu$ and $\Delta = \Delta^\mu$. 
By definition,  
\[(\Delta^\mu_{A,B}\circ \nabla_{S,S'}^\mu)(\lambda\otimes \lambda') = \alpha\otimes \beta \neq 0.\]
On the other hand,  the Hopf compatibility of $\nabla^\mu$ and $\Delta^\mu$ implies $\Delta^\mu_{A,B}\circ \nabla_{S,S'}^\mu = f\circ \tau \circ g$ where  
\[ f = \nabla^\mu_{A\cap S,A\cap S'} \otimes \nabla^\mu_{B\cap S, B\cap S'}\qquand g  =\Delta^\mu_{S\cap A,S\cap B} \otimes \Delta^\mu_{S'\cap A,S'\cap B}\]
and $\tau$ is an appropriate twisting isomorphism. As these are all linear maps, we must have $g(\lambda\otimes \lambda')\neq 0 $, and by the definition of $\Delta^\mu$ this holds precisely when we have both $\lambda \in \im(\mu_{A\cap S,B\cap S}) $ and $ \lambda' \in \im(\mu_{A\cap S',B\cap S'})$.

Conversely suppose that $\mu$ has properties (a), (b), and (c). Let $S\sqcup S' = A\sqcup B$ be two disjoint decompositions of the same finite set and fix $(\lambda,\lambda') \in \P[S]\times \P[S']$. We must show that $\nabla^\mu$ and $\Delta^\mu$ are Hopf compatible, and to do this it suffices to check that $x=y$ where
\[ x\omdef =  (\Delta^\mu_{A,B}\circ \nabla_{S,S'}^\mu)(\lambda\otimes \lambda')
\qquand
y \omdef = (f\circ \tau \circ g)(\lambda\otimes \lambda'),\] with the maps $f$ and $\tau$ and $g$  defined as above. 
%Let $x$ be the image of $\lambda\otimes \lambda'$ under the first map and let $y$ be the image under the second.
There are two cases to consider. First suppose $\lambda\cdot  \lambda' \notin \im(\mu_{A,B})$. Since $\mu$ is associative and commutative, it follows that either
 $\lambda \notin \im(\mu_{A\cap S,B\cap S}) $ or $ \lambda' \notin \im(\mu_{A\cap S',B\cap S'})$, and so we have $x=y=0$
 by the definition of $\Delta^\mu$.
Alternatively, suppose 
 $\lambda\cdot  \lambda'  =  \alpha\cdot \beta$ for some $(\alpha,\beta) \in \P[A]\times \P[B]$. 
Since $\mu$ is injective, our definition of $x$ then reduces to
\[x = \alpha \otimes \beta.\]
Property (c)  ensures that $\lambda =\alpha_S\cdot \beta_S$ and $\lambda' = \alpha_{S'}\cdot \beta_{S'}$
for  some $\alpha_S \in \P[S\cap A]$ and $\beta_S\in \P[S\cap B]$ and $\alpha_{S'}\in \P[S'\cap A] $ and $\beta_{S'} \in \P[S'\cap B]$, and our definition of $y$ likewise reduces to \[y = (\alpha_S \cdot \alpha_{S'}) \otimes (\beta_{S}\cdot \beta_{S'}).\]
%again using the fact that $\mu$ is injective.
Since $\mu$ is associative and commutative, we may compute
\[\ba (\alpha_S \cdot \alpha_{S'}) \cdot (\beta_{S}\cdot \beta_{S'}) &= \alpha_S \cdot( \beta_{S} \cdot \alpha_{S'})\cdot \beta_{S'}
\\&
= \alpha_S \cdot( \alpha_{S'} \cdot \beta_{S})\cdot \beta_{S'} 
\\&
= (\alpha_S \cdot \beta_{S}) \cdot (\alpha_{S'}\cdot \beta_{S'}) = \lambda\cdot \lambda' = \alpha\cdot \beta. 
\ea\]
Since $\mu$ is injective, this identity implies that $\alpha_S\cdot \alpha_{S'} = \alpha$ and $\beta_S \cdot \beta_{S'} = \beta$, and thus  again  $x=y$, which is what we needed to show.
\end{proof}

Recall from the introduction that if $\Q$ is a set species then $\cS(\Q)$ is the species of $\Q$-labeled set partitions.
If $\SetSp$ denotes the category of set species and $\SetSp_+$ is the full subcategory of set species $\Q$ with $\Q[\varnothing]= \varnothing$, then we can view $\cS$ as a faithful functor
\be\label{set-functor-eq} \cS : \SetSp_+ \to \SetSp\ee
by defining $\cS(f)$ for a morphism $f : \Q \to \Q'$ to be the  morphism $\cS(\Q) \to \cS(\Q')$  whose $I$-component acts on labeled partitions by the formula 
$
X \mapsto \{ (B,f_B(\lambda)) : (B,\lambda) \in X\}.
$
%In words, $\cS(f)$ maps a $\Q$-labeled partition $X$ to a $\Q'$-labeled partition by replacing the labels of the blocks of $X$ by their images under $f$.
%Checking that  $\cF$ is a faithful functor is straightforward, and 
It follows  that $\cS(\Q)\cong \cS(\Q')$ whenever $\Q$ and $\Q'$ are isomorphic set species with $\Q [\varnothing] = \Q'[\varnothing] = \varnothing$.
%\end{remark}
%We view $\cS(\P)$ as a subspecies of $\cS(\p)$ by identifying a labeled partition $X$ with the unordered tensor product of its blocks' labels $\lambda$.

The multiplicative system 
for the species of set partitions $\Pi$
 in Example \ref{mu-ex}(v) admits the following generalization to $\cS(\Q)$. Namely, for any disjoint sets $S$, $T$   denote by 
 $\cup_{S,T}$  the map $   \cS(\Q)[S]\times \cS(\Q)[T]  \to   \cS(\Q)[S\sqcup T] $ given the disjoint union $(X,Y) \mapsto X\sqcup Y$.
The following fact now comes as a corollary to Theorem \ref{selfcompat-thm}.
%\[ \barr{crcl} \cup_{S,T} : & \cS(\Q)[S]\times \cS(\Q)[T] &  \to &  \cS(\Q)[S\sqcup T] \\ 
%& (X,Y) &\mapsto & X \cup Y
%\earr
%\]
%when $X$ and $Y$ are $\Q$-labeled partitions of disjoint finite sets $S$ and $T$.
\begin{corollary}\label{cup-cor}
If  $\Q$ is a finite set species, then  $\cup = (\cup_{S,T})$ forms a multiplicative system for $\cS(\Q)$ which is associative, unital, and Hopf self-compatible,
and the natural isomorphism $\cS(\kk\Q) \xrightarrow{\sim} \kk\cS(\Q)$ defines an isomorphism of connected Hopf monoids $\cS(\kk\Q) \cong (\kk\cS(\Q),\nabla^\cup,\Delta^\cup)$.
\end{corollary}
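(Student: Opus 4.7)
The plan is to verify the three properties of $\cup$ directly and then produce the isomorphism by a bookkeeping argument on labeled partitions. To see that $\cup$ is a well-defined multiplicative system, I would observe that for disjoint $S$, $T$ and a bijection $\sigma : S\sqcup T \to S'\sqcup T'$ with $\sigma(S)=S'$ and $\sigma(T)=T'$, the action of $\cS(\Q)[\sigma]$ on a disjoint union $X\sqcup Y$ distributes over the union, so linearizing $\cup_{S,T}$ does produce a morphism $\nabla^{\cup}: \kk\cS(\Q)\cdot\kk\cS(\Q)\to \kk\cS(\Q)$.

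Associativity and unitality are immediate from set theory: $(X\sqcup Y)\sqcup Z = X\sqcup(Y\sqcup Z)$, and $\cS(\Q)[\varnothing]$ contains the single (empty) labeled partition, which acts as an identity. For Hopf self-compatibility I would invoke Theorem \ref{selfcompat-thm}. Conditions (a) and (b) are routine: $\cup$ is commutative since $X\sqcup Y=Y\sqcup X$, and it is injective because the $S$-blocks and $T$-blocks of $X\sqcup Y$ can be recovered by intersecting each block with $S$ or $T$. The essential content is condition (c): given two decompositions $S\sqcup S'=A\sqcup B$ and $(\lambda,\lambda')\in\cS(\Q)[S]\times\cS(\Q)[S']$ with $\lambda\sqcup\lambda'=\alpha\sqcup\beta$ for some $\alpha\in\cS(\Q)[A]$, $\beta\in\cS(\Q)[B]$, each labeled block of the common labeled partition is a block of both $\lambda\sqcup\lambda'$ and $\alpha\sqcup\beta$, hence its underlying set is contained in exactly one of $A\cap S$, $A\cap S'$, $B\cap S$, $B\cap S'$. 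Partitioning the blocks of $\lambda$ by whether they lie in $A$ or in $B$ then exhibits $\lambda\in\im(\cup_{A\cap S,B\cap S})$, and symmetrically for $\lambda'$.

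For the identification $\cS(\kk\Q)\cong(\kk\cS(\Q),\nabla^\cup,\Delta^\cup)$ I would use the canonical isomorphism $(\kk\Q)(X)=\bigotimes_{B\in X}\kk\Q[B]$, whose natural basis consists of tensors $\bigotimes_{B\in X}\lambda_B$ with $\lambda_B\in\Q[B]$; these correspond bijectively to $\Q$-labeled partitions whose underlying partition is $X$. Summing over $X\in\Pi[I]$ and invoking Fact \ref{connected-morph-fact} to handle the unit gives an isomorphism of connected species $\cS(\kk\Q)\xrightarrow{\sim}\kk\cS(\Q)$. Under this identification, the product of $\cS(\kk\Q)$, which on each summand is the canonical isomorphism $(\kk\Q)(X)\otimes(\kk\Q)(Y)\to(\kk\Q)(X\sqcup Y)$, becomes the linearization of $(X,Y)\mapsto X\sqcup Y$, i.e.\ $\nabla^\cup$; and the coproduct, which acts as zero on $(\kk\Q)(Z)$ whenever some block of $Z$ is not contained in $S$ or in $T$ and otherwise splits $Z$ into $Z|_S\sqcup Z|_T$, coincides with $\Delta^\cup$.

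The main obstacle is the verification of condition (c) of Theorem \ref{selfcompat-thm}, because it requires tracking how the labeled blocks of a common refinement distribute across the four-way intersection $\{A,B\}\times\{S,S'\}$; once this is done, the remaining steps are essentially bookkeeping and the identification in the last paragraph is forced by the definitions.
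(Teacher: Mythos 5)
Your proposal is correct and follows the same route as the paper: the paper likewise verifies associativity, commutativity, injectivity, and unitality by inspection, reduces Hopf self-compatibility to condition (c) of Theorem \ref{selfcompat-thm} (your block-by-block argument is exactly the "straightforward" check the paper leaves implicit), and treats the final identification $\cS(\kk\Q)\cong(\kk\cS(\Q),\nabla^\cup,\Delta^\cup)$ as immediate from the definition of $\cS(\kk\Q)$. You have simply supplied the details the paper omits.
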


\begin{proof}
That $\cup$ forms an associative, commutative, and injective multiplicative system follows by inspection. Since $\cS(\Q)[\varnothing]= \{ \varnothing\}$ we likewise find that $\cup$ is unital. To show that $\cup$ is Hopf self-compatible it remains only to check property (c) in Theorem \ref{selfcompat-thm}, and this is straightforward. 
The last part of the corollary is clear from the definition of $\cS(\kk\Q)$.
%This follows since  if $X = X_{S} \cup X_{S'} = X_A \cup X_B \in \cS(\Q)[I]$ where $I = S\sqcup S' = A \sqcup B$ and each $X_\bullet \in \cS(\Q)[\bullet]$, then the underlying sets of the blocks of $X$ are each
% subsets of exactly one of the intersections $A\cap S$ or $B \cap S$ or $A \cap S'$ or $B \cap S'$.
%Hence $X$ decomposes as the union of four $\Q$-labeled partitions of  these respective sets, and the union of the first two such partitions 
%necessarily gives $X_S$ while the union the second two gives $X_{S'}$. Property (c) in Theorem \ref{selfcompat-thm} therefore holds so $\cup$ is Hopf self-compatible.
\end{proof}

 %Below we show that any multiplicative system which is associative, unital, and Hopf self-compatible is isomorphic to the union $\cup$ on $\cS(\Q)$ for a certain set species $\Q$. The species $\Q$ will come from the following construction.
%Given a multiplicative system $\mu$ for a set species $\P$, 
Define $\cP(\P,\mu)\subset \P$ as the  subspecies  such that $\cP(\P,\mu)[\varnothing] = \varnothing$ and such that 
if $I$ is nonempty then
%\[ %\cP(\P,\mu)[\varnothing] = \varnothing
%%\qquand
% \cP(\P,\mu)[I] = \P[I] - \bigcup_{\substack{S\sqcup T = I \\ S,T\neq \varnothing}} \im(\mu_{S,T}).\]
% In words,
 $\cP(\P,\mu)[I]$
 is the set of elements in $\P[I]$ which are not in the image of $\mu_{S,T}$ for any disjoint decomposition $I = S\sqcup T$ with $S$ and $T$  both nonempty. 
  Our notation is justified by the following.
   
\begin{proposition} \label{fsd-prim-prop}
Let  $\mu$ be a multiplicative system for a finite set species $\P$ and set $\Q = \cP(\P,\mu)$.
If $(\kk\P,\nabla^\mu,\Delta^\mu)$ is a connected Hopf monoid, then $\kk\Q$ is its subspecies of primitive elements. 
\end{proposition}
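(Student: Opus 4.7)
The plan is to verify that $\kk\Q[I] = \cP(\h)[I]$ separately for $I = \varnothing$ and for nonempty $I$. The empty case is immediate since both sides are zero by the definitions of primitive elements and of $\cP(\P,\mu)$.

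For nonempty $I$, I would first exploit the fact that the hypothesis forces $\mu$ to be Hopf self-compatible, so Theorem \ref{selfcompat-thm} applies and in particular $\mu$ is injective. Combined with the formula in Fact \ref{mumorph}, this means that for each nontrivial disjoint decomposition $I = S \sqcup T$ and each $\gamma \in \P[S\sqcup T]$, either
\[
\Delta^\mu_{S,T}(\gamma) = 0 \quad\text{(when }\gamma \notin \im(\mu_{S,T})\text{),}
\]
or else there is a unique pair $(\lambda,\lambda') \in \P[S]\times\P[T]$ with $\mu_{S,T}(\lambda,\lambda') = \gamma$ and $\Delta^\mu_{S,T}(\gamma) = \lambda \otimes \lambda'$.

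The containment $\kk\Q[I] \subset \cP(\h)[I]$ is then immediate: any $\gamma \in \Q[I]$ lies in none of the images $\im(\mu_{S,T})$ for nontrivial decompositions, so $\Delta^\mu_{S,T}(\gamma) = 0$ in every such case, and by linearity the same holds for any element of $\kk\Q[I]$.

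For the reverse containment, I would fix a primitive $v = \sum_{\gamma \in \P[I]} c_\gamma \cdot \gamma$ and argue that $c_\gamma = 0$ for every $\gamma$ lying in some image $\im(\mu_{S,T})$ with $S, T$ both nonempty. Fixing such a decomposition, the injectivity of $\mu$ tells me that the tensors $\{\lambda \otimes \lambda' : \mu_{S,T}(\lambda,\lambda') = \gamma, \ \gamma \in \im(\mu_{S,T})\}$ are distinct basis elements of $\p[S]\otimes\p[T]$, and so are linearly independent. The equation $\Delta^\mu_{S,T}(v) = 0$ then forces $c_\gamma = 0$ for every $\gamma \in \im(\mu_{S,T})$. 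Running this over all nontrivial decompositions $I = S\sqcup T$ shows $v$ is supported on $\Q[I]$, giving $\cP(\h)[I] \subset \kk\Q[I]$.

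There is no real obstacle here; the only subtle point is recognizing that the Hopf monoid hypothesis automatically supplies the injectivity of $\mu$ needed to interpret the sum in Fact \ref{mumorph} as a single tensor, so that the images $\im(\mu_{S,T})$ partition the non-primitive part of the basis in a way compatible with the coproduct.
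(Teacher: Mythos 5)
Your proof is correct, and since the paper omits this argument entirely (calling it ``a simple exercise from the definition of $\Delta^\mu_{S,T}$''), what you have written is precisely the intended exercise carried out in full: injectivity of $\mu$ (available either via Theorem \ref{selfcompat-thm} or directly from Lemma \ref{injsurj-fact}) collapses each $\Delta^\mu_{S,T}(\gamma)$ to at most one basis tensor, and both containments follow by linear independence of those tensors. No gaps.
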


This statement's proof, which we omit, is a simple exercise from the definition of $\Delta^\mu_{S,T}$.

% The following example demonstrates that not all FSD Hopf monoids are strongly self-dual; i.e., a connected Hopf monoid can be freely self-dual even if its subspecies of primitive elements does not have a basis.
%
%\begin{example}
%. . .
%
%\end{example}

%\begin{proof}
%Write $\q$ for the subspecies of primitive elements in  $(\kk\P,\nabla^\mu,\Delta^\mu)$ and
%fix an arbitrary element $\gamma=\sum_{\lambda \in \P[I]} c_\lambda \cdot \lambda \in \p[I]$ where each $c_\lambda \in \kk$ and $I$ is some finite set. 
%%If $I = S\sqcup T$ is  any disjoint decomposition, 
%Then
% \[ \Delta^\mu_{S,T}(\gamma) = \sum_{(\lambda,\lambda') \in \P[S]\times\P[T]} c_{\mu_{S,T}(\lambda,\lambda')} \cdot \lambda\otimes \lambda'
% \qquad\text{whenever }I=S\sqcup T.
% \] 
%Since the elements $\lambda\otimes\lambda'$ for $(\lambda,\lambda') \in \P[S]\times\P[T]$ are linearly independent in $\p[S]\otimes\p[T]$, 
%it follows that $ \Delta^\mu_{S,T}(\gamma) = 0$ if and only if 
% $c_\lambda = 0$ for all $\lambda \in \im(\mu_{S,T})$. Thus $\gamma \in \q[I]$ if and only if $c_\lambda = 0$ whenever $\lambda \in \im(\mu_{S,T})$ for any disjoint decomposition $I=S\sqcup T$ with $S$ and $T$ both nonempty, that is, if and only if $\gamma$ is a linear combination of elements in $\Q[I]$. Hence $\q = \kk\Q$.
%% 
%\end{proof}

\begin{example}\label{cP-ex}
Given a set $C$, and let $\X_{C}$ denote the set species with $\X_C[S] = C$ for sets $S$ with one element and $\X_C[S] = \varnothing$ otherwise. Define $\X_C[\sigma]$ for bijections $\sigma$ to always be the identity map. 
% \[\X[S] =\begin{cases} \{1_\kk\} & |S| = 1 \\ \varnothing & |S| \neq 1\end{cases}
% \qquand
% \X^{C}[S] =\begin{cases} C & |S| = 1 \\ \varnothing & |S| \neq 1\end{cases}
% \]
% for finite sets $S$
% and with 
% $\X[\sigma]$ and $\X^{C}[\sigma]$ given by identity maps for all bijections $\sigma$.
We compute  $\cP(\P,\mu)$ for the set species  $\P$ and multiplicative systems $\mu$  in 
Example \ref{mu-ex}:
\ben
\item[(i)]  Exponential species: If $\P=\E$ then $\cP(\P,\mu) = \X$ where $\X\omdef= \X_{\{1\}}$.
%, the set species with 
%\[\X[S] =\begin{cases} \{1_\kk\} & |S| = 1 \\ \varnothing & |S| \neq 1.\end{cases}\]

\item[(ii)] Maps: if $\P = \E_C$  then $\cP(\P,\mu) \cong \X_{C}$.
%, the set species with 
%\[\X^{[n]}[S] =\begin{cases} [n] & |S| = 1 \\ \varnothing & |S| \neq 1\end{cases}\]
%and with $\X^{[n]}[\sigma]$ the identity map for all bijections $\sigma$ between finite sets.

\item[(iii)] Permutations: if $\P=\fk S$ then $\cP(\P,\mu)$ is the subspecies of  transitive permutations.

\item[(iv)] Linear orders: if $\P = \L$ then $\cP(\P,\mu) \cong \X$.
\item[(v)] Set partitions: if $\P=\Pi$ then $\cP(\P,\mu) \cong \E$.
\een
\end{example}

%For strongly self-dual connected Hopf monoids, we  have the following strengthening of Theorem \ref{coco-cor}, showing that the unique isomorphism described in that theorem occurs as the linearization of an isomorphism of multiplicative systems.

We may now prove Theorems \descref{B} and \descref{C} from the introduction.

\begin{theorem}\label{fsd-thm}
Let $\h$ be a connected Hopf monoid which is commutative and cocommutative. 
Then $\h$ is  
strongly self-dual
 if and only if its subspecies of primitive elements has a finite basis.
 \end{theorem}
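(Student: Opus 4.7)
The plan is to deduce both directions from Stover's structure theorem (Theorem \ref{coco-thm}), the description of a free commutative Hopf monoid on a linearized species as labeled set partitions (Corollary \ref{cup-cor}), and the computation of primitive elements of a Hopf monoid arising from a multiplicative system (Proposition \ref{fsd-prim-prop}).

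For the ``if'' direction, suppose $\q = \cP(\h)$ has a finite basis $\Q$, so $\q = \kk\Q$. By Theorem \ref{coco-thm}, there is an isomorphism of connected Hopf monoids $\h \cong \cS(\q) = \cS(\kk\Q)$. Corollary \ref{cup-cor} then provides a further isomorphism $\cS(\kk\Q) \cong (\kk\cS(\Q), \nabla^\cup, \Delta^\cup)$, where $\cup$ is the disjoint-union multiplicative system on $\Q$-labeled partitions. Because $\Q$ is a finite set species, so is $\cS(\Q)$ (for each finite $I$, there are only finitely many $\Q$-labeled partitions of $I$). Since $\cup$ is associative, unital, and Hopf self-compatible (again by Corollary \ref{cup-cor}), the triple $(\kk\cS(\Q), \nabla^\cup, \Delta^\cup)$ is a connected Hopf monoid whose product is linearized with respect to the basis $\cS(\Q)$ and which is freely self-dual by the discussion preceding Proposition \ref{redef-prop}. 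Transporting this basis across the composite isomorphism shows that $\h$ is strongly self-dual.

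For the ``only if'' direction, suppose $\h = (\p, \nabla, \Delta)$ is strongly self-dual with respect to a basis $\P$ for $\p$. Since $\h$ is freely self-dual it is finite-dimensional, so $\P$ is a finite set species. The linearization of $\nabla$ in the basis $\P$ means $\nabla = \nabla^\mu$ for a unique multiplicative system $\mu$ on $\P$, and Proposition \ref{monoid-prop} (together with the fact that $\nabla$ and $\Delta$ are dual to each other under the isomorphism $\p \to \p^*$ induced by $\P$, which is the defining property of free self-duality) forces $\Delta = \Delta^\mu$. Proposition \ref{fsd-prim-prop} then identifies $\cP(\h)$ with $\kk\cP(\P,\mu)$, so the set subspecies $\cP(\P,\mu) \subset \P$ is a basis for $\q$; it is finite because $\P$ is.

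The only nontrivial step is recognizing that, for the ``only if'' direction, the strong self-duality assumption forces the coproduct to coincide with $\Delta^\mu$, so that Proposition \ref{fsd-prim-prop} applies. This is exactly the content of (or rather a direct consequence of) Proposition \ref{redef-prop}: condition (b) there says $\P[S \sqcup T] \subset \im(\nabla_{S,T}) \cup \ker(\Delta_{S,T})$, which together with Lemma \ref{injsurj-fact} pins down $\Delta_{S,T}$ on every basis element. Once this identification is made, both directions are essentially assemblies of results already proved.
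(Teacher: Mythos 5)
Your proposal is correct and follows essentially the same route as the paper: the "only if" direction is Proposition \ref{fsd-prim-prop} applied to the presentation $\h = (\kk\P,\nabla^\mu,\Delta^\mu)$, and the "if" direction chains Theorem \ref{coco-thm} with Corollary \ref{cup-cor}. The extra detail you supply on why strong self-duality forces $\Delta = \Delta^\mu$ (via Proposition \ref{redef-prop} and Lemma \ref{injsurj-fact}) is a correct elaboration of a step the paper leaves implicit.
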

 
 \begin{proof}
 Let $\q=\cP(\h)$.
 If $\h$ is strongly self-dual then $\q$ has a finite basis by Proposition \ref{fsd-prim-prop}. 
Alternatively, if $\q$ has a finite basis $\Q$, then  $\h \cong \cS(\q) \cong (\kk\cS(\Q),\nabla^\cup,\Delta^\cup)$ by Theorem \ref{coco-thm} and Corollary \ref{cup-cor}, so $\h$ is strongly self-dual.
 \end{proof}

Theorem \ref{selfcompat-thm} gave a set of ``local'' properties characterizing a multiplicative system which is associative, unital, and Hopf self-compatible. The following  result, alternatively, gives  a ``global'' characterization of such systems. 
Here, observe that there is a natural inclusion $\Q \subset \cS(\Q)$ given by identifying $\Q$ with the subspecies of $\Q$-labeled partitions with exactly one block.

\begin{theorem} \label{muFSD-mainthm}
Suppose  $\mu$ is a multiplicative system for a finite set species $\P$ and $\Q = \cP(\P,\mu)$.
If $\mu$ is associative, unital, and Hopf self-compatible, then there is a unique 
isomorphism of multiplicative systems
\[f^\mu : (\cS(\Q),\cup)\xrightarrow{\sim} (\P,\mu)\]
%whose linearization gives   the  isomorphism  $\cS(\kk\Q) \xrightarrow{\sim} (\kk\P,\nabla^\mu,\Delta^\mu)$
%described in Theorem \ref{unique-isom-thm}.
making the diagram 
\[
\begin{diagram}
\cS(\Q) && \rTo^{f^\mu} && \P \\
& \luTo &&\ruTo\\
&& \Q
 \end{diagram}
 \]
commute, where the diagonal arrows are the natural inclusions of set species.
%
%The following are then equivalent:
%\ben
%\item[(a)] $(\kk\P,\nabla^\mu,\Delta^\mu)$ is a connected Hopf monoid.
%
%\item[(b)] $\mu$ is associative, unital, and Hopf self-compatible
%
%\item[(c)] There exists an isomorphism of multiplicative systems $(\P,\mu) \cong (\cS(\Q),\cup)$.
%
%\item[(d)] There is a unique isomorphism  $f^\mu : (\cS(\Q),\cup)\xrightarrow{\sim} (\P,\mu)$
%making the diagram 
%\[
%\begin{diagram}
%\cS(\Q) && \rTo^{f^\mu} && \P \\
%& \luTo &&\ruTo\\
%&& \Q
% \end{diagram}
% \]
%commute, where the diagonal arrows are the natural inclusions of set species.
%\een
\end{theorem}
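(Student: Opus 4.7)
The plan is to deduce the theorem from Stover's classification (Theorem \ref{coco-thm}) applied to the connected Hopf monoid $\h = (\kk\P,\nabla^\mu,\Delta^\mu)$, combined with the identification of $\cS(\kk\Q)$ with $\kk\cS(\Q)$ afforded by Corollary \ref{cup-cor}. Since $\mu$ is assumed to be associative, unital, and Hopf self-compatible, Proposition \ref{monoid-prop} and the definition of Hopf self-compatibility make $\h$ a connected Hopf monoid; by Theorem \ref{coco-cor} it is both commutative and cocommutative, and by Proposition \ref{fsd-prim-prop} its subspecies of primitive elements is precisely $\kk\Q$. Stover's theorem then supplies a unique isomorphism of connected Hopf monoids $\widetilde f : \cS(\kk\Q) \xrightarrow{\sim} \kk\P$ that extends the inclusion $\kk\Q \hookrightarrow \kk\P$; composing with the isomorphism $\kk\cS(\Q) \xrightarrow{\sim} \cS(\kk\Q)$ from Corollary \ref{cup-cor}, I obtain a Hopf monoid isomorphism $\kk\cS(\Q)\xrightarrow{\sim}\kk\P$ that I will still denote $\widetilde f$.

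The central step$-$and the one I expect to require the most care$-$is showing that $\widetilde f$ descends to a morphism of set species $f^\mu : \cS(\Q) \to \P$. For this I will invoke the explicit form of Stover's isomorphism from the proof of Theorem \ref{contained-thm}: the component $\widetilde f_I$ on the basis element corresponding to a $\Q$-labeled partition $X = \{(B_1,\lambda_1),\ldots,(B_k,\lambda_k)\} \in \cS(\Q)[I]$ acts as $\nabla^\mu_{B_1,\ldots,B_k}(\lambda_1\otimes\cdots\otimes\lambda_k) = \mu_{B_1,\ldots,B_k}(\lambda_1,\ldots,\lambda_k)$, a well-defined element of $\P[I]$ because the commutativity of $\mu$ (guaranteed by Theorem \ref{selfcompat-thm}) makes the value independent of the chosen ordering of the blocks of $X$. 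Hence $\widetilde f_I$ restricts to a set map $f^\mu_I:\cS(\Q)[I] \to \P[I]$. Since $\P[I]$ is finite and $\widetilde f_I$ is a linear isomorphism, $f^\mu_I$ sends the basis $\cS(\Q)[I]$ into the finite basis $\P[I]$ of the same cardinality in such a way that the image still spans $\kk\P[I]$; a cardinality argument then forces $f^\mu_I$ to be a bijection, so $f^\mu$ is an isomorphism of set species, and naturality in $I$ is inherited from $\widetilde f$.

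To conclude I will verify that $f^\mu$ is a morphism of multiplicative systems and that it is unique. Compatibility of $f^\mu$ with $\cup$ and $\mu$ amounts to the identity $f^\mu_{S\sqcup T}\circ \cup_{S,T} = \mu_{S,T}\circ(f^\mu_S\times f^\mu_T)$ on labeled partitions, which follows by evaluating the monoid identity $\widetilde f \circ \nabla^\cup = \nabla^\mu\circ (\widetilde f\cdot \widetilde f)$ on basis elements; the commutativity of the triangle with the inclusions of $\Q$ follows from the corresponding property of $\widetilde f$ together with the identification of $\Q$ with the one-block labeled partitions inside $\cS(\Q)$. For uniqueness, any second isomorphism $g:(\cS(\Q),\cup)\to(\P,\mu)$ making the diagram commute linearizes, by Proposition \ref{monoid-prop}, to an isomorphism of connected monoids $\kk\cS(\Q)\to\kk\P$ which, by the same proposition applied to the duals, is simultaneously an isomorphism of connected comonoids and hence of connected Hopf monoids extending $\kk\Q\hookrightarrow\kk\P$; the uniqueness clause of Stover's theorem forces this linearization to equal $\widetilde f$, and hence $g = f^\mu$.
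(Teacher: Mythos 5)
Your proposal is correct and follows essentially the same route as the paper: both deduce the result from Stover's theorem via Theorem \ref{coco-cor} and Proposition \ref{fsd-prim-prop}, with the key observation that the explicit isomorphism from the proof of Theorem \ref{contained-thm} sends a labeled partition $X$ to the well-defined product $\mu_{B_1,\dots,B_k}(\lambda_1,\dots,\lambda_k)$, so it restricts to a bijection of bases. The only cosmetic difference is that your uniqueness argument invokes the uniqueness clause of Stover's theorem, whereas the paper argues directly that an automorphism of $(\cS(\Q),\cup)$ fixing $\Q$ must be the identity because every labeled partition is a union of one-block partitions; both are valid.
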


We deduce this result as a corollary to Theorems \ref{coco-thm} and \ref{coco-cor}, but one could also give a  proof using only Theorem \ref{selfcompat-thm}.

\begin{proof}
To see the uniqueness of $f^\mu$, suppose
$g^\mu $ is another isomorphism   $(\cS(\Q),\cup) \xrightarrow{\sim} (\P,\mu)$ making the diagram in the theorem commute. Then $(f^\mu)^{-1} \circ g^\mu$ is an automorphism of the multiplicative system $(\cS(\Q),\cup)$ which restricts to the identity on the subspecies $\Q$. Since every $\Q$-labeled set partition is the union of labeled partitions with only one block (which we identity with $\Q$), the composition  $(f^\mu)^{-1} \circ g^\mu$ must be the identity map
so $f^\mu =g^\mu$.

To construct $f^\mu$, 
assume $\mu$ is associative, unital and Hopf self-compatible, and for each 
 finite set $I$    define a map $f^\mu_I : \cS(\Q)[I] \to \P[I]$ as follows. When $I$ is empty both $\cS(\Q)[\varnothing]$ and $\P[\varnothing]$ are  singleton sets so we define $f^\mu_\varnothing$ as the unique map between them. When $I \neq \varnothing$
and  $X \in \cS(\Q)[I]$ is a labeled set partition with $k$ blocks, we 
set
\[f^\mu_I(X) = \lambda_1\cdot \lambda_2\cdots \lambda_k \]
where  $(B_1,\lambda_1)$, $(B_2,\lambda_2)$, \dots, $(B_k,\lambda_k)$ is an arbitrary ordering of the blocks of $X$. 
Here we use the notation $\lambda \cdot \lambda'$ to denote the image of $(\lambda,\lambda') \in \P[S]\times \P[T]$ under $\mu_{S,T}$. It makes no difference how we parenthesize this product (and thus we have omitted all  parentheses) since $\mu$ is associative. Likewise, the product has no dependence on how we order the blocks of $X$ since $\mu$ is commutative by Theorem \ref{selfcompat-thm}.
 
The linearization of $f^\mu_I$ is precisely the map \eqref{f_I} defined  in the proof of Theorem \ref{contained-thm} with $\h = (\kk\P, \nabla^\mu,\Delta^\mu)$. Hence 
 the maps $f^\mu_I$ are the components of a morphism $f^\mu : \cS(\Q) \to \P$
 whose linearization is an isomorphism of connected Hopf monoids $(\kk\cS(\Q),\nabla^\cup,\Delta^\cup) \xrightarrow{\sim} (\kk\P,\nabla^\mu,\Delta^\mu)$. It follows that $f^\mu$ must be itself an isomorphism of multiplicative systems.
\end{proof}

 \subsection{Linearly self-dual Hopf monoids}
 \label{pifsd-sect}
 
%Here we  have
 % classify the  connected Hopf monoids which are linearly self-dual. 
%We begin with
%  several facts and definitions dual to the ones in the preceding section. 
Again let $\P$ denote a fixed set species and with linearization $\p = \kk\P$. Following \cite[Section 8.7.2]{species}, we say that 
a connected comonoid $(\p,\Delta)$ is \emph{linearized} with respect to $\P$ if the coproduct $\Delta$ is linearized in this basis (in the sense of the introduction). 
Recall that a connected Hopf monoid $\h$ is \emph{linearly self-dual} with respect to some basis if in that basis, $\h$ is both freely self-dual and linearized as a comonoid.
We classify such Hopf monoids in this section.

Towards this end, we have several facts and definitions dual to the ones in the preceding section. 
Complementing Definition \ref{mu-def}, we have the following problem:

\begin{definition}\label{pi-def} A family 
 $\pi = (\pi_{S,T})$
of maps $\pi_{S,T} :   \P[S\sqcup T] \to \P[S]\times \P[T]$ indexed by pairs of disjoint finite sets $S$, $T$ is a \emph{comultiplicative system} for $\P$ if
there exists a morphism $\Delta : \p \to  \p\cdot \p $ 
whose components $\Delta_{S,T}$ are the linearizations of $\pi_{S,T}$.
\end{definition}

\begin{notation}
If $\pi$ is a comultiplicative system then the morphism $\Delta : \p \to \p\cdot \p $ in 
Definition \ref{pi-def}
 is uniquely determined, and we denote it by $\Delta^\pi$.
\end{notation}

\begin{example}\label{pi-ex}
For each  species in Example \ref{mu-ex} we define a comultiplicative system $\pi$ as follows:
\begin{itemize}
\item[(i)] Exponential species: if $\P = \E$ then
define $\pi_{S,T}$ as the unique map $\{1_\kk\} \to \{1_\kk\}\times \{1_\kk\}$.

\item[(ii)] Maps: if $\P = \E_C$ then define $\pi_{S,T}(\lambda)= (\lambda|_S, \lambda|_T)$ for any map $\lambda : S\sqcup T \to C$.

\item[(iii)] Permutations: if $\P = \fk S$ then define $\pi_{S,T}(\lambda) = (\lambda',\lambda'')$ for permutations $\lambda \in \fk S[S\sqcup T]$, where $\lambda' $ is the permutation of $S$ such that if $i \in S$ then $\lambda'(i)$ is the first element of the sequence $\lambda(i),\ \lambda^2(i),\ \lambda^3(i),\ \dots$ belonging to $S$, and $\lambda'' \in \fk S[T]$ is defined likewise.
%(\lambda,\lambda') = \lambda \sqcup \lambda'$, so that this is the permutation of $S\sqcup T$ whose cycle representation is given by concatenating the cycles of $\lambda$ and $\lambda'$.

\item[(iv)] Linear orders: if $\P=\L$ then define $\pi_{S,T}(\lambda) = (\lambda|_S,\lambda|_T)$ where $\lambda|_S$ and $\lambda|_T$ are the orders of $S$ and $T$ induced by the linear order $\lambda \in \L[S\sqcup T]$.

\item[(v)] Set partitions: if $\P = \Pi$ then define $\pi_{S,T}(\lambda) = (\lambda|_S,\lambda|_T)$ where $\lambda|_S$ denotes the partition of $S$ whose blocks are the nonempty intersections of the form $B \cap S$ with $B $ ranging over all blocks of  $ \lambda \in \Pi[S]$, and  $\lambda|_T$ is defined likewise.
%\item Set partitions of type D: define $\pi_{ST}$ as the map sending an arc set $\Lambda \in \X[S\sqcup T]$ to $(\Lambda',\Lambda'') \in \X[S]\times \X[T]$ where $\Lambda' = \{ (i,j,t) \in \Lambda : i,j \in S \cup (-S)\}$ and $\Lambda'' = \{ (i,j,t)\in \Lambda : i,j \in T \cup (-T)\}$.

\end{itemize}
\end{example}

For the rest of this section $\pi$ will denote a fixed comultiplicative system for  $\P$.
Concerning  such a system, we adapt the following terminology  from  the last section:
\begin{itemize}
\item $\pi$ is \emph{injective} or \emph{surjective} if the maps $\pi_{S,T}$ are always injective or surjective.

\item $\pi$ is \emph{coassociative} or \emph{cocommutative} if the morphism $\Delta^\mu$ is coassociative or cocommutative.

\item  $\pi$ is \emph{counital} if $\P[\varnothing] = \{1_\P\}$ is a singleton set and the maps $\pi_{\varnothing,S}$ and $\pi_{S,\varnothing}$ are always the canonical identifications $\lambda\mapsto (1_\P,\lambda)$ and $\lambda\mapsto (\lambda,1_\P)$.

%\item $\pi$ is \emph{Hopf self-compatible} if  $\nabla^\pi$ and $\Delta^\pi$ are Hopf compatible in the sense of Definition \ref{hopf-compat-def}.

\end{itemize}
All of the comultiplicative systems in Example \ref{pi-ex} are surjective, coassociative, cocommutative, and counital. Only systems (i) and (ii) are injective. %Lemma \ref{pi-selfcompat-lem} below will show that these are also the systems which are Hopf self-compatible.
In turn, we have the following comultiplicative version of Fact \ref{mumorph}.

 \begin{fact} \label{pimorph}
 If $\P$ is finite then there 
 is a unique morphism 
 $\nabla^\pi : \p\cdot \p \to \p  $ of vector species 
whose components for disjoint finite sets $S$, $T$ have the formula
  \[\nabla^\pi_{S,T}(\alpha \otimes \beta) =\sum_{\substack{\lambda \in \P[S\sqcup T] \\ \pi_{S,T}(\lambda) = (\alpha,\beta)}} \lambda
  \qquad\text{for }
(\alpha,\beta) \in \P[S]\times\P[T].
\] 
The morphism $\nabla^\pi$ is associative (respectively, commutative) if and only if $\Delta^\pi$ is coassociative (respectively, cocommutative).
\end{fact}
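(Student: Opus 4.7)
The plan is to mirror the proof of Fact \ref{mumorph} and construct $\nabla^\pi$ as the dual of $\Delta^\pi$ with respect to the basis $\P$. Since $\P$ is finite, every $\p[S]$ is finite-dimensional and the morphism $f : \p \to \p^*$ induced by $\P$ is an isomorphism of vector species. I would therefore define $\nabla^\pi$ to be the composition
\[
\p\cdot\p \xrightarrow{\ f\cdot f\ } \p^* \cdot \p^* \xrightarrow{\ \sim\ } (\p\cdot\p)^* \xrightarrow{\ (\Delta^\pi)^*\ } \p^* \xrightarrow{\ f^{-1}\ } \p,
\]
where the middle isomorphism is the canonical identification $(\p\cdot\p)^*\cong \p^*\cdot\p^*$ recorded in Section~\ref{dual-sect}. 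This is manifestly a morphism of vector species.

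Next I would verify the asserted formula. Fix disjoint finite sets $S$, $T$ and basis elements $\alpha\in \P[S]$, $\beta\in \P[T]$, $\lambda\in \P[S\sqcup T]$. Chasing $\alpha\otimes\beta$ through the four arrows, one reduces (using how $f$ is defined on $\P$) to computing the coefficient of $\lambda$ in $\nabla^\pi_{S,T}(\alpha\otimes \beta)$, which equals the coefficient of $\alpha\otimes\beta$ in $\Delta^\pi_{S,T}(\lambda)$. Since $\Delta^\pi$ is the linearization of $\pi$, that latter coefficient is $1$ if $\pi_{S,T}(\lambda)=(\alpha,\beta)$ and $0$ otherwise, which is exactly the claimed formula. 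Uniqueness is then immediate, because the formula determines $\nabla^\pi_{S,T}$ on the basis $\P[S]\times \P[T]$ of $(\p\cdot\p)[S\sqcup T]$.

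For the associative/coassociative (and commutative/cocommutative) correspondence I would exploit the same duality. By construction, under the isomorphism $f$, the morphism $\nabla^\pi$ is identified with $(\Delta^\pi)^*$. As recalled in Section~\ref{dual-sect}, the dual of a coassociative (respectively, cocommutative) morphism is associative (respectively, commutative), and the argument is reversible since $f$ is an isomorphism. Thus $\nabla^\pi$ is associative (respectively, commutative) if and only if $\Delta^\pi$ is coassociative (respectively, cocommutative).

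The only real bookkeeping obstacle is keeping track of the canonical twist isomorphisms between $(\p\cdot\p)^*$ and $\p^*\cdot\p^*$ when transferring the (co)associativity and (co)commutativity diagrams of Definitions~\ref{assoc-def} and~\ref{coassoc-def}; this is formal but requires some care. Once one pins down the convention that $f\cdot f$ intertwines these canonical isomorphisms, all four equivalences drop out by diagram chase.
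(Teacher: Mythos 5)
Your proposal is correct and follows essentially the same route as the paper: the paper's own (one-line) proof defines $\nabla^\pi$ as exactly the composition $\p\cdot\p \xrightarrow{\sim} \p^*\cdot\p^* \xrightarrow{\sim} (\p\cdot\p)^* \xrightarrow{(\Delta^\pi)^*} \p^* \xrightarrow{\sim} \p$ using the isomorphism induced by the basis $\P$. The extra detail you supply (the basis-element chase verifying the formula, and the transfer of (co)associativity and (co)commutativity through duality) is exactly what the paper leaves implicit.
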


 \begin{proof} 
 $\nabla^\pi$ is  the composition
$\p\cdot \p \xrightarrow{\sim} \p^* \cdot \p^* \xrightarrow{\sim} (\p\cdot \p)^* \xrightarrow{(\Delta^\pi)^*} \p^* \xrightarrow{\sim} \p$
 where again the first and last maps derive from the isomorphism $\p \cong \p^*$ induced by the basis $\P$.
 \end{proof}
 
% As above, we will often abbreviate by writing $\pi$ for the map $\pi_{S,T}$ when the sets $S$ and $T$ are clear from context.

%We leave to the reader the straightforward proofs of the following facts concerning the terms just introduced. 
%The following are comultiplicative versions of Facts \ref{mu-fact1} and \ref{mu-fact2} and Proposition \ref{monoid-prop}. We again leave the straightforward proofs to the reader.

% \begin{fact} Assume $\P$ is finite. The morphism $\nabla^\pi$ is associative (respectively, commutative) if and only if $\Delta^\pi$ is coassociative (respectively, cocommutative).
% \end{fact}
% 
%
% 
% \begin{fact} Assume $\P$ is finite and  $\pi$ is coassociative. Then $\pi$ is counital if and only if 
% $\P[\varnothing]$ is nonempty and the maps $\pi_{\varnothing,S}$ and $\pi_{S,\varnothing}$ are always surjective.
%  \end{fact}

%Summarizing the discussion so far:

We next have this analogue of Proposition \ref{monoid-prop}, whose proof we likewise omit.

 \begin{proposition}\label{comonoid-prop} Let $\pi$ be a comultiplicative system for a finite set species $\P$ with linearization $\p=\KK\P$. The following are then equivalent: 
 \begin{itemize}
 \item[(a)] $\pi$ is coassociative and counital. 
 \item[(b)]  $(\p,\nabla^\pi)$ is a connected monoid.
 \item[(c)] $(\p,\Delta^\pi)$ is a connected comonoid.% and these two structures are isomorphic to each other's duals.
 \end{itemize}
Moreover, if these conditions hold then $(\p,\nabla^\pi)$ and $(\p,\Delta^\pi)$ are isomorphic to each other's duals via the morphism $\p\to \p^*$ induced by $\P$.
\end{proposition}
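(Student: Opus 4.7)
The plan is to mirror the proof of Proposition \ref{monoid-prop}, reducing everything to Fact \ref{pimorph} and the observation that, since $\P$ is finite, the induced morphism $f : \p \to \p^*$ is an isomorphism of connected species. I would first unpack what Fact \ref{pimorph} gives us beyond the equivalence of (co)associativity: by construction $\nabla^\pi$ is the unique morphism $\p\cdot \p \to \p$ making the square
\[
\begin{diagram}
\p\cdot \p & \rTo^{\nabla^\pi} & \p \\
\dTo^{f\cdot f} & & \dTo_{f} \\
\p^*\cdot \p^* & \rTo^{(\Delta^\pi)^*} & \p^*
\end{diagram}
\]
commute (after identifying $(\p\cdot\p)^*$ with $\p^*\cdot \p^*$). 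This square already encodes the last sentence of the proposition, since $(\Delta^\pi)^*$ is by definition the product of the dual comonoid $(\p, \Delta^\pi)^*$; symmetrically $\Delta^\pi$ is identified under $f$ with the dual coproduct of $(\p, \nabla^\pi)$.

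Next I would establish (a) $\Leftrightarrow$ (c) directly from definitions. Coassociativity of $\pi$ is by definition coassociativity of $\Delta^\pi$. Since $\p$ is a connected species with unit $\eta$ satisfying $\eta_\varnothing(1_\kk) = 1_\P$ (as $\P[\varnothing] = \{1_\P\}$ is forced to be a singleton whenever $\p = \kk\P$ is connected), the counit compatibility required for $(\p, \Delta^\pi)$ to be a connected comonoid amounts to
\[
\Delta^\pi_{\varnothing, S}(\lambda) = 1_\P \otimes \lambda \qquand \Delta^\pi_{S, \varnothing}(\lambda) = \lambda \otimes 1_\P
\]
for every $\lambda \in \P[S]$, which by linearity is exactly the statement that $\pi$ is counital.

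For (b) $\Leftrightarrow$ (c) I would appeal to the second sentence of Fact \ref{pimorph} for the associativity side, and then verify the unit compatibility of $\nabla^\pi$ by direct computation. By the formula in Fact \ref{pimorph},
\[
\nabla^\pi_{\varnothing, S}(1_\P \otimes \lambda) = \sum_{\gamma \in \P[S]\,:\, \pi_{\varnothing, S}(\gamma) = (1_\P, \lambda)} \gamma,
\]
and under the counital hypothesis (equivalent to counit compatibility of $\Delta^\pi$, by the previous paragraph) the only such $\gamma$ is $\lambda$, so the sum equals $\lambda$; the same argument applies to $\nabla^\pi_{S, \varnothing}$. Conversely, if $\nabla^\pi$ is unit compatible then each fiber $\pi_{\varnothing, S}^{-1}(1_\P, \lambda)$ must be the singleton $\{\lambda\}$, forcing $\pi_{\varnothing, S}$ to be the canonical identification, and likewise for $\pi_{S, \varnothing}$.

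I expect no serious obstacle: the argument is entirely parallel to Proposition \ref{monoid-prop}, and the only subtlety is being careful that the finiteness of $\P$ is what makes the dualization in Fact \ref{pimorph} well-defined and what makes $f : \p \to \p^*$ a genuine isomorphism (rather than a mere injection), so that the duality identification of the final sentence is legitimate.
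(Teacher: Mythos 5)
Your proposal is correct and follows exactly the route the paper intends: the paper omits this proof as being the evident dual analogue of Proposition \ref{monoid-prop}, i.e.\ a summary of the preceding discussion, which is precisely what you supply (coassociativity via Fact \ref{pimorph}, counitality/unit-compatibility by inspecting the fibers of $\pi_{\varnothing,S}$, and the duality statement read off from the defining square for $\nabla^\pi$). Your attention to the finiteness of $\P$ making $f:\p\to\p^*$ an isomorphism, and to $\P[\varnothing]$ being a singleton whenever (b) or (c) presupposes connectedness, covers the only points of care.
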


%\begin{proof}
%
%\end{proof}

Our remarks concerning this proposition are similar to those following Proposition \ref{monoid-prop}.
Suppose $\pi$ and $\pi'$ are comultiplicative systems for  set species $\P$ and $\P'$. 
A \emph{morphism} of comultiplicative systems $(\P,\pi) \to (\P',\pi')$ is a 
morphism of set species $f : \P \to \P'$ such that 
\[
(f_{S}\times f_{T})\circ \pi_{S,T} = \pi'_{S,T}\circ f_{S\sqcup T}
\] for all disjoint  finite sets $S$, $T$.
The proposition implies that $(\P,\pi) \cong (\P',\pi')$   if and only if $(\kk\P,\Delta^\pi) \cong (\kk\P',\Delta^{\pi'})$ as connected comonoids and also, provided $\P$ and $\P'$ are finite, $(\kk\P,\nabla^\pi) \cong (\kk\P',\nabla^{\pi'})$ as connected monoids.

% \begin{remark}
We conclude from the proposition that   $(\kk\P,\nabla^\pi,\Delta^\pi)$ is a  connected Hopf monoid if and only if $\P$ is finite and $\pi$ is coassociative, counital, and \emph{Hopf self-compatible}$-$where again we say that $\pi$ is Hopf self-compatible if the morphisms $\nabla^\pi$ and $\Delta^\pi$ are Hopf compatible. A connected Hopf monoid is linearly self-dual precisely when it is equal to such a triple for some choice of $\P$ and $\pi$.
 % \end{remark}
The following result shows that Hopf self-compatibility for comultiplicative systems is equivalent to a simpler condition than in the multiplicative case.

\begin{theorem}\label{pi-selfcompat-lem}
Suppose $\P$ is a finite set species and $\pi$ is a comultiplicative system for $\P$ which is 
 coassociative and counital. Then 
 $\pi$ is  Hopf self-compatible if and only if $\pi$ is cocommutative and bijective.
 %, in which case the system of inverse maps $\mu = (\pi_{S,T}^{-1})$ is a multiplicative system which is associative, unital, and Hopf self-compatible.
\end{theorem}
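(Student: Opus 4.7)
The plan is to handle the two directions separately, leveraging \Cref{injsurj-fact} for the forward direction and reducing to the multiplicative analogue \Cref{selfcompat-thm} for the reverse direction.

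For the forward direction, I will assume $\pi$ is Hopf self-compatible, so that by \Cref{comonoid-prop} the triple $\h = (\kk\P,\nabla^\pi,\Delta^\pi)$ is a connected Hopf monoid. Fix disjoint finite sets $S$, $T$ and a pair $(\alpha,\beta) \in \P[S]\times \P[T]$. The key identity is \Cref{injsurj-fact}, which says $\Delta^\pi_{S,T}\circ\nabla^\pi_{S,T}$ is the identity on $\p[S]\otimes\p[T]$. Applied to $\alpha\otimes \beta$, the formula in \Cref{pimorph} gives
\[ \alpha\otimes\beta \;=\; \Delta^\pi_{S,T}\!\left(\sum_{\lambda \in \pi_{S,T}^{-1}(\alpha,\beta)} \lambda\right) \;=\; \bigl|\pi_{S,T}^{-1}(\alpha,\beta)\bigr| \cdot (\alpha\otimes\beta). \]
Since $\alpha\otimes\beta\neq 0$ and $\kk$ has characteristic zero, the fiber $\pi_{S,T}^{-1}(\alpha,\beta)$ must have size exactly one. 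This simultaneously forces $\pi_{S,T}$ to be surjective (nonempty fiber) and injective (no larger fiber), hence bijective. Once bijectivity is established, $\h$ is linearly self-dual in the basis $\P$ and in particular FSD, so \Cref{coco-cor} applies to give that $\Delta^\pi$ is cocommutative, i.e., $\pi$ is cocommutative.

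For the reverse direction, I will assume $\pi$ is cocommutative and bijective, and define $\mu = \pi^{-1}$, viewed componentwise as a family of maps $\mu_{S,T}:\P[S]\times\P[T]\to \P[S\sqcup T]$. Because each fiber of $\pi_{S,T}$ is a single element, the sums defining $\nabla^\pi_{S,T}$ in \Cref{pimorph} collapse to single terms, so $\nabla^\pi$ coincides with the linearization $\nabla^\mu$ of $\mu$, and identifying $\pi$ with its inverse also gives $\Delta^\pi = \Delta^\mu$. In particular, $\mu$ is a multiplicative system, and Hopf self-compatibility of $\pi$ is equivalent to Hopf self-compatibility of $\mu$. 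The coassociativity and counitality of $\pi$ translate into associativity and unitality of $\mu$, so I may invoke \Cref{selfcompat-thm}. Conditions (a) and (b) of that theorem hold immediately: commutativity of $\mu$ is equivalent to cocommutativity of $\pi$, and injectivity of $\mu$ follows from bijectivity of $\pi$. Condition (c) is vacuous because $\mu$ is surjective on every component, so $\im(\mu_{A,B}) = \P[A\sqcup B]$ and all membership hypotheses hold automatically. Thus $\mu$, and therefore $\pi$, is Hopf self-compatible.

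The proof is essentially formal once the correct viewpoint is adopted; there is no deep obstacle. The only subtle point is the quick passage from $\Delta^\pi\circ\nabla^\pi = \id$ to bijectivity of $\pi$, which relies crucially on characteristic zero (implicit in the standing assumption $\kk\subset\CC$ for \Cref{coco-cor}) so that the scalar $|\pi_{S,T}^{-1}(\alpha,\beta)|$ is not a zero divisor. Everything else is a direct translation between the dualizing languages of multiplicative and comultiplicative systems.
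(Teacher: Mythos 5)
Your proof is correct and, for most of its length, follows the paper's own argument: the bijectivity step (computing $\Delta^\pi_{S,T}\circ\nabla^\pi_{S,T}(\alpha\otimes\beta)=k\cdot\alpha\otimes\beta$ with $k$ the fiber cardinality and using characteristic zero to force $k=1$) and the entire reverse direction (passing to $\mu=\pi^{-1}$, checking that $\nabla^\mu=\nabla^\pi$ and $\Delta^\mu=\Delta^\pi$, and invoking Theorem \ref{selfcompat-thm} with condition (c) vacuous by surjectivity) are exactly what the paper does. The one place you genuinely diverge is the cocommutativity half of the forward direction. The paper proves this directly: it applies the Hopf compatibility diagram of Definition \ref{hopf-compat-def} with $R=S'$ and $R'=S$, which reduces $\Delta^\pi_{S,S'}\circ\nabla^\pi_{S',S}$ to a twist sandwiched between (co)unit maps and yields $\alpha\otimes\beta=\alpha'\otimes\beta'$ by a short computation using only counitality and the bijectivity just established. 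You instead note that $(\kk\P,\nabla^\pi,\Delta^\pi)$ is linearly self-dual, hence FSD, and cite Theorem \ref{coco-cor}. That is logically valid, but it imports the hypothesis $\KK\subset\CC$ (needed there for the positive definite Hermitian form), which the statement of the present theorem does not carry and which the paper's direct computation avoids; your closing parenthetical conflates ``characteristic zero'' with ``$\KK\subset\CC$'', and these are not the same restriction. If one is content to work over a subfield of $\CC$ throughout, your route is a legitimate shortcut that reuses heavier machinery; otherwise the paper's elementary diagram chase is both more economical and strictly more general.
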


\begin{proof}
Write $\p =\kk\P$ and suppose $\pi$ is Hopf self-compatible.
%and suppose $\pi$ is coassociative, counital, and Hopf self-compatible. 
Then, by Lemma \ref{injsurj-fact}, the composition $\Delta^\pi_{S,T} \circ \nabla^\pi_{S,T}$ is the identity map on $\p[S]\otimes \p[T]$ for all disjoint finite sets $S$, $T$. By the definition of $\Delta^\pi$ and $\nabla^\pi$, however, we have 
$(\Delta^\pi_{S,T} \circ \nabla^\pi_{S,T})(\alpha\otimes \beta) = k \cdot \alpha\otimes\beta$ 
for $(\alpha,\beta) \in \P[S]\times \P[T]$, where $k$ is the cardinality of $\{ \lambda \in \P[I] : \pi_{S,T}(\lambda) = (\alpha,\beta)\}$.
Since $\kk$ has characteristic zero, it follows that always $k=1$ so $\pi$ is bijective.

Next, let $S$ and $S'$ be disjoint finite sets and choose $\lambda \in \P[S\sqcup S']$. Suppose $\pi_{S,S'}(\lambda) = (\alpha,\beta)$ and $\pi_{S',S}(\lambda) = (\beta',\alpha')$. To show that $\pi$ is cocommutative it suffices to show that $\alpha = \alpha'$ and $\beta=\beta'$. Towards this end,   note that since $\pi$ is bijective, we have $\alpha\otimes\beta = (\Delta^\pi_{S,S'} \circ \nabla^\pi_{S',S})(\beta'\otimes \alpha')$. Observe that taking $R=S'$ and $R'=S$ in Definition \ref{hopf-compat-def}
gives
\[ \Delta^\pi_{S,S'} \circ \nabla^\pi_{S',S} =  (\nabla^\pi_{\varnothing,S}\otimes \nabla^\pi_{S',\varnothing}) \circ \tau\circ  (\Delta^\pi_{\varnothing,S'}\otimes \Delta^\pi_{S,\varnothing})\]
where $\tau$ is the natural twisting isomorphism $\p[\varnothing] \otimes \p[S']\otimes \p[S]\otimes \p[\varnothing] \xrightarrow{\sim} \p[\varnothing] \otimes \p[S]\otimes \p[S']\otimes \p[\varnothing]$.
Since $\pi$ is counital, the set $\P[\varnothing]$ consists of  a single element $1_\P$ and we have
\[  (\Delta^\pi_{\varnothing,S'}\otimes \Delta^\pi_{S,\varnothing})(\beta'\otimes \alpha') = 1_\P\otimes \beta'\otimes \alpha'\otimes 1_\P\]
 and likewise
\[  (\nabla^\pi_{\varnothing,S}\otimes \nabla^\pi_{S',\varnothing})(1_\P\otimes \alpha'\otimes \beta'\otimes 1_\P) = \alpha'\otimes \beta'.\]
Combining these identities, we get $\alpha\otimes \beta = \alpha'\otimes \beta'$ in $\p[S]\otimes\p[S']$, which allows us to conclude that $\alpha=\alpha'$ and $\beta=\beta'$. Thus $\pi$ is also cocommutative.

To prove the converse,
now suppose that $\pi$ is  cocommutative and bijective (in addition to being coassociative and counital). Let $\mu =  (\mu_{S,T})$ be the family of inverse maps   
$\mu_{S,T} = \pi_{S,T}^{-1} : \P[S]\times \P[T] \to \P[S\sqcup T].$
Then  $\mu$ is  a multiplicative system for $\P$, which is associative, commutative, unital, and bijective. 
Moreover, one checks from the definitions that $\nabla^\mu = \nabla^\pi$ and $\Delta^\mu  = \Delta^\pi$. 
These morphisms are Hopf compatible since $\mu$ is Hopf self-compatible by Theorem \ref{selfcompat-thm}; in particular, property (c) holds automatically when $\mu$ is surjective. Thus $\pi$ is Hopf self-compatible.
%
%Conversely suppose $\pi$ is cocommutative and bijective.  
%The fact that $\pi$ is bijective implies that the diagram in Definition \ref{hopf-compat-def} commutes provided that for each $\lambda \in \p[I]$ there exist $\alpha \in \P[A]$ and $\beta \in \P[B]$ and $\alpha' \in \P[A']$ and $\beta ' \in \P[B']$ such that  
%\[(\pi_{A,B}\times \pi_{A',B'}) \circ \pi_{R,R'}(\lambda) = (\alpha,\beta,\alpha',\beta')
%\qquand (\pi_{A,A'}\times \pi_{B,B'}) \circ \pi_{S,S'}(\lambda)  = (\alpha,\alpha',\beta,\beta').\]
%This property holds as an immediate consequence of the fact that $\pi$ is coassociative and cocommutative, however.
\end{proof}

%When a comultiplicative system $\pi = (\pi_{S,T})$ is bijective, we write $\pi^{-1}= (\pi^{-1}_{S,T})$ for the family of inverse maps.

 In the  preceding proof we observed that if $\h=(\kk\P,\nabla^\pi,\Delta^\pi)$ is a connected Hopf monoid then $\h = (\kk\P,\nabla^\mu,\Delta^\mu)$ 
 for the bijective multiplicative system $\mu$ given by taking the inverses of the components of $\pi$.
%This gives one half of the following corollary.
  Thus, we have the following corollary, which was mentioned in Theorem \descref{D}.
  
%\begin{fact} Suppose $\P$ is a finite set species and $\pi$ is a bijective comultiplicative system for $\P$. Let $\mu =  (\mu_{S,T})$ be the family of inverse maps   
%\[\label{pi-inv-eq}\mu_{S,T} = \pi_{S,T}^{-1} : \P[S]\times \P[T] \to \P[S\sqcup T].\]
%Then $\mu$ is a bijective multiplicative system for $\P$ and it holds that $\nabla^\mu = \nabla^\pi$ and $\Delta^\mu = \Delta^\pi$. Consequently $\mu$ is  associative, commutative, unital, or Hopf self-compatible if and only if $\pi$ is respectively coassociative, cocommutative, counital, or Hopf self-compatible.
%\end{fact}
%
%\begin{proof}
%
%\end{proof}

\begin{corollary}\label{deltanabla-cor}
%A connected Hopf monoid $(\p,\nabla,\Delta)$ is linearly self-dual with respect to a basis $\P$ for $\p$ if and only if it is strongly self-dual in this basis and $\P[S\sqcup T] \subset \im(\nabla_{S,T})$ for all disjoint finite sets $S$, $T$.

If a connected Hopf monoid is linearly self-dual with respect to some basis then it is also strongly self-dual (and hence linearized) with respect to that basis.
\end{corollary}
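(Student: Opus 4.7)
The plan is to extract the corollary directly from the observation made near the end of the proof of Theorem \ref{pi-selfcompat-lem}. Suppose $\h$ is linearly self-dual with respect to a basis $\P$ for $\p = \kk\P$. By the discussion following Proposition \ref{comonoid-prop}, we may write $\h = (\kk\P,\nabla^\pi,\Delta^\pi)$ for some comultiplicative system $\pi$ for $\P$ which is coassociative, counital, and Hopf self-compatible. Theorem \ref{pi-selfcompat-lem} then tells us that $\pi$ is both cocommutative and bijective.

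Since each $\pi_{S,T}$ is a bijection, the inverse maps $\mu_{S,T} \omdef{=} \pi_{S,T}^{-1} : \P[S]\times\P[T] \to \P[S\sqcup T]$ are well-defined. I would first check that $\mu = (\mu_{S,T})$ forms a multiplicative system for $\P$: the linearizations of $\pi_{S,T}$ are the components of a morphism of vector species $\Delta^\pi$, and linearizing the inverses gives the componentwise inverses, which assemble into a morphism $\nabla^\mu : \p \cdot \p \to \p$ (naturality in $\sigma$ transfers from $\Delta^\pi$ to $\nabla^\mu$ since $\P[\sigma]$ commutes with $\pi_{S,T}$). In particular $\mu$ is bijective, hence injective.

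Next I would compare the associated structure morphisms. Using the explicit formulas in Facts \ref{mumorph} and \ref{pimorph}, when $\pi_{S,T}$ is bijective, the sum
\[ \nabla^\pi_{S,T}(\alpha\otimes\beta) = \sum_{\lambda \in \P[S\sqcup T],\ \pi_{S,T}(\lambda) = (\alpha,\beta)} \lambda \]
collapses to the single summand $\pi_{S,T}^{-1}(\alpha,\beta) = \mu_{S,T}(\alpha,\beta)$, which is exactly the defining formula for $\nabla^\mu_{S,T}$; thus $\nabla^\pi = \nabla^\mu$. Symmetrically, bijectivity of $\mu$ collapses the sum defining $\Delta^\mu_{S,T}(\gamma)$ to the single summand $(\mu_{S,T}^{-1}(\gamma)) = \pi_{S,T}(\gamma)$, giving $\Delta^\pi = \Delta^\mu$.

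Therefore $\h = (\kk\P, \nabla^\mu, \Delta^\mu)$ is a connected Hopf monoid whose product is the linearization of a multiplicative system on $\P$. By the characterization in Proposition \ref{redef-prop} (or by definition), this exhibits $\h$ as strongly self-dual with respect to the same basis $\P$, and in particular $\h$ is linearized as a monoid in this basis. The only genuine content here is the bijectivity portion of Theorem \ref{pi-selfcompat-lem}; once that is in hand, the argument is essentially a translation between the two dual formalisms of multiplicative and comultiplicative systems, and there is no further obstacle.
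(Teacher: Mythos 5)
Your proposal is correct and follows exactly the paper's route: the paper also derives the corollary from the observation, made in the converse direction of the proof of Theorem \ref{pi-selfcompat-lem}, that bijectivity of $\pi$ lets one form the inverse multiplicative system $\mu_{S,T}=\pi_{S,T}^{-1}$ with $\nabla^\pi=\nabla^\mu$ and $\Delta^\pi=\Delta^\mu$, so that the Hopf monoid is strongly self-dual in the same basis. Your added verification that the sums in Facts \ref{mumorph} and \ref{pimorph} collapse to single terms is the same "one checks from the definitions" step the paper leaves implicit.
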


Recall that if $C$ is a set then $\E_{C}$ is the set species such that $\E_{C}[S]$ is the set of maps $\lambda: S \to C$. Let $\rho= (\rho_{S,T})$ denote the comultiplicative system for 
this set species given by restriction of maps as in Example \ref{pi-ex}(ii),
so that  $\rho_{S,T}(\lambda) = (\lambda|_S,\lambda|_T)$ for any map $\lambda \in \E_C[ S\sqcup T]$ and disjoint finite sets $S$, $T$. %(Recall that $\lambda|_S$ denotes the restriction of $\lambda$ to a map $S \to [n]$.)
%The following is a consequence of Theorem \ref{pi-selfcompat-lem}.
The following corollary follows from Theorem \ref{pi-selfcompat-lem} by inspection.

\begin{corollary} \label{rho-cor} Provided that $C$ is a finite set, the system of maps $\rho = (\rho_{S,T})$ forms a comultiplicative system for $\E_C$ which is coassociative, counital, and Hopf self-compatible.
\end{corollary}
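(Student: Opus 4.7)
The plan is to verify each of the listed properties directly from the definition of $\rho_{S,T}(\lambda) = (\lambda|_S, \lambda|_T)$, and then to invoke Theorem \ref{pi-selfcompat-lem} to obtain Hopf self-compatibility. The computations are all elementary because restriction of maps is a very rigid operation.

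First I would check that $\rho$ is actually a comultiplicative system in the sense of Definition \ref{pi-def}, i.e.\ that the family $(\rho_{S,T})$ underlies a morphism of vector species $\Delta^{\rho} : \kk\E_C \to \kk\E_C \cdot \kk\E_C$. This reduces to the naturality identity $(\E_C[\sigma|_S] \times \E_C[\sigma|_T]) \circ \rho_{S,T} = \rho_{S',T'} \circ \E_C[\sigma]$ for every bijection $\sigma : S\sqcup T \to S'\sqcup T'$ respecting the decomposition, which is immediate because $(f \circ \sigma^{-1})|_{S'} = f|_{S} \circ (\sigma|_S)^{-1}$ for any $f : S\sqcup T \to C$.

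Next I would dispatch coassociativity and counitality. Coassociativity amounts to the equality $(\lambda|_{R\sqcup S})|_R = \lambda|_R$ and the analogous identities for the other restrictions that appear in the two sides of the coassociativity diagram applied to a basis element $\lambda \in \E_C[R\sqcup S \sqcup T]$; both composites simply send $\lambda$ to $(\lambda|_R, \lambda|_S, \lambda|_T)$. Counitality is even simpler: $\E_C[\varnothing]$ has exactly one element (the empty map $1_\P$), and $\rho_{\varnothing,S}(\lambda) = (1_\P,\lambda)$ and $\rho_{S,\varnothing}(\lambda) = (\lambda,1_\P)$ by definition.

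The main point, and the only place we need the finiteness of $C$, is Hopf self-compatibility. By Theorem \ref{pi-selfcompat-lem} (which requires $\E_C$ to be a finite set species, hence $C$ to be finite), it suffices to show that $\rho$ is cocommutative and bijective. Cocommutativity is the observation that $\rho_{T,S}(\lambda) = (\lambda|_T, \lambda|_S)$ is the image of $\rho_{S,T}(\lambda) = (\lambda|_S, \lambda|_T)$ under the canonical swap. Bijectivity of $\rho_{S,T} : \E_C[S\sqcup T] \to \E_C[S] \times \E_C[T]$ is the elementary fact that a map out of a disjoint union is determined by, and can be freely prescribed by, its restrictions to the two parts; explicitly, the inverse sends $(f,g)$ to the unique map $f\sqcup g : S\sqcup T \to C$ extending them. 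I do not expect any genuine obstacle here, since everything follows from the universal property of disjoint unions in the category of sets.
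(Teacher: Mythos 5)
Your proposal is correct and follows essentially the same route as the paper, which simply states that the corollary ``follows from Theorem \ref{pi-selfcompat-lem} by inspection.'' You have merely written out the inspection in full: the naturality, coassociativity, counitality, cocommutativity, and bijectivity of $\rho$ are all immediate from the universal property of disjoint unions, and finiteness of $C$ is exactly what makes $\E_C$ a finite set species so that Theorem \ref{pi-selfcompat-lem} applies.
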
 

%\begin{proof}
%By inspection, $\rho$ is coassociative, cocommutative, counital, and bijective, so it is also Hopf self-compatible by Theorem \ref{pi-selfcompat-lem}.
%\end{proof}

The next theorem, which was promised in the introduction, shows that $\rho$ is up to isomorphism the only comultiplicative system for which the preceding corollary holds.

\begin{theorem}\label{pi-mainthm}
Suppose $\pi$ is a comultiplicative system for a  finite set species $\P$ and $C = \P[1]$.
If $\pi$ is coassociative, counital, and Hopf self-compatible, 
then there exists a unique isomorphism of comultiplicative systems
$f^\pi : (\E_{C}, \rho)\xrightarrow{\sim} (\P,\pi)$ such that $f^\pi_{\{1\}}(\lambda) = \lambda(1)$ for all $\lambda \in \E_C[1]$.
\end{theorem}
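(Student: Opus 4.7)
The strategy is to invert $\pi$ so as to reduce to Theorem \ref{muFSD-mainthm}. By Theorem \ref{pi-selfcompat-lem}, the hypotheses on $\pi$ force it to be cocommutative and bijective; setting $\mu_{S,T} = \pi_{S,T}^{-1}$ then yields, as in the proof of that theorem, an associative, commutative, unital, bijective, and Hopf self-compatible multiplicative system $\mu$ for $\P$ with $\nabla^\mu = \nabla^\pi$ and $\Delta^\mu = \Delta^\pi$.

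The next step is to compute the primitive subspecies $\Q = \cP(\P, \mu)$. Because each $\mu_{S,T}$ is surjective, any element of $\P[I]$ with $|I| \geq 2$ lies in $\im(\mu_{S,T})$ for some nontrivial disjoint decomposition $I = S \sqcup T$, so $\Q[I] = \varnothing$ in this range. Conversely, singletons admit no such decomposition, so $\Q[\{i\}] = \P[\{i\}]$; in particular $\Q[1] = C$. Hence $\Q$ is canonically isomorphic to the species $\X_C$ of Example \ref{cP-ex}, and Theorem \ref{muFSD-mainthm} produces a unique isomorphism of multiplicative systems $f^\mu : (\cS(\X_C), \cup) \xrightarrow{\sim} (\P, \mu)$ extending the inclusion $\X_C \hookrightarrow \P$.

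Finally I would identify $\cS(\X_C)$ with $\E_C$: since $\X_C[B]$ is empty unless $|B| = 1$, every $\X_C$-labeled partition of $S$ is a partition of $S$ into singletons with each singleton labeled by an element of $C$, i.e., a function $S \to C$. This gives a natural isomorphism of set species $\cS(\X_C) \cong \E_C$ under which the disjoint union operation $\cup$ transports to $(\alpha, \beta) \mapsto \alpha \sqcup \beta$ on $\E_C$, which is precisely the componentwise inverse of $\rho$. Hence $f^\mu$ reinterprets as an isomorphism of comultiplicative systems $f^\pi : (\E_C, \rho) \xrightarrow{\sim} (\P, \pi)$, and the normalization $f^\pi_{\{1\}}(\lambda) = \lambda(1)$ is immediate because $\lambda \in \E_C[1]$ corresponds to the labeled partition $\{(\{1\}, \lambda(1))\}$, identified under the extended inclusion with $\lambda(1) \in \P[1]$.

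For uniqueness, any competitor $g^\pi : (\E_C, \rho) \xrightarrow{\sim} (\P, \pi)$ satisfying the normalization condition translates back, via the correspondences above, to an isomorphism $(\cS(\X_C), \cup) \xrightarrow{\sim} (\P, \mu)$ of multiplicative systems that restricts to the inclusion $\X_C \hookrightarrow \P$; the uniqueness clause of Theorem \ref{muFSD-mainthm} forces this to equal $f^\mu$, and hence $g^\pi = f^\pi$. The main obstacle in carrying out this plan is not any deep argument but rather the notational bookkeeping: one must verify that morphisms of bijective multiplicative and comultiplicative systems correspond under componentwise inversion, and that the identifications $\cS(\X_C) \cong \E_C$ and $\cup \leftrightarrow \rho^{-1}$ are natural in the precise sense needed to make this translation work.
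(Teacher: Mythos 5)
Your proposal is correct and follows essentially the same route as the paper: invert $\pi$ via Theorem \ref{pi-selfcompat-lem} to get a Hopf self-compatible multiplicative system $\mu$, identify the primitives with $\X_C$, and invoke Theorem \ref{muFSD-mainthm}. The only cosmetic difference is that the paper applies Theorem \ref{muFSD-mainthm} a second time to $(\E_C,\nu)$ to mediate the identification $\cS(\X_C)\cong\E_C$ (and proves uniqueness by noting that singletons generate $\E_C$ under $\nu$), whereas you carry out that identification and the uniqueness reduction by hand; both are sound.
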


\begin{proof}
Assume $\pi$ is coassociative, counital, and Hopf self-compatible, so that $\pi$ is cocommutative and bijective by Theorem \ref{pi-selfcompat-lem}.
Let $\mu$ be the multiplicative system for $\P$ with $\mu_{S,T} = \pi_{S,T}^{-1}$
and
let $\nu$ be the multiplicative system for $\E_C$ given in Example \ref{mu-ex}(ii).
Then both $\mu$ and $\nu$ are associative, unital, and Hopf self-compatible, 
so 
if 
$\Q = \cP(\P,\mu)$ and $\R = \cP(\E_C,\nu)$
then
by Theorem \ref{muFSD-mainthm}
  there are isomorphisms of multiplicative systems
\[
  f^\mu : (\cS(\Q),\cup) \xrightarrow{\sim} (\P,\mu)
\qquand
  f^{\nu} : (\cS(\R),\cup) \xrightarrow{\sim} (\E_C,\nu)
 \]
 commuting with the natural inclusions of $\Q$ and $\R$ respectively.
%  such that the diagrams 
%  \[
%\begin{diagram}
%\E_C && \lTo^{f^\nu} && \cS(\R) \\
%& \luTo &&\ruTo\\
%&& \R
% \end{diagram}
% \qquand
% \begin{diagram}
%\cS(\Q) && \rTo^{f^\mu} && \P \\
%& \luTo &&\ruTo\\
%&& \Q
% \end{diagram}
% \]
%commute, where the diagonal arrows are the natural inclusions of set species.
%
%
%  
%  commuting with the inclusions $\Q \to \cS(\Q)$ and $\Q \to \P$ by Theorem \ref{muFSD-mainthm}.
%Similarly, 
%if we
%let $\nu$ denote the multiplicative system for $\E_C$ in Example \ref{mu-ex}(ii), 
%then for $\R = \cP(\E_C,\nu)$ there exists a unique isomorphism 
%\[f^{\nu} : (\cS(\R),\cup) \xrightarrow{\sim} (\E_C,\nu)\] commuting with the inclusions $\R \to \cS(\R)$ and $\R \to\E_C$.
%
%
%

One computes that $\Q \cong \R \cong \X_C$ where $\X_C$ is the set species defined in Example \ref{cP-ex}. In particular, one checks that there is a unique isomorphism $g :\R \xrightarrow{\sim} \Q$ such that $g_{\{1\}}(\lambda) =\lambda(1)$ for all $ \lambda \in \E_C[1] = \R[1]$.
%
%Since $\mu$ is bijective,
%$\Q\subset \P$ is the subspecies with $\Q[S] = \P[S]$ when $|S| = 1$ and $\Q[S] = \varnothing$ otherwise.
%%For the same reason, $\R[S] = \E_C[S]$ when $|S| = 1$ and $\R[S] = \varnothing$ otherwise. 
%In turn, we may consider $\R$ to be equal to the set species $\X^C$  defined  in Example \ref{cP-ex}. 
%%,
%%since
%%the set of maps from a singleton set to $C$  is canonically identified with the set  $C$ itself.
%%
%There is then unique isomorphism $g : \R \xrightarrow{\sim} \Q$ such that $g_{\{1\}}$  is the identity map.
%(In particular, if $I$ is not a singleton set then $g_I$ is necessarily the unique map $\varnothing \to \varnothing$, and if $|I| = 1$ then $g_I = g_I \circ \R[\sigma] = \P[\sigma] \circ g_{\{1\}}$ for the unique bijection $\sigma:\{1\}\to I $.)
%%and for other singleton sets 
%%%%\[ \Q[S] = \begin{cases} \P[S] &\text{if }|S| = 1 \\ \varnothing&\text{otherwise}.\end{cases}\]
%%%Recall the definition of $\X^C$ from Example \ref{cP-ex}. The set species $\X^C$ and $\Q$ are then isomorphic via the morphism $g : \X^C \to \Q$ 
%%$I$ setting $g_I= \P[\sigma]$ where $\sigma$ is the unique bijection $\{1\}\to I $. (When $I$ does not have exactly one element, $g_I$ is necessarily the unique map $\varnothing \to \varnothing$.)
%%%The uniqueness of this isomorphism follows from that fact that $g$ is completely determined by $g_{\{1\}}$ from its definition as a natural transformation. % as $g_I = g_I \circ \X^C[\sigma]=  \P[\sigma] \circ  g_{\{1\}}$ whenever $I$ is a singleton set and $\sigma : \{1\} \to I$.
Viewing $\cS$ as the functor \eqref{set-functor-eq}, we obtain an isomorphism of multiplicative systems 
$\cS(g) : (\cS(\R),\cup) \xrightarrow{\sim} (\cS(\Q),\cup)$
commuting with $g: \R \to \Q$ and the natural inclusions $\R\to \cS(\R)$ and $\Q \to \cS(\Q)$.
It follows that the composition
 \[f^\pi \omdef = f^\mu\circ \cS(g) \circ (f^\nu)^{-1}\] is an isomorphism of multiplicative systems $(\E_C,\nu) \xrightarrow{\sim} (\P,\mu)$ such that $f^\pi_{\{1\}}(\lambda) = \lambda(1)$ for all $\lambda \in \E_C[1]$. 
Since $\nu$ and $\mu$ are obtained from $\rho$ and $\pi$ by taking inverses, $f^\pi$ is also an isomorphism of comultiplicative systems $(\E_C,\rho) \xrightarrow{\sim} (\P,\pi)$.

To show that $f^\pi$ is the unique such isomorphism, 
%we note that if $I = S\sqcup T$ is a disjoint decomposition then 
%\[f^\pi_I = f^\pi_I \circ \nu_{S,T}\circ \rho_{S,T} = \mu_{S,T}\circ (f^\pi_{S} \times f^\pi_{T}) \circ \rho_{S,T}\]
%and if $I$ is a set with one element then 
%\[ f^\pi_I = \E_C[\sigma]\circ f^\pi_{\{1\}} \circ \P[\sigma^{-1}]\]
%where $\sigma$ is the unique bijection $\{1\}\to I$.
%
suppose $f $ is another isomorphism  with the same properties. Then $f^{-1} \circ f^\pi$ is an automorphism of the multiplicative system $(\E_C,\nu)$ which is the identity on $\E_C[1]$ and hence on $\E_C[S]$ for all singleton sets $S$. Since every $\lambda \in \E_C[S]$ is the product under $\nu$ of the maps $\lambda|_{\{i\}} \in \E_C[\{i\}]$ for $i \in S$, it follows that $f^{-1} \circ f^\pi = \id$.
%Assume $f^\pi \neq f$ and let $I$ be a set of minimal cardinality such that $f^\pi_I \neq f_I$. Without loss of generality we may assume $I = [n]$ for some natural number $n$. We must have $n>1$ since $f^\pi_{\{1\}} = f_{\{1\}}$ by definition, 
%so
%let
%$S= \{1\}$ and $T = \{2,\dots,n\}$.  Then 
%\[f^\pi_I = f^\pi_I \circ \nu_{S,T}\circ \rho_{S,T} = \mu_{S,T}\circ (f^\pi_{S} \times f^\pi_{T}) \circ \rho_{S,T}.\]
%Since the same identify holds with $f$ in place of $f^\pi$ and since 
%we assume $f^\pi_S = f_S$ and $f^\pi_T = f_T$, this contradicts our assumption that $f^\pi_I \neq f_I$. We conclude that $f^\pi$ is unique is the sense proscribed. 
\end{proof}

%Note that if $\mu$ is the family of inverse maps $\mu_{S,T}$ to $\pi^{[n]}_{S,T}$ then $\mu$ is a multiplicative system for $\E_{[n]}$ and $(\E_{[n]},\mu) \cong \(\cS\(\X^{[n]}\),\cup\)$.

%
%\begin{corollary}
%A connected Hopf monoid $(\p,\nabla,\Delta)$ is linearly self-dual if and only if \[(\p,\nabla,\Delta) \cong \cS\(\bfX^{[n]}\)\] where $n = \dim(\p[1])$.
%\end{corollary}

%We say that a vector species $\q$  \emph{vanishes} on a family of finite sets $\cF$ if $\q[S] = 0$ whenever $S \in \cF$.

To finish this section, we prove Theorem \descref{E} the introduction. Recall that $\cP(\h)$ denotes the subspecies of primitive elements in a Hopf monoid.

\begin{theorem}\label{pi-lastthm}
Let $\h$ be a  connected Hopf monoid which is commutative and cocommutative. Then $\h$ is linearly self-dual if and only if 
%its subspecies of primitive elements $\cP(\h)$  has 
$\cP(\h)[1]$ is finite-dimensional and $\cP(\h)[n] = 0$ for all  $n\neq 1$.
\end{theorem}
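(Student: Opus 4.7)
The plan is to combine Theorem \ref{coco-thm} (which writes $\h \cong \cS(\cP(\h))$ whenever $\h$ is commutative and cocommutative) with the classification of linearly self-dual connected Hopf monoids as exponential species established in Theorem \ref{pi-mainthm}. The key bridge between these two results is the identification of $\bfE_C$ with $\cS(\bfX_C)$ as connected Hopf monoids, via the bijection sending a map $f : I \to C$ to the $\bfX_C$-labeled set partition of $I$ whose blocks are the singletons $\{i\}$, each labeled by $f(i) \in C$. One checks that this bijection respects both products (disjoint union of maps corresponds to disjoint union of labeled partitions) and coproducts (restriction of $f$ to $S$ and $T$ corresponds to splitting the labeled partition into its singletons lying in $S$ and $T$), so it is an isomorphism of connected Hopf monoids.

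For the forward direction, assume $\h$ is linearly self-dual. Since linear self-duality implies free self-duality, $\h$ is finite-dimensional, so any basis $C$ of $\p[1]$ is finite. Theorem \ref{pi-mainthm} (equivalently Theorem \descref{D}) then yields $\h \cong \bfE_C \cong \cS(\bfX_C)$. Applying the fact $\cP(\cS(\q)) = \q$ recorded at the close of Section \ref{free-sect} gives $\cP(\h) \cong \bfX_C$, so $\cP(\h)[1]$ is finite-dimensional of dimension $|C|$ and $\cP(\h)[n] = 0$ for all $n \neq 1$.

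For the converse, set $\q = \cP(\h)$ and suppose $\q[1]$ is finite-dimensional while $\q[n] = 0$ for $n \neq 1$. Choose a basis $C$ of $\q[1]$; then $\q = \kk\bfX_C$, and in particular $\q$ has the finite basis $\bfX_C$. By Theorem \ref{coco-thm} we have $\h \cong \cS(\q) \cong \cS(\kk\bfX_C)$, and Corollary \ref{cup-cor} together with the identification above produces a further isomorphism of connected Hopf monoids $\cS(\kk\bfX_C) \cong \bfE_C$. Since $C$ is finite, Corollary \ref{rho-cor} shows that $\bfE_C$ carries the coassociative, counital, Hopf self-compatible comultiplicative system $\rho$ given by restriction of maps, and by construction $\Delta^\rho$ coincides with the coproduct of $\bfE_C$ from Example \ref{E-ex}. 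This exhibits $\h$ as linearly self-dual.

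The only real obstacle in this plan is the explicit identification $\bfE_C \cong \cS(\bfX_C)$ at the level of connected Hopf monoids, i.e.\ checking that the natural set-species bijection intertwines products and coproducts on both sides; once this is in hand, both directions reduce to citing the machinery already developed in Sections \ref{free-sect}, \ref{mufsd-sect}, and \ref{pifsd-sect}.
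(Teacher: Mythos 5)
Your proposal is correct and follows essentially the same route as the paper: both directions reduce to the chain $\h \cong \cS(\cP(\h)) \cong (\kk\cS(\X_C),\nabla^\cup,\Delta^\cup) \cong (\kk\E_C,\nabla^\rho,\Delta^\rho)$, with the forward direction using Theorem \ref{pi-mainthm} and the identification of the primitives of $\cS(\q)$ with $\q$, and the converse using Theorems \ref{coco-thm}/\ref{muFSD-mainthm} together with the isomorphism $(\cS(\X_C),\cup)\cong(\E_C,\rho^{-1})$. The only nitpick is notational: $\cP(\h)$ is the \emph{linearization} $\kk\X_C$ rather than the set species $\X_C$ itself.
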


\begin{proof}
Let $\h \in \cocoHopf(\Sp^\circ)$ and set $\q = \cP(\h)$. First suppose $\h$ is linearly self-dual.
Then, from the proof of Theorem \ref{pi-mainthm}, $\h $ is  isomorphic  to the strongly self-dual connected Hopf monoid $(\kk\cS(\X_C), \nabla^\cup,\Delta^\cup)$ for a finite set $C$. By Proposition \ref{fsd-prim-prop}, therefore, $\q \cong \kk \X_C$ and so $\q$ has the desired properties.

Conversely suppose $\q[1]$ is finite-dimensional and $\q[n] =0$ for all $n>1$. Then $\q[S] = 0$ for all finite sets $S$ with $|S| \neq 1$, which means that if  $C$ is any basis for the vector space $\q[1]$, then $\q \cong \kk\X_C$ and, further,  every basis for $\q$ is a set species isomorphic to $\X_C$.   By Theorems \ref{muFSD-mainthm} and \ref{fsd-thm}, we therefore
have
\[
\h \cong 
 (\kk\cS(\X_C), \nabla^\cup,\Delta^\cup)\cong (\kk\E_C, \nabla^\rho,\Delta^\rho),
\]
 the second isomorphism following from the observation %in the proof of Theorem \ref{pi-mainthm}
that 
if $\mu$ is the multiplicative system for $\E_C$ with
$\mu_{S,T} = \rho_{S,T}^{-1}$ then $(\cS(\X_C),\cup) \cong (\E_C,\mu)$.
Since $C$ is finite, the last triple is linearly self-dual, and hence so is $\h$.
\end{proof}

%True? ? ? 
%
%\begin{corollary}
%Let $\h$ be a  connected Hopf monoid which is commutative and cocommutative. Then $\h$ is linearly self-dual if and only if its antipode $\S : \h \to \h$ has $\S = \Par$.
%\end{corollary}

%\newpage

\section{Structure of linearized Hopf monoids}
\label{linear-sect}

A connected Hopf monoid is \emph{linearized} if with respect to some basis both its product and coproduct are linearized. 
The main results of this section show that  connected Hopf monoids which are commutative, cocommutative, and linearized are equivalent  to connected Hopf monoids which are strongly self-dual with respect to a basis equipped with a certain partial order. 
%We also define four distinguished bases for  linearized Hopf monoids arising from linearized comonoids, which can be viewed as generalizations of the classical bases of the Hopf algebra of symmetric functions.

\subsection{Partial orders on species}
\label{order-sect}

%Classified FSD Hopf monoids of form $(\p,\nabla^\mu,\Delta^\mu)$ and $(\p,\nabla^\pi,\Delta^\pi)$ . . .  now consider those of form $(\p,\nabla^\mu,\Delta^\pi)$ . . . 
%Throughout this section, we fix a set species $\P$, a multiplicative system $\mu$, and a comultiplicative system $\pi$ 
%such that $(\kk\P,\nabla^\mu,\Delta^\pi)$ is a connected Hopf monoid.
%\begin{remark}
%By . . . this is precisely equivalent to requiring that
%\begin{itemize}
%\item $\mu$ is associative and unital.
%\item $\pi$ is coassociative and counital.
%\item $\nabla^\mu$ and $\Delta^\pi$ are Hopf compatible in the sense of Definition \ref{hopf-compat-def}.
%\end{itemize}
%The third condition may be characterized . . . 
%Consider modifying the diagram in Definition \ref{hopf-compat-def} in the following way: replace $\p$ by $\P$, $\nabla$ by $\mu$, $\Delta$ by $\pi$, and all tensor products $\otimes$ by Cartesian products $\times$. The morphisms $\nabla^\mu$ and $\Delta^\pi$ are then Hopf compatible if and only if this modified diagram commutes for all disjoint decompositions $I= R\sqcup R' = S\sqcup S'$.
%\end{remark}
%
%\[\]
%\[\]

Throughout this section $\P$ denotes a finite set species with a multiplicative system $\mu$ and a comultiplicative system $\pi$. 
Above, we studied Hopf monoids of the form $(\kk\P,\nabla^\mu,\Delta^\mu)$ and $(\kk\P,\nabla^\pi,\Delta^\pi)$, and 
we turn our attention here to the remaining variations on this theme, namely, 
Hopf monoids of the form $(\kk\P,\nabla^\mu,\Delta^\pi)$ and $(\kk\P,\nabla^\pi,\Delta^\mu)$.
%We will find that when such Hopf monoids are commutative and cocommutative, they form another special class of strongly self-dual Hopf monoids.

Let us recall  necessary and sufficient  conditions for these triples to be connected Hopf monoids.
Say that $\mu$ and $\pi$ are \emph{Hopf compatible} if the morphisms $\nabla^\mu$ and $\Delta^\pi$ are Hopf compatible in the sense of Definition \ref{hopf-compat-def}. The triple $(\kk\P,\nabla^\mu,\Delta^\pi)$ is then a connected Hopf monoid if and only if 
\begin{itemize}
\item$(\kk\P,\nabla^\mu)$ is a connected monoid (i.e., $\mu$ is associative and unital).
\item $(\kk\P,\Delta^\pi)$ is a connected comonoid (i.e., $\pi$ is coassociative and counital).
\item $\mu$ and $\pi$ are Hopf compatible.
\end{itemize}
When these conditions hold, the connected Hopf monoid $(\kk\P,\nabla^\mu,\Delta^\pi)$ is evidently linearized, 
%. By Propositions \ref{monoid-prop} and \ref{comonoid-prop}, moreover,  
and any connected Hopf monoid which is linearized has this form for some choice of $\P$, $\mu$, and $\pi$.
%More generally, a connected Hopf monoid 
%$\h$
%$\h=(\p,\nabla,\Delta)$ 
%is \emph{linearized} if 
%it has a basis
%$\p$ has a basis 
%with respect to which 
%$(\p,\nabla)$
%it  is linearized both as a monoid and
% $(\p,\Delta)$ is linearized  
%as a comonoid.
%From Propositions \ref{monoid-prop} and \ref{comonoid-prop}, it is clear that $\h$ is linearized if and only if $\h = (\kk\P,\nabla^\mu,\Delta^\pi)$ for some choice of $\P$, $\mu$, and $\pi$.

\begin{example} The triple
$(\kk\P,\nabla^\mu,\Delta^\pi)$ is a  connected Hopf monoid whenever    $\P$ is one of the set species $\E$, $\E_C$, $\fk S$, $\L$, or $\Pi$, and  $\mu$ and $\pi$ are given as in
Examples \ref{mu-ex} and \ref{pi-ex}.
\end{example}

%\begin{remark}
We may characterize when the systems $\mu$ and $\pi$ are Hopf compatible more directly as follows. Take the diagram in Definition \ref{hopf-compat-def}
and replace the symbols $\p$, $\otimes$, $\nabla$, $\Delta$ with $\P$, $\times$, $\mu$, $\pi$ respectively. Then $\mu$ and $\pi$ are Hopf compatible if and only if this modified diagram always commutes.
From this, or using a duality argument, it is straightforward to check that if
$\mu$ and $\pi$ are Hopf compatible then the morphisms $\nabla^\pi$ and $\Delta^\mu$ are also Hopf compatible.
%
%\begin{fact}
%If $\mu$ and $\pi$ are Hopf compatible then $\nabla^\pi$ and $\Delta^\mu$ are Hopf compatible.
%\end{fact}
%
%\begin{proof}
%
%\end{proof}
%
From Propositions \ref{monoid-prop} and \ref{comonoid-prop} we then obtain the following fact:
\begin{fact}\label{hopfdual-fact}
 If either triple $(\kk\P,\nabla^\mu,\Delta^\pi)$ or 
$(\kk\P,\nabla^\pi,\Delta^\mu)$ is a connected Hopf monoid then both are,
and in this case each is isomorphic to the other's dual via the induced morphism $\kk\P \to (\kk\P)^*$.
\end{fact}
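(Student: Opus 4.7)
The plan is to deduce Fact \ref{hopfdual-fact} by assembling Propositions \ref{monoid-prop} and \ref{comonoid-prop} together with the observation immediately preceding the fact, namely that Hopf compatibility of $\mu$ and $\pi$ implies Hopf compatibility of $\nabla^\pi$ and $\Delta^\mu$. By the symmetry of the two triples under interchanging the roles of $\mu$ and $\pi$, it suffices to prove that if $(\kk\P,\nabla^\mu,\Delta^\pi)$ is a connected Hopf monoid then so is $(\kk\P,\nabla^\pi,\Delta^\mu)$, and that the morphism $f : \kk\P \to (\kk\P)^*$ induced by $\P$ is an isomorphism between the first Hopf monoid and the dual of the second.

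First, assume $(\kk\P,\nabla^\mu,\Delta^\pi)$ is a connected Hopf monoid. Then $\mu$ is associative and unital and $\pi$ is coassociative and counital, so by Proposition \ref{monoid-prop} the pair $(\kk\P,\Delta^\mu)$ is a connected comonoid and by Proposition \ref{comonoid-prop} the pair $(\kk\P,\nabla^\pi)$ is a connected monoid. Hopf compatibility of $\nabla^\mu$ and $\Delta^\pi$ transfers to Hopf compatibility of $\nabla^\pi$ and $\Delta^\mu$ by the observation recorded just before the fact (obtained either by direct inspection of the modified compatibility diagram or via a duality argument). Thus $(\kk\P,\nabla^\pi,\Delta^\mu)$ is a connected Hopf monoid as well.

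For the duality claim, recall from the last statements of Propositions \ref{monoid-prop} and \ref{comonoid-prop} that the morphism $f : \kk\P \to (\kk\P)^*$ induced by $\P$ identifies $(\kk\P,\nabla^\mu)$ with the dual of $(\kk\P,\Delta^\mu)$ as connected monoids and $(\kk\P,\Delta^\pi)$ with the dual of $(\kk\P,\nabla^\pi)$ as connected comonoids. Applying these two identifications simultaneously, $f$ sends the product $\nabla^\mu$ and coproduct $\Delta^\pi$ of the first Hopf monoid respectively to the coproduct and product of the dual of $(\kk\P,\nabla^\pi,\Delta^\mu)$. Since $f$ preserves the unit and counit (again by Propositions \ref{monoid-prop} and \ref{comonoid-prop}), it is an isomorphism of connected Hopf monoids
\[
(\kk\P,\nabla^\mu,\Delta^\pi) \xrightarrow{\ \sim\ } (\kk\P,\nabla^\pi,\Delta^\mu)^*.
\]

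I do not anticipate a real obstacle: once one has the two preceding propositions and the transfer of Hopf compatibility, the argument is essentially bookkeeping. The only point that warrants care is verifying that the canonical identification $(\p\cdot\p)^* \cong \p^*\cdot \p^*$ used implicitly in the two propositions is the same one used to define the dual of a Hopf monoid in Section \ref{dual-sect}, so that the monoid-level and comonoid-level dualities combine consistently into a duality of Hopf monoids; this is immediate from the definitions.
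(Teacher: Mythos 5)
Your proposal is correct and follows exactly the route the paper intends: the paper states this fact as an immediate consequence of the observation that Hopf compatibility of $\mu$ and $\pi$ transfers to $\nabla^\pi$ and $\Delta^\mu$, combined with the final assertions of Propositions \ref{monoid-prop} and \ref{comonoid-prop}, which is precisely your assembly. The bookkeeping you carry out (combining the monoid-level and comonoid-level dualities via the induced morphism $\kk\P\to(\kk\P)^*$) is the argument the paper leaves implicit.
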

%\end{remark}

For connected Hopf monoids which are commutative as well as linearized, much of the structure theory in Section \ref{mufsd-sect} carries over as a result of the following lemma.
Here we write $\Hopf(\Sp^\circ)$ for the category of connected Hopf monoids.

\begin{lemma}\label{comcomp-lem} 
Assume $(\kk\P,\nabla^\mu,\Delta^\pi) \in \Hopf(\Sp^\circ)$. Then
 $\mu$ is commutative if and only if $\mu$ is Hopf self-compatible.
\end{lemma}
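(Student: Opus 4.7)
The plan is to reduce both directions of the biconditional to Theorem \ref{selfcompat-thm}, which characterizes Hopf self-compatibility for $\mu$ (given that $\mu$ is already associative and unital, which comes for free here from the assumption that $(\kk\P,\nabla^\mu,\Delta^\pi)$ is a connected Hopf monoid). The forward direction is then essentially immediate: condition (a) of Theorem \ref{selfcompat-thm} asserts commutativity as a necessary condition for Hopf self-compatibility, so if $\mu$ is Hopf self-compatible, $\mu$ is commutative.

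For the reverse direction, assume $\mu$ is commutative, and verify conditions (a), (b), (c) of Theorem \ref{selfcompat-thm}. Condition (a) is the hypothesis. Condition (b) (injectivity of each $\mu_{S,T}$) follows because Lemma \ref{injsurj-fact}, applied to the Hopf monoid $(\kk\P,\nabla^\mu,\Delta^\pi)$, makes each $\nabla^\mu_{S,T}$ injective, and injectivity of the linearization implies injectivity of the underlying map of sets.

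The real work is condition (c). Given $S\sqcup S' = A\sqcup B$ and $(\lambda,\lambda')\in\P[S]\times\P[S']$ with $\gamma := \mu_{S,S'}(\lambda,\lambda') = \mu_{A,B}(\alpha,\beta)$, write $(a_1,b_1) = \pi_{A\cap S,B\cap S}(\lambda)$ and $(a_2,b_2) = \pi_{A\cap S',B\cap S'}(\lambda')$. The plan is to evaluate both sides of the Hopf-compatibility square for $\nabla^\mu$ and $\Delta^\pi$ on $\lambda\otimes\lambda'$. The left-hand composite yields $\Delta^\pi_{A,B}(\gamma) = \Delta^\pi_{A,B}\circ\nabla^\mu_{A,B}(\alpha\otimes\beta) = \alpha\otimes\beta$ by Lemma \ref{injsurj-fact}, while the right-hand composite, after the obvious twist, produces $\mu_{A\cap S,A\cap S'}(a_1,a_2)\otimes\mu_{B\cap S,B\cap S'}(b_1,b_2)$. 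Matching the two tensors (basis elements in $\kk\P[A]\otimes\kk\P[B]$) gives the decompositions $\alpha = a_1\cdot a_2$ and $\beta = b_1\cdot b_2$.

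The final step is a short algebraic rearrangement using associativity and commutativity of $\mu$:
\[
\gamma \;=\; \alpha\cdot\beta \;=\; (a_1\cdot a_2)(b_1\cdot b_2) \;=\; (a_1\cdot b_1)(a_2\cdot b_2) \;=\; \mu_{S,S'}\bigl(\mu_{A\cap S,B\cap S}(a_1,b_1),\ \mu_{A\cap S',B\cap S'}(a_2,b_2)\bigr).
\]
Comparing this with $\gamma = \mu_{S,S'}(\lambda,\lambda')$ and invoking the injectivity of $\mu_{S,S'}$ already established, we conclude $\lambda = \mu_{A\cap S,B\cap S}(a_1,b_1)$ and $\lambda' = \mu_{A\cap S',B\cap S'}(a_2,b_2)$, which is exactly condition (c). The main obstacle is the diagram chase identifying $a_i$ and $b_i$ as the correct factors of $\alpha$ and $\beta$; once that bookkeeping is done, commutativity swaps an inner pair and the result falls out of the injectivity of $\mu_{S,S'}$.
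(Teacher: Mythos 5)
Your proof is correct and follows essentially the same route as the paper: both directions reduce to Theorem \ref{selfcompat-thm}, with injectivity supplied by Lemma \ref{injsurj-fact} and condition (c) extracted from the Hopf-compatibility square for $\nabla^\mu$ and $\Delta^\pi$. The only (immaterial) difference is the orientation of the diagram chase: the paper expands $\Delta^\pi_{S,S'}\circ\nabla^\mu_{A,B}(\alpha\otimes\beta)$, which exhibits $\lambda$ and $\lambda'$ directly as the required products, whereas you expand $\Delta^\pi_{A,B}\circ\nabla^\mu_{S,S'}(\lambda\otimes\lambda')$ to factor $\alpha$ and $\beta$ and then need the extra associativity--commutativity rearrangement together with the injectivity of $\mu_{S,S'}$ to recover the factorizations of $\lambda$ and $\lambda'$.
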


\begin{proof}
One direction of this result follows from Theorem \ref{selfcompat-thm}. For the other direction, suppose that $\mu$ is commutative.
The morphism  $\nabla^\mu$ is injective by Lemma \ref{injsurj-fact} and this implies that $\mu$ is injective.
To show that $\mu$ is Hopf self-compatible, therefore, 
all we need to do is check that $\mu$ has property (c) in Theorem \ref{selfcompat-thm}. 
To this end, suppose $S\sqcup S' = A\sqcup B$ are two disjoint decompositions of the same finite set and $(\lambda,\lambda') \in \P[S]\times \P[S']$ such that $\mu_{S,S'}(\lambda,\lambda') = \mu_{A,B}(\alpha,\beta)$ for some $(\alpha,\beta) \in \P[A]\times\P[B]$.
Then 
\[ \lambda\otimes \lambda' = (\Delta^\pi_{S,S'}\circ \nabla^\mu_{A,B})(\alpha\otimes\beta)
\]
since $\Delta^\pi_{S,S'}\circ \nabla^\pi_{S,S'}$ is the identity map by Lemma \ref{injsurj-fact}.
Applying the definition of  Hopf compatibility (and also of the morphisms $\nabla^\mu$ and $\Delta^\pi$)
to the composition 
$\Delta^\pi_{S,S'}\circ \nabla^\mu_{A,B}$
gives alternatively
\[  (\Delta^\pi_{S,S'}\circ \nabla^\mu_{A,B})(\alpha\otimes\beta)
 = \mu_{A\cap S,B\cap S}(\lambda_1,\lambda_2) \otimes \mu_{A\cap S',B\cap S'}(\lambda'_1,\lambda'_2)\]
 where $\lambda_i$ and $\lambda'_i$ are the elements such that $\pi_{A\cap S, A\cap S'}(\alpha) = (\lambda_1,\lambda_1')$ and $\pi_{B\cap S, B\cap S'}(\beta) = (\lambda_2,\lambda'_2)$.
 Combining the preceding equations shows that $\lambda \in \im(\mu_{A\cap S,B\cap S})$ and $\lambda' \in \im(\mu_{A\cap S',B\cap S'})$ as required.
 \end{proof}

Thus, when the linearized Hopf monoid $\h=(\kk\P,\nabla^\mu,\Delta^\pi)$ is  commutative 
we may identify $\P$ with some species of labeled set partitions by Theorem \ref{muFSD-mainthm}.
When $\h$ is also cocommutative, there 
is a natural partial order  on $\P$ which generalizes the refinement partial order on  set partitions. (Recall a set partition $X$ \emph{refines} another set partition $Y$  if each block of $Y$ is a union of blocks of $X$. If we write $X \leq Y$ in this situation, then $\leq$ is a partial order on the collection of partitions of a fixed set.)
We introduce this order in the next theorem, 
but 
%
%and cocommutative. Then, by the preceding lemma and Fact \ref{hopfdual-fact}, both $\h'=(\kk\P,\nabla^\mu,\Delta^\mu)$ and $\h''=(\kk\nabla^\mu,\Delta^\mu)$ are also  commutative and commutative Hopf monoids. 
%%In this situation one naturally asks whether $\h\cong \h'$, in which case by duality also $\h'\cong \h''$.
%It turns out that  these Hopf monoids are all isomorphic, and the  results of the next section will describe explicit isomorphisms between them.
% The main tool for this will be a certain ``partial order'' on the set species $\P$ which we introduce in the next theorem. b
 before considering this, we require some notation and a technical lemma.
 
 \begin{notation}
 Assume $\mu$ is associative and $\pi$ is coassociative. Then, for any pairwise disjoint decomposition $I = S_1 \sqcup \dots \sqcup S_k$ of a finite set, iterating the components of $\mu$ and $\pi$ give rise to   maps which we denote  \[ 
 \mu_{S_1,\dots,S_k} : \P[S_1]\times \dots \times \P[S_k] \to \P[I] 
 \qquad
  \pi_{S_1,\dots,S_k} :\P[I] \to \P[S_1]\times \dots \times \P[S_k].
 \]
Formally, these are the unique maps  whose linearizations are the linear maps $\nabla^\mu_{S_1,\dots,S_k}$ and $\Delta^\pi_{S_1,\dots,S_k}$ given by \eqref{nablam} and \eqref{deltam}.
When $k=1$  both  maps are the identity. %These are the maps indicated by the arrows in the diagram in the following lemma. 
\end{notation}

The maps just defined are the ones indicated without subscripts in the  diagram in the following lemma, but our primary use of this notation will begin with the statement of Theorem \ref{order-thm}.

\begin{lemma}\label{preorder-lem}
%Let $\P$ be a set species with a multiplicative system $\mu$ and a comultiplicative system $\pi$. 
Suppose $\mu$ is associative and commutative and $\pi$ is coassociative and cocommutative. 
If $\mu$ and $\pi$ are Hopf compatible, then the diagram 
\[
\begin{diagram}
\prod_{i=1}^k \P[R_i] &\rTo^{\mu}& & \P[I] &&\rTo^{\pi} & \prod_{j=1}^\ell \P[S_j] \\ 
\\ 
\dTo^{\prod_{i=1}^k\pi}&&  &&  && \uTo_{\prod_{i=1}^\ell \mu} \\ 
\\
\prod_{i=1}^k \(\prod_{j=1}^\ell \P[R_i\cap S_j] \)&&& \rTo^{\sim} &&& \prod_{j=1}^\ell \(\prod_{i=1}^k\P[S_j\cap R_i]\)
 \end{diagram}
 \]
commutes for any pairwise disjoint decompositions $I =R_1\sqcup \dots \sqcup R_k = S_1\sqcup \dots \sqcup S_\ell$. % of the same finite set.
 \end{lemma}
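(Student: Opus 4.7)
The plan is to reduce the commutativity of the general $k \times \ell$ diagram to the $2 \times 2$ Hopf compatibility condition (Definition \ref{hopf-compat-def}) by double induction on $(k,\ell)$, using the associativity of $\mu$ and coassociativity of $\pi$ to peel off one factor at a time. Since $\mu$ is commutative and associative, the map $\mu_{R_1,\dots,R_k}$ is well-defined independently of ordering and parenthesization, and similarly for $\pi_{S_1,\dots,S_\ell}$; this ensures that the diagram makes unambiguous sense. It suffices to prove commutativity of the linearized diagram involving $\nabla^\mu_{R_1,\dots,R_k}$ and $\Delta^\pi_{S_1,\dots,S_\ell}$.

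First I would dispose of the degenerate base cases. When $k=1$, the top-left map $\mu$ is the identity and the bottom-left map $\prod_i \pi$ is just $\pi_{R_1 \cap S_1, \dots, R_1 \cap S_\ell} = \pi_{S_1,\dots,S_\ell}$, so the diagram trivially commutes (the bottom isomorphism degenerates into an identification). The case $\ell = 1$ is dual. The case $k = \ell = 2$ is precisely the Hopf compatibility assumption on $\mu$ and $\pi$.

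Next I would handle the case $k=2$ with arbitrary $\ell$ by induction on $\ell$. Using coassociativity of $\pi$, write
\[
\pi_{S_1,\dots,S_\ell} = \bigl(\pi_{S_1,\dots,S_{\ell-1}} \times \mathrm{id}_{\P[S_\ell]}\bigr) \circ \pi_{S_1 \sqcup \cdots \sqcup S_{\ell-1},\, S_\ell}.
\]
Applying the $2\times 2$ compatibility to the partition $I = R_1 \sqcup R_2 = (S_1 \sqcup \cdots \sqcup S_{\ell-1}) \sqcup S_\ell$ transforms $\pi_{S_1\sqcup\cdots\sqcup S_{\ell-1},\,S_\ell} \circ \mu_{R_1,R_2}$ into a composition going through the four intersections $R_i \cap (S_1 \sqcup \cdots \sqcup S_{\ell-1})$ and $R_i \cap S_\ell$. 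Then applying the inductive hypothesis with $k=2$ and $\ell-1$ separately to each factor $R_1 \cap (S_1 \sqcup \cdots \sqcup S_{\ell-1}) = \bigsqcup_{j<\ell}(R_1\cap S_j)$ and $R_2 \cap (S_1 \sqcup \cdots \sqcup S_{\ell-1}) = \bigsqcup_{j<\ell}(R_2 \cap S_j)$ finishes the step, after regrouping factors via the canonical twisting isomorphisms.

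Finally, I would induct on $k$ with $\ell$ arbitrary. Using associativity of $\mu$, write
\[
\mu_{R_1,\dots,R_k} = \mu_{R_1 \sqcup \cdots \sqcup R_{k-1},\, R_k} \circ \bigl(\mu_{R_1,\dots,R_{k-1}} \times \mathrm{id}_{\P[R_k]}\bigr),
\]
apply the $k=2$ case already established (with the two pieces $R_1 \sqcup \cdots \sqcup R_{k-1}$ and $R_k$ and the given $\ell$-fold decomposition of $I$), and then use the inductive hypothesis for $(k-1,\ell)$ on the first factor $R_1 \sqcup \cdots \sqcup R_{k-1} = \bigsqcup_j (R_1 \sqcup \cdots \sqcup R_{k-1}) \cap S_j$. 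A standard naturality argument identifies the resulting composition with the right-hand side of the target diagram.

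The main obstacle is purely bookkeeping: one must verify that the various canonical isomorphisms that permute and regroup the double product $\prod_{i,j} \P[R_i \cap S_j]$ at each reduction step compose correctly to yield the single bottom isomorphism of the target diagram. Conceptually nothing new happens beyond the 2x2 case, but writing the diagram-chase cleanly requires choosing a consistent convention for the reordering maps so that the inductive hypothesis can be invoked verbatim.
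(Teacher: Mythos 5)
Your proposal is correct and is essentially the paper's own argument: the paper simply states that the lemma follows ``by induction on $k+\ell$ using Definition \ref{hopf-compat-def}'' and leaves the (admittedly cumbersome) details as an exercise, which is exactly the double induction you carry out. Your write-up is in fact more explicit than the paper's, and the only quibble is a phrasing one: in the $k=2$ step the inductive hypothesis is invoked once on the set $S' = S_1\sqcup\cdots\sqcup S_{\ell-1}$ with its two decompositions $(R_1\cap S')\sqcup(R_2\cap S')$ and $S_1\sqcup\cdots\sqcup S_{\ell-1}$, rather than ``separately to each factor.''
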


\begin{proof}
The proof of this lemma, by induction on $k+\ell$ using Definition \ref{hopf-compat-def}, is  straightforward  but cumbersome to write down in detail, and we leave it as an exercise for the reader.
\end{proof}

Recall from the introduction that a \emph{partial order} on a set species $\P$ is transitive subspecies $\{<\}\subset \P\times \P$ such that if $(a,b) \in \{<\}[I]$ then $a\neq b$.
% and if also $(b,c) \in \{<\}[I]$ then $(a,c) \in \{<\}[I]$.
When $\{ <\}$ is a partial order on $\P$ and $\lambda,\lambda' \in \P[I]$ we write 
\[ \lambda < \lambda' \qquad\text{to indicate}\qquad (\lambda,\lambda') \in \{<\}[I].\]
In turn, we write $\lambda \leq \lambda'$ to mean $\lambda = \lambda'$ or $\lambda < \lambda'$.
With respect to this notation, $<$ then corresponds to the usual notion of a partial order on each set $\P[I]$.
The condition that $\{<\}$ forms a subspecies of $\P\times \P$ is equivalent to requiring that $\P[\sigma](\lambda) < \P[\sigma](\lambda')$ whenever $\lambda < \lambda'$, for all bijections $\sigma : I \to I'$.

These definitions are all at the service of the following result, given as Theorem \descref{G} in the introduction.
Here we write $\cocoHopf(\Sp^\circ)$ for the full subcategory of connected Hopf monoids which are commutative and cocommutative.
%, which describes a natural partial order which exists on the basis of a linearized Hopf monoid when it is both commutative and cocommutative.
%\[\]
%Consider a system of relations ${<} = ({<}_S)$, where ${<}_S$ is a relation on $\P[S]$ for each finite set $S$.
%We say that ${<}$ is a \emph{partial order} on the set species $\P$ if the relations $<_S$ are all strict partial orders  such that %if $\lambda,\lambda' \in \P[S]$ then
%\[\P[\sigma](\lambda) <_{T} \P[\sigma](\lambda')\qquad\text{whenever}\qquad \lambda <_S\lambda'\qquad \text{for }\lambda,\lambda' \in \P[S]\] for all bijections $\sigma : S \to T$ between finite sets.
%(Here, by \emph{strict} we mean that no elements satisfy both $\lambda'<_S\lambda$ and $\lambda<_S\lambda'$.) We write $\leq_S$ for the smallest non-strict partial ordered induced by $<_S$.
%From now on, when working with partial orders on set species we typically omit the subscript and just write $<$ for the partial order $<_S$ on $\P[S]$.

\begin{theorem}\label{order-thm}
Assume 
$(\kk\P,\nabla^\mu,\Delta^\pi) \in \cocoHopf(\Sp^\circ)$.
Define $\{ \prec\}[I]$ for each finite set $I$ as the set of pairs $(\lambda,\lambda') \in \P[I]\times \P[I]$ with $\lambda \neq \lambda'$ such that
\[ \lambda = \mu_{S_1,\dots,S_k}\circ \pi _{S_1,\dots,S_k}(\lambda')\]
for some pairwise disjoint decomposition $I = S_1\sqcup \dots \sqcup S_k$. Then $\{\prec\}$ is a partial order on $\P$. Moreover, $\{\prec\}$
 is the minimal transitive subspecies of $\P\times \P$ such that $(\lambda,\lambda') \in \{\prec\}[I]$   whenever $\lambda,\lambda' \in \P[I]$ are distinct and $\lambda = \mu_{S,T}\circ \pi_{S,T} (\lambda')$ for some disjoint decomposition $I = S\sqcup T$.
\end{theorem}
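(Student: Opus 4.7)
The plan is to rephrase the relation in terms of the projection operators $P_S := \mu_{S_1,\dots,S_k}\circ\pi_{S_1,\dots,S_k}$ attached to an ordered disjoint decomposition $S=(S_1,\dots,S_k)$ of a finite set $I$. Since $(\kk\P,\nabla^\mu,\Delta^\pi)$ is a connected Hopf monoid, Lemma \ref{injsurj-fact} yields $\pi_{S,T}\circ\mu_{S,T}=\id$; iterating this identity in the obvious way gives $\pi_S\circ\mu_S=\id$ for any decomposition $S$, so each $P_S$ is idempotent. In this language $\lambda\prec\lambda'$ holds precisely when $\lambda=P_S(\lambda')$ for some $S$ and $\lambda\neq\lambda'$.

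The main structural step is to prove
\[P_S\circ P_T \;=\; P_U \;=\; P_T\circ P_S,\]
where $U$ is the common refinement $(S_i\cap T_j)_{i,j}$ (discarding empty parts) of two decompositions $S$ and $T$ of $I$. The strategy is to apply Lemma \ref{preorder-lem} to rewrite $\pi_S\circ\mu_T$ as the corresponding composition through the refinement, and then use associativity of $\mu$ and coassociativity of $\pi$, together with commutativity of $\mu$ and cocommutativity of $\pi$ (to absorb the reordering isomorphism), to collapse $\mu_S\circ(\pi_S\circ\mu_T)\circ\pi_T$ into $\mu_U\circ\pi_U=P_U$. Specializing this identity to the case where $U$ already refines $T$ yields $P_T\circ P_U=P_U$, so every fixed point of $P_U$ is also fixed by $P_T$.

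From these facts the order axioms fall out. Antireflexivity is built into the definition. For antisymmetry of the underlying preorder: if $\lambda=P_S(\lambda')$ and $\lambda'=P_T(\lambda)$, then $\lambda=P_S(P_T(\lambda))=P_U(\lambda)$, so $\lambda$ is fixed by $P_U$; since $U$ refines $T$, $\lambda$ is then fixed by $P_T$ too, forcing $\lambda'=P_T(\lambda)=\lambda$. For transitivity of $\{\prec\}$: if $\lambda\prec\lambda'\prec\lambda''$ via decompositions $S$ and $T$, then $\lambda=P_S(P_T(\lambda''))=P_U(\lambda'')$; and if $\lambda=\lambda''$, the antisymmetry argument applied to the pair $(\lambda',\lambda'')$ gives $\lambda'=\lambda''$, contradicting $\lambda'\prec\lambda''$. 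Functoriality of $\{\prec\}$ as a subspecies of $\P\times\P$ is immediate from the naturality of $\mu$ and $\pi$.

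For the minimality clause I plan to induct on the number of blocks $k$ of the decomposition witnessing $\lambda=P_{(S_1,\dots,S_k)}(\lambda')$, the cases $k\leq 2$ being trivial. For $k\geq 3$, associativity and coassociativity give
\[P_{(S_1,\dots,S_k)}(\lambda') \;=\; \mu_{(S_1,\,S_2\sqcup\cdots\sqcup S_k)}\bigl(\alpha,\;P_{(S_2,\dots,S_k)}(\beta)\bigr),\]
where $(\alpha,\beta)=\pi_{(S_1,\,S_2\sqcup\cdots\sqcup S_k)}(\lambda')$. The intermediate element $\lambda_0:=\mu_{(S_1,\,S_2\sqcup\cdots\sqcup S_k)}(\alpha,\beta)=P_{(S_1,\,S_2\sqcup\cdots\sqcup S_k)}(\lambda')$ is either $\lambda'$ or a 2-block predecessor of it. Induction applied inside $\P[S_2\sqcup\cdots\sqcup S_k]$ then yields a chain of 2-block steps from $\beta$ to $P_{(S_2,\dots,S_k)}(\beta)$. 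The main obstacle is lifting such a chain to $\P[I]$: concretely, to show that whenever $\beta_{i+1}=P_{(A,B)}(\beta_i)$ for a 2-block decomposition $A\sqcup B=S_2\sqcup\cdots\sqcup S_k$, the elements $\gamma_i:=\mu_{(S_1,\,S_2\sqcup\cdots\sqcup S_k)}(\alpha,\beta_i)$ satisfy $\gamma_{i+1}=P_{(S_1\sqcup A,\,B)}(\gamma_i)$. This is a careful but essentially mechanical application of the Hopf compatibility square of Definition \ref{hopf-compat-def} for the decompositions $(S_1,A\sqcup B)$ and $(S_1\sqcup A,\,B)$ of $I$, in which the vanishing intersection $S_1\cap B=\varnothing$ lets unitality of $\mu$ and counitality of $\pi$ collapse most of the diagram to the required identity.
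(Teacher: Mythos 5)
Your proposal is correct and follows essentially the same route as the paper: both arguments hinge on Lemma \ref{preorder-lem} to establish the composition identity $\mu_R\circ\pi_R\circ\mu_S\circ\pi_S=\mu_U\circ\pi_U$ for the common refinement $U$, derive transitivity and the needed inequality $\lambda\neq\lambda''$ from the resulting idempotence/fixed-point considerations, and prove minimality by induction on the number of blocks. The one organizational difference is in the induction step for minimality: you peel off $S_1$ and must then lift a chain of $2$-block steps from $\P[S_2\sqcup\cdots\sqcup S_k]$ up to $\P[I]$ via a Hopf-compatibility computation (which does work, since $S_1\cap B=\varnothing$ collapses the square as you say), whereas the paper merges $S_1$ and $S_2$ into a single block to get a $(k-1)$-block predecessor $\lambda''$ and then observes that $\lambda=\mu_{S_1,I-S_1}\circ\pi_{S_1,I-S_1}(\lambda'')$ is a single $2$-block step, avoiding the chain-lifting entirely.
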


\begin{remark} 
The word \emph{minimal} in this statement refers to minimality in the ordering of the subspecies of $\P\times \P$ by containment. The family of subspecies of $\P\times \P$ with the properties described in the last part of the theorem is closed under intersections, and so it makes sense to speak of the (unique) minimal subspecies in this family.
\end{remark}

\begin{proof}
That $\{\prec\}$ is at least a subspecies of $\P\times \P$ follows from the definition of a (co)multiplicative system. To show transitivity, suppose $(\lambda,\lambda') \in \{\prec\}[I]$ and also $(\lambda',\lambda'') \in \{\prec\}[I]$. By definition, there are then two pairwise disjoint decompositions $I = R_1 \sqcup \dots \sqcup R_k = S_1 \sqcup \dots \sqcup S_\ell$ such that 
\[  \lambda = \mu_{R_1,\dots,R_k}\circ \pi _{R_1,\dots,R_k}(\lambda')
 \qquand
 \lambda'=   \mu_{S_1,\dots,S_\ell}\circ \pi _{S_1,\dots,S_\ell}(\lambda'').
\]
On applying Lemma \ref{preorder-lem} to the composition $\pi _{R_1,\dots,R_k}\circ \mu_{S_1,\dots,S_\ell}$,
we
obtain
\[ \lambda =
 \mu_{R_1,\dots,R_k}\circ 
 \(\prod_{i=1}^k \mu_{A_{i}^1,\dots, A^\ell_{i}}\)\circ \tau \circ
  \(\prod_{j=1}^\ell  \pi_{A_{1}^j,\dots, A_{k}^j}\)
   \circ \pi _{S_1,\dots,S_\ell}(\lambda'')
   \]
   where $A_i^j = R_i \cap S_j$ and $\tau$ is an appropriate twisting bijection. From the associativity and commutativity of $\mu$ and $\pi$, it follows that this formula can be rewritten as \[ \lambda = \mu_{A_1^1,\dots, A_\ell^k} \circ \pi_{A_1^1,\dots,A_\ell^k}(\lambda'')\] where $A_1^1,\dots,A_\ell^k$  are the sets $A_i^j$  listed in any order.
In light of the preceding formula, to show that $(\lambda,\lambda'') \in \{\prec\}[I]$, it remains only to check $\lambda \neq \lambda''$. To this end, note that   $\lambda \in \im(\mu_{A_1^1,\dots, A_\ell^k})$ so also $\lambda \in \im(\mu_{S_1,\dots,S_\ell})$ by the associativity of $\mu$.
It follow by induction on $\ell$ using Lemma \ref{injsurj-fact} that $\pi_{S_1,\dots,S_\ell} \circ \mu_{S_1,\dots,S_\ell}$ is the identity map,
 so $ \mu_{S_1,\dots,S_\ell}\circ \pi_{S_1,\dots,S_\ell}$ acts as the identity map on $\im(\mu_{S_1,\dots,S_\ell})$. From this we deduce that $\lambda \neq \lambda''$, since otherwise we have the contradiction
$  \lambda'=   \mu_{S_1,\dots,S_\ell}\circ \pi _{S_1,\dots,S_\ell}(\lambda'') = \mu_{S_1,\dots,S_\ell}\circ \pi _{S_1,\dots,S_\ell}(\lambda) = \lambda.$
%Therefore $\{\prec\}$ is a partial order on $\P$.

Let $\cO $ be the minimal transitive subspecies of $\P\times\P$ with $(\lambda,\lambda') \in \cO[I]$   whenever  $\lambda = \mu_{S,T}\circ \pi_{S,T} (\lambda')\neq \lambda'$ for some disjoint decomposition $I = S\sqcup T$.
Write $\lambda \leq \lambda'$ if $\lambda=\lambda'$ or $(\lambda,\lambda') \in \cO[I]$.
We have shown that $\{\prec\}$ is a partial order, and so $\{\prec\} \subset \cO$.
To show the reverse containment, we must check that
if
$\lambda' \in \P[I]$
and
 $I = S_1\sqcup \dots \sqcup S_k$ is a pairwise disjoint decomposition
 then
\[ \lambda\leq \lambda'\qquad\text{where}\qquad\lambda\omdef= \mu_{S_1,\dots,S_k} \circ \pi_{S_1,\dots,S_k}(\lambda').\]
We already know that this holds when $k\leq 2$ and in general the desired claim follows by induction on $k$. In detail, assume $k> 2$ and define 
\[\lambda'' = \mu_{S_1\sqcup S_2,S_3,\dots,S_k}\circ \pi_{S_1\sqcup S_2,S_3,\dots,S_k}(\lambda').\]
By induction $\lambda'' \leq \lambda'$, and it follows from Lemma \ref{preorder-lem} by an argument similar to the one in the previous paragraph that $ \lambda = \mu_{S_1,I-S_1} \circ \pi_{S_1,I-S_1}(\lambda'')$, 
so $\lambda \leq \lambda''$. As $\cO$ is transitive, $\lambda \leq \lambda'$, and so we conclude that $\{\prec\} = \cO$.
\end{proof}

\begin{example}
We describe the partial order $\{\prec\}$ when $\P$ is one of the set species $\E$, $\E_C$, $\fk S$, $\L$, or $\Pi$, and  $\mu$ and $\pi$ are  given as in
Examples \ref{mu-ex} and \ref{pi-ex}.
\ben
\item[(i)] Exponential species: if $\P=\E$ then $\{\prec\}$ is the empty subspecies of $\E\times \E$. %, so that $\prec$ is the trivial partial order under which all distinct $\lambda,\lambda' \in \P[I]$ are incomparable.

\item[(ii)] Maps: if $\P=\E_C$ then $\{\prec\}$ is again trivial: all distinct $\lambda,\lambda' \in \E_C[I]$ are incomparable.

\item[(iii)] Permutations: if $\P = \fk S$ then $\{\prec\}$ is the partial order such that if  $\lambda,\lambda' \in \fk S[I]$ are permutations 
then  $\lambda \preceq \lambda'$ if and only the cycles of $\lambda'$ are each shuffles of some number of cycles of $\lambda$. Here, a cycle $c$ is a \emph{shuffle} of two cycles $a$ and $b$ if we can write $c=(c_1,\dots, c_n)$ and there are indices $1\leq i_1 < \dots < i_k \leq n$ such that $a = (c_{i_1},\dots, c_{i_k})$ and $b$ is the cycle given by deleting $c_{i_1},\dots,c_{i_k}$ from $c$. For example, there are six  cycles which are shuffles of $(1,2)$ and $(3,4)$:
\[ (1,2,3,4),\quad (1,2,4,3),\quad (1,3,2,4),\quad (1,4,2,3),\quad (1,3,4,2),\quad (1,4,3,2).\]
In turn, a cycle $c$ is a shuffle of $n$ cycles $a_1,\dots,a_n$ if $c$ is a shuffle of $a$ and $b$ where  $a=a_1$ and $b$ is some shuffle of $a_2,\dots,a_n$. (When $n=1$ this occurs precisely when $c=a_1$.)

\item[(iv)] Linear orders: if $\P =\L$ then $\{\prec\}$ is not well-defined since $\mu$ is not commutative.

\item[(v)] Set partitions: if $\P = \Pi$ then $\{\prec\}$ is the refinement partial order.
\een
\end{example}

The following results describe some noteworthy properties of the partial order $\{ \prec\}$.

\begin{lemma}\label{order-lem1}
Assume 
$(\kk\P,\nabla^\mu,\Delta^\pi) \in \cocoHopf(\Sp^\circ)$
and define $\{\prec\}$ as  Theorem \ref{order-thm}.
Let $I = S\sqcup T$ be a disjoint decomposition of a finite set and fix $\alpha \in \P[S]$ and $\beta \in \P[T]$ and $\lambda \in \P[I]$. The following properties then hold:
\ben

\item[(a)] $\mu_{S,T}(\alpha,\beta) \succeq \lambda$ if and only if $\lambda = \mu_{S,T}(\alpha',\beta')$ for some  $\alpha' \preceq \alpha$ and $\beta '\preceq \beta$.

\item[(b)] $\mu_{S,T}(\alpha,\beta) \preceq \lambda  $ if and only if $\pi_{S,T}(\lambda) = (\alpha',\beta')$ for some $\alpha' \succeq \alpha$ and $\beta '\succeq \beta$.

\een
\end{lemma}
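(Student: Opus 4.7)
The plan is to prove both parts by systematic application of Lemma \ref{preorder-lem}, which describes how iterated products and coproducts interact via the intersection lattice of two disjoint decompositions. Part (b) will largely follow from part (a) together with a duality argument plus one direct application of the lemma.

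For part (a), I first handle the reverse implication. Assume $\lambda = \mu_{S,T}(\alpha',\beta')$ with $\alpha' \preceq \alpha$ and $\beta' \preceq \beta$. Using Theorem \ref{order-thm}, choose pairwise disjoint decompositions $S = S_1 \sqcup \cdots \sqcup S_k$ and $T = T_1 \sqcup \cdots \sqcup T_\ell$ witnessing $\alpha' \preceq \alpha$ and $\beta' \preceq \beta$, so that $\alpha' = \mu_{S_1,\dots,S_k} \circ \pi_{S_1,\dots,S_k}(\alpha)$ and similarly for $\beta'$. Applying Lemma \ref{preorder-lem} to the decompositions $I = S\sqcup T = S_1\sqcup\cdots\sqcup S_k \sqcup T_1\sqcup\cdots\sqcup T_\ell$, the intersections collapse (each piece lies entirely in $S$ or in $T$) and we get
\[\pi_{S_1,\dots,S_k,T_1,\dots,T_\ell}\circ \mu_{S,T}(\alpha,\beta) = (\pi_{S_1,\dots,S_k}(\alpha),\pi_{T_1,\dots,T_\ell}(\beta)).\]
Applying $\mu$ to both sides and using associativity and commutativity of $\mu$ to regroup gives exactly $\mu_{S,T}(\alpha',\beta') = \lambda$, so $\mu_{S,T}(\alpha,\beta)\succeq \lambda$.

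For the forward implication of (a), suppose $\mu_{S,T}(\alpha,\beta) \succeq \lambda$. The equality case is trivial, so assume strict inequality and pick a pairwise disjoint decomposition $I = R_1\sqcup\cdots\sqcup R_m$ with $\lambda = \mu_{R_1,\dots,R_m}\circ \pi_{R_1,\dots,R_m}(\mu_{S,T}(\alpha,\beta))$. Setting $A_i = R_i\cap S$ and $B_i = R_i \cap T$, Lemma \ref{preorder-lem} applied to the decompositions $\{R_i\}$ and $\{S,T\}$ yields
\[\pi_{R_1,\dots,R_m}(\mu_{S,T}(\alpha,\beta)) = \bigl(\mu_{A_1,B_1}(\alpha_1,\beta_1),\dots,\mu_{A_m,B_m}(\alpha_m,\beta_m)\bigr),\]
where $(\alpha_i)=\pi_{A_1,\dots,A_m}(\alpha)$ and $(\beta_i)=\pi_{B_1,\dots,B_m}(\beta)$. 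Applying $\mu_{R_1,\dots,R_m}$ and regrouping via associativity and commutativity of $\mu$ gives $\lambda = \mu_{S,T}(\alpha',\beta')$ where $\alpha' = \mu_{A_1,\dots,A_m}\circ \pi_{A_1,\dots,A_m}(\alpha)\preceq \alpha$ and similarly $\beta'\preceq \beta$ by Theorem \ref{order-thm}.

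For part (b), the reverse implication is immediate from (a) and transitivity: if $\pi_{S,T}(\lambda)=(\alpha',\beta')$ with $\alpha\preceq\alpha'$ and $\beta\preceq\beta'$, then $\mu_{S,T}(\alpha',\beta') = \mu_{S,T}\circ\pi_{S,T}(\lambda) \preceq \lambda$ by the definition of $\{\prec\}$, while $\mu_{S,T}(\alpha,\beta)\preceq\mu_{S,T}(\alpha',\beta')$ by part (a). The forward implication runs parallel to the forward direction of (a) but with the roles of $\mu$ and $\pi$ interchanged. Assuming $\mu_{S,T}(\alpha,\beta) = \mu_{R_1,\dots,R_m}\circ \pi_{R_1,\dots,R_m}(\lambda)$, let $(\alpha',\beta') = \pi_{S,T}(\lambda)$ and $(\lambda_i) = \pi_{R_1,\dots,R_m}(\lambda)$. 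Applying $\pi_{S,T}$ to both sides of the assumed identity and using coassociativity (equivalently, Lemma \ref{preorder-lem}) shows that $\pi_{A_1,\dots,A_m}(\alpha') = (\alpha_i)$ where $(\alpha_i,\beta_i)=\pi_{A_i,B_i}(\lambda_i)$, and similarly for $\beta'$; comparing with $\pi_{S,T}(\mu_{S,T}(\alpha,\beta))=(\alpha,\beta)$ gives $\alpha = \mu_{A_1,\dots,A_m}\circ\pi_{A_1,\dots,A_m}(\alpha')$ and analogously for $\beta$, so $\alpha\preceq\alpha'$ and $\beta\preceq\beta'$ by Theorem \ref{order-thm}.

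The main obstacle is bookkeeping: each direction requires a careful application of Lemma \ref{preorder-lem} to correctly identify the relevant iterated product/coproduct in terms of the pieces $A_i = R_i\cap S$ and $B_i = R_i\cap T$, and one must invoke associativity and commutativity of $\mu$ (or coassociativity and cocommutativity of $\pi$) to rearrange the resulting expression into a form matching the desired conclusion. Edge cases where some $A_i$ or $B_i$ is empty are handled automatically by unitality.
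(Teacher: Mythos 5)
Your proposal is correct and follows essentially the same route as the paper: both directions of (a) and the forward direction of (b) are obtained by intersecting the witnessing decomposition with $S$ and $T$, applying Lemma \ref{preorder-lem}, and regrouping via the (co)associativity and (co)commutativity of $\mu$ and $\pi$, while the reverse direction of (b) is deduced from (a), the relation $\mu_{S,T}\circ\pi_{S,T}(\lambda)\preceq\lambda$, and transitivity. The only differences are cosmetic (e.g., in the reverse direction of (a) you compute $\pi$ of $\mu_{S,T}(\alpha,\beta)$ with respect to the combined decomposition rather than rewriting $\mu_{S,T}(\alpha',\beta')$ directly, which amounts to the same calculation).
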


\begin{proof}
To prove (a), first suppose $\lambda = \mu_{S,T}(\alpha',\beta')$ for some $\alpha' \preceq \alpha$ and $\beta' \preceq \beta$. We wish to show that $\lambda \preceq \mu_{S,T}(\alpha,\beta)$. For this, we note by definition that there are pairwise disjoint decompositions $S = S_1\sqcup \dots \sqcup S_k$ and $T= T_1 \sqcup \dots \sqcup T_\ell$ such that 
\[ \alpha'
 =  \mu_{S_1,\dots,S_k}\circ \pi_{S_1,\dots,S_k}(\alpha)
 \qquand
 \beta'
 =
 \mu_{T_1,\dots,T_\ell}\circ \pi_{S_1,\dots,S_\ell}(\beta).\]
 Using  the associativity of $\mu$ and $\pi$ and the fact that $ \pi_{S,T}\circ \mu_{S,T}$ is the identity map, it follows    that 
 \[ \lambda =\mu_{S,T}(\alpha',\beta') = \mu_{S_1,\dots,S_k,T_1,\dots,T_\ell} \circ \pi_{S_1,\dots,S_k,T_1,\dots,T_\ell}(\mu_{S,T}(\alpha,\beta))\]
 and thus $\lambda \preceq \mu_{S,T}(\alpha,\beta)$. Conversely suppose $\lambda \preceq \mu_{S,T}(\alpha,\beta)$. We wish to show that $\lambda = \mu_{S,T}(\alpha',\beta')$ for some $\alpha' \preceq \alpha$ and $\beta' \preceq \beta$.
 Let $I = I_1\sqcup \dots \sqcup I_k$ be a pairwise disjoint decomposition such that 
 \[ \lambda = \mu_{I_1,\dots,I_k} \circ \pi _{I_1,\dots,I_k}(\mu_{S,T}(\alpha,\beta))\]
 and define $S_i = I_i \cap S$ and $T_i = I_i \cap T$. By applying Lemma \ref{preorder-lem} to the composition $ \pi _{I_1,\dots,I_k}\circ \mu_{S,T}$ and then using the associativity and  commutativity of $\mu$ and $\pi$, we can rewrite this expression as 
 \[ \lambda = \mu_{S,T} \circ (\mu_{S_1,\dots,S_k} \times \mu_{T_1,\dots,T_k}) \circ (\pi_{S_1,\dots,S_k}\times \pi_{T_1,\dots, T_k})(\alpha,\beta).\]
 It follows that the desired pair $(\alpha',\beta')$ is given by setting $\alpha ' = \mu_{S_1,\dots,S_k}\circ \pi_{S_1,\dots,S_k}(\alpha)$ and $\beta' =   \mu_{T_1,\dots,T_k} \circ  \pi_{T_1,\dots, T_k}(\beta)$. This establishes part (a).
 
 To prove the second part, let $(\alpha',\beta') = \pi_{S,T}(\lambda)$. If $\alpha \preceq \alpha'$ and $\beta\preceq \beta'$ then $\mu_{S,T}(\alpha,\beta) \preceq \mu_{S,T}(\alpha',\beta')$ by part (a), so $\mu_{S,T}(\alpha,\beta) \preceq \lambda$ by transitivity as  $\mu_{S,T}(\alpha',\beta')\preceq \lambda$.
 Conversely, suppose $\mu_{S,T}(\alpha,\beta) \preceq \lambda$. Let $I = I_1 \sqcup \dots \sqcup I_k$ be a pairwise disjoint decomposition such that 
\[\mu_{S,T}(\alpha,\beta) = \mu_{I_1,\dots,I_k} \circ \pi _{I_1,\dots,I_k}(\lambda)\]
and define $S_i = I_i \cap S$ and $T_i = I_i \cap T$. Then $(\alpha,\beta) = \pi_{S,T}\circ  \mu_{I_1,\dots,I_k} \circ \pi _{I_1,\dots,I_k}(\lambda)$, 
 so after
 applying Lemma \ref{preorder-lem} to the composition $ \pi_{S,T}\circ  \mu_{I_1,\dots,I_k}$ and invoking as usual the associativity and commutativity of $\mu$ and $\pi$, it follows that 
\[(\alpha,\beta) =(\mu_{S_1,\dots,S_k}\circ  \mu_{T_1,\dots,T_k}) \circ (\pi _{S_1,\dots,S_k}\times \pi_{T_1,\dots,T_k})(\alpha',\beta')\]
which shows precisely that $\alpha \preceq \alpha'$ and $\beta\preceq \beta'$.
\end{proof}

If $X$ is a labeled set partition then we define its \emph{shape} to be $\sh(X)=\{ B : (B,\lambda) \in X\}$, the unlabeled set partition  whose blocks are the unlabeled blocks of $X$.
In turn, if $\lambda \in \P[I]$ and 
$(\kk\P,\nabla^\mu,\Delta^\pi) \in \cocoHopf(\Sp^\circ)$, then define $\sh(\lambda)$ as the shape of the labeled partition which is the preimage of $\lambda$ under the isomorphism described in Theorem \ref{muFSD-mainthm}.
As a   consequence of the preceding lemma we have this proposition.

\begin{proposition}\label{lattice-cor}
Assume 
$(\kk\P,\nabla^\mu,\Delta^\pi) \in \cocoHopf(\Sp^\circ)$
and define $\{\prec\}$ as in  Theorem \ref{order-thm}.
If $I$ is a finite set and  $\lambda \in \P[I]$, then the partially ordered set
\[\{ \lambda ' \in \P[I] : \lambda' \preceq \lambda\}\]  
%has a unique minimal element $\emptyset_\lambda$, and 
is a finite lattice. If $\lambda'$ and $\lambda''$ both belong to this lattice, then $\lambda' \preceq \lambda''$ if and only if $\sh(\lambda') \leq \sh(\lambda'')$ in the ordering of set partitions by refinement.
\end{proposition}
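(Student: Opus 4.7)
The plan is to reduce to the primitive case and then analyze the lower interval via a retraction property of the shape map.

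First, since our Hopf monoid is commutative, $\mu$ is Hopf self-compatible by Lemma \ref{comcomp-lem}, so Theorem \ref{muFSD-mainthm} applies and identifies $\P$ with $\cS(\Q)$ as multiplicative systems, where $\Q = \cP(\P,\mu)$; then each $\lambda \in \P[I]$ factors uniquely as $\lambda = \mu_{B_1,\dots,B_k}(q_1,\dots,q_k)$ with $q_i \in \Q[B_i]$ and $\sh(\lambda) = \{B_1,\dots,B_k\}$. Iterating Lemma \ref{order-lem1}(a) together with the injectivity of $\mu$ shows that multiplication gives a bijection of posets $\prod_i \{q_i' \preceq q_i\} \leftrightarrow \{\lambda' \preceq \lambda\}$, under which $\sh$ on the right corresponds to the disjoint union of shapes on the left. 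This reduces both assertions of the proposition to the primitive case $\lambda = q \in \Q[B]$.

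In the primitive case, Theorem \ref{order-thm} gives $\{q' \preceq q\} = \{\eta^Y(q) : Y \in \Pi[B]\}$, where $\eta^Y(q) := \mu_Y \circ \pi_Y(q)$. I would establish a retraction property of the shape map: for every $Y \in \Pi[B]$, $\eta^{\sh(\eta^Y(q))}(q) = \eta^Y(q)$. The argument uses the identity $\pi \circ \mu = \id$ from Lemma \ref{injsurj-fact} together with the coassociativity of $\pi$: if $\pi_Y(q) = (\alpha_1,\dots,\alpha_\ell)$ and some $\alpha_i = \mu_{U,V}(\gamma,\delta)$ factors nontrivially along $S_i = U \sqcup V$, then coassociativity of $\pi$ yields $\pi_{Y''}(q) = (\dots,\gamma,\delta,\dots)$ where $Y''$ refines $Y$ by splitting $S_i$, and reassociating $\mu$ gives $\eta^{Y''}(q) = \eta^Y(q)$; iterating until all components are primitive produces the retraction. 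This shows $\sh$ is injective on $\{q' \preceq q\}$ and identifies it with $\mathcal{L} := \{\sh(q') : q' \preceq q\} \subseteq \Pi[B]$. A further application of Lemma \ref{order-lem1}(a) yields the equivalence $\eta^{Y_1^*}(q) \preceq \eta^{Y_2^*}(q)$ iff $Y_1^* \leq Y_2^*$ in refinement, giving the shape-ordering assertion.

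To complete the lattice assertion, I would verify that $\mathcal{L}$ is closed under the meet and join operations of the refinement lattice on $\Pi[B]$. Finiteness follows from the finiteness of $\P[B]$. For closure under meets, the element $\eta^{Y_1^* \wedge Y_2^*}(q)$ lies in the lower interval by Theorem \ref{order-thm}; the retraction identifies its actual shape as an element of $\mathcal{L}$ at most both $Y_1^*$ and $Y_2^*$, and the shape-ordering equivalence forces this shape to be $Y_1^* \wedge Y_2^*$ exactly. Closure under joins then follows in any finite meet-semilattice possessing a top element, which $\mathcal{L}$ has (namely $\{B\}$, the shape of $q$).

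The main obstacle will be the retraction property of $\sh$. Its proof requires carefully combining $\pi \circ \mu = \id$ with the coassociativity of $\pi$, iterated at each non-primitive component of $\pi_Y(q)$, while ensuring that the refinement process terminates with a well-defined $Y^*$. A useful side observation is that this $Y^*$ characterizes the unique coarsest partition for which $\pi_{Y^*}(q)$ is componentwise primitive, which also clarifies why $\mathcal{L}$ is downward closed in $\Pi[B]$ when combined with the meet-closure argument.
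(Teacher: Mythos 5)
Your strategy---reduce to a primitive $q\in\Q[B]$ via Theorem \ref{muFSD-mainthm} and the iterated form of Lemma \ref{order-lem1}(a), establish the retraction $\mu_{Y^*}\circ\pi_{Y^*}(q)=\mu_Y\circ\pi_Y(q)$ for $Y^*=\sh(\mu_Y\circ\pi_Y(q))$, and transport the order to the set of shapes $\mathcal{L}\subset\Pi[B]$---is genuinely different from the paper's proof, which never reduces to the primitive case and instead exhibits the greatest lower bound of two arbitrary elements $\lambda',\lambda''$ of the interval directly as $\mu_{A_1,\dots,A_{k\ell}}\circ\pi_{A_1,\dots,A_{k\ell}}(\lambda)$, where the $A$'s are the pairwise intersections of the blocks of $\sh(\lambda')$ and $\sh(\lambda'')$, using Theorem \ref{selfcompat-thm}(c) and Lemma \ref{order-lem1}. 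Your reduction, retraction, and shape-ordering steps are all sound. The gap is in the lattice step: the claim that $\mathcal{L}$ is closed under the meet of the refinement lattice $\Pi[B]$, i.e.\ that the shape of $\mu_{Y_1^*\wedge Y_2^*}\circ\pi_{Y_1^*\wedge Y_2^*}(q)$ is ``forced'' to equal $Y_1^*\wedge Y_2^*$, is false; that shape can properly refine $Y_1^*\wedge Y_2^*$. For a counterexample, take $\P=\Pi$ with $\mu=\sqcup$ and the comultiplicative system $\pi$ that restricts a block $C$ of a partition to $C\cap U$ as the single block $\{C\cap U\}$ unless $C\cap U$ is a \emph{proper} subset of $C$ of cardinality exactly $2$, in which case it shatters $C\cap U$ into singletons; one checks this is coassociative, cocommutative, counital, and Hopf compatible with $\sqcup$. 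For the primitive $q=\{B\}$ with $B=\{1,2,3,4\}$, both $Y_1=\{\{1,2,3\},\{4\}\}$ and $Y_2=\{\{1\},\{2,3,4\}\}$ lie in $\mathcal{L}$, but $Y_1\wedge Y_2=\{\{1\},\{2,3\},\{4\}\}$ does not: the component of $\pi_{Y_1\wedge Y_2}(q)$ on $\{2,3\}$ is $\{\{2\},\{3\}\}$, which is not primitive, so $\sh(\mu_{Y_1\wedge Y_2}\circ\pi_{Y_1\wedge Y_2}(q))$ is the partition of $B$ into singletons. The same example refutes your closing remark that $\mathcal{L}$ is downward closed in $\Pi[B]$.

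The gap is repairable inside your framework, because the conclusion you actually need is weaker than $\Pi[B]$-meet-closure. The map $Y\mapsto Y^*$ is decreasing, idempotent, and order-preserving on $\Pi[B]$ (order-preservation follows from $\mu_{Y_1}\circ\pi_{Y_1}(q)\preceq\mu_{Y_2}\circ\pi_{Y_2}(q)$ whenever $Y_1\leq Y_2$ together with your shape-ordering equivalence), and $\mathcal{L}$ is its image. Hence the greatest lower bound of $Y_1^*$ and $Y_2^*$ \emph{within} $\mathcal{L}$ is $(Y_1^*\wedge Y_2^*)^*$: it is a lower bound since $(Y_1^*\wedge Y_2^*)^*\leq Y_1^*\wedge Y_2^*$, and any $Z\in\mathcal{L}$ below both satisfies $Z\leq Y_1^*\wedge Y_2^*$ and therefore $Z=Z^*\leq(Y_1^*\wedge Y_2^*)^*$. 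With the meet so corrected, your finite-meet-semilattice-with-top argument closes the proof.
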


\begin{remark}
While it follows in the situation of the proposition  that the map 
\[\{ \lambda ' \in \P[I] : \lambda' \preceq \lambda\} \to \{ \Lambda \in \Pi[I] : \Lambda \leq \sh(\lambda)\}\] given by $\lambda' \mapsto \sh(\lambda')$  is order-preserving and injective, this map is not necessarily a bijection.
\end{remark}

\begin{proof}
The set $X = \{ \lambda ' \in \P[I] : \lambda' \preceq \lambda\}$ is finite and has a unique maximum given by $\lambda$.
% and  is also finite,  its size being bounded by the number of pairwise disjoint decompositions of $I$. 
To show that $X$ is a lattice it is enough to show that  every pair of elements $\lambda',\lambda'' \in X$ has a greatest lower bound \cite[Proposition 3.3.1]{Stan1}. To this end, 
let $I = R_1 \sqcup \dots \sqcup R_k = S_1 \sqcup \dots \sqcup S_\ell$  be two pairwise disjoint decompositions such that 
\be\label{thislast}  \lambda' = \mu_{R_1,\dots,R_k}\circ \pi _{R_1,\dots,R_k}(\lambda)
 \qquand
 \lambda''=   \mu_{S_1,\dots,S_\ell}\circ \pi _{S_1,\dots,S_\ell}(\lambda).
\ee
We claim the   greatest lower bound of these elements in $X$   is precisely
\[ \gamma\omdef 
=  
\mu_{A_1,\dots,A_{k\ell}}\circ \pi _{A_1,\dots,A_{k\ell}}(\lambda') 
=
\mu_{A_1,\dots,A_{k\ell}}\circ \pi _{A_1,\dots,A_{k\ell}}(\lambda'') 
\]
where $A_1,\dots, A_{k\ell}$ are the intersections $ R_i \cap S_j$ listed in any order.
The proof of this claim is straightforward using $n$-ary versions of
Theorem \ref{selfcompat-thm} and Lemma \ref{order-lem1}, which hold by induction. We sketch the main idea.
First, Lemma \ref{order-lem1}(a) implies that the set of elements in $X$ which are lower bounds for $\lambda'$ and $\lambda''$ is contained in the images of both
$ \mu_{R_1,\dots,R_k}$ and $\mu_{S_1,\dots,S_\ell}$.
From Theorem \ref{selfcompat-thm}(c), it follows that
 the intersection of these images is just the image of $\mu_{A_1,\dots,A_{k\ell}}$. 
Using Lemma \ref{order-lem1}(b), we deduce that an element in $\im(\mu_{A_1,\dots,A_{k\ell}})$ is a lower bound for $\lambda'$ and for $\lambda''$ if and only if it is the image of a lower bound for $ \pi _{A_1,\dots,A_{k\ell}}(\lambda)=  \pi _{A_1,\dots,A_{k\ell}}(\lambda') =  \pi _{A_1,\dots,A_{k\ell}}(\lambda'')$. Such a lower bound therefore has $\gamma$ as an upper bound by Lemma \ref{order-lem1}(a).

For the last part of the proposition, let $\Lambda = \{ A_1,\dots, A_{k\ell} \} - \{\varnothing\}\in \Pi[I]$ and note that we may assume in \eqref{thislast} that $\sh(\lambda') = \{R_1,\dots,R_k\}$ and $\sh(\lambda'') = \{S_1,\dots,S_\ell\}$. If $\sh(\lambda') \leq \sh(\lambda'')$ then $\Lambda = \sh(\Lambda')$, so $\lambda' = \gamma \preceq \lambda''$ by the (co)unitality of $\mu$ and $\pi$.
Conversely, suppose $\lambda' \preceq \lambda''$, so that $\gamma  = \lambda'$. Since $\gamma$ belongs to the image of $\mu_{A_1,\dots, A_{k\ell}}$, its shape must refine $\Lambda$, so since  $\Lambda \leq \sh(\lambda'')$ by construction,  we have $\sh(\lambda') = \sh(\gamma) \leq \sh(\lambda'')$. 
%
%If $I$ has less than two elements then $\{ \lambda ' \in \P[I] : \lambda' \preceq \lambda\} = \{\lambda\}$ and the unique minimal element of this set is clearly $\emptyset_\lambda=\lambda$.
%If $I = \{i_1,\dots,i_k\}$ is a set with $k\geq 2$ elements  then we define
% \[\emptyset_\lambda
% = \mu_{\{i_1\},\dots,\{i_k\}}
% \circ 
% \pi_{\{i_1\},\dots,\{i_k\}}(\lambda).\]
% As usual, this definition does not depend on how the elements $i_1,\dots,i_k$ of $I$ are ordered since $\mu$ is commutative and $\pi$ is cocommutative.
% By definition $\emptyset_\lambda \preceq \lambda$. Now suppose  $\lambda' \in \P[I]$ such that $\lambda' \preceq\lambda.$ Then necessarily $\lambda' = \mu_{S_1,\dots,S_n}\circ \pi_{S_1,\dots,S_n}(\lambda)$ for some pairwise disjoint decomposition $I = S_1\sqcup \dots \sqcup S_n$. Let $\pi_{S_1,\dots,S_n}(\lambda) = (\lambda_1,\dots,\lambda_n)$; it then follows by the associativity and commutativity of $\mu$ and $\pi$
% that $\emptyset_\lambda = \mu_{S_1,\dots,S_n}(\emptyset_{\lambda_1},\dots,\emptyset_{\lambda_n})$. Since each $\emptyset_{\lambda_i} \preceq \lambda_i$, it follows  by induction on $n$ using Lemma \ref{order-lem1}(a) that $\emptyset_\lambda  \preceq \lambda'$.
\end{proof}

%\begin{corollary}\label{minimal-cor}
%Assume 
%$(\kk\P,\nabla^\mu,\Delta^\pi) \in \cocoHopf(\Sp^\circ)$.
%The partial order $\{\prec\}$ defined   in  Theorem \ref{order-thm}.
%is then the
%minimal transitive subspecies of $\P\times \P$ such that $(\lambda, \lambda' ) \in \{\prec\}[I]$   whenever   $\lambda = \nabla_{S,T}\circ \Delta_{S,T} (\lambda')\neq \lambda' $ for some  disjoint decomposition $I = S\sqcup T$.
%\end{corollary}
%
%\begin{remark} 
%The word \emph{minimal} in this statement refers to minimality in the ordering of the subspecies of $\P\times \P$ by containment.
%\end{remark}
%
%\begin{proof}
%Let $\R$ be the minimal transitive subspecies of $\P \times \P$ described in the corollary. By definition $\R \subset \{\prec\}$. 
%To show the reverse containment, 
%\end{proof}

The properties     highlighted in Lemma \ref{order-lem1} in some sense characterize all partial orders of the form $\{\prec\}$. In particular, if we are given a partial order on the basis of a strongly self-dual connected Hopf monoid with two certain properties, then this order arises via the construction in Theorem \ref{order-thm} from a unique comultiplicative system. The following theorem makes this idea precise.

\begin{theorem}\label{order-thm2}
Assume $\mu$ is associative, unital, and Hopf self-compatible. Suppose $\{<\}$ is a minimal partial order on $\P$ such that for any disjoint finite sets $S$, $T$ the following properties hold:
 \ben
\item[(A)] If %$S$, $T$ are disjoint finite sets and
 $(\lambda,\lambda') \in \P[S]\times \P[T]$ then $\mu_{S,T}$ induces a poset isomorphism
\[ \{ \alpha \in \P[S] : \alpha \leq \lambda\} \times\{ \alpha' \in \P[T] : \alpha' \leq \lambda'\} \xrightarrow{\sim} \{\gamma \in \P[S\sqcup T] : \gamma \leq \mu_{S,T}(\lambda,\lambda')\}\]
where 
on the left 
we consider  $(\alpha,\alpha') \leq (\beta,\beta')$ if and only if $\alpha\leq\beta$ and $\alpha'\leq\beta'$.

\item[(B)] If %$S$, $T$ are disjoint finite sets and
 $\lambda \in \P[S \sqcup T]$ then   the poset 
$\{ \lambda' \in \im(\mu_{S,T}) : \lambda' \leq \lambda\}$
has a unique maximal element.
 \een
 There is then a unique  comultiplicative system $\pi$ for $\P$ such that
$(\kk\P,\nabla^\mu,\Delta^\pi) \in \cocoHopf(\Sp^\circ)$
and  such that
  $\{<\}$ coincides with the partial order
defined from $\mu$ and $\pi$ by
  Theorem \ref{order-thm}.
%Moreover, for this comultiplicative system it holds that 
%\[(\kk\P,\nabla^\mu,\Delta^\pi) \in \cocoHopf(\Sp^\circ).\]
%That is, $\pi$ is coassociative, counital, and Hopf compatible with $\mu$.
\end{theorem}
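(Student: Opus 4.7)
The plan is to build $\pi$ directly from $\{<\}$ using property (B): for each disjoint decomposition $I = S\sqcup T$ and $\lambda\in\P[I]$, let $\lambda^\star\in\im(\mu_{S,T})$ be the unique maximal element below $\lambda$ provided by (B), and set $\pi_{S,T}(\lambda) = \mu_{S,T}^{-1}(\lambda^\star)$, which is well defined since Theorem \ref{selfcompat-thm} makes $\mu_{S,T}$ injective. Functoriality of $\pi$ in bijections is inherited from that of $\mu$ together with the fact that bijections preserve $\{<\}$ (as $\{<\}$ is a subspecies of $\P\times\P$), so $\pi$ is a genuine comultiplicative system.

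I would then verify the coproduct axioms in turn. Counitality is immediate because $\im(\mu_{\varnothing,S}) = \P[S]$, forcing $\lambda^\star = \lambda$ when one side of the decomposition is empty. Cocommutativity of $\pi$ is forced by commutativity of $\mu$ (Theorem \ref{selfcompat-thm}(a)), since $\mu_{S,T}(\alpha,\beta)=\mu_{T,S}(\beta,\alpha)$. For coassociativity, applied to a three-fold decomposition $I=R\sqcup S\sqcup T$, both parenthesizations of the iterated coproduct compute the preimage under $\mu_{R,S,T}$ of a unique maximal element of $\im(\mu_{R,S,T})$ below $\lambda$; that this maximum exists, and agrees with the two iterated constructions, is a routine consequence of applying (A) to factor the down-set of $\lambda$ and (B) at each stage.

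The key technical step is Hopf compatibility of $\mu$ and $\pi$. Given disjoint decompositions $R\sqcup R' = S\sqcup S'$ with intersections $A = R\cap S$, $B = R\cap S'$, $A' = R'\cap S$, $B' = R'\cap S'$ and inputs $(\lambda,\lambda')\in\P[R]\times\P[R']$, set $(\alpha,\beta) = \pi_{A,B}(\lambda)$ and $(\alpha',\beta') = \pi_{A',B'}(\lambda')$. By construction of $\pi$ and injectivity of $\mu_{S,S'}$, the compatibility square reduces to the identity
\[
\mu_{S,S'}\bigl(\mu_{A,A'}(\alpha,\alpha'),\,\mu_{B,B'}(\beta,\beta')\bigr) = \lambda^\star,
\]
where $\lambda^\star$ is the unique maximum of $\im(\mu_{S,S'})$ below $\mu_{R,R'}(\lambda,\lambda')$. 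Property (A) identifies $\{\gamma\leq \mu_{R,R'}(\lambda,\lambda')\}$ with $\{\eta\leq \lambda\}\times\{\eta'\leq \lambda'\}$, and a second application of (A) refines each factor along $R = A\sqcup B$ and $R' = A'\sqcup B'$; Theorem \ref{selfcompat-thm}(c) then pins down which refined elements lie in $\im(\mu_{S,S'})$, and associativity and commutativity of $\mu$ rearrange the factors into the desired form. Executing this combinatorial reshuffling while keeping track of which factor sits in which tensor slot is the main obstacle.

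Finally, let $\{\prec\}$ be the partial order on $\P$ produced from $\mu$ and $\pi$ via Theorem \ref{order-thm}. By construction of $\pi$, each generating relation of $\{\prec\}$ already lies in $\{<\}$, so $\{\prec\}\subseteq\{<\}$. Conversely, with the Hopf axioms in hand, Lemma \ref{order-lem1} applies to $\{\prec\}$ and shows that $\{\prec\}$ itself satisfies (A) and (B): part (a) of the lemma yields the isomorphism of down-sets in (A) (combined with injectivity of $\mu_{S,T}$), and part (b) exhibits $\mu_{S,T}\circ\pi_{S,T}(\lambda)$ as the unique maximum required by (B). The minimality hypothesis on $\{<\}$ thus forces $\{\prec\} = \{<\}$. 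Uniqueness of $\pi$ is then immediate from Lemma \ref{order-lem1}(b), which shows that any comultiplicative system producing $\{<\}$ must send $\lambda$ to the preimage under $\mu_{S,T}$ of the maximum element of $\im(\mu_{S,T})$ below $\lambda$, matching our construction.
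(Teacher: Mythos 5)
Your construction of $\pi$ (the preimage under $\mu_{S,T}$ of the maximum supplied by (B)), your identification of $\{\prec\}$ with $\{<\}$ via minimality, and your uniqueness argument from Lemma \ref{order-lem1}(b) all coincide with the paper's proof. Where you diverge is in the middle, and that is also where the write-up stops short of a proof: you set out to check counitality, cocommutativity, coassociativity, and Hopf compatibility of $\pi$ one axiom at a time, but coassociativity is asserted as ``a routine consequence'' without exhibiting the iterated versions of (A) and (B) it needs, and for Hopf compatibility you state the correct reduction and then concede that executing it ``is the main obstacle.'' That reduction can in fact be closed: by (A) the elements of $\im(\mu_{S,S'})$ below $\mu_{R,R'}(\lambda,\lambda')$ correspond to pairs $(\eta,\eta')$ with $\eta\leq\lambda$, $\eta'\leq\lambda'$ and (by Theorem \ref{selfcompat-thm}(c) together with associativity and commutativity of $\mu$) $\eta\in\im(\mu_{A,B})$, $\eta'\in\im(\mu_{A',B'})$; the maximum of that product poset is $\bigl(\mu_{A,B}(\alpha,\beta),\mu_{A',B'}(\alpha',\beta')\bigr)$ by (B), and reassociating gives your displayed identity. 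But as submitted, the central verification that $(\kk\P,\nabla^\mu,\Delta^\pi)$ is a Hopf monoid is not carried out.

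The paper avoids the entire block of checks with one change of basis. For $\lambda\in\P[I]$ set $q_\lambda=\sum_{\lambda'\leq\lambda}\lambda'$. Property (A) yields $\nabla^\mu_{S,T}(q_\alpha\otimes q_\beta)=q_{\mu_{S,T}(\alpha,\beta)}$, and property (B) together with the definitions of $\Delta^\mu$ and $\pi$ yields $\Delta^\mu_{S,T}(q_\lambda)=q_{\lambda'}\otimes q_{\lambda''}$ where $(\lambda',\lambda'')=\pi_{S,T}(\lambda)$. Thus in the basis $q_\bullet$ the already-established Hopf monoid $(\kk\P,\nabla^\mu,\Delta^\mu)\in\cocoHopf(\Sp^\circ)$ has product linearizing $\mu$ and coproduct linearizing $\pi$; since the $q_\lambda$ are linearly independent, coassociativity, cocommutativity, counitality of $\pi$ and its Hopf compatibility with $\mu$ all transfer in one stroke. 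Either adopt this device or write out your direct verification in full; the axioms to be established are the same in both routes.
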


\begin{remark} %The word \emph{minimal} in this statement refers to minimality in the ordering of the subspecies of $\P\times \P$ by containment.
The order $\{\prec\}$ in Theorem \ref{order-thm} always has   properties (A) and (B): the first property is   equivalent to part (a) of Lemma \ref{order-lem1}, while (B) holds since the same lemma implies that the unique maximal element of  
 $\{ \lambda' \in \im(\mu_{S,T}) : \lambda' \preceq \lambda\}$ is just $\mu_{S,T}\circ \pi_{S,T}(\lambda)$.
\end{remark}

\begin{proof}
Note that $\mu$ is injective and commutative by Theorem \ref{selfcompat-thm}.
Given disjoint sets $S$, $T$, we may therefore define $\pi_{S,T} : \P[S\sqcup T] \to \P[S]\times \P[T]$  as the map such that %if $\lambda \in \P[S\sqcup T]$ then
 $\pi_{S,T}(\lambda)$ is the   preimage under $\mu_{S,T}$ of the unique maximal element of $\{ \lambda' \in \im(\mu_{S,T}) : \lambda' \leq \lambda\}$.  
 Since $\{<\}$ is a  subspecies of $\P\times \P$, the family of maps $\pi =(\pi_{S,T})$  is a comultiplicative system for $\P$.  
It is clear from 
 Lemma \ref{order-lem1}(b) that $\pi$ is the only possible comultiplicative system with respect to which the partial order in Theorem \ref{order-thm} could coincide with $\{ <\}$.

 For each finite set $S$ and $\lambda \in \P[S]$ define
$ q_\lambda 
=\sum_{\lambda' \preceq \lambda}{\lambda'} \in \kk\P[S]$ 
where the sum is over  $\lambda' \in \P[S]$.
%In turn, if $S$, $T$ are disjoint finite sets and $\lambda' \in \P[S]$ and $\lambda'' \in \P[T]$, define $q_{\lambda'\otimes\lambda''} = q_{\lambda'}\otimes q_{\lambda''} \in \kk\P[S]\otimes \kk\P[T]$.
%and for each finite set $S$ let $\Q[S] = \{ q_\lambda : \lambda \in \P[S]\}$. 
%Our definition of  a partial order implies that $\Q$ is a set subspecies of $\kk\P$,
% and the unitriangularity of the formula for $q_\lambda$ implies that this subspecies is actually a basis for $\kk\P$.
Observe that if $S$, $T$ are disjoint finite sets and $\alpha \in \P[S]$ and $\beta \in \P[T]$ and $\lambda \in \P[S\sqcup T]$, then 
 \[ \nabla^\mu_{S,T}(q_{\alpha}\otimes q_{\beta}) = q_{\mu_{S,T}(\alpha,\beta)}
 \qquand
 \Delta^\mu_{S,T}(q_\lambda) = q_{\lambda'}\otimes q_{\lambda''}\text{\ \ where\ \ }\pi_{S,T}(\lambda)=(\lambda',\lambda'').\]
% where on the 
Here, the equation on the left follows from property (A), while the right equation is clear from property (B) and the definitions of $\Delta^\mu$ and $\pi$. 
Since $(\kk\P,\nabla^\mu,\Delta^\mu) \in \cocoHopf(\Sp^\circ)$  and since the elements $q_\lambda$ for $\lambda \in \P[S]$ are linearly independent, it is straightforward to deduce from these formulas  that
$\pi$  is coassociative, cocommutative, counital, and Hopf compatible with $\mu$.

Thus $(\kk\P,\nabla^\mu,\Delta^\pi) \in \cocoHopf(\Sp^\circ)$, and  if $S$, $T$ are disjoint finite sets and $\lambda \in \P[S\sqcup T]$ then $\mu_{S,T}\circ \pi _{S,T}(\lambda)$ is the unique maximal element of $\{ \lambda' \in \im(\mu_{S,T}) : \lambda' \leq \lambda\}$. 
Since the partial order $\{\prec\}$ defined with respect to $\mu$ and $\pi$   in Theorem \ref{order-thm}
is the minimal partial order such that $\lambda \preceq \mu_{S,T}\circ \pi_{S,T}(\lambda')$ always holds,
it follows that $\{\prec\} \subset \{<\}$. 
Conversely, since 
both $\{<\}$ and $\{\prec\}$ have properties (A) and (B) and since $\{ <\}$ is assumed to be a minimal order with these properties, we must have $\{ <\} \subset \{\prec\}$.
Hence these orders are equal.
%
%We claim that 
%for any pairwise disjoint decomposition $I = S_1\sqcup \dots \sqcup S_k$ and  $\lambda \in \P[I]$ it holds that 
%\be\label{finalclaim} \lambda'\leq \lambda\qquad\text{where}\qquad\lambda'\omdef= \mu_{S_1,\dots,S_k} \circ \pi_{S_1,\dots,S_k}(\lambda).\ee
%If this is true then $\{<\}$ contains as a subspecies the partial order $\{\prec\}$ defined with respect to $\mu$ and $\pi$   in Theorem \ref{order-thm}.
%Since 
%
%
%We already know that \eqref{finalclaim} holds when $k\leq 2$ and in general this inequality follows by induction on $k$. In detail, define $(\alpha,\beta) = \pi_{S_1,S-S_1}(\lambda)$ and set $\beta'=\mu_{S_2,\dots,S_k} \circ \pi_{S_2,\dots,S_k}(\beta)$ so that  $\lambda' =\mu_{S_1,S-S_1}(\alpha,\beta')$. By induction $\beta' \leq \beta$ so by property (A) it holds that $\lambda' =\mu_{S_1,S-S_1}(\alpha,\beta') \leq \mu_{S_1,S-S_1}(\alpha,\beta)$.
%In turn,
%we have 
%  $\mu_{S,S-S_1}(\alpha,\beta) = \mu_{S,S-S_1}\circ \pi_{S,S-S_1}(\lambda) \leq \lambda$ by definition, so the desired inequality $\lambda' \leq \lambda$ follows by transitivity.
\end{proof}

 \subsection{Explicit isomorphisms}\label{bases-sect}

Maintaining the conventions of the previous section,  we let 
 $\P$ denote a fixed finite set species with a multiplicative system $\mu$ and a comultiplicative system $\pi$. Here, we always assume that $\P[\varnothing]$ is a singleton set, and that $\mu$ and $\pi$ are (co)associative, (co)cocommutative, (co)unital, and Hopf compatible.
 %
% these systems are such that $(\kk\P,\nabla^\mu,\Delta^\pi) \in \cocoHopf(\Sp^\circ)$ is a connected Hopf monoid which is commutative and cocommutative. 
Equivalently, we assume $(\kk\P,\nabla^\mu,\Delta^\pi) \in \cocoHopf(\Sp^\circ)$.
 We may then let $\{ \prec\}$ 
denote the partial order on $\P$ defined by Theorem \ref{order-thm} with respect to $\mu$ and $\pi$.

From the data $(\P,\mu,\pi)$ under these hypotheses, our notation naturally allows the definition of three connected Hopf monoids which are linearized, commutative, and cocommutative, namely:
\be\label{hhh-eq} \h = (\kk\P,\nabla^\pi,\Delta^\mu)\qquand \h' = (\kk\P,\nabla^\mu,\Delta^\mu)\qquand \h'' = (\kk\P,\nabla^\mu,\Delta^\pi).\ee
Here $\h''$ is a Hopf monoid by assumption, $\h'$ is a Hopf monoid by Lemma \ref{comcomp-lem}, and $\h$ is a Hopf monoid by Fact \ref{hopfdual-fact}. We do not consider the fourth triple $(\kk \P,\nabla^\pi,\Delta^\pi)$ since by Corollary \ref{deltanabla-cor} this a connected Hopf monoid only if it equal to $\h'$. It is not entirely obvious that these are actually isomorphic Hopf monoids, and in this section we construct explicit isomorphisms demonstrating this fact. 

This amounts to finding   bases of $\kk\P$ in which the formulas for the product and coproduct match those of $\h$, $\h'$, and $\h''$ in turn.
For this purpose, for each finite set $I$ and $\lambda \in \P[I]$, we define
\[
%m_\lambda = \lambda
%\qquand
p_\lambda = \sum_{\lambda' \succeq \lambda} {\lambda'}
\qquand
q_\lambda 
%= \sum_{\lambda' \preceq \lambda} \sum_{\lambda'' \succeq \lambda'} {\lambda''} 
=\sum_{\lambda' \preceq \lambda}p_{\lambda'}
\]
where all sums  are over the  elements of $\P[I]$.
To refer to the collections of these elements,  define 
\[
p_\bullet[I] = \{ p_\lambda : \lambda \in \P[I]\}
\qquand
q_\bullet[I] = \{q_\lambda : \lambda \in \P[I]\}
\] 
for finite sets $I$. 
The triangularity of the sums defining $p_\lambda$ and $q_\lambda$ and the fact that $\{\prec\}$ is a partial order on $\P$ imply that  $p_\bullet$ and $q_\bullet$ are then bases (and in particular, set subspecies) of $\kk\P$.
%
%
%\begin{proof}
%%Certainly $m_\bullet = \P$ is a basis. 
%That $p_\bullet$ and $q_\bullet$ are at least subspecies of $\kk\P$ follows from the fact  $\P[\sigma](\lambda) \preceq \P[\sigma](\lambda')$ if and only if $\lambda \preceq \lambda'$ whenever $\sigma$ is a bijection, from which one deduces 
%$(\kk\P)[\sigma](p_\lambda) = p_{\P[\sigma](\lambda)}$ and $(\kk\P)[\sigma](q_\lambda) =q_{\P[\sigma](\lambda)}$.
%That the subspecies $p_\bullet$ and $q_\bullet$ are bases for $\kk\P$ follows from the triangularity of the formulas for $p_\lambda$ in terms of $\lambda$ and $q_\lambda$ in terms of $p_\lambda$.
%\end{proof}
%
We now present the main result of this section.

\begin{theorem}\label{basis-thm} The connected Hopf monoids $\h$, $\h'$,  and $\h''$ given by \eqref{hhh-eq}
are
 all isomorphic. %, and hence all three are strongly self-dual.
 In particular,  the following holds:
\ben
\item[(a)] The morphism $\kk\P \to \kk\P$ defined by $p_\lambda \mapsto \lambda$ gives an isomorphism 
$ \h \xrightarrow{\sim} \h'.$

\item[(b)] The morphism $\kk\P \to \kk\P$ defined by $q_\lambda \mapsto \lambda$  gives an isomorphism
$ \h \xrightarrow{\sim} \h''.$

%\item[(c)] The morphism $  \kk\P \to \kk\P$ defined by $e_\lambda \mapsto h_\lambda$  gives an automorphism of $\h$. %If we denote this morphism by $\omega$ then $\omega^2 = \id$ and $\omega_I(p_\lambda) = \sgn(\lambda) \cdot p_\lambda$.
\een
\end{theorem}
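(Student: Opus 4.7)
The plan is to prove parts (a) and (b) by direct computation on the bases $p_\bullet$ and $q_\bullet$, using Lemma \ref{order-lem1} as the decisive tool. Write $P,Q : \kk\P \to \kk\P$ for the linear maps with $P(p_\lambda) = \lambda$ and $Q(q_\lambda) = \lambda$ respectively. The triangular expansions $p_\lambda = \sum_{\lambda'\succeq\lambda}\lambda'$ and $q_\lambda = \sum_{\lambda'\preceq\lambda} p_{\lambda'}$ show $p_\bullet$ and $q_\bullet$ are bases for $\kk\P$, so $P$ and $Q$ are well-defined linear isomorphisms. They are morphisms of vector species because the partial order $\{\prec\}$ is defined naturally, so the assignments $\lambda\mapsto p_\lambda$ and $\lambda\mapsto q_\lambda$ intertwine bijections. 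They preserve units and counits because $\P[\varnothing]$ is a singleton and so $p_{1_\P} = q_{1_\P} = 1_\P$. It remains to verify that the product and coproduct identities $P\circ \nabla^\pi_{S,T} = \nabla^\mu_{S,T}\circ (P\otimes P)$ and $(P\otimes P)\circ \Delta^\mu_{S,T} = \Delta^\mu_{S,T}\circ P$ hold, and analogously for $Q$ with $\Delta^\mu$ replaced by $\Delta^\pi$ on the $\h''$ side.

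For part (a), I will verify $\nabla^\pi_{S,T}(p_\alpha \otimes p_\beta) = p_{\mu_{S,T}(\alpha,\beta)}$ and $\Delta^\mu_{S,T}(p_\lambda) = p_\alpha \otimes p_\beta$ when $\lambda = \mu_{S,T}(\alpha,\beta)$ (and zero otherwise). For the product, the formula for $\nabla^\pi$ from Fact \ref{pimorph} gives
\[
\nabla^\pi_{S,T}(p_\alpha \otimes p_\beta) \;=\; \sum_{\alpha' \succeq \alpha,\ \beta' \succeq \beta}\ \sum_{\lambda\,:\,\pi_{S,T}(\lambda) = (\alpha',\beta')} \lambda,
\]
and Lemma \ref{order-lem1}(b) identifies the set of $\lambda$ appearing here as precisely those with $\lambda \succeq \mu_{S,T}(\alpha,\beta)$, yielding $p_{\mu_{S,T}(\alpha,\beta)}$. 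For the coproduct, observe that $\mu$ is injective (by Lemma \ref{injsurj-fact} applied to $\h'$), so $\Delta^\mu_{S,T}(\lambda')$ equals $\mu_{S,T}^{-1}(\lambda')$ when $\lambda' \in \im(\mu_{S,T})$ and zero otherwise. Expanding $\Delta^\mu_{S,T}(p_\lambda) = \sum_{\lambda'\succeq\lambda}\Delta^\mu_{S,T}(\lambda')$ and invoking Lemma \ref{order-lem1}(a), the contributing $\lambda'$ are in bijection with pairs $(\alpha',\beta')$ satisfying $\alpha' \succeq \alpha$ and $\beta'\succeq \beta$ whenever $\lambda = \mu_{S,T}(\alpha,\beta)$, and there are no contributing $\lambda'$ at all when $\lambda \notin \im(\mu_{S,T})$ (since $\lambda' \succeq \lambda$ with $\lambda' \in \im(\mu_{S,T})$ forces $\lambda \in \im(\mu_{S,T})$ by the same lemma). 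Summing produces $p_\alpha \otimes p_\beta$ or zero, as required.

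For part (b), I use the formulas established in (a) to compute in the $q_\bullet$ basis. Expanding $q_\alpha\otimes q_\beta$ in the $p_\bullet$ basis and applying the product formula from (a) gives
\[
\nabla^\pi_{S,T}(q_\alpha \otimes q_\beta) \;=\; \sum_{\alpha' \preceq \alpha,\ \beta' \preceq \beta} p_{\mu_{S,T}(\alpha',\beta')},
\]
and Lemma \ref{order-lem1}(a) shows the indexing set matches $\{\gamma \preceq \mu_{S,T}(\alpha,\beta)\}$, so the sum is $q_{\mu_{S,T}(\alpha,\beta)}$. Likewise, expanding $\Delta^\mu_{S,T}(q_\lambda) = \sum_{\lambda' \preceq \lambda} \Delta^\mu_{S,T}(p_{\lambda'})$ with the coproduct formula from (a), the nonzero contributions come from $\lambda' \preceq \lambda$ lying in $\im(\mu_{S,T})$, which by Lemma \ref{order-lem1}(b) means $\lambda' = \mu_{S,T}(\alpha',\beta')$ for $\alpha' \preceq \alpha,\ \beta' \preceq \beta$ where $(\alpha,\beta) = \pi_{S,T}(\lambda)$; the total is $q_\alpha \otimes q_\beta$, which under $Q\otimes Q$ maps to $\alpha\otimes\beta = \pi_{S,T}(\lambda) = \Delta^\pi_{S,T}(\lambda)$. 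This verifies $Q$ is an isomorphism $\h \xrightarrow{\sim} \h''$. Composing $P^{-1}$ with $Q$ then exhibits $\h' \cong \h''$, so all three Hopf monoids in \eqref{hhh-eq} are mutually isomorphic. I expect no real obstacle; the only subtlety is tracking which direction of Lemma \ref{order-lem1} applies in each of the four compatibility checks, and noting that the triangular transitions $p_\bullet \leftrightarrow \P$ and $q_\bullet \leftrightarrow p_\bullet$ interact cleanly with the $\succeq$- and $\preceq$-indexing supplied by that lemma.
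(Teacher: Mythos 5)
Your proposal is correct and follows essentially the same route as the paper: both establish the identities $\nabla^\pi_{S,T}(p_\alpha\otimes p_\beta)=p_{\mu_{S,T}(\alpha,\beta)}$ and $\Delta^\mu_{S,T}(p_\lambda)=p_\alpha\otimes p_\beta$ (or $0$) by expanding in the $p_\bullet$ basis and invoking the two parts of Lemma \ref{order-lem1}, then deduce the $q_\bullet$ formulas from these. The only difference is that you carry out the part (b) computation explicitly where the paper leaves it as ``similar,'' which is a harmless (indeed welcome) elaboration.
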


\begin{proof}
%That $\h''$ is a connected Hopf monoid follows by our assumptions at the outset of this subsection. That $\h$ is a connected Hopf monoid follows by Fact \ref{hopfdual-fact}.  Finally, $\h'$ is a connected Hopf monoid since $\mu$ is Hopf self-compatible by Lemma \ref{comcomp-lem}.

To prove the theorem it suffices to compute the product and coproduct in $\h$ for the bases $p_\bullet$ and $q_\bullet$.
Fix a disjoint decomposition $I = S\sqcup T$  of a finite set.  For part (a) we must show that 
\be\label{id1-eq}
\nabla^\pi_{S,T}(p_\alpha \otimes p_{\beta}) = p_{\mu_{S,T}(\alpha,\beta)}\ee
when $(\alpha,\beta) \in \P[S]\times \P[T]$ and that
 \be\label{id2-eq}
\Delta^\mu_{S,T}(p_\lambda) =\begin{cases}   p_\alpha \otimes p_\beta &\text{if $\lambda = \mu_{S,T}(\alpha,\beta)$ for some $(\alpha,\beta) \in \P[S]\times \P[T]$} \\ 0&\text{otherwise}\end{cases}\ee
when $\lambda \in \P[I]$.
We consider the product first. By definition
\[ \nabla^\pi_{S,T}(p_\alpha \otimes p_{\beta}) =  
\sum_{\substack{\alpha' \in \P[S] \\ \alpha' \succeq \alpha}}  \sum_{\substack{\beta' \in \P[T] \\ \beta' \succeq \beta}}
 \sum_{\substack{\gamma \in \P[S\sqcup T] \\ \pi_{S,T}(\gamma) = (\alpha',\beta')}} \gamma.
% \qquad \text{for }(\alpha,\beta) \in \P[S]\times \P[T].
\] 
The second part of Lemma \ref{order-lem1} asserts precisely that this triple sum is equal to the sum of $\gamma \in \P[S\sqcup T]$ such that $\gamma \succeq \mu_{S,T}(\alpha,\beta)$, which is  $p_{\mu_{S,T}(\alpha, \beta)}$ as desired. In turn, we compute
\[
 \Delta^\mu_{S,T}(p_\lambda) = 
  \sum_{\substack{\lambda' \in \P[I] \\ \lambda' \succeq \lambda}} 
 \sum_{\substack{(\alpha',\beta') \in \P[S]\times \P[T] \\ \mu_{S,T}(\alpha',\beta')= \lambda'}} \alpha'\otimes \beta'
=
\sum_{\substack{(\alpha',\beta') \in \P[S]\times \P[T] \\ \mu_{S,T}(\alpha',\beta')\succeq \lambda}} \alpha'\otimes \beta'.
 \]
By  the first part of Lemma \ref{order-lem1}, $\alpha'\otimes \beta'$ appears in this sum if and only if $\lambda = \mu_{S,T}(\alpha,\beta)$ for some $\alpha \preceq \alpha ' $ and $\beta \preceq \beta'$. As $\mu_{S,T}$ is injective, the pair $(\alpha,\beta) \in \P[S]\times \P[T]$ is unique if it exists, so if $\lambda \notin \im(\mu_{S,T})$ then the last sum is zero. Alternatively, if $\lambda = \mu_{S,T}(\alpha,\beta)$ for some $(\alpha,\beta)$ then  $ \Delta^\mu_{S,T}(p_\lambda)$ is  the sum of $\alpha'\otimes \beta'$ over all $(\alpha',\beta') \in \P[S]\times \P[T]$ with $\alpha'\succeq \alpha$ and $\beta' \succeq \beta$, which is precisely $p_{\alpha}\otimes p_{\beta} $. 

This establishes (a). To prove (b), we must show that 
\[\nabla^\pi_{S,T}(q_\alpha \otimes q_{\beta}) = q_{\mu_{S,T}(\alpha,\beta)} \qquand \Delta^\mu_{S,T}(q_\lambda) = q_{\lambda'} \otimes q_{\lambda''}
\]
when $(\alpha,\beta) \in \P[S]\times \P[T]$ and $\lambda \in \P[I]$ and  $\pi_{S,T}(\lambda)= (\lambda',\lambda'')$.
The proof of these identities is similar to the arguments above, and follows by combining Lemma \ref{order-lem1} with  \eqref{id1-eq} and \eqref{id2-eq}.
%
%For the product we compute, using  \eqref{id1-eq} above, that
%\[
%\nabla^\pi_{S,T}(q_\alpha \otimes q_{\beta}) =
%\sum_{\substack{\alpha' \in \P[S] \\ \alpha' \preceq \alpha}}  \sum_{\substack{\beta' \in \P[T] \\ \beta' \preceq \beta}}
% \nabla^\pi_{S,T}(p_{\alpha'}\otimes p_{\beta'})
% =
% \sum_{\substack{\alpha' \in \P[S] \\ \alpha' \preceq \alpha}}  \sum_{\substack{\beta' \in \P[T] \\ \beta' \preceq \beta}} p_{\mu_{S,T}(\alpha',\beta')}.
% \]
% The last sum is equal to $q_{\mu_{S,T}(\alpha,\beta)}$ by Lemma \ref{order-lem1}(a). Now for the coproduct, we compute, likewise using \eqref{id2-eq}, that
%\[
%\Delta^\mu_{S,T}(q_\lambda) =
%\sum_{\substack{\gamma \in \P[I] \\ \gamma \preceq \lambda}}\Delta^\mu_{S,T}(p_\gamma)
%=
%\sum_{\substack{\gamma \in \P[I] \\ \gamma \preceq \lambda}}\sum_{\substack{(\alpha,\beta) \in \P[S]\times \P[T] \\ \mu_{S,T}(\alpha,\beta) = \gamma}} p_\alpha \otimes p_\beta
%=
%\sum_{\substack{(\alpha,\beta) \in \P[S]\times \P[T] \\ \mu_{S,T}(\alpha,\beta) \preceq \lambda}} p_\alpha \otimes p_\beta.
%\]
%The second part of Lemma \ref{order-lem1} shows that if we define $(\lambda',\lambda'') = \pi_{S,T}(\lambda)$ then the last sum may be rewritten as 
%$\sum_{\alpha \preceq \lambda'} \sum_{\beta\preceq \lambda''} p_\alpha \otimes p_\beta $ which is equal to $ q_{\lambda'}\otimes q_{\lambda''}$ by definition. Thus \eqref{id3-eq} holds and hence part (b) does also.
\end{proof}

Since $\h'$  is SSD,  the isomorphisms $\h\cong \h'\cong\h''$ imply the following corollary.

\begin{corollary}\label{last-cor} If a connected Hopf monoid or its dual is finite-dimensional,  commutative, cocommutative, and linearized in some basis, then it is strongly self-dual in some other basis.
\end{corollary}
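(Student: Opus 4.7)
The plan is to reduce to the statement that any linearized, commutative, cocommutative connected Hopf monoid is isomorphic to a strongly self-dual one, and then to dispose of the dual case by invoking the fact that SSD implies self-dual. Throughout, $\h$ is the given connected Hopf monoid.

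First I would handle the primary case, where $\h$ itself is finite-dimensional, commutative, cocommutative, and linearized in some basis $\P$. Since $\h$ is linearized in $\P$, both its product and coproduct are linearizations of maps on $\P$, so there exist a multiplicative system $\mu$ and a comultiplicative system $\pi$ with $\h = (\kk\P,\nabla^\mu,\Delta^\pi)$. The commutativity of $\h$ means $\mu$ is commutative, and the cocommutativity of $\h$ means $\pi$ is cocommutative; since $\h$ is a Hopf monoid, $\mu$ and $\pi$ are also (co)associative, (co)unital, and Hopf compatible. Thus the hypotheses at the start of Section~\ref{bases-sect} are met. Lemma~\ref{comcomp-lem} then says $\mu$ is Hopf self-compatible, so $\h' \omdef = (\kk\P,\nabla^\mu,\Delta^\mu)$ is a connected Hopf monoid; by construction its product and coproduct are linearizations in the basis $\P$ of the same maps $\mu$, so $\h'$ is strongly self-dual.

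The remaining input is an explicit isomorphism $\h \cong \h'$. This is precisely the content of Theorem~\ref{basis-thm}: in the notation of \eqref{hhh-eq}, the Hopf monoid $\h$ there corresponds to our $\h'$ (coproduct from $\mu$, product from $\pi$\,…) and one of parts (a), (b) translates our $(\kk\P,\nabla^\mu,\Delta^\pi)$ into $(\kk\P,\nabla^\mu,\Delta^\mu)$ by the change of basis sending the triangular sum $q_\lambda = \sum_{\lambda'\preceq\lambda}\sum_{\lambda''\succeq\lambda'}\lambda''$ to $\lambda$. The upshot is that $\h$ is isomorphic, via a change of basis induced by the partial order $\{\prec\}$ of Theorem~\ref{order-thm}, to the strongly self-dual Hopf monoid $\h'$, and hence is itself strongly self-dual in this other basis.

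Finally I would address the case where it is the dual $\h^*$ (rather than $\h$) that is finite-dimensional, commutative, cocommutative, and linearized. Finite-dimensionality transfers between a Hopf monoid and its dual, and by the discussion in Section~\ref{dual-sect} the dual of a commutative cocommutative Hopf monoid is again commutative and cocommutative. Applying the primary case to $\h^*$ produces a basis in which $\h^*$ is strongly self-dual. Any SSD Hopf monoid is in particular freely self-dual and thus self-dual, so $\h^* \cong (\h^*)^*$; combining this with the canonical identification $(\h^*)^* \cong \h$ (valid because $\h$ is finite-dimensional) shows $\h \cong \h^*$ is strongly self-dual as well. The only genuine content is the primary case; the dual case is a formal consequence, and no step here appears to require work beyond assembling Theorem~\ref{basis-thm}, Lemma~\ref{comcomp-lem}, and the dualities of Section~\ref{dual-sect}, so I do not expect a significant obstacle.
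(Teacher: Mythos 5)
Your proposal is correct and follows essentially the same route as the paper: the given linearized Hopf monoid is $\h''=(\kk\P,\nabla^\mu,\Delta^\pi)$ in the notation of \eqref{hhh-eq}, Lemma \ref{comcomp-lem} makes $\h'=(\kk\P,\nabla^\mu,\Delta^\mu)$ an SSD connected Hopf monoid, and Theorem \ref{basis-thm} supplies the isomorphisms $\h\cong\h'\cong\h''$. The only (harmless) difference is in the dual case: the paper observes that the dual of $\h''$ is $\h=(\kk\P,\nabla^\pi,\Delta^\mu)$, which already sits in the same isomorphism chain, whereas you route through the self-duality of SSD Hopf monoids and $(\h^*)^*\cong\h$ --- both are valid.
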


\end{document}